\DeclareMathAlphabet{\mathpzc}{OT1}{pzc}{m}{it}
\theoremstyle{plain}
\newtheorem{thm}{Theorem}[section]
\newtheorem{cor}[thm]{Corollary}
\newtheorem{lem}[thm]{Lemma}
\newtheorem{prop}[thm]{Proposition}
\renewenvironment{proof}[1][Proof]{\textbf{#1.} }{\ \rule{0.5em}{0.5em}}
\theoremstyle{definition}
\theoremstyle{remark}
\newtheorem*{rmk*}{Remark}
\newtheorem{nota}[thm]{Notation}
\theoremstyle{plain}
\newtheorem{claim}[]{Claim}
\theoremstyle{definition}
\newtheorem{assumption}[]{Assumption}
\numberwithin{equation}{section}
\renewcommand*\env@cases[1][1.2]{%
  \let\@ifnextchar\new@ifnextchar
  \left\lbrace
  \def\arraystretch{#1}%
  \array{@{}l@{\quad}l@{}}%
}
\newcommand{\bb}[1]{\mathbb{#1}}
\newcommand{\mc}[1]{\mathcal{#1}}
\newcommand{\pzc}[1]{\mathpzc{#1}}
\newcommand{\bbR}{\mathbb{R}}
\newcommand{\bbN}{\mathbb{N}}
\newcommand{\mcL}{\mathcal{L}}
\newcommand{\E}{\mathbb{E}}
\newcommand{\W}{\Omega}
\newcommand{\w}{\omega}
\newcommand{\ed}{\stackrel{\text{dist}}{=}}
\newcommand{\Vol}{\operatorname{Vol}}
\newcommand{\dVol}{\operatorname{dVol}}
\newcommand{\Scal}{\operatorname{Scal}}
\newcommand{\Ric}{\operatorname{Ric}}
\newcommand{\Law}{\operatorname{Law}}
\newcommand{\Hom}{\operatorname{Hom}}
\newcommand{\ex}{\operatorname{exp}}
\newcommand{\tr}{\operatorname{tr}}
\newcommand{\mesh}{\operatorname{mesh}}
\newcommand{\epsi}{\varepsilon}
\newcommand{\T}{\mc{T}}
\newcommand{\p}{\mathcal{P}}
\newcommand{\Hp}{H_{\p}}
\newcommand{\Hpe}{H_{\p}^{\epsi}}
\newcommand{\mLp}{\mcL_{\p}}
\newcommand{\Up}{\mc{U}_{\p}}
\newcommand{\ep}{\mc{E}_{\p}}
\newcommand{\bp}{b^{\p}}
\newcommand{\up}{u^{\p}}
\newcommand{\Ap}[1]{A^{\p}_{#1}}
\newcommand{\Sp}[1]{S^{\p}_{#1}}
\newcommand{\Cp}[1]{C^{\p}_{#1}}
\newcommand{\Vp}[1]{V^{\p}_{#1}}
\newcommand{\Fp}[1]{F^{\p}_{#1}}
\newcommand{\Kp}[1]{K^{\p}_{#1}}
\newcommand{\Rp}{\mc{R}_{\p}}
\newcommand{\betap}{\beta^{\p}}
\newcommand{\tp}{\theta^{\p}}
\newcommand{\mAp}{\mc{A}_{\p}}
\newcommand{\nuG}[1]{\nu_{G_{#1}}}
\newcommand{\nuS}[1]{\nu_{S_{#1}}}
\newcommand{\muS}[1]{\mu_{S_{#1}}}
\newcommand{\re}{\mathbb{R}}
\newcommand{\fancyR}{\pzc{R}}
\newcommand{\fancyS}{\pzc{S}}
\newcommand{\fancyA}{\pzc{A}}
\newcommand{\fancyL}{\pzc{L}}
\newcommand{\ddim}{\operatorname{d}}
\newcommand{\rhot}{\tilde{\rho}}
\newcommand{\F}{\mc{F}}
\newcommand{\GFp}{G_{\p}^{\F_{\p}}}
\newcommand{\SFp}{S_{\p}^{\F_{\p}}}
\newcommand{\D}{\mc{D}}
\newcommand{\mD}{\D}
\newcommand{\tk}{\theta_k}
\newcommand{\nats}{\bbN}
\newcommand{\chs}[2]{ \left( \begin{array}{c} #1 \\ #2 \end{array}  \right)}
\newcommand{\HpeR}{H_{\p}^{\epsi}(\re^d)}
\newcommand{\rem}{\operatorname{rem}}
\newcommand{\bdry}[1]{\partial_{#1}}
\newcommand{\bdryc}[1]{\Omega_{#1}}
\newcommand{\subsect}[1]{$\newline$ \textbf{#1}. }
\title[Approximation to Wiener Measure]{An Approximation to Wiener Measure and Quantization of the Hamiltonian on Manifolds with Non-positive Sectional Curvature}
\author{Thomas Laetsch}
\thanks{This research was supported in part by NSF Grants DMS-0804472 and DMS-1106270}
\keywords{Path integrals, finite dimensional approximations, Wiener measure, infinite dimensional analysis}
\subjclass[2010]{28C20, 58D30, 60H99}
\address{Department of Mathematics, University of Connecticut, Storrs, CT 06269}
\email{thomas.laetsch@uconn.edu}
\urladdr{http://www.math.uconn.edu/~laetsch}
\begin{document}
\maketitle

\begin{abstract}
This paper gives a rigorous interpretation of a Feynman path integral on a Riemannian manifold $M$ with non-positive sectional curvature. A $L^2$ Riemannian metric $G_{\p}$ is given on the space of piecewise geodesic paths $\Hp(M)$ adapted to the partition $\p$ of $[0,1]$, whence a finite-dimensional approximation of Wiener measure is developed. It is proved that, as $\mesh(\p) \to 0$, the approximate Wiener measure converges in a $L^1$ sense to the measure $\ex\{ -\frac{2 + \sqrt{3}}{20\sqrt{3}} \int_0^1 \Scal(\sigma(s)) ds \} d\nu(\sigma)$ on the Wiener space $W(M)$ with Wiener measure $\nu$. This gives a possible prescription for the path integral representation of the quantized Hamiltonian, as well as yielding such a result for the natural geometric approximation schemes originating in \cite{AndDrive:1999} and followed by \cite{ALim:2007}.
\end{abstract}

\tableofcontents

\section{Introduction}
Let  $M$ be a $d$-dimensional Riemannian manifold with metric $g$, fixed point $o \in M$, and Levi-Civita covariant derivative $\nabla$. For the remainder of this paper we will assume that the curvature and its derivative are bounded on $M$. We will eventually also require that the sectional curvature of $M$ is non-positive, making sure to mention when we impose this restriction. 

The Wiener space of $M$ consists of the continuous paths starting at $o$ and parameterized on $[0,1]$, 
\begin{align}
W(M) = \{ \sigma \in C([0,1] \to M) : \sigma(0)=o \}. \label{eqn.WM}
\end{align}
The Wiener measure associated to $M$ is the unique probability measure $\nu$ on $W(M)$ such that,
\begin{align}
&\int_{W(M)} f(\sigma) d\nu(\sigma)  = \int_{M^n} F(x_1, ..., x_n) \prod_{i=1}^n P_i(dx_i)
\end{align}
whenever $f$ has the form $f(\sigma) = F(\sigma(s_1), ..., \sigma(s_n))$ where $\p = \{ 0=s_0 < s_1 < \cdots < s_n = 1\}$ is a partition of $[0,1]$ and $F$ is a bounded and measurable function. The measures $P_i(dx_i)$ are defined as $P_i(dx_i) := p_{\Delta_i s} (x_{i-1}, x_i) dx_i$, where $p_s(x,y)$ denotes the fundamental solution to the heat equation on $M$, $\Delta_i s = s_i - s_{i-1}$, and $dx_i$ is the Riemannian volume form on $M$.

The purpose of this paper is to give a rigorous interpretation of a heuristic path integral on $M$ having the form,
\begin{align}
\frac{1}{Z} \int_{W(M)} f(\sigma(1)) \ex \left\{  \int_0^1\left( -\frac{1}{2}  \| \sigma'(s) \|^2 + V(s) \right)ds \right\} \mc{D}\sigma \label{eqn.heurpath}
\end{align}
via a finite dimensional approximation to Wiener measure. The ``derivation'' of Eq. (\ref{eqn.heurpath}) follows from an application of Trotter's product formula and a limiting argument from which $Z$ arises as a ``normalization'' constant that can either be interpreted as $0$ or $\infty$, and $\mc{D}\sigma$ is an infinite-dimensional Lebesgue type measure which, in truth, does not exist.    Moreover, $V$ is a potential and $-\frac{1}{2} \| \sigma'(s) \|^2 + V(s)$ yields an energy term which is problematic since the weight of the space $W(M)$ lands on nowhere differentiable paths.

In spite of the need to give a rigorous interpretation, heuristic path integrals such as those in Eq. (\ref{eqn.heurpath}) have proven themselves useful and arise often in physics literature. Particularly, one can interpret this path integral as the path integral quantization of the Hamiltonian on $M$. Much of the current interest concerning path integrals in physics began with Feynman in \cite{MR0026940} and has since grown deeply. The role of path integrals in quantum mechanics is surveyed by Gross in \cite{MR838562} and detailed more by Feynman and Hibbs in \cite{MR2797644} as well as Glimm and Jaffe in \cite{QuantumPhysics:1981}.

\section{Main Result}
For the partition $\p = \{ 0=s_0 < s_1 < \cdots < s_n = 1\}$, define the finite dimensional subspace $\Hp(M)$ of $W(M)$ by
\begin{align}
\Hp(M) = \{ \sigma \in W(M) : \sigma \text{ is piecewise geodesic with respect to } \p \}. \label{eqn.HpM}
\end{align}
We make $\Hp(M)$ into a Riemannian manifold by endowing it with the $L^2$ metric $G_{\p}$, defined by,
\begin{align}
G_{\p}(X,Y) = \int_0^1 g\left( X(s), Y(s) \right) ds, \label{eqn.G}
\end{align}
where we are making the natural identification of the tangent space $T_{\sigma}\Hp(M)$ with the piecewise Jacobi fields along $\sigma$ in $M$. From here we define the approximate Wiener measure $\nu_{G_{\p}}$ on $\Hp(M)$ by,
\begin{align}
d\nu_{G_{\p}} = \frac{1}{Z_{G_{\p}}} e^{ -\frac{1}{2} \int_0^1\| \sigma'(s) \|^2 ds } \dVol_{G_{\p}}, \label{eqn.nuG}
\end{align}
where $\Vol_{G_{\p}}$ is the Riemannian volume form given by $G_{\p}$ and $Z_{G_{\p}}$ is a normalization constant which forces $\nu_{G_{\p}}$ to be a probability measure in the case that $M = \re^d$. With the matrix $\mLp$ introduced below in Eq. (\ref{eqn:Lp}),
\begin{align}
Z_{G_{\p}} = \sqrt{\det{\mLp}} \prod_{i=1}^n (2\pi \Delta_i s)^{d/2}. \label{eqn.ZGp}
\end{align}

We can now state the main result of this paper.

\begin{thm} 
\label{thm.mainthm}
Suppose that $M$ has non-positive sectional curvature and $f : W(M) \to \re$ is bounded and continuous. Then, 
\begin{align}
\lim_{|\p| \to 0} \int_{\Hp(M)} f(\sigma) d\nuG{\p}(\sigma) = \int_{W(M)} f(\sigma) e^{-\frac{2+\sqrt{3}}{20\sqrt{3}} \int_0^1 \Scal(\sigma(s)) ds} d\nu(\sigma) \label{eqn.mainthm}
\end{align}
where $\Scal$ is the scalar curvature on $M$ and $\p$ is taken to be the equally-spaced partition $\p = \{ 0, 1/n, 2/n, ..., 1\}$.
\end{thm}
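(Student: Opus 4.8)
\bigskip

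\noindent\textbf{Proof plan.} The idea is to push $\nuG{\p}$ forward to an explicit density on $M^{n}$, compare it with the finite dimensional distributions of $\nu$ one geodesic segment at a time using the short time expansion of the heat kernel, and then pass to the limit through a quadratic variation argument together with a uniform integrability estimate.

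\emph{Step 1 (coordinates and the explicit form of $\nuG{\p}$).} Since $M$ has non-positive sectional curvature, the Cartan--Hadamard theorem makes every $\exp_{x}\colon T_{x}M\to M$ a diffeomorphism, so a path $\sigma\in\Hp(M)$ is determined by its vertices $(x_{1},\dots,x_{n})=(\sigma(s_{1}),\dots,\sigma(s_{n}))\in M^{n}$, equivalently by the increments $v_{i}:=\exp_{x_{i-1}}^{-1}(x_{i})\in T_{x_{i-1}}M$ (with $x_{0}=o$), and $d(x_{i-1},x_{i})=|v_{i}|$ with no cut locus ambiguity. On $[s_{i-1},s_{i}]$ the path $\sigma$ has constant speed $|v_{i}|/\Delta_{i}s$, so $\tfrac12\int_{0}^{1}\|\sigma'(s)\|^{2}\,ds=\tfrac12\sum_{i=1}^{n}|v_{i}|^{2}/\Delta_{i}s$. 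Identifying $T_{\sigma}\Hp(M)$ with the piecewise Jacobi fields along $\sigma$, the coordinate fields $\partial/\partial x_{i}$ are the Jacobi fields $J_{i}$ with prescribed vertex values, supported on $[s_{i-1},s_{i+1}]$; hence the Gram matrix $G_{\p}=\big(G_{\p}(\partial_{x_{i}},\partial_{x_{j}})\big)_{i,j}$ is block tridiagonal and $\dVol_{G_{\p}}=\sqrt{\det G_{\p}}\,dx_{1}\cdots dx_{n}$ relative to the product Riemannian volume. Together with Eq.~\eqref{eqn.ZGp} this exhibits the pushforward of $\nuG{\p}$ to $M^{n}$ as
\[
  \frac{\sqrt{\det G_{\p}}}{\sqrt{\det\mLp}\;\prod_{i=1}^{n}(2\pi\Delta_{i}s)^{d/2}}\;e^{-\frac12\sum_{i=1}^{n}|v_{i}|^{2}/\Delta_{i}s}\;dx_{1}\cdots dx_{n}.
\]

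\emph{Step 2 (comparison with the Wiener finite dimensional distributions).} Let $\mu_{\p}$ be the law on $W(M)$ of the piecewise geodesic interpolant $b_{\p}$, with respect to $\p$, of an $M$-valued Brownian motion $b$; its vertex marginal is exactly $\prod_{i=1}^{n}p_{\Delta_{i}s}(x_{i-1},x_{i})\,dx_{i}$. By the Minakshisundaram--Pleijel expansion for the semigroup $e^{t\Delta/2}$,
\[
  p_{t}(x,y)=(2\pi t)^{-d/2}\,e^{-d(x,y)^{2}/2t}\left(j(x,y)^{-1/2}+t\,u_{1}(x,y)+O(t^{2})\right),
\]
where $j(x,y)=\det\big(d\exp_{x}|_{\exp_{x}^{-1}y}\big)$ is the van Vleck--Morette determinant and $(u_{1}j^{1/2})(x,x)$ is a universal multiple of $\Scal(x)$. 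Because $d(x_{i-1},x_{i})=|v_{i}|$, the Gaussian factors match those of Step 1 and cancel, and comparing the density prefactors on $M^{n}$ gives $d\nuG{\p}=\rho_{\p}\,d\mu_{\p}$ with
\[
  \rho_{\p}(x_{1},\dots,x_{n})=\sqrt{\frac{\det G_{\p}}{\det\mLp}}\;\prod_{i=1}^{n}j(x_{i-1},x_{i})^{1/2}\;\prod_{i=1}^{n}\left(1+\Delta_{i}s\,(u_{1}j^{1/2})(x_{i-1},x_{i})+O((\Delta_{i}s)^{2})\right)^{-1},
\]
the error terms being uniform on $M$ by the assumed two-sided curvature bounds; the event $\{\max_{i}|v_{i}|\ge1\}$, on which the expansion is unavailable, carries $\mu_{\p}$- and $\nuG{\p}$-mass tending to $0$ and is discarded.

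\emph{Step 3 (identification of the limiting constant).} Since $b_{\p}\to b$ uniformly on $[0,1]$ almost surely (a modulus-of-continuity estimate) and $f$ is bounded and continuous, $\mu_{\p}$ converges weakly to $\nu$; this is the convergence underlying the schemes of \cite{AndDrive:1999,ALim:2007}. What remains is that $\rho_{\p}(b_{\p})$ converges in probability to $e^{-\frac{2+\sqrt3}{20\sqrt3}\int_{0}^{1}\Scal(b_{s})\,ds}$. Taking logarithms, $\log\rho_{\p}$ is a sum over the $n$ segments of terms vanishing to second order in $v_{i}$: using $\log j(x,\exp_{x}v)=-\tfrac16\Ric_{x}(v,v)+O(|v|^{3})$, the convergence $(u_{1}j^{1/2})(x,\exp_{x}v)\to c_{0}\Scal(x)$ as $v\to0$, and — via the Jacobi equation $J''+R(J,\sigma')\sigma'=0$ — the Taylor expansion $G_{\p}=\mLp+E_{\p}+\cdots$ with $E_{\p}$ block tridiagonal and of order $|v_{i}|^{2}$ times curvature, one obtains
\[
  \log\rho_{\p}=\sum_{i=1}^{n}Q_{x_{i-1}}(v_{i},v_{i})-\sum_{i=1}^{n}\Delta_{i}s\,c_{0}\Scal(x_{i-1})+(\text{negligible})
\]
for an explicit curvature quadratic form $Q_{x}$. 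The delicate contribution is $\tfrac12\log(\det G_{\p}/\det\mLp)=\tfrac12\tr\log(I+\mLp^{-1}E_{\p})$: because $\mLp$ is ($d$ copies of) the piecewise-linear finite element mass matrix, a tridiagonal Toeplitz matrix with stencil proportional to $(1,4,1)$, the entries of $\mLp^{-1}$ decay off the diagonal at the rate $2-\sqrt3$, the small root of $\lambda^{2}-4\lambda+1=0$, and summing the resulting geometric weights against the blocks of $E_{\p}$ produces the irrational coefficient $\tfrac{1}{10\sqrt3}$; together with the rational part from the van Vleck and heat-kernel terms this assembles to $\tfrac{2+\sqrt3}{20\sqrt3}=\tfrac{1}{20}+\tfrac{1}{10\sqrt3}$. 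Finally, the quadratic variation of $b$ forces $\sum_{i}Q_{b_{s_{i-1}}}(v_{i},v_{i})\to\int_{0}^{1}(\tr_{g}Q)(b_{s})\,ds$ in probability, and the Riemann sum $\sum_{i}\Delta_{i}s\,c_{0}\Scal(b_{s_{i-1}})\to c_{0}\int_{0}^{1}\Scal(b_{s})\,ds$; since $\tr_{g}Q_{x}$ and $c_{0}\Scal(x)$ are universal multiples of $\Scal(x)$, this yields $\log\rho_{\p}(b_{\p})\to-\tfrac{2+\sqrt3}{20\sqrt3}\int_{0}^{1}\Scal(b_{s})\,ds$ in probability.

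\emph{Step 4 (uniform integrability and conclusion).} Write $\int_{\Hp(M)}f\,d\nuG{\p}=\E\big[f(b_{\p})\,\rho_{\p}(b_{\p})\big]$. By the above, $f(b_{\p})\,\rho_{\p}(b_{\p})\to f(b)\,e^{-\frac{2+\sqrt3}{20\sqrt3}\int_{0}^{1}\Scal(b_{s})\,ds}$ in probability, $f$ being bounded, so it suffices that $\{\rho_{\p}(b_{\p})\}_{\p}$ be uniformly integrable. On the complement of the negligible event of Step 2 the expansion gives $\log\rho_{\p}\le C_{1}+C_{2}\sum_{i}d(x_{i-1},x_{i})^{2}$, where non-positive curvature keeps $j\ge1$ and the two-sided curvature bound controls $\det G_{\p}/\det\mLp$ and the heat-kernel ratio; the Gaussian upper bound $p_{t}(x,y)\le Ct^{-d/2}e^{-d(x,y)^{2}/(Ct)}$, valid under bounded geometry, gives $\int_{M}p_{t}(x,y)\,e^{\lambda d(x,y)^{2}}\,dy\le e^{C\lambda t}$ for $t$ small, and integrating out the vertices $x_{n},x_{n-1},\dots$ successively yields $\sup_{\p}\E\big[e^{\lambda\sum_{i}d(x_{i-1},x_{i})^{2}}\big]\le e^{C\lambda}<\infty$, hence $\sup_{\p}\E[\rho_{\p}(b_{\p})^{1+\delta}]<\infty$ for some $\delta>0$. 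Uniform integrability follows and the expectations converge, which is Eq.~\eqref{eqn.mainthm}. The crux of the argument is Step 3 — the Taylor expansion of the Jacobi-field Gram matrix $G_{\p}$ to the required order and the extraction of $\tfrac{2+\sqrt3}{20\sqrt3}$ from the Green's function of the $(1,4,1)$ mass-matrix stencil; the heat-kernel asymptotics, the convergence $\mu_{\p}\to\nu$, and the uniform integrability bound are comparatively routine once the bookkeeping is in place.
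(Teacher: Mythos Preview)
Your route is genuinely different from the paper's. The paper never touches the heat kernel: it writes $d\nuG{\p}=\rho_{\p}\,d\nuS{\p}$ where $\nuS{\p}$ is the Andersson--Driver approximation from \cite{AndDrive:1999}, so that $\rho_{\p}$ is \emph{purely} the ratio of Gram determinants (in a basis of Jacobi fields normalised by $f'_{i,a}(s_{i-1}+)=e_a$, not by vertex values). Since Andersson--Driver already gives $\nuS{\p}\to\nu$ with \emph{no} scalar-curvature correction, the entire constant $\tau_G=\tfrac{2+\sqrt3}{20\sqrt3}$ must come from that single Gram-determinant ratio, and the paper extracts it by a spectral analysis of $\mLp$ (eigenvectors of the $(1,4,1)$ matrix, Corollary~\ref{cor:BetaInt}, Lemma~\ref{lem:LpUptrace}) ending in the residue integral $\int_0^\pi(2+\cos x)^{-1}dx=\pi/\sqrt3$.

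Your proposal, by contrast, compares $\nuG{\p}$ directly to the heat-kernel product measure, so three pieces contribute: (a) $\tfrac12\log(\det G_{\p}/\det\mLp)$ in the vertex basis, (b) the van Vleck factor, and (c) the $u_1$ heat coefficient. This can be made to work, but your \emph{accounting} is wrong, and the piece you wave away is the whole difficulty. Your vertex-value basis differs from the paper's basis by the block-diagonal matrix $\operatorname{diag}(S_i^{\p}(\Delta))$; consequently $\sqrt{\det G_{\p}^{\text{vertex}}/\det\mLp^{\text{vertex}}}$ equals the paper's ratio times $\prod_i\det\bigl(S_i^{\p}(\Delta)/\Delta\bigr)^{-1}$, and since $\det(S_i^{\p}(\Delta)/\Delta)=1-\tfrac16\langle\Ric\,\Delta_ib,\Delta_ib\rangle+\cdots$ this extra factor converges to $e^{+\frac16\int\Scal}$. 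So piece (a) actually contributes $\tfrac16-\tau_G=\tfrac{7}{60}-\tfrac{1}{10\sqrt3}$, \emph{not} the $-\tfrac{1}{10\sqrt3}$ you assert. Pieces (b) and (c) each contribute $-\tfrac{1}{12}$ (from $\log j^{1/2}=-\tfrac{1}{12}\Ric(v,v)+\cdots$ and $u_1(x,x)=\tfrac{1}{12}\Scal$ for $e^{t\Delta/2}$), summing to $-\tfrac16$, not $-\tfrac{1}{20}$. The total is correct, but your claimed split ``irrational from the Gram matrix, rational from the heat kernel'' is false, and more importantly you have not \emph{computed} (a): the sentence ``summing the resulting geometric weights \dots\ produces $\tfrac{1}{10\sqrt3}$'' is exactly the step that occupies Sections~\ref{section:PropertiesOfLp}--9 of the paper and cannot be replaced by invoking the decay rate $2-\sqrt3$. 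Your uniform integrability sketch likewise hides the real work: the bound $\det(I+\mLp^{-1}E_{\p})\le e^{C\sum|v_i|^2}$ is not elementary and in the paper requires the Cholesky factorisation of $\mLp$, Fischer's inequality, and the It\^o estimate of Lemma~\ref{lem:limsupGeneral}.
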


Interpreting Eq (\ref{eqn.mainthm}) as the path integral quantization of the hamiltonian $H$ with
\begin{align*}
e^{-t H} :=  \int_{W(M)} f(\sigma) e^{-\frac{2+\sqrt{3}}{20\sqrt{3}} \int_0^1 \Scal(\sigma(s)) ds} d\nu(\sigma),
\end{align*}
an application of the Feynman-Kac formula gives $H = - \frac{1}{2} \Delta +V + \frac{2+\sqrt{3}}{20\sqrt{3}} \Scal $, where we have again included the potential $V$. 

\subsect{Remarks}
	This paper views $\mc{D}\sigma$ as a volume form on $W(M)$ and approximates Wiener measure on the piecewise geodesic path space $\Hp(M)$; an approach which started with \cite{AndDrive:1999} and has further been explored by \cite{ MR2074770, MR2353701, MR2482215, MR2133965, MR2314128, MR2423533, MR2020214, MR1845218, MR1991495, MR1843773, MR2195994, MR2030207, MR2290140, ALim:2007,MR2285584, MR2386721, MR2006190, MR1991497, MR2276523, MR2401870}. A wealth of literature pertaining to Eq. (\ref{eqn.heurpath}), which considers a product measure on $M^n$, is also available, see \cite{SKCheng1, MR799932, MR1714351, MR780661, Um1} for a short list. The ``derivation'' of Eq. \ref{eqn.heurpath} follows from an application of the Trotter product formula, which the interested reader is directed to \cite{Cachia2001, Cachia2002,  Exner2011, Ichinose2002, Ichinose2004, MR1627505, Neidhardt1999b, Neidhardt1999c, Neidhardt1999a} for reference. 
	
	In \cite{AndDrive:1999}, Andersson and Driver consider two natural geometric schemes for approximating Wiener measure which result from $L^2$ and $H^1$ Riemann sum type metrics on $\Hp(M)$, 
\begin{align*}
	&S_0(X,Y) = \sum_{i=1}^n g\left( X(s_{i}), Y(s_{i}) \right) \Delta_i s, \\  
	&S_1(X,Y) = \sum_{i=1}^n g\left( \frac{\nabla}{ds} X(s_{i-1}+), \frac{\nabla}{ds}Y(s_{i-1}+) \right).
\end{align*}
 Another natural geometric scheme approximating Wiener measure is developed in  \cite{ALim:2007}, which results from the  $H^1$ metric,
	\[ G_1(X,Y) = \int_0^1 g\left( \frac{\nabla}{ds} X(s), \frac{\nabla}{ds} Y(s) \right) ds. \]

	It has been asserted that the correct form of the quantization of the Hamiltonian $\frac{1}{2}g^{ij} p_i p_j + V$ is given by $- \hbar^2(\frac{1}{2}\nabla - \tau \Scal)+V$ where $\hbar$ is Planck's constant and $\tau\in \re$ is a constant which depends on the interpretation of the path integral. For example, in \cite{AndDrive:1999}, $\tau = 0$ or $\tau=\frac{1}{6}$. Our work gives the value $\tau = (2 + \sqrt{3})/(20\sqrt{3})$. However, in \cite{ALim:2007}, Lim derives a form that is dissimilar and does not lend itself to the Feynman-Kac formula for interpretation of the quantized Hamiltonian.
	
\section{Background and Notation}
\subsect{More on Wiener Spaces}
We have already introduced the Wiener space $W(M)$ in Eq. (\ref{eqn.WM}) as well as the space of piecewise geodesics $\Hp(M)$ in Eq. (\ref{eqn.HpM}). It is well known that the Wiener measure on $W(\re^d)$ is the law of a $\re^d$-valued Brownian motion, and conversely, the evaluation maps $b_s(\w) = \w(s)$ on $W(\re^d)$ are an $\re^d$-valued Brownian motion under the Wiener measure. The analogous statements can be said for the Wiener measure on $W(M)$ and an $M$-valued Brownian motion, although we do not explore this further. The interested reader is referred to \cite{MR2090752, MR675100, MR1882015} for the definition and treatment of a manifold-valued Brownian motion.

In what follows we use the symbols $\mu$ and $\nu$ to denote the Wiener measures on $W(\bbR^d)$ and $W(M)$ respectively. Although we will consider several probability spaces, the symbol $\E$ will be used solely for expectation on the probability space $(W(\re^d), \mu)$. Further, we reserve $(b_s : s \in [0,1])$ as the $\re^d$-valued Brownian motion defined as the evaluation maps on $W(\re^d)$.

The piecewise approximation of Brownian motion with respect to the partition $\p$ are the maps $\bp_s : W(\re^d) \to \Hp(\re^d)$  with $s \in [0,1]$ given by,
\begin{align}
b^{\p}_s &:= \sum_{i=1}^n 1_{J_i}(s)\left[ \frac{\Delta_i b}{\Delta_i s}(s - s_{i-1}) + b_{s_{i-1}} \right]. 
\label{eqn:bp}
\end{align}
Here and forevermore $\Delta_i b = b_{s_i} - b_{s_{i-1}}$, $\Delta_i s = s_i - s_{i-1}$, and $J_i = (s_{i-1}, s_{i}]$ when $i > 1$ and $J_1 = [0, s_1)$. It is important to note that $b_s |_{\Hp(\re^d)} = \bp_{s} |_{\Hp(\re^d)}$.

This is a convenient place to introduce the Cameron-Martin subspace $H(M)$ of the Wiener space, which is the collection of absolutely continuous paths with finite energy,
\begin{align}
H(M) = \{ \sigma \in W(M) : \sigma \text{ is absolutely continuous}, \int_0^1 \| \sigma'(s) \|^2 ds < \infty \}. \label{eqn.HM}
\end{align}
The Cameron-Martin space is a Hilbert space and $(i, H(\re^d), W(\re^d))$ is the prototype for an abstract Wiener space, where $i : H(\re^d) \to W(\re^d)$ is the canonical injection. The full beauty of abstract Wiener spaces will not come to light in this paper, but for a short list of references, see \cite{MR0212152, MR1267569}. Volumes of work have also come to move these ideas onto the manifold setting, for example \cite{MR2090752, MR675100, MR1882015, MR1070361}.

\subsect{Geometric Basics}
\label{section:GeometricBasics}
For a path $\sigma \in H(M)$ and $s \in [0,1]$, we use the symbol $//_s(\sigma) : T_o M \to T_{\sigma(s)}M$ to represent parallel translation along $\sigma$ with respect to $\nabla$. Further, for a vector field $X$ along $\sigma$, define $\frac{\nabla}{ds}$ by,
\begin{align*}
\frac{\nabla}{ds} X(s) &:= //_s(\sigma) \frac{d}{ds} \left\{ //^{-1}_s(\sigma)X(s) \right\}.%\label{eqn:covariantderiv}
\end{align*}
The curvature tensor $R$ on $M$ is defined by
\begin{align*}
R(X,Y)Z := \nabla_X \nabla_Y Z - \nabla_Y \nabla_X Z - \nabla_{[X,Y]} Z
\end{align*}
for vector fields $X, Y$ and $Z$ on $M$. The Ricci tensor is then defined as
\begin{align*}
\Ric(X) := \sum_{i=1}^d R(X,e_i)e_i
\end{align*}
for the vector field $X$ on $M$ and orthonormal frame $\{ e_i \}_{i=1}^d$. The scalar curvature on $M$ is given by
\begin{align*}
\Scal := \sum_{i=1}^d g( \Ric(e_i),e_i ).
\end{align*}
Notice that for a given $p \in M$, $\Ric|_{T_p(M)}$ is a linear map $T_p(M) \to T_p(M)$. Therefore, $\Scal(p) = \tr(\Ric|_{T_p(M)})$.

We fix an isometry $u_0 : \re^d \to T_o(M)$ and from henceforth identify $T_o(M)$ with $\re^d$. Some of the work of this paper will be translating statements between the spaces $W(M), H(M)$, and $H_{\p}(M)$ and the spaces $W(\re^d), H(\re^d)$, and $H_{\p}(\re^d)$. In doing so many proofs become tractable; however, this does lead us to introduce more notation.

\begin{nota}
\label{nota:geometric}
If $\pi: TM \to M$ is the projection, $f : \re^d (= T_o (M)) \to T_p M$ is an isometry, we define
\begin{enumerate} 
\item $\W_f : \re^d \times \re^d \times \re^d \to \re^d$ by $\W_f (a,b) c = f^{-1} R(f a, f b) f c$.
\item $\Ric_f :\re^d \to \re^d$ is the linear map defined by $\Ric_f(v) = \sum_{i=1}^d \W_f (v, \varepsilon_i) \varepsilon_i$ where $\{ \varepsilon_i \}_{i=1}^d$ is an orthonormal basis for $\re^d$.
\item Given a curves $h:[0,1] \to \re^d$ and $\sigma:[0,1] \to M$, we define the vector field $X_{\sigma}^h$ along $\sigma$ by $X_{\sigma}^h(s) = //_s(\sigma)h(s)$.
\end{enumerate}
\end{nota}

As previously mentioned, the tangent space $T_{\sigma} H_{\p}(M)$ is identified with the continuous piecewise Jacobi fields along $\sigma$. The following proposition is a statement of this fact using the notation introduced in this section.
\begin{prop} 
\label{prop.HpJacobi}
Let $\sigma \in \Hp(M)$ and $X \in T_{\sigma}H(M)$. Then, $X \in T_{\sigma}\Hp(M)$ if and only if $X$ satisfies,
\begin{align}
\frac{\nabla^2}{ds^2}  X(s) = R(\sigma'(s), X(s))\sigma'(s), \label{eqn:JacobisEqn}
\end{align}
on $[0,1]\backslash \p$. 
\end{prop}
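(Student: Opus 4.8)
The plan is to prove the two implications separately: the forward direction ($X\in T_{\sigma}\Hp(M)\Rightarrow$ Eq.~(\ref{eqn:JacobisEqn})) by the classical variation-of-geodesics computation, and the reverse direction by a dimension count. Throughout I use that $\Hp(M)$ is, near $\sigma$, a smooth manifold of dimension $nd$ — realized by the chart $\tau\mapsto(\tau(s_1),\dots,\tau(s_n))\in M^n$ — and that $\Hp(M)\subseteq H(M)$, so that $T_{\sigma}\Hp(M)$ sits inside $T_{\sigma}H(M)$ as a space of (absolutely continuous, finite-energy) vector fields along $\sigma$.

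For the forward direction I would write $X=\frac{d}{dt}\big|_{t=0}\sigma_t$ for a smooth curve $t\mapsto\sigma_t$ in $\Hp(M)$ with $\sigma_0=\sigma$, and set $\Sigma(t,s)=\sigma_t(s)$. For each fixed $t$ and each $i$, $\sigma_t$ restricted to $(s_{i-1},s_i)$ is a geodesic, so $\frac{\nabla}{\partial s}\partial_s\Sigma\equiv 0$ on $[0,1]\setminus\p$. Applying $\frac{\nabla}{\partial t}$, commuting it past $\frac{\nabla}{\partial s}$ at the cost of a curvature term, and using the torsion-free identity $\frac{\nabla}{\partial t}\partial_s\Sigma=\frac{\nabla}{\partial s}\partial_t\Sigma$, one gets, on each $(s_{i-1},s_i)$,
\[
0=\frac{\nabla}{\partial t}\frac{\nabla}{\partial s}\partial_s\Sigma=\frac{\nabla}{\partial s}\frac{\nabla}{\partial s}\partial_t\Sigma+R(\partial_t\Sigma,\partial_s\Sigma)\partial_s\Sigma .
\]
Evaluating at $t=0$ (so $\partial_t\Sigma\to X$, $\partial_s\Sigma\to\sigma'$) and using antisymmetry of $R$ in its first two slots yields $\frac{\nabla^2}{ds^2}X=-R(X,\sigma')\sigma'=R(\sigma',X)\sigma'$, which is Eq.~(\ref{eqn:JacobisEqn}). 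Continuity of $X$ on all of $[0,1]$ is automatic since $X\in T_{\sigma}H(M)$, so $X$ is genuinely a continuous piecewise Jacobi field.

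For the reverse direction, let $\mc{J}_{\sigma}$ denote the space of $X\in T_{\sigma}H(M)$ satisfying Eq.~(\ref{eqn:JacobisEqn}) on $[0,1]\setminus\p$; by the forward direction $T_{\sigma}\Hp(M)\subseteq\mc{J}_{\sigma}$, so it suffices to show $\dim\mc{J}_{\sigma}=nd=\dim T_{\sigma}\Hp(M)$, since a subspace of a finite-dimensional vector space of the same dimension is the whole space. To compute $\dim\mc{J}_{\sigma}$, I would pass to the parallel frame: writing $h(s)=//_s^{-1}(\sigma)X(s)$, Eq.~(\ref{eqn:JacobisEqn}) becomes on each subinterval a linear second-order ODE $\ddot h(s)=\Omega(s)h(s)$ (with $\Omega$ smooth there, since $\sigma$ is a geodesic on each piece), whose solutions are freely and uniquely parametrized by the pair $(X(s_{i-1}),\frac{\nabla}{ds}X(s_{i-1}+))$ and are automatically finite-energy. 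Imposing $X(0)=0$ (forced by $X\in T_{\sigma}H(M)$) and continuity of $X$ at the interior nodes $s_1,\dots,s_{n-1}$, one sees $X$ is determined by, and free to take, the $n$ vectors $\frac{\nabla}{ds}X(s_{i-1}+)\in\re^d$, $i=1,\dots,n$; hence $\dim\mc{J}_{\sigma}=nd$, and equality follows.

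The main obstacle is bookkeeping rather than depth: one must nail down the smooth structure on $\Hp(M)$ so that both ``$\dim T_{\sigma}\Hp(M)=nd$'' and the representation $X=\frac{d}{dt}\big|_{0}\sigma_t$ are legitimate, and track the regularity of $\Sigma$ across the fixed partition points. (Alternatively one could argue the reverse direction constructively, building the variation from the geodesics $s_i\mapsto\ex_{\sigma(s_i)}(tX(s_i))$ together with piecewise-geodesic interpolation; but then verifying $\frac{d}{dt}\big|_{0}\sigma_t=X$ requires Jacobi fields to be pinned down by their endpoint values, i.e. the absence of conjugate points along each $\sigma|_{[s_{i-1},s_i]}$ — true under the non-positive curvature hypothesis, but which the dimension count sidesteps.)
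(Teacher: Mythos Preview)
Your argument is correct and fills in exactly what the paper's one-sentence proof (``This is a direct consequence of the fact that $\Hp(M)$ consists of piecewise geodesics'') leaves implicit. One minor inconsistency worth fixing: you establish $\dim\Hp(M)=nd$ via the endpoint chart $\tau\mapsto(\tau(s_1),\dots,\tau(s_n))$, but that chart requires precisely the no-conjugate-points hypothesis you later flag as a pitfall of the alternative constructive approach; the paper instead gets the smooth structure and dimension from the development map $\phi:\Hp(\re^d)\to\Hp(M)$, which needs no curvature assumption and dovetails with your parametrization of $\mc{J}_\sigma$ by the initial derivatives $\frac{\nabla}{ds}X(s_{i-1}+)$.
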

\begin{proof} This is a direct consequence of the fact that $\Hp(M)$ consists of piecewise geodesics.
\end{proof}

\subsect{Cartan's Development Map}
\label{section:CartanDevelopment}
%\begin{defi}
%\label{defi:cartan}
Cartan's development map is a diffeomorphism $\phi:H(\re^d) \to H(M)$, where $\sigma = \phi(\w)$ with,
\[ \sigma'(s) = //_s(\sigma) \w'(s), \qquad \sigma(0)=o. \label{eqn:Development} \] 
%\end{defi}
By smooth dependence on parameters, $\phi$ is smooth, and uniqueness of solutions implies that $\phi$ is injective. 
%\begin{defi}
%\label{defi:anticartan}
The anti-development map, $\phi^{-1} : H(M) \to H(\re^d)$ is defined by $\w = \phi^{-1}(\sigma)$ where
\[ \w(s) = \int_0^s //_r^{-1}(\sigma)\sigma'(r)dr. \]
%\end{defi}
By similar arguments as above, $\phi^{-1}$ is smooth and injective and shows that $\phi:H(\re^d) \to H(M)$ is a diffeomorphism, as asserted.  We also make note of the following important facts,
\begin{enumerate}
\item $\phi$ is a bijection between $H_{\p}(\re^d)$ and $H_{\p}(M)$, $\phi(H_{\p}(\re^d)) = H_{\p}(M)$.
\item This further implies that $T_{\sigma}H_{\p}(M)$ is an embedded submanifold of $T_{\sigma}H(M)$, since $T_{\sigma}H_{\p}(\re^d)$ is an embedded submanifold of $T_{\sigma}H(\re^d)$.
\end{enumerate}
A more detailed account of the development map can be found in \cite{MR2090752, MR1882015}. Throughout the remainder, the symbol $\phi$ will be reserved solely for the development  map.

\section{A Previous Result and Consequences}
In this section we introduce the measure $\nuS{\p}$ in Eq. (\ref{eqn.nuS}) and $\muS{\p}$, where $\muS{\p}$ is simply realization of $\nuS{\p}$ in the flat case $M = \re^d$. To prove Theorem \ref{thm.mainthm}, it is our approach is to compare the measure $\nuG{\p}$ with $\nuS{\p}$. To this end we will define the derivative $\rho_{\p}$  in Eq. (\ref{eqn.firstrhop}) and its companion map $\rhot_{\p}$ in Eq. (\ref{eqn.firstrhotp}), which reduces many of our calculations involving $\rho_{\p}$ to more tractable ones in flat space. However, before arriving at that point we first examine some properties of $\nuS{\p}$ and give size estimates that will be needed later. 

\subsect{The Measure $\nuS{\p}$} In \cite{AndDrive:1999}, Andersson and Driver give the approximation to the Wiener measure $\nuS{\p}$ defined  analogously to Eq. (\ref{eqn.nuG}),
\begin{align}
d\nuS{\p} = \frac{1}{Z_{S_{\p}}} e^{ -\frac{1}{2} \int_0^1\| \sigma'(s) \|^2 ds } \dVol_{S_{\p}}. \label{eqn.nuS}
\end{align}
Here the metric $S_{\p}$ on $\Hp(M)$ given by,
\begin{align}
S_{\p}(X,Y) =  \sum_{i=1}^n g\left( \frac{\nabla}{ds}X(s_{i-1}+), \frac{\nabla}{ds}Y(s_{i-1}+) \right) \Delta_i s, \label{eqn.Sp}
\end{align}
and normalization constant,
\begin{align}
Z_{S_{\p}} = (2 \pi)^{nd/2}. \label{eqn.ZSp}
\end{align}
With this approximation to the Wiener measure, they prove the following theorem.

\begin{thm}[{\cite[Theorem 1.8]{AndDrive:1999}}]
\label{thm.AndDrive}
 If $f:W(M) \to \re$ is bounded and continuous then,
\begin{align}
\lim_{|\p| \to 0} \int_{\Hp(M)} f d\nu_{S_{\p}} = \int_{W(M)} f d\nu.
\end{align}
\end{thm}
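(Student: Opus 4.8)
The plan is to transport the problem to flat space via Cartan's development map $\phi:\Hp(\re^d)\to\Hp(M)$ and to recognize the pushforward $(\phi^{-1})_*\nuS{\p}$ as a Jacobian-weighted version of the law of the piecewise-linear approximation $\bp$ of Brownian motion. Since parallel translation is an isometry, the energy $\int_0^1\|\sigma'(s)\|^2\,ds$ is invariant under $\phi$ and, in the increment coordinates $(\Delta_1\w,\dots,\Delta_n\w)$ on $\Hp(\re^d)$, equals $\sum_{i=1}^n|\Delta_i\w|^2/\Delta_i s$; hence the two measures differ only through their Riemannian volume forms. A short computation shows that the metric defined on $\Hp(\re^d)$ by the same formula \eqref{eqn.Sp} has volume form equal to $\prod_i(\Delta_i s)^{-d/2}$ times Lebesgue measure in these coordinates, so that the choice $Z_{S_\p}=(2\pi)^{nd/2}$ makes the flat instance of $\nuS{\p}$ coincide exactly with the law of $\bp$ under $\mu$. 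One therefore obtains
\begin{equation}
\int_{\Hp(M)} f\,d\nuS{\p}\;=\;\E\!\left[(f\circ\phi)(\bp)\,\mathcal J_\p(\bp)\right],\label{eqn.proposal.main}
\end{equation}
where $\mathcal J_\p(\w)=\big(\det(\phi^*S_\p)_\w/\det(S_\p^{\,\mathrm{flat}})_\w\big)^{1/2}$ is the ratio of the two volume densities in the increment coordinates, $S_\p^{\,\mathrm{flat}}$ denoting the flat metric just described, and $f\circ\phi$ is read through the a.s.-defined stochastic extension of $\phi$ to $W(\re^d)$, for which $\phi(b)$ is Brownian motion on $M$ with $\Law(\phi(b))=\nu$.

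The heart of the argument is the analysis of the random factor $\mathcal J_\p$. By Proposition~\ref{prop.HpJacobi}, a tangent vector to $\Hp(M)$ at $\sigma=\phi(\w)$ of the form $X_\sigma^h$ corresponds to a curve $h:[0,1]\to\re^d$ that, on each subinterval $[s_{i-1},s_i]$, solves a linear second-order ODE with coefficients built from the curvature tensor of $M$ along $\sigma$ and has prescribed endpoint values, while $S_\p(X_\sigma^h,X_\sigma^h)=\sum_i|h'(s_{i-1}+)|^2\,\Delta_i s$. Thus $\mathcal J_\p$ is the determinant of a change of coordinates which, block by block, is a perturbation of the affine relation $h'(s_{i-1}+)=(h(s_i)-h(s_{i-1}))/\Delta_i s$. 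Using the global bounds on the curvature and its derivative, one shows that $\log\mathcal J_\p(\w)\to 0$ as $|\p|\to 0$ --- here the low-order-in-$\Delta_i s$ contributions must be seen to cancel rather than merely to be summable, a cancellation specific to the weighting in $S_\p$ and precisely what fails for the $L^2$ metric $G_\p$, yielding the nontrivial limit of Theorem~\ref{thm.mainthm} --- and that $\mathcal J_\p$ obeys a bound $\mathcal J_\p\le e^{C}$ uniform in $\p$ and $\w$.

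Finally I would assemble three ingredients. First, the Wong--Zakai / Stratonovich approximation theorem on manifolds (see \cite{MR2090752,MR1882015} and references therein) gives that the piecewise-geodesic interpolation $\phi(\bp)$ converges to $\phi(b)$ uniformly on $[0,1]$ in probability, so that $(f\circ\phi)(\bp)\to(f\circ\phi)(b)$ in probability and boundedly, $f$ being bounded and continuous on $W(M)$. Second, $\mathcal J_\p(\bp)\to 1$ in probability, by the previous paragraph. Third, the uniform bound $\mathcal J_\p\le e^{C}$ makes $\{\mathcal J_\p(\bp)\}$ uniformly integrable, whence $\E|\mathcal J_\p(\bp)-1|\to 0$. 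The inequality
\[
\big|(f\circ\phi)(\bp)\,\mathcal J_\p(\bp)-(f\circ\phi)(b)\big|\le\|f\|_\infty\,\big|\mathcal J_\p(\bp)-1\big|+\big|(f\circ\phi)(\bp)-(f\circ\phi)(b)\big|
\]
then gives $\E[(f\circ\phi)(\bp)\,\mathcal J_\p(\bp)]\to\E[(f\circ\phi)(b)]=\int_{W(M)}f\,d\nu$, which by \eqref{eqn.proposal.main} is the claim. The main obstacle is the second paragraph: extracting from the Jacobi-field ODE on shrinking intervals both the limit $\mathcal J_\p\to 1$ and an integrable bound uniform in $\p$, i.e.\ controlling a product of $n\sim 1/|\p|$ near-identity factors along with the first-order cancellation among them --- this is where the curvature hypotheses genuinely enter and where most of the technical work of \cite{AndDrive:1999} is concentrated.
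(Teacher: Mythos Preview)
The paper does not prove this theorem; it is quoted from \cite[Theorem~1.8]{AndDrive:1999}, so there is no proof here to compare against. Your outline is, in fact, the skeleton of the Andersson--Driver argument. However, the step you flag as ``the heart of the argument'' is based on a misconception that makes you anticipate far more work than is needed.

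For the metric $S_\p$ of Eq.~(\ref{eqn.Sp}), the Jacobian factor you call $\mathcal J_\p$ is identically equal to $1$ --- not merely tending to $1$ through delicate first-order cancellations. This is precisely the content of Theorem~\ref{thm:lawofbp} in the present paper (which is \cite[Theorem~4.10 and Corollary~4.13]{AndDrive:1999}): the identity $\muS{\p}=\phi^*\nuS{\p}$ unwinds, via $E(\phi(\w))=E(\w)$ and the definitions in Eqs.~(\ref{eqn.nuS})--(\ref{eqn.ZSp}), to $\phi^*\dVol_{S_\p}=\dVol_{S_\p^{\,\mathrm{flat}}}$, i.e.\ $\mathcal J_\p\equiv 1$. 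The mechanism is structural rather than asymptotic: in the ``initial-velocity'' coordinates $X\mapsto\big(//_{s_{i-1}}^{-1}(\sigma)\,\tfrac{\nabla}{ds}X(s_{i-1}+)\big)_{i=1}^n$ on $T_\sigma\Hp(M)$, the metric $S_\p$ is the standard $\sum_i|v_i|^2\Delta_i s$, and the differential $d\phi_\w$ written in these coordinates on both sides is block lower-triangular with identity diagonal blocks (varying $\xi_j$ does not affect $\sigma|_{[0,s_{j-1}]}$, hence not the frame at $s_{j-1}$, so the $j$th initial velocity moves by exactly $\eta_j$ to first order). Its determinant is therefore $1$ for every $\w$, with no curvature-dependent corrections to cancel. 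Your sentence ``here the low-order-in-$\Delta_i s$ contributions must be seen to cancel rather than merely to be summable'' describes a phenomenon that simply does not occur for $S_\p$; it is exactly the nontriviality of the corresponding Jacobian for the $L^2$ metric $G_\p$ that drives the entire paper.

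Once $\mathcal J_\p\equiv 1$, your formula \eqref{eqn.proposal.main} collapses to $\int_{\Hp(M)} f\,d\nuS{\p}=\E\big[f(\phi(\bp))\big]$, and the only remaining analytic content is the Wong--Zakai/Eells--Elworthy convergence $\phi(\bp)\to\phi(b)$ in law on $W(M)$, which with $f$ bounded and continuous gives the result. That is indeed where the technical work in \cite{AndDrive:1999} lies --- but it is the stochastic-development limit, not the Jacobian, that carries it.
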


 As previously mentioned, we will distinguish the measure $\muS{\p}$ on $\Hp(\re^d)$ as the flat case realization of $\nuS{\p}$. From here we state as a Theorem another fact proved in  \cite[Theorem 4.10 and Corollary 4.13]{AndDrive:1999}. 
\begin{thm}
\label{thm:lawofbp} 
For $\bp$ defined in Eq. (\ref{eqn:bp}), $\muS{\p} = \Law_{\mu}(\bp)$ and $\nuS{\p} = \Law_{\mu}(\phi(\bp))$. In particular, $\muS{\p}$ is the pullback of $\nuS{\p}$ by $\phi$, $\muS{\p} = \phi^{*}\nuS{\p}$. That is, for any Borel set $A \subset \Hp(\re^d)$, $\muS{\p}(A) = \nuS{\p}(\phi(A))$.
\end{thm}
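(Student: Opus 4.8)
The plan is to make the flat measure $\muS{\p}$ completely explicit and then transport it along Cartan's development map $\phi$. First I would identify $\Hp(\re^d)$ with $(\re^d)^n$ through the increments $\sigma \mapsto (\Delta_1\sigma,\dots,\Delta_n\sigma)$, $\Delta_i\sigma = \sigma(s_i)-\sigma(s_{i-1})$. On $J_i$ one has $\sigma'(s) = \Delta_i\sigma/\Delta_i s$, so $\int_0^1\|\sigma'(s)\|^2\,ds = \sum_{i=1}^n|\Delta_i\sigma|^2/\Delta_i s$; and a tangent vector at $\sigma$, being a piecewise-linear vector field $X$ vanishing at $0$ (a piecewise Jacobi field in the flat case, by Proposition \ref{prop.HpJacobi}), has $\frac{\nabla}{ds}X(s_{i-1}+) = (X(s_i)-X(s_{i-1}))/\Delta_i s$, so $S_{\p}$ is \emph{diagonal} in the increment coordinates with entries $1/\Delta_i s$ (each of multiplicity $d$). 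Hence $\dVol_{S_{\p}} = \prod_{i=1}^n(\Delta_i s)^{-d/2}\,d(\Delta_1\sigma)\cdots d(\Delta_n\sigma)$, and with $Z_{S_{\p}}=(2\pi)^{nd/2}$ the density of $\muS{\p}$ factors as $\prod_{i=1}^n(2\pi\Delta_i s)^{-d/2}e^{-|\Delta_i\sigma|^2/(2\Delta_i s)}$; that is, in increment coordinates $\muS{\p}$ is the product of the Gaussians $\mc{N}(0,\Delta_i s\,I)$. Since $\bp$ is precisely the piecewise-linear interpolant whose increments are $\Delta_i b = b_{s_i}-b_{s_{i-1}}$, and these are independent $\mc{N}(0,\Delta_i s\,I)$ under $\mu$, this yields $\muS{\p}=\Law_\mu(\bp)$.

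For the remaining assertions it suffices to prove $\phi^*\nuS{\p}=\muS{\p}$: then $\nuS{\p}=\phi_*\muS{\p}=\Law_\mu(\phi\circ\bp)=\Law_\mu(\phi(\bp))$, and since $\phi:\Hp(\re^d)\to\Hp(M)$ is a diffeomorphism the Borel identity $\muS{\p}(A)=\nuS{\p}(\phi(A))$ is automatic. Two of the three factors defining $\nuS{\p}$ transport trivially: $\phi$ preserves the energy, because $\sigma=\phi(\w)$ obeys $\sigma'(s)=//_s(\sigma)\w'(s)$ and parallel translation is an isometry, so $\int_0^1\|\sigma'(s)\|^2\,ds=\int_0^1\|\w'(s)\|^2\,ds$; and $Z_{S_{\p}}=(2\pi)^{nd/2}$ is the same on $M$ and on $\re^d$. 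So everything comes down to showing that $\phi$ is volume preserving for the two $S_{\p}$-metrics, i.e.\ $\phi^*\dVol_{S_{\p}}=\dVol_{S_{\p}}$.

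To establish that I would fix $\w\in\Hp(\re^d)$, put $\sigma=\phi(\w)$, and for $h\in T_\w\Hp(\re^d)$ set $X=d\phi_\w(h)=\frac{d}{d\epsilon}\big|_0\phi(\w+\epsilon h)$, a piecewise Jacobi field along $\sigma$. Differentiating the development identity $\frac{\partial}{\partial s}\phi(\w+\epsilon h)(s)=//_s(\phi(\w+\epsilon h))\,(\w+\epsilon h)'(s)$ in $\epsilon$ at $\epsilon=0$ and using the symmetry $\frac{\nabla}{\partial s}\frac{\partial}{\partial\epsilon}=\frac{\nabla}{\partial\epsilon}\frac{\partial}{\partial s}$ gives, on each open interval $J_i$, an identity of the form $\frac{\nabla}{ds}X(s)=//_s(\sigma)\,h'(s)+\big(\tfrac{\nabla}{\partial\epsilon}\big|_0//_s(\phi(\w+\epsilon h))\big)\w'(s)$; evaluating at $s=s_{i-1}+$ and writing $\xi_i:=//_{s_{i-1}}^{-1}(\sigma)\tfrac{\nabla}{ds}X(s_{i-1}+)\in\re^d$, this reads $\xi_i=\Delta_i h/\Delta_i s+F_i$, where $F_i$ depends only on the restriction of $h$ to $[0,s_{i-1}]$ — hence only on $\Delta_1 h,\dots,\Delta_{i-1}h$ — and $F_1=0$. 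Thus, relative to the increment coordinates on $T_\w\Hp(\re^d)$ and the coordinates $(\xi_1,\dots,\xi_n)$ on $T_\sigma\Hp(M)$, the map $d\phi_\w$ is block lower triangular with diagonal blocks $(\Delta_i s)^{-1}I$, so its Jacobian determinant is $\prod_i(\Delta_i s)^{-d}$; meanwhile, by the isometry property of parallel translation, $S_{\p}(X,X)=\sum_i|\xi_i|^2\Delta_i s$, so $S_{\p}$ is diagonal in the $\xi$-coordinates with Gram determinant $\prod_i(\Delta_i s)^d$. Combining, the Gram determinant of $\phi^*S_{\p}$ in the increment coordinates on $\Hp(\re^d)$ is $\big(\prod_i(\Delta_i s)^{-d}\big)^2\prod_i(\Delta_i s)^d=\prod_i(\Delta_i s)^{-d}$, which matches the flat computation above; therefore $\phi^*\dVol_{S_{\p}}=\dVol_{S_{\p}}$, and the theorem follows.

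I expect the genuine work to be in the last step — specifically, verifying that the $\epsilon$-derivative of the parallel-translation term, evaluated at $s_{i-1}$, depends on $h$ only through its values on $[0,s_{i-1}]$, and then translating that dependence (together with the fact that $X(s_{i-1})$ is itself determined by $\Delta_1 h,\dots,\Delta_{i-1}h$) into the lower-triangularity of $d\phi_\w$. Everything else is bookkeeping with Gaussian densities and Gram determinants.
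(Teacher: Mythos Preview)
Your argument is correct. The paper itself does not prove this theorem but quotes it from \cite[Theorem~4.10 and Corollary~4.13]{AndDrive:1999}; what you have written is essentially a reconstruction of that argument, and your identification of the key step---the block lower-triangular structure of $d\phi_\w$ in the coordinates $(\Delta_1 h,\dots,\Delta_n h)\mapsto(\xi_1,\dots,\xi_n)$, coming from the fact that the $\epsilon$-derivative of parallel translation at $s_{i-1}$ depends only on $h|_{[0,s_{i-1}]}$---is exactly right.
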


\begin{rmk*}
Using Eq \ref{eqn:bp}, $\Delta_i \bp := \bp_{s_i} - \bp_{s_{i-1}} = \Delta_i b$.  This fact will be used  in future calculations alongside Theorem \ref{thm:lawofbp} as follows. If $f : (\bbR^d)^n \to \bbR^d$ is integrable, then 
\begin{align*} 
\int_{\Hp(\bbR^d)} f( \Delta_1 b, ..., \Delta_n b) d\muS{\p} = \int_{\Hp(\bbR^d)} f( \Delta_1 \bp, ..., \Delta_n \bp) d\muS{\p} \\
= \int_{W(\bbR^d)} f( \Delta_1 b, ..., \Delta_n b) d\mu = \E[f(\Delta_1 b, ..., \Delta_n b)].
\end{align*}
\end{rmk*}

Before exploring some of the consequences of this theorem, we need to agree upon some notation. 
\begin{nota}
Define the map $\up_s$ on $W(\re^d)$ by,
\begin{align}
\up_s := //_s(\phi(\bp)). \label{eqn:up}
\end{align}
That is, for $\w \in W(\re^d)$ and $\sigma^{\p}:= \phi(\bp(\w))$,  $\up_s(\w) : T_o M \to T_{\sigma^{\p}(s) } M$ is the linear isometry given by,
\begin{align}
\up_s(\w) = //_s(\sigma^{\p}).
\end{align}
In turn this let us define the random variables $\fancyR_{\p}, \fancyS_{\p} :W(\re^d) \to \re$ by
\begin{align}
\fancyR_{\p} &= \sum_{i=1}^n \langle \Ric_{\up_{s_{i-1}}} \Delta_i b, \Delta_i b \rangle, \label{eqn:fancyRp}\\
\fancyS_{\p} &= \sum_{i=1}^n \Scal( \phi(\bp)|_{s_{i-1}})\Delta_i s, \label{eqn:fancySp}.
\end{align}
\end{nota} 
\noindent Here  $\phi(\bp)|_{s_{i-1}}(\w) := \phi( \bp(\w))(s_{i-1})$. A direct consequence of these definitions is that $\tr( \Ric_{\up_s} ) = \Scal(\phi(\bp)|_{s})$.

\begin{lem}
\label{lem:fancyRS1}
For $p \in \re$  there is a constant $C$ depending only on $\ddim$, $p$, and the bound on the curvature of $M$ such that 
\begin{align}
1 \leq \int_{\Hp(\re^d)} e^{p(\fancyR_{\p} - \fancyS_{\p})} d\muS{\p}  \leq e^{C | \p |},
\end{align}
immediately implying that 
\begin{align}
\left| \int_{\Hp(\re^d)} (e^{p(\fancyR_{\p} - \fancyS_{\p})} - 1) d\muS{\p} \right|  \leq e^{C | \p |} - 1.
\end{align}
\end{lem}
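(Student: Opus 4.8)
The plan is to establish the two-sided bound on $\int_{\Hp(\re^d)} e^{p(\fancyR_{\p} - \fancyS_{\p})} d\muS{\p}$ by computing the integral exactly in terms of Gaussian expectations and then extracting the leading behavior. The crucial structural observation is that $\fancyR_{\p}$ and $\fancyS_{\p}$ are both sums over the intervals $J_i$ of the partition, and — using Theorem \ref{thm:lawofbp} together with the Remark following it — under $\muS{\p}$ the increments $\Delta_i b$ are independent centered Gaussians with covariance $\Delta_i s \cdot I_d$. Moreover $\up_{s_{i-1}} = //_{s_{i-1}}(\phi(\bp))$ depends only on the increments $\Delta_1 b, \dots, \Delta_{i-1} b$, hence is $\mathcal{F}_{s_{i-1}}$-measurable, while $\Delta_i b$ is independent of that $\sigma$-algebra. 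So I would condition successively: write the integral as an iterated expectation, peeling off one interval at a time.

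First I would handle a single interval. Fixing the past (so $\up_{s_{i-1}}$ is a fixed isometry and $\Scal(\phi(\bp)|_{s_{i-1}})$ is a fixed number equal to $\tr(\Ric_{\up_{s_{i-1}}})$), I need to evaluate
\begin{align*}
\E\left[ \exp\left\{ p \langle \Ric_{\up_{s_{i-1}}} \Delta_i b, \Delta_i b\rangle - p\,\Scal(\phi(\bp)|_{s_{i-1}})\Delta_i s \right\} \,\Big|\, \mathcal{F}_{s_{i-1}} \right].
\end{align*}
Since $\Delta_i b \sim N(0, \Delta_i s\, I_d)$, this is a standard Gaussian integral of the form $\E[e^{\langle A\xi,\xi\rangle}]$ with $\xi \sim N(0, \Delta_i s\, I_d)$ and $A = p\,\Ric_{\up_{s_{i-1}}}$ symmetric; it equals $\det(I - 2\Delta_i s\, A)^{-1/2}$ times $e^{-p\,\tr(A/p)\Delta_i s} = e^{-\Delta_i s\,\tr A}$, provided $\|A\|\,\Delta_i s$ is small enough (guaranteed once $|\p|$ is small, using the curvature bound). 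Writing $\lambda_1,\dots,\lambda_d$ for the eigenvalues of $A$, the single-interval factor is $\prod_j (1 - 2\lambda_j \Delta_i s)^{-1/2} e^{-\lambda_j \Delta_i s}$. The elementary inequality $(1-2x)^{-1/2} e^{-x} \geq 1$ for all $x < 1/2$ (with equality only at $x=0$) gives the lower bound $\geq 1$ on each factor, hence $\geq 1$ on the whole product, hence $\geq 1$ after taking all the iterated expectations — this is the left inequality. For the upper bound, a Taylor expansion shows $(1-2x)^{-1/2} e^{-x} \leq e^{C' x^2}$ for $|x| \leq 1/4$, say, with an absolute constant $C'$; since $|\lambda_j| \leq p\,\|\Ric\|_\infty =: K$ is bounded by the curvature bound, each factor is $\leq e^{C' K^2 (\Delta_i s)^2}$, so the single-interval conditional expectation is $\leq e^{C' d K^2 (\Delta_i s)^2} \leq e^{C'' \Delta_i s \,|\p|}$ using $\Delta_i s \leq |\p|$.

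Then I would iterate: taking conditional expectations from the inside out, the total integral is bounded above by $\prod_{i=1}^n e^{C'' \Delta_i s\, |\p|} = e^{C'' |\p| \sum_i \Delta_i s} = e^{C'' |\p|}$, which is the right inequality with $C = C''$ depending only on $d$, $p$, and the curvature bound (through $K$). The final displayed inequality of the lemma follows immediately: since the integral lies in $[1, e^{C|\p|}]$, subtracting $1$ gives a quantity in $[0, e^{C|\p|}-1]$, so its absolute value is at most $e^{C|\p|}-1$. I expect the main (though still routine) obstacle to be bookkeeping the measurability/independence structure carefully enough to justify the successive conditioning — in particular verifying that $\up_{s_{i-1}}$ really is a function of the first $i-1$ increments only, which follows from the explicit ODE defining parallel transport along the piecewise-geodesic path $\phi(\bp)$ — together with pinning down the scalar elementary inequalities $(1-2x)^{-1/2}e^{-x} \ge 1$ and $(1-2x)^{-1/2}e^{-x} \le e^{C'x^2}$ on the relevant range and confirming that $|\p|$ small makes $2\|A\|\Delta_i s < 1$ so the Gaussian integral converges.
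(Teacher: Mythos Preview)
Your proposal is correct and follows essentially the same approach as the paper. The paper's proof simply invokes Theorem~\ref{thm:lawofbp} to rewrite the integral as $\E[e^{p(\fancyR_\p - \fancyS_\p)}]$ and then applies Proposition~\ref{prop.ItoTraceOnlyImportant} (quoted from \cite[Proposition~8.8]{AndDrive:1999}) with $R_i = \Ric_{\up_{s_{i-1}}}$; your argument is precisely an unrolling of the proof of that proposition --- successive conditioning on $\mathcal F_{s_{i-1}}$, exact evaluation of the conditional Gaussian integral as $\det(I-2\Delta_i s\,A)^{-1/2}e^{-\Delta_i s\,\tr A}$, and the scalar inequalities $1\le (1-2x)^{-1/2}e^{-x}\le e^{C'x^2}$ applied eigenvalue by eigenvalue.
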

\begin{proof} Since $\fancyR_{\p} = \fancyR_{\p}(\bp)$ and $\fancyS_{\p} = \fancyS_{\p}(\bp)$, Theorem \ref{thm:lawofbp} implies, 
\begin{align*}
\int_{\Hp(\re^d)}e^{p(\fancyR_{\p} - \fancyS_{\p})} d\muS{\p} = \E\left[ e^{p(\fancyR_{\p} - \fancyS_{\p})} \right].
\end{align*}
The result then follows as a direct application of Proposition \ref{prop.ItoTraceOnlyImportant} below.
\end{proof}

\begin{lem}
\label{lem:fancyS}
For $p \in \re$  there is a constant $C$ depending only on $\ddim$, $p$, and the bound on the curvature of $M$ such that 
\begin{align}
\int_{\Hp(\re^d)}  \left(\ex\left\{p\left|\fancyS_{\p}(\w) - \int_0^1 \Scal(\phi(\w)(s)) ds\right| \right\} - 1 \right) d\muS{\p}(\w)  \leq C |\p|^{1/2}.
\end{align}
\end{lem}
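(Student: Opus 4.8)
The goal is to bound, in $L^1(\muS{\p})$, the exponential of the discretization error between the Riemann sum $\fancyS_{\p}(\w) = \sum_{i=1}^n \Scal(\phi(\w)(s_{i-1}))\Delta_i s$ and the integral $\int_0^1 \Scal(\phi(\w)(s))\,ds$. The first move is to transfer everything to flat space: since $\fancyS_{\p}$ and the path $s \mapsto \phi(\bp(\w))(s)$ are functionals of $\bp$, Theorem \ref{thm:lawofbp} lets me replace $\int_{\Hp(\re^d)}(\cdots)\,d\muS{\p}$ with $\E[\cdots]$, an expectation over genuine Brownian motion $b$. Wait—there is a subtlety: inside the integral $\int_0^1 \Scal(\phi(\w)(s))\,ds$ the argument is $\phi(\w)$, the development of the \emph{full} path $\w$, not $\phi(\bp(\w))$. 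So I must first control the difference between $\int_0^1 \Scal(\phi(\w)(s))\,ds$ and $\int_0^1 \Scal(\phi(\bp(\w))(s))\,ds$; on each interval $J_i$ the paths $\phi(\w)$ and $\phi(\bp)$ agree at the left endpoint and both stay within $O((\Delta_i b)$-controlled distance of it, so by Lipschitzness of $\Scal$ (bounded curvature and its derivative) this contributes an error controlled by $\sum_i \Delta_i s \cdot \sup_{J_i}\|\w(s)-\w(s_{i-1})\|$-type quantities. After that reduction, what remains is the clean statement: bound $\E[\exp\{p|\sum_i \int_{J_i}(\Scal(\tilde\sigma(s)) - \Scal(\tilde\sigma(s_{i-1})))\,ds|\}] - 1$, where $\tilde\sigma = \phi(b)$ (or $\phi(\bp)$, whichever is cleaner after the first step).

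\textbf{Main estimate.} For the core bound I would write the per-interval increment using the fundamental theorem of calculus along $\tilde\sigma$: $\Scal(\tilde\sigma(s)) - \Scal(\tilde\sigma(s_{i-1})) = \int_{s_{i-1}}^s \langle (\nabla\Scal)(\tilde\sigma(r)), \tilde\sigma'(r)\rangle\,dr$, and then use $\tilde\sigma'(r) = //_r(\tilde\sigma)\,b'(r)$ together with the development equation — but $b$ is not differentiable, so I should instead work directly with the SDE satisfied by $\Scal(\tilde\sigma(s))$ under $\mu$. By Itô's formula, $d\,\Scal(\tilde\sigma(s))$ has a martingale part $\langle //_s^{-1}\nabla\Scal(\tilde\sigma(s)), db_s\rangle$ plus a bounded-drift part (bounded because $\Scal$, $\nabla\Scal$, $\nabla^2\Scal$ and the curvature are all bounded). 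Thus $\int_{J_i}(\Scal(\tilde\sigma(s)) - \Scal(\tilde\sigma(s_{i-1})))\,ds$ splits into a term bounded by $C(\Delta_i s)^2$ from the drift and a stochastic term $\int_{J_i}\big(\int_{s_{i-1}}^s \langle v_r, db_r\rangle\big)ds$ with $\|v_r\|$ bounded. Summing, the drift part contributes $\le C|\p|\sum_i \Delta_i s = C|\p|$ deterministically. For the exponential moment of the stochastic part, I would observe that $\sum_i \int_{J_i}\big(\int_{s_{i-1}}^s \langle v_r, db_r\rangle\big)ds$ is (after Fubini) a single stochastic integral $\int_0^1 \langle w_r, db_r\rangle$ with a predictable integrand $w_r$ satisfying $\|w_r\| \le C|\p|$ pointwise — because the ``weight'' $(s_i - r)$ attached to $db_r$ on $J_i$ is at most $\Delta_i s \le |\p|$. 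Hence $\int_0^1\langle w_r, db_r\rangle$ is a Gaussian-type variable with variance $\le C^2|\p|^2$, and its exponential moments are controlled: $\E[e^{p|\int_0^1\langle w_r,db_r\rangle|}] \le 2\E[e^{p\int_0^1\langle w_r,db_r\rangle}]$ via a Novikov/exponential-martingale argument is not quite available since $w$ is random, but the cruder bound $\E[e^{pX}] \le e^{C'|\p|^2}$ for $X$ a stochastic integral with uniformly bounded (by $C|\p|$) integrand follows from the standard estimate on exponential moments of stochastic integrals (e.g. expanding, or comparing to the exponential martingale $\exp\{\lambda\int w\,db - \tfrac{\lambda^2}{2}\int\|w\|^2 dr\}$ and using $\int_0^1\|w_r\|^2 dr \le C^2|\p|^2$). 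Combining, $\E[\exp\{p|\fancyS_{\p} - \int_0^1\Scal\,ds|\}] \le e^{C|\p|}\cdot e^{C'|\p|^2} \le e^{C''|\p|}$ for $|\p|$ bounded, but I actually want the sharper $\le 1 + C|\p|^{1/2}$ — for that I use $e^x - 1 \le x e^x$ and that the exponent is $O(|\p|)$ in $L^1$, or more simply Jensen/Cauchy–Schwarz to pass from the $L^2$-type control of the exponent to $e^{(\cdot)} - 1 \le C|\p|^{1/2}$. Actually the clean route: write $\exp\{p|Y|\} - 1 \le p|Y|\exp\{p|Y|\}$ with $Y = \fancyS_{\p} - \int_0^1\Scal\,ds$, apply Cauchy–Schwarz to get $\le p\,\|Y\|_{L^2}\,\|\exp\{p|Y|\}\|_{L^2} = p\,\|Y\|_{L^2}\,\big(\E[\exp\{2p|Y|\}]\big)^{1/2}$, bound the second factor by $e^{C|\p|} \le C$ from the exponential-moment step above, and bound $\|Y\|_{L^2}$: from the decomposition, $\|Y\|_{L^2} \le C|\p| + \|\int_0^1\langle w_r, db_r\rangle\|_{L^2} = C|\p| + (\E\int_0^1\|w_r\|^2 dr)^{1/2} \le C|\p| + C|\p| = O(|\p|)$. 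This actually gives $O(|\p|)$, even better than $|\p|^{1/2}$, so the stated bound is comfortably met. (If the reduction step between $\phi(\w)$ and $\phi(\bp)$ only gives $|\p|^{1/2}$ — as is typical for sup-of-increments of Brownian motion over partition intervals — then that weaker step is what forces the $|\p|^{1/2}$ in the final statement.)

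\textbf{Where the difficulty lies.} The routine parts are Itô's formula for $\Scal(\tilde\sigma(s))$, the Fubini rearrangement of the iterated stochastic integral, and the exponential-moment bound for a stochastic integral with a uniformly small integrand — all standard given the curvature bounds. The genuinely delicate point is the first reduction: replacing $\int_0^1 \Scal(\phi(\w)(s))\,ds$ (development of the true path) by the same integral with $\phi(\bp(\w))$ (development of the piecewise-linear path), and doing so with an error that is $o(1)$ in the appropriate $L^1$-exponential sense uniformly in $\p$. This requires comparing solutions of the development ODE driven by $\w$ versus by $\bp$ on each subinterval, using that they share the left endpoint and that $\|\w(s) - \w(s_{i-1})\|$ for $s \in J_i$ has good exponential moments (scaling like $|\p|^{1/2}$), then summing $n \sim 1/|\p|$ such contributions. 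One must be careful that the constants from Gronwall-type estimates for the ODE comparison do not blow up as $n \to \infty$; they do not, because each subinterval has length $\le |\p|$ and the driving curvature is bounded, so the per-interval amplification factor is $1 + O(|\p|)$ and the product over $n$ intervals stays bounded. Assembling the two pieces via the triangle inequality inside $|Y|$ and one more application of Cauchy–Schwarz on the exponentials yields the claimed $C|\p|^{1/2}$ bound.
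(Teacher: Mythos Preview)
Your proposal rests on a misreading of the statement that makes the problem look much harder than it is. The integral is over $\Hp(\re^d)$ with respect to $\muS{\p}$, so the variable $\w$ is already a piecewise linear path; on $\Hp(\re^d)$ the map $\bp$ is the identity, hence $\phi(\w) = \phi(\bp(\w))$ automatically. The ``genuinely delicate'' first reduction you describe --- comparing the development of the true path to the development of its piecewise-linear approximation --- is therefore vacuous here. Consequently there is also no need to pass to the stochastic development $\phi(b)$ on Wiener space, and no role for It\^o's formula, exponential martingales, or stochastic integrals with small integrands.

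The paper's argument is entirely elementary. Since $\w \in \Hp(\re^d)$, the developed path $\phi(\w)$ is a piecewise geodesic whose speed on $(s_{i-1},s_i)$ is $\|\Delta_i b(\w)\|/\Delta_i s$; thus the Riemannian distance from $\phi(\w)(s_{i-1})$ to $\phi(\w)(s)$ is at most $\|\Delta_i b(\w)\|$. Using only that $\Scal$ is Lipschitz (bounded curvature and derivative) one gets the pointwise bound
\[
\Big|\fancyS_{\p}(\w) - \int_0^1 \Scal(\phi(\w)(s))\,ds\Big| \;\le\; \Lambda \sum_{i=1}^n \Delta_i s\,\|\Delta_i b(\w)\|,
\]
valid for every $\w \in \Hp(\re^d)$. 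From here the paper applies $e^{|a|}-1 \le |a|e^{|a|}$, pushes to $\E$ via $\muS{\p}=\Law_\mu(\bp)$, and factors the resulting expectation by independence of the increments $\Delta_i b$; one Brownian scaling gives the extra $\sqrt{\Delta_i s}$ per term and hence the $|\p|^{1/2}$. Your machinery would eventually get there (with $\tilde\sigma = \phi(\bp)$ the ``It\^o formula'' collapses to the fundamental theorem of calculus and reproduces exactly this bound), but the detour through stochastic development and the imagined ODE-comparison step are unnecessary.
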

\begin{proof} There is a bound $\Lambda = \Lambda(\text{curvature}) < \infty$ such that for $\w \in \Hp(\re^d)$,
\begin{align*}
\left| \fancyS_{\p}(\w) - \int_0^1 \Scal(\phi(\w)(s)) ds \right| &= \left| \sum_{i=1}^n \int_{s_{i-1}}^{s_i} \left\{ \Scal(\phi(\w)(s)) - \Scal(\phi(\w)(s_{i-1})) \right\}ds\right| \\
&\leq  \sum_{i=1}^n \int_{s_{i-1}}^{s_i} \left| \Scal(\phi(\w)(s)) - \Scal(\phi(\w)(s_{i-1})) \right| ds \\
& \leq \Lambda  \sum_{i=1}^n \int_{s_{i-1}}^{s_i} \left\| \w(s) - \w(s_{i-1}) \right\| ds \\
&= \Lambda  \sum_{i=1}^n \int_{s_{i-1}}^{s_i} \left\| \bp_s(\w) - \bp_{s_{i-1}}(\w) \right\| ds \\
&\leq  \Lambda  \sum_{i=1}^n \Delta_i s \left\| \Delta_i b \right\| (\w), 
\end{align*}
where we recall that $\bp$ is the identity on $\Hp(\re^d)$.
From here,
\begin{align*}
&\int_{\Hp(\re^d)}  \left(\ex\left\{p\left|\fancyS_{\p}(\w) - \int_0^1 \Scal(\phi(\w)(s)) ds\right| \right\} - 1 \right) d\muS{\p}(\w) \\
&\leq  p \Lambda \sum_{i=1}^n \Delta_i s \int_{\Hp(\re^d)} \| \Delta_i b \| \ex\left\{p \Lambda \sum_{j=1}^n \Delta_j s \| \Delta_j b \| \right\} d\muS{\p}\\
& = p \Lambda \sum_{i=1}^n \Delta_i s  \E \left[ \| \Delta_i b \| \ex\left\{p \Lambda \sum_{j=1}^n \Delta_j s \| \Delta_j b \| \right\}   \right] \\
& = p \Lambda \sum_{i=1}^n \Delta_i s  \E \left[ \| \Delta_i b \| e^{ p \Lambda \Delta_i s \| \Delta_i b \|} \right]  \prod_{j \neq i} \E \left[e^{p\Lambda \Delta_j s \| \Delta_j b \| }   \right] \\
& = p \Lambda \sum_{i=1}^n (\Delta_i s)^{3/2}  \E \left[ \| b_1 \| e^{ p \Lambda (\Delta_i s)^{3/2} \| b_1 \|} \right]  \prod_{j \neq i} \E \left[e^{p\Lambda (\Delta_j s)^{3/2} \| b_1 \| }   \right] \\
&\leq p \Lambda |\p|^{1/2} \E \left[ \| b_1 \| e^{ p \Lambda |\p|^{3/2} \| b_1 \|} \right]  \prod_{i=1}^n \left( 1 + p \Lambda (\Delta_i s)^{3/2} \E \left[\|b_1\|e^{p\Lambda (\Delta_i s)^{3/2} \| b_1 \| }   \right] \right) \\
& \leq C |\p|^{1/2}.
\end{align*}
Here the first and penultimate inequality come from Eq.\,(\ref{eqn.eaminus1}), and the final inequality follows from the fact that 
\begin{align*}
 &\prod_{i=1}^n \left( 1 + p \Lambda (\Delta_i s)^{3/2} \E \left[\|b_1\|e^{p\Lambda (\Delta_i s)^{3/2} \| b_1 \| }   \right] \right) \\
 &\qquad \leq \ex\left\{ p \Lambda |\p|^{1/2} \E\left[ \| b_1 \| e^{p \Lambda |\p|^{3/2} \| b_1 \|}\right]\right\}.
\end{align*}
\end{proof}

\subsect{The Maps $\rho_{\p}$ and $\rhot_{\p}$}
\label{subsection.rhop} The final result discussed in this section introduces the maps $\rho_{\p}$ and $\rhot_{\p}$ which are a major focus throughout the sequel. Given the measure $\nuG{\p}$ in Eq. (\ref{eqn.nuG}), we let $\rho_{\p} : \Hp(M) \to \re$ be the Lebesgue-Radon-Nikodym derivative with respect to $\nuS{\p}$,
\begin{align}
d\nuG{\p} = \rho_{\p} d\nuS{\p}. \label{eqn.firstrhop}
\end{align}
Given how our measures are defined,
\begin{align}
\rho_{\p} = \frac{Z_{S_{\p}}}{Z_{G_{\p}}}\sqrt{\frac{\det\left(G_{\p}(X_i, X_j)\right)}{\det\left(S_{\p}(X_i, X_j)\right)}} \label{eqn.rhop2}
\end{align}
with $\{ X_i \}$ a frame in $T\Hp(M)$.
We then define $\rhot_{\p}: W(\re^d) \to \re$ as
\begin{align}
\rhot_{\p} = \rho_{\p} ( \phi(\bp) ).\label{eqn.firstrhotp}
\end{align}
The usefulness of introducing $\rhot_{\p}$ is illustrated in the following proposition and is, in essence, why our focus turns to understanding $\rhot_{\p}$ as $|\p|\to 0$.
\begin{prop}
\label{prop:nuGvsmu}
Let $f : W(M) \to \re$ be bounded and continuous. Then, 
\begin{align}
\int_{\Hp(M)} f d\nuG{\p} = \int_{\Hp(\re^d)} f(\phi) \rhot_{\p} d\muS{\p} = \int_{W(\re^d)} f(\phi(\bp)) \rhot_{\p} d\mu.
\end{align}
\end{prop}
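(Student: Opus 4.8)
The plan is to chain together three identifications, each of which is a change-of-variables that has already been set up in the excerpt. The statement asserts two equalities; I would prove them one at a time, moving from right to left or left to right as convenient.

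\textbf{Step 1: the second equality.} The claim is that $\int_{\Hp(\re^d)} f(\phi)\,\rhot_{\p}\, d\muS{\p} = \int_{W(\re^d)} f(\phi(\bp))\,\rhot_{\p}\,d\mu$. By Theorem \ref{thm:lawofbp}, $\muS{\p} = \Law_{\mu}(\bp)$, i.e. $\muS{\p}$ is the pushforward of $\mu$ under the map $\bp : W(\re^d)\to\Hp(\re^d)$. Since $\bp$ is the identity on $\Hp(\re^d)$, for any integrable $F$ on $\Hp(\re^d)$ one has $\int_{\Hp(\re^d)} F\, d\muS{\p} = \int_{W(\re^d)} F(\bp)\, d\mu$ (this is exactly the mechanism illustrated in the Remark following Theorem \ref{thm:lawofbp}). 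Applying this with $F = f(\phi)\,\rho_{\p}$ on $\Hp(\re^d)$ and recalling the definition $\rhot_{\p} = \rho_{\p}(\phi(\bp))$ from Eq.~(\ref{eqn.firstrhotp}) gives $\int_{\Hp(\re^d)} f(\phi)\rho_{\p}\,d\muS{\p} = \int_{W(\re^d)} f(\phi(\bp))\,\rho_{\p}(\phi(\bp))\,d\mu = \int_{W(\re^d)} f(\phi(\bp))\,\rhot_{\p}\,d\mu$. Here one should be slightly careful: on the domain $\Hp(\re^d)$ the function $\rho_{\p}(\phi(\,\cdot\,))$ agrees with $\rhot_{\p}$ restricted to $\Hp(\re^d)$, so writing $\rhot_{\p}$ for the integrand on the left-hand side is consistent.

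\textbf{Step 2: the first equality.} The claim is $\int_{\Hp(M)} f\, d\nuG{\p} = \int_{\Hp(\re^d)} f(\phi)\,\rhot_{\p}\, d\muS{\p}$. Start from Eq.~(\ref{eqn.firstrhop}), $d\nuG{\p} = \rho_{\p}\, d\nuS{\p}$, so $\int_{\Hp(M)} f\, d\nuG{\p} = \int_{\Hp(M)} f\,\rho_{\p}\, d\nuS{\p}$. Now use that $\muS{\p} = \phi^{*}\nuS{\p}$ (Theorem \ref{thm:lawofbp}), i.e. $\nuS{\p}$ is the pushforward of $\muS{\p}$ under the development diffeomorphism $\phi : \Hp(\re^d)\to\Hp(M)$. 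The change-of-variables formula for pushforward measures gives, for integrable $g$ on $\Hp(M)$, $\int_{\Hp(M)} g\, d\nuS{\p} = \int_{\Hp(\re^d)} g(\phi)\, d\muS{\p}$. Apply this with $g = f\rho_{\p}$: then $g(\phi) = f(\phi)\,\rho_{\p}(\phi)$, and again $\rho_{\p}(\phi) = \rhot_{\p}$ on $\Hp(\re^d)$ by Eq.~(\ref{eqn.firstrhotp}) (since $\bp$ is the identity there). This yields $\int_{\Hp(M)} f\,\rho_{\p}\, d\nuS{\p} = \int_{\Hp(\re^d)} f(\phi)\,\rhot_{\p}\, d\muS{\p}$, which is the desired equality.

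\textbf{Main obstacle.} There is no deep obstacle here — the proposition is essentially a bookkeeping lemma whose content is entirely carried by Theorem \ref{thm:lawofbp} and the definitions (\ref{eqn.firstrhop})–(\ref{eqn.firstrhotp}). The one point requiring care is the consistent use of the symbol $\rhot_{\p}$: it is defined on all of $W(\re^d)$ via $\rho_{\p}(\phi(\bp))$, but in the first two integrals it appears as an integrand against measures supported on $\Hp(\re^d)$, where $\bp$ acts as the identity, so $\rhot_{\p}|_{\Hp(\re^d)} = \rho_{\p}\circ\phi$; one should note this explicitly so the three integrals are literally equal and not merely equal after restriction. A secondary (routine) check is integrability: $f$ is bounded and continuous, hence bounded on $W(M)$, and $\rho_{\p} \geq 0$ with $\int \rho_{\p}\, d\nuS{\p} = \nuG{\p}(\Hp(M)) < \infty$ (indeed the relevant integrability of $\rhot_{\p}$-type quantities is controlled by Lemmas \ref{lem:fancyRS1}–\ref{lem:fancyS} and Eq.~(\ref{eqn.rhop2})), so all three integrals are finite and the manipulations are justified.
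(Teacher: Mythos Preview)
Your proposal is correct and follows essentially the same route as the paper: use $d\nuG{\p}=\rho_{\p}\,d\nuS{\p}$, then apply Theorem~\ref{thm:lawofbp} twice (once via $\muS{\p}=\phi^{*}\nuS{\p}$ to pass from $\Hp(M)$ to $\Hp(\re^d)$, and once via $\muS{\p}=\Law_{\mu}(\bp)$ to pass to $W(\re^d)$), together with the observation that $\rho_{\p}\circ\phi=\rhot_{\p}$ on $\Hp(\re^d)$ since $\bp$ is the identity there. Your added remarks on integrability and on the consistent reading of $\rhot_{\p}$ are more explicit than the paper's version but do not change the argument.
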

\begin{proof}
By the definition of $\rho_{\p}$, 
\begin{align*}
\int_{\Hp(M)} f d\nuG{\p} = \int_{\Hp(M)} f \rho_{\p} d\nuS{\p}.
\end{align*}
By Theorem \ref{thm:lawofbp},
\begin{align*}
\int_{\Hp(M)} f \rho_{\p} d\nuS{\p} = \int_{\Hp(\re^d)} f(\phi) \rho_{\p}(\phi) d\muS{\p},
\end{align*}
and we finish by another application of Theorem \ref{thm:lawofbp} and by noticing that $\rho_{\p}(\phi) = \rho_{\p}(\phi(\bp)) = \rhot_{\p}$ on $\Hp(\re^d)$.
\end{proof}

\section{Setup for the Proof of Theorem \ref{thm.mainthm}}
This paper sets out to find the limit of the measure $d\nuG{\p}$ as $|\p| \to 0$. Based on the definition of $\rho_{\p}$ in Eq. (\ref{eqn.firstrhop}), this problem turns into understanding the limit of $\rho_{\p} d\nuS{\p}$, which leads us to understand how $\rho_{\p}$ behaves in the limit, since Theorem \ref{thm.AndDrive} shows us how to deal with $d\nuS{\p}$. Eq. (\ref{eqn.rhop2})  tells us that we can describe $\rho_{\p}$ if we choose a frame in $T\Hp(M)$ to consider. 

In this section we choose such a frame in Eq. (\ref{eqn:fbasis}) by applying Proposition \ref{prop.HpJacobi2} with the basis given in Eq. (\ref{eqn:Definef}). This then lets us have an explicit description of $\rhot_{\p}$ in Eq. (\ref{eqn.rhotp2}), which essentially  is a description of $\rho_{\p}$. The operator in Eq. (\ref{eqn:Si}) and its inverse are used in the construction of the chosen frame. However, to guarantee that the inverse exists, we make the assumption of non-positivity of the sectional curvature from here forward. 

\begin{assumption}
\label{assumption.nonpossect}
$M$ has non-positive sectional curvature.
\end{assumption}

\subsect{Defining the basis $\{ f_{i,a} \}$}
We start this section by restating Proposition \ref{prop.HpJacobi} in an equivalent form that will be useful. The proof is just a reformulation of the definitions in Notation \ref{nota:geometric}, or for a direct proof, see \cite[Proposition 4.4]{AndDrive:1999}.

\begin{prop}
\label{prop.HpJacobi2}
If $\w = \phi^{-1}(\sigma)$, $u(s) = //_s(\sigma)$, $h \in H(\re^d)$, and $X^h_{\sigma} \in T_{\sigma}H(M)$. Then $X^h_{\sigma} \in T_{\sigma}H_{\p}(M)$ if and only if 
\begin{align}
h''(s) = \W_{u(s)}(\w'(s), h(s))\w'(s). \label{eqn.FlatJacobi}
\end{align}
\end{prop}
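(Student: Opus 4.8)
The plan is to transport both sides of the piecewise Jacobi equation \eqref{eqn:JacobisEqn} back to $\re^d$ along $\sigma$ by parallel translation and recognize the result as \eqref{eqn.FlatJacobi}; this is purely a change of variables, with no analytic content beyond bookkeeping.

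First I would record how $\frac{\nabla}{ds}$ acts on a vector field of the form $X^h_{\sigma}$. By the definition in Notation \ref{nota:geometric} we have $//_s^{-1}(\sigma)X^h_{\sigma}(s) = h(s)$, so the definition of $\frac{\nabla}{ds}$ from Section \ref{section:GeometricBasics} gives $\frac{\nabla}{ds}X^h_{\sigma}(s) = //_s(\sigma)h'(s)$ and, iterating, $\frac{\nabla^2}{ds^2}X^h_{\sigma}(s) = //_s(\sigma)h''(s)$ on $[0,1]\setminus\p$. In particular $\|\frac{\nabla}{ds}X^h_{\sigma}\|_{L^2}=\|h'\|_{L^2}$, so $X^h_{\sigma}\in T_{\sigma}H(M)$ exactly when $h\in H(\re^d)$ and $X\mapsto //_{\cdot}^{-1}(\sigma)X$ identifies $T_{\sigma}H(M)$ with $H(\re^d)$; moreover $X^h_{\sigma}$ is automatically continuous, which is the only regularity condition needed at the partition points.

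Next I would rewrite the right-hand side of \eqref{eqn:JacobisEqn}. Since $\w=\phi^{-1}(\sigma)$, the development equation from Section \ref{section:CartanDevelopment} gives $\sigma'(s)=//_s(\sigma)\w'(s)=u(s)\w'(s)$, while $X^h_{\sigma}(s)=u(s)h(s)$. Because $u(s)=//_s(\sigma):\re^d\to T_{\sigma(s)}M$ is a linear isometry, part (1) of Notation \ref{nota:geometric} yields
\[ R(\sigma'(s),X^h_{\sigma}(s))\sigma'(s) = R\big(u(s)\w'(s),\,u(s)h(s)\big)u(s)\w'(s) = u(s)\,\W_{u(s)}(\w'(s),h(s))\w'(s). \]
Applying $u(s)^{-1}=//_s^{-1}(\sigma)$ to both sides of \eqref{eqn:JacobisEqn} and using the two computations above, the equation \eqref{eqn:JacobisEqn} holding off $\p$ is equivalent to \eqref{eqn.FlatJacobi} holding off $\p$. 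Invoking Proposition \ref{prop.HpJacobi} (which characterizes membership in $T_{\sigma}H_{\p}(M)$ by \eqref{eqn:JacobisEqn} off $\p$) then gives the asserted equivalence.

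I do not expect a genuine obstacle here. The only points that deserve a line of care are that $\frac{\nabla}{ds}$ "commutes" with $//_s(\sigma)$ in the precise sense stated above, and that $\phi$ restricts to a bijection $H_{\p}(\re^d)\to H_{\p}(M)$ (recorded in Section \ref{section:CartanDevelopment}), so that the notion of "piecewise" on the two sides genuinely corresponds under the development map.
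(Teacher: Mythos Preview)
Your argument is correct and is exactly the ``reformulation of the definitions in Notation \ref{nota:geometric}'' that the paper points to for its proof: you parallel-translate both sides of \eqref{eqn:JacobisEqn} back to $\re^d$ using $\frac{\nabla^2}{ds^2}X^h_{\sigma}=u(s)h''(s)$ and the definition of $\W_{u(s)}$, then invoke Proposition \ref{prop.HpJacobi}. There is nothing to add.
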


Following the notation of Proposition \ref{prop.HpJacobi2}, if $\w \in \Hp(\bbR^d)$ then $\w'(s) = \frac{\Delta_i \w}{\Delta_i s}= \frac{\Delta_i b(\w)}{\Delta_i s}$ when $s \in (s_{i-1},s_i)$. Moreover, $u(s) = //_s( \phi(\w)) = \up_s(\w)$, where $\up$ is defined in Eq. (\ref{eqn:up}). Hence, for each $i \in \{1, ..., n\}$ and $s \in (s_{i-1}, s_i)$, we can rewrite Eq. (\ref{eqn.FlatJacobi}) as,
\begin{align}
h''(s) &= \W_{\up_s (\w)}\left( \frac{\Delta_i b(\w)}{\Delta_i s}, h(s) \right)  \frac{\Delta_i b(\w)}{\Delta_i s} \label{eqn:FlatJacobi2}.
\end{align}
This motivates the definition of the following operators for $1\leq i \leq n$,
\begin{align}
\Ap{i}(s) := \W_{\up_{s+s_{i-1}}}\left( \frac{\Delta_i b}{\Delta_i s}, \cdot \right)  \frac{\Delta_i b}{\Delta_i s} \label{eqn:Ai}
\end{align}
when $s \in (s_{i-1}, s_i)$. Using this notation,
\begin{align}
\tr(\Ap{i}(0)) = - \left\langle \Ric_{\up_{s_{i-1}}} \frac{\Delta_i b}{\Delta_i s}, \frac{\Delta_i b}{\Delta_i s} \right\rangle.
\label{eqn:trAp}
\end{align}
Moreover, the assumption that the curvature and its derivative are bounded on $M$ is equivalent to the existence of some $\kappa < \infty$ such that 
\begin{align}
\sup_s \| \Ap{i} (s) \| &\leq \kappa \frac{\| \Delta_i b \|^2}{(\Delta_i s)^2} \label{eqn:Aibound} \\
\sup_s \| \frac{d}{ds}\Ap{i} (s) \| &\leq \kappa \frac{\| \Delta_i b \|^3}{(\Delta_i s)^3}. \label{eqn:Aipbound}
\end{align}

With this in mind, for each $s \in [0,\Delta_i s]$ with $1 \leq i \leq n$, consider the differential equation, 
\begin{align}
\frac{d^2}{ds^2} Z^{\p}_i(s) = \Ap{i}(s) Z^{\p}_i(s) \label{eqn:OpFlatJacobi}
\end{align}
with $Z^{\p}_i(s) : \re^d \to \re^d$ a linear map. Applying existence and uniqueness of ordinary differential equations, we define the following solutions to Eq. (\ref{eqn:OpFlatJacobi}):
\begin{align}
\Sp{i} &= Z_i^{\p} \text{ with initial conditions } \Sp{i}(0) = 0, \frac{d}{ds}\Sp{i}(0)=I \label{eqn:Si},\\
C_i^{\p} &= Z_i^{\p} \text{ with initial conditions } C_i^{\p}(0) = I, \frac{d}{ds}C_i^{\p}(0)=0 \label{eqn:Ci},
\end{align}
and for $1 \leq i < n$,
\begin{align}
\Vp{i+1} &= Z_{i+1}^{\p} \text{ with initial conditions } \Vp{i+1}(0) = \Sp{i}(\Delta_{i} s), \frac{d}{ds}\Vp{i+1}(0) = - \Fp{i} \label{eqn:ViODE}
\end{align}
where
\begin{align}
\Fp{i} = \Sp{i+1}(\Delta_{i+1}s)^{-1} \Cp{i+1}(\Delta_{i+1}s) \Sp{i}(\Delta_i s) \label{eqn:Fi}.
\end{align}
With the above definitions, 
\begin{align}
\Vp{i+1}(s) = \Cp{i+1}(s) \Sp{i}(\Delta_i s) - \Sp{i+1}(s) \Fp{i}. \label{eqn:Vi}
\end{align}

The fact that $\Sp{i}(\Delta_i s)$ has an inverse is not immediate. However, we are guaranteed the inverse exists when we restrict ourselves to manifolds of non-positive sectional curvature.

We now define the maps $\{ f^{\p}_{i,a} : 1\leq i \leq n, ~ 1\leq a \leq d \}$ which satisfy Eq. (\ref{eqn.FlatJacobi}) with the boundary conditions 
\begin{align}
f^{\p}_{i,a}(0) &= 0\\
\frac{d}{ds}f^{\p}_{i,a}(s_j+) &=
\begin{cases}
e_a & j = i-1 \\
-\Fp{i} e_a & j= i \\
0 & \text{otherwise}.
\end{cases}
\end{align}
Using  Eq. (\ref{eqn:Si}), Eq. (\ref{eqn:Ci}), and Eq. (\ref{eqn:Vi}),
\begin{align}
f^{\p}_{i,a}(s) = 1_{J_i}(s)\Sp{i}(s-s_{i-1})e_a + 1_{J_{i+1}}(s)\Vp{i+1}(s-s_i)e_a \label{eqn:Definef}
\end{align}
where $\{ e_a  \}_{a=1}^d$ is the standard basis for $\re^d$.
This set of maps induces a basis 
\begin{align}
\F_{\p} := \left\{ X_{\phi(\bp)}^{f^{\p}_{i,a}} : 1 \leq i \leq n,~ 1 \leq a \leq d  \right\} \label{eqn:fbasis}
\end{align} of $T_{\phi(\bp)}H_{\p}(M)$ where the meaning of $X_{\sigma}^{h}$ was established in Notation \ref{nota:geometric}.

\subsect{The matrix $\GFp$}
Define $\GFp$ as the $n\times n$ block diagonal matrix with $d \times d$ blocks given by
\begin{align}
\GFp &:= \left[ G_{\p}\left( X_{\phi(\bp)}^{f^{\p}_{i,a}} , X_{\phi(\bp)}^{f^{\p}_{j,c}}\right): 1\leq i, j \leq n,~ 1\leq a, c \leq d \right]. \label{eqn:GfMatrix}
\end{align}
That is, $\GFp$ is the matrix representation of the metric $G_{\p}$ under the basis $\F_{\p}$ in Eq. (\ref{eqn:fbasis}). We write the $(i,j)^{th}$ block of $\GFp$ as
\begin{align}
\left[ \GFp \right]_{i,j} := \left[ G_{\p}\left( X_{\phi(\bp)}^{f^{\p}_{i,a}} , X_{\phi(\bp)}^{f^{\p}_{j,c}}\right): 1\leq a, c \leq d \right].
\end{align}
 
Let $i,j \in \{1, ..., n\}$, $a,c \in \{1, ..., d\}$, and define $S_{n+1}^{\p} = V_{n+1}^{\p} \equiv 0$, 
\begin{align*}
G_{\p}\left(  X_{\phi(\bp)}^{f^{\p}_{i,a}},  X_{\phi(\bp)}^{f^{\p}_{j,c}} \right) &= \int_0^1 g\left( X_{\phi(\bp)}^{f^{\p}_{i,a}}(s), X_{\phi(\bp)}^{f^{\p}_{j,c}}(s) \right) ds \\
&= \int_0^1 \left\langle f^{\p}_{i,a}(s), f^{\p}_{j,c}(s) \right\rangle ds
\end{align*}
where $\langle \cdot, \cdot \rangle$ indicates the inner-product on $T_o(M) = \re^d$ defined by $g$. Using Eq. (\ref{eqn:Definef}),
\begin{align*}
&\int_0^1 \left\langle f^{\p}_{i,a}(s), f^{\p}_{j,c}(s) \right\rangle  ds \\
&\qquad  = \delta_{i,j} \left\langle e_a, \left[ \int_0^{\Delta_i s}(\Sp{i}(s)^{tr} \Sp{i}(s) + \Vp{i+1}(s)^{tr}\Vp{i+1}(s) )ds \right]  e_c \right\rangle \\
&\qquad \qquad   + \delta_{i,j+1} \left\langle e_a, \left[ \int_0^{\Delta_i s} \Sp{j+1}(s)^{tr} \Vp{j+1}(s) ds\right] e_c \right\rangle \\
&\qquad  \qquad+ \delta_{i+1,j} \left\langle e_a, \left[  \int_0^{\Delta_{i+1}s} \Vp{i+1}(s)^{tr} \Sp{i+1}(s) ds\right] e_c \right\rangle,
\end{align*}
which implies that,
\begin{align}
\GFp &=
\left(
\begin{array}{cccc}
D_1 & M_2 & 0 & 0 \\
M_2^{tr} & D_2 &  M_3 & 0 \\
0 & \ddots & \ddots & M_n \\
0 & 0 & M_n^{tr} & D_n
%\int_0^{\Delta} \left(S_1^{tr}(s)S_1(s)+V_2^{tr}(s)V_2(s) \right)ds & \int_0^{\Delta} V_2^{tr}(s) S_2(s) ds & 0 & 0 \\
%\int_0^{\Delta} S_2^{tr}(s) V_2(s) ds & \int_0^{\Delta} \left(S_2^{tr}(s)S_2(s)+V_3^{tr}(s)V_3(s) \right)ds & \int_0^{\Delta} V_3^{tr}(s) S_3(s) ds & 0 \\
%0 & \ddots & \ddots & 0 \\
%0 & 0 & \int_0^{\Delta} S_n^{tr}(s) V_n(s) ds & \int_0^{\Delta} S_n^{tr}(s) S_n(s) ds
\end{array}
\right)
\label{eqn:GFp}
\end{align}
where 
\begin{align}
D_i &= 
\begin{cases}
\int_0^{\Delta_i s} \Sp{i}(s)^{tr}\Sp{i}(s)ds + \int_0^{\Delta_{i+1} s}\Vp{i+1}(s)^{tr}\Vp{i+1}(s) ds & 1\leq i < n \\
\int_0^{\Delta_n s} \Sp{n}(s)^{tr}\Sp{n}(s)ds & i = n
\end{cases}
\label{eqn:GFpDiag}
\end{align}
and 
\begin{align}
M_{i} &= \int_0^{\Delta_i s} V^{\p}_{i}(s)^{tr} \Sp{i}(s) ds \quad 2 \leq i \leq n.
\label{eqn:GFpOffDiag}
\end{align}

In the case that $M = \re^d$, Eq. (\ref{eqn:Definef}) greatly simplifies to, 
\begin{align}
f_{i,a} = \left\{ 1_{J_i}(s)(s-s_{i-1}) + 1_{J_{i+1}}(s)(s_{i+1} - s)  \right\} e_a, \label{eqn:fiaflat}
\end{align}
and hence $\GFp(\re^d)$ is given by the block matrix,
\begin{align}
 \left(
\begin{array}{cccc}
\frac{1}{3}[(\Delta_1 s)^3 + (\Delta_{2} s)^3] I& \frac{1}{6}(\Delta_2 s)^3 I & 0 & 0 \\
\frac{1}{6}(\Delta_2 s)^3 I  & \frac{1}{3}[(\Delta_2 s)^3 + (\Delta_{3} s)^3] I &  \frac{1}{6}(\Delta_3 s)^3 I & 0 \\
0 & \ddots & \ddots & \frac{1}{6}(\Delta_n s)^3 I  \\
0 & 0 & \frac{1}{6}(\Delta_n s)^3 I & \frac{1}{3}(\Delta_n s)^3 I
%\int_0^{\Delta} \left(S_1^{tr}(s)S_1(s)+V_2^{tr}(s)V_2(s) \right)ds & \int_0^{\Delta} V_2^{tr}(s) S_2(s) ds & 0 & 0 \\
%\int_0^{\Delta} S_2^{tr}(s) V_2(s) ds & \int_0^{\Delta} \left(S_2^{tr}(s)S_2(s)+V_3^{tr}(s)V_3(s) \right)ds & \int_0^{\Delta} V_3^{tr}(s) S_3(s) ds & 0 \\
%0 & \ddots & \ddots & 0 \\
%0 & 0 & \int_0^{\Delta} S_n^{tr}(s) V_n(s) ds & \int_0^{\Delta} S_n^{tr}(s) S_n(s) ds
\end{array}
\right).
\label{eqn:Lp}
\end{align}
Since this matrix will prove important to understand throughout the remainder, we use the notation $\mLp := \GFp(\re^d)$. 

 \subsect{A simplified expression for $\rhot_{\p}$}
We now return to the non-flat case.  If we define $\SFp$ analogously to $\GFp$, where $\SFp$ is the matrix representation of the metric $S_{\p}$ using the basis $\F_{\p}$, then we have, 
\begin{align}
\rhot_{\p} = \rho_{\p}(\phi(\bp)) &= \frac{Z_{S_\p}}{Z_{G_{\p}}}\sqrt{\frac{ \det \left( \GFp \right)}{ \det \left( \SFp \right)}}. %\label{eqn:Definerhot}.
\end{align}

An analogous argument as that in \cite[Theorem 4.7]{ALim:2007} shows that $\det(\SFp) = \prod_{i=1}^n (\Delta_i s)^d$. This in combination with Eqs. (\ref{eqn.ZGp}) and (\ref{eqn.ZSp}) yields, 
\begin{align}
\rhot_{\p} = \sqrt{\frac{ \det \left( \GFp \right)}{ \det (\mLp)}} \label{eqn.rhotp2}.
\end{align}

\section{Properties of $\mLp$}
\label{section:PropertiesOfLp}
The main theorem of this paper, Theorem \ref{thm.mainthm}, has the constant $\frac{2 + \sqrt{3}}{20\sqrt{3}}$ multiplied by the scalar curvature term once the limit has been taken. While the physical interpretation of this constant is open, that constant largely owes its present form to the eigenvalues and eigenvectors of $\mLp$ defined in Eq. (\ref{eqn:Lp}). This section collects together  the necessary properties of $\mLp$ in Theorem \ref{thm:Lpthm} followed by corollaries that give insight into how these properties manifest themselves in the future. 

The analysis of $\mLp$ is greatly simplified by assuming that $\p$ is equally spaced, and so we will make this assumption from here on.

\begin{assumption}
\label{assumption.Pequal}
We will assume that $\p = \{0, 1/n, 2/n, ..., 1\}$ is the equally spaced partition of $[0,1]$ and use the notation $\Delta  := |\p| = 1/n$. 

\end{assumption}
\noindent Under Assumption \ref{assumption.Pequal}, Eq. (\ref{eqn:Lp}) becomes,
\begin{align}
 \mLp = \frac{\Delta^3}{6} \left(
\begin{array}{cccc}
4I&  I & 0 & 0 \\
 I  &4 I &   I & 0 \\
0 & \ddots & \ddots & I  \\
0 & 0 &  I & 2 I
\end{array}
\right). \label{eqn.Lp2}
\end{align} 

\begin{thm}
\label{thm:Lpthm}
Let $\{ e_a : 1 \leq a \leq d \}$ denote the standard basis in $\re^d$. There exists an orthonormal basis of eigenvectors of $\mLp$, $\{ \up_{k,a} : 1\leq k \leq n, 1\leq a \leq d \}$ with,
\begin{align}
\up_{k,a} := \betap_k \left(
\begin{array}{c}
\alpha_k^1 e_a \\
\alpha_k^2 e_a \\
\vdots \\
\alpha_k^n e_a
\end{array}
\right).
\end{align}
 Here, for $1 \leq k < n$, $\alpha_k^m = \sin(m \tp_k)$ with $\{ \tp_k : 1 \leq k \leq n-1\} \subset (0, \pi)$ a monotonically increasing sequence given by,
\begin{align}
\label{eqn.tk}
\tp_k = \frac{\pi(k+r_k)}{n+1}
\end{align}
where there is a smooth map $\varphi : [0,\pi] \to [0,2\pi]$ with $1 \leq \varphi' \leq 4$ such that $r_k = \varphi(\tp_k)/2\pi$. For $k=n$, $\alpha_n^m = \gamma_n^m - \gamma_n^{-m}$ where for $n \geq 2$, $\gamma_n \in (-2, -3/2)$, and $\gamma_n \to -2$ as $n \to \infty$. If $1\leq k <n$, the normalization constants $\betap_k$ are given by
\begin{align}
(\betap_k)^2 = \frac{2}{n}\left( \frac{1}{1-\epsilon_k} \right) \label{eqn.bk}
\end{align}
where $| \epsilon_k | = O\left( 1/n  \right)$ and thusly $\left( \betap_k \right)^2  = 2/n + O(1/n^2)$. For $k=n$, 
\begin{align}
(\betap_n)^2 = \left( - 2(n + 1) + \frac{\gamma_n^{2(n+1/2)} - \gamma_n^{-2(n+1/2)}}{\gamma_n-\gamma_n^{-1}} \right)^{-1}. \label{eqn.bn}
\end{align}
In particular,  $(\betap_n)^2 = O(\gamma_n^{-2n})$. 

The eigenvalues $\lambda^{\p}_{k,a}$, defined so that $\mLp u_{k,a} = \lambda^{\p}_{k,a} \up_{k,a}$, are given by $\lambda^{\p}_{k,a} = \frac{\Delta^3}{3}\left( 2 + \cos(\tp_k) \right) =: \lambda_k^{\p}$  for $1 \leq k <n$, and $\lambda^{\p}_{n,a} = \frac{\Delta^3}{6}(4 + \gamma_n + \gamma_n^{-1}) =: \lambda_n^{\p}$ when $k=n$. This further implies that
\begin{align}
\frac{\Delta^3}{4} \leq \| \mLp \| \leq \Delta^3. \label{eqn.SizeOfLp}
\end{align}

Finally, there exists an upper triangular matrix $\mAp$ such that 
\begin{align}
\mLp = \mAp \mAp^{tr}. \label{eqn:LpLU}
\end{align}
Here $\mAp$ is invertible and 
\begin{align}
\| [ \mAp^{-1}]_{i,j} \|^2 \leq
\begin{cases}
\frac{3}{\Delta^3} \left( \frac{1}{2} \right)^{j-i} & j\geq i \\
0 & j < i
\end{cases}. \label{eqn:LpLUsize}
\end{align}
\end{thm}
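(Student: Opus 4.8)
The proof of Theorem~\ref{thm:Lpthm} is essentially an exercise in the spectral analysis of a perturbed tridiagonal (Jacobi) matrix, so I would organize it around the block structure of $\mLp$. Since $\mLp = \frac{\Delta^3}{6}\, T \otimes I_d$ where $T$ is the $n\times n$ tridiagonal matrix with $4$ on the diagonal (except the last entry, which is $2$), $1$ on the off-diagonals, every eigenvector has the tensor form $v \otimes e_a$; so the whole problem reduces to the scalar tridiagonal matrix $T$, and the $d$-fold degeneracy and the basis $\{e_a\}$ come for free. First I would diagonalize $T$: the eigenvector equation $T v = \mu v$ is the three-term recurrence $v_{m-1} + 4 v_m + v_{m+1} = \mu v_m$ with the boundary conditions imposed by the first row ($v_0 := 0$) and the anomalous last row ($v_{n+1}$ determined by the ``$2$'' instead of ``$4$''). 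Writing $\mu = 4 + 2\cos\theta$ and $v_m = \sin(m\theta)$ automatically solves the recurrence and the left boundary condition; the right boundary condition becomes a transcendental equation $g(\theta) = 0$ of the form $\sin((n+1)\theta) = \tfrac12 \sin(n\theta)$ (or equivalently something like $\sin((n+1)\theta) + \sin(n\theta) = $ a correction). A root-counting / intermediate-value argument on the interval $(0,\pi)$ gives $n-1$ solutions $\tp_k$, one in each interval $\big(\tfrac{k\pi}{n+1}, \tfrac{(k+1)\pi}{n+1}\big)$ or similar, and writing the displacement from $\tfrac{k\pi}{n+1}$ as $r_k$ and verifying, via the implicit function theorem applied to $g$, that $r_k$ depends smoothly on $\tp_k$ with the derivative bound $1 \le \varphi' \le 4$, yields Eq.~\eqref{eqn.tk}. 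The remaining eigenvalue (the $n$-th) must correspond to $\mu \notin [2,6]$, i.e. $\theta$ imaginary; setting $\gamma = e^{i\theta}$ real and negative gives $v_m = \gamma^m - \gamma^{-m}$ and a polynomial-type equation for $\gamma$ whose unique relevant root lies in $(-2,-3/2)$ and tends to $-2$; I would prove this by the same intermediate-value / monotonicity technique and then Taylor-expand to get the $\gamma_n \to -2$ rate.

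Next I would compute the normalization constants by evaluating $\sum_{m=1}^n \sin^2(m\tp_k)$ in closed form using $\sin^2 x = \tfrac12(1-\cos 2x)$ and the Dirichlet-kernel identity $\sum_{m=1}^n \cos(2m\tp_k) = \frac{\sin((2n+1)\tp_k) - \sin\tp_k}{2\sin\tp_k}$; substituting the eigenvalue equation $g(\tp_k)=0$ to simplify the numerator produces $\frac{n}{2}(1-\epsilon_k)$ with $\epsilon_k = O(1/n)$, hence Eq.~\eqref{eqn.bk}; the $k=n$ case is the analogous geometric-series computation with $\gamma_n$ in place of $e^{i\tp_k}$, giving Eq.~\eqref{eqn.bn} and the $O(\gamma_n^{-2n})$ size estimate since $\gamma_n^{2(n+1)}$ dominates. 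The eigenvalue formulas $\lambda_k^\p = \frac{\Delta^3}{6}(4 + 2\cos\tp_k) = \frac{\Delta^3}{3}(2+\cos\tp_k)$ and $\lambda_n^\p = \frac{\Delta^3}{6}(4 + \gamma_n + \gamma_n^{-1})$ are then immediate, and Eq.~\eqref{eqn.SizeOfLp} follows because $\cos\tp_k \in (-1,1)$ forces $\lambda_k^\p \in (\tfrac{\Delta^3}{3}, \Delta^3)$ while $\gamma_n + \gamma_n^{-1} \in (-2\cdot\tfrac{13}{6}, \ldots)$ — more simply, $\|\mLp\| = \max_k \lambda_k^\p$ is squeezed between $\Delta^3/4$ and $\Delta^3$ by checking the two extreme eigenvalues and using Gershgorin for the upper bound.

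For the Cholesky factorization $\mLp = \mAp\mAp^{tr}$: since $\mLp$ is symmetric positive definite (it is a Gram matrix, by Eq.~\eqref{eqn:Lp}, or directly because $T$ is diagonally dominant with positive diagonal except we must check the last row — $2 > 1$ still works, so $T \succ 0$), the (upper-triangular, in the convention here) Cholesky factor exists and is invertible. To get the exponential off-diagonal decay in Eq.~\eqref{eqn:LpLUsize} I would not compute $\mAp$ explicitly but instead use the standard fact that the Cholesky factor of a tridiagonal matrix is bidiagonal, so $\mAp^{-1}$ is upper-triangular with entries that satisfy a first-order recurrence in $j-i$; the ratio of consecutive bidiagonal Cholesky entries of $\frac{\Delta^3}{6}T$ is controlled by the continued-fraction $c_i = 4 - 1/c_{i-1}$ type recursion which has attracting fixed point $2+\sqrt 3 \approx 3.73 > 2$, so each step of $\mAp^{-1}$ contributes a factor $\le 1/2$, and the prefactor $3/\Delta^3$ comes from $[\mAp^{-1}]_{i,i}^2 \le \|\mLp^{-1}\| \le 3/\Delta^3$ (the reciprocal of the smallest eigenvalue, $\lambda_1^\p \ge \Delta^3/3$). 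I expect the main obstacle to be the bookkeeping in the implicit-function-theorem step giving the smooth $\varphi$ with $1 \le \varphi' \le 4$ — one must identify the right function $\varphi$ and differentiate the transcendental eigenvalue equation carefully to pin the derivative into that exact window — together with making the $\gamma_n \in (-2,-3/2)$ localization and its limit fully rigorous rather than just asymptotic; everything else is a (lengthy but routine) trigonometric-identity and linear-algebra computation.
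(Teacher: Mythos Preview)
Your plan is correct and follows the same overall architecture as the paper: tensor reduction $\mLp = \tfrac{\Delta^3}{6}L_n\otimes I_d$, the $\sin(m\theta)$ ansatz for the three-term recurrence, intermediate-value localization of the $\tp_k$, the real-root $\gamma_n$ for the outlier eigenvalue, closed-form normalization via the Dirichlet-kernel identity, and Cholesky of a tridiagonal matrix for the last part. Two small corrections and one tactical difference are worth flagging. First, the last-row boundary condition gives $\sin((n+1)\theta)=-2\sin(n\theta)$ (equivalently $f(n+1)=-2f(n)$), not $\tfrac12$. Second, the paper does not use the implicit function theorem for $\varphi$: it \emph{defines} $\varphi$ explicitly by $e^{i\varphi(\theta)}=(1+2e^{i\theta})/(1+2e^{-i\theta})$, so that the boundary equation becomes $2(n+1)\theta \equiv \varphi(\theta)\pmod{2\pi}$, and then computes $\varphi'(\theta)=4(\cos\theta+2)/(5+4\cos\theta)\in[1,4]$ directly. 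Third, for the Cholesky bound the paper writes the factor explicitly in terms of the trailing determinants $d_k=\det L_k$, which satisfy $d_{k+2}=4d_{k+1}-d_k$ with closed form $d_k=\tfrac12[(2+\sqrt3)^k+(2-\sqrt3)^k]$, and then reads off $|[A_n^{-1}]_{i,j}|^2\le 2^{-(j-i+1)}$; your continued-fraction recursion $c_i=4-1/c_{i-1}$ is the same object (it is $d_{k+1}/d_k$), but the explicit closed form is what delivers the sharp prefactor $3/\Delta^3$ --- your proposed bound $[\mAp^{-1}]_{i,i}^2\le\|\mLp^{-1}\|$ would only give $4/\Delta^3$, since the smallest eigenvalue is $\lambda_n^\p\ge\Delta^3/4$, not $\Delta^3/3$.
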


\begin{proof}
A first step into understanding $\mLp$ is to write $\mLp = \frac{\Delta^3}{6} L_n \otimes I_d$ where $L_n$ be the $n \times n$ matrix given by 
\begin{align}
L_n = 
\left(
\begin{array}{c c c c c}
4 & 1 & 0 & 0 & 0\\
1 & 4 & 1 & 0 & 0\\
0 & 1 & \ddots & 1 & 0 \\
0 & 0 & 1 & 4 & 1 \\
0 & 0 & 0 & 1 & 2
\end{array}
\right),
\label{eqn:ln}
\end{align}
$I_d$ is the $d \times d$ identity matrix, and $\otimes$ denotes the Kronecker product. 

From here, our work simplifies to understanding $L_n$ which in turn can be understood by writing $L_n = \mD_n + 6 I_n$ with
\begin{align*}
\mD_n &= 
\left(
\begin{array}{c c c c c}
-2 & 1 & 0 & 0 & 0\\
1 & -2 & 1 & 0 & 0\\
0 & 1 & \ddots & 1 & 0 \\
0 & 0 & 1 & -2 & 1 \\
0 & 0 & 0 & 1 & -4
\end{array}
\right),
\end{align*}
and turning our attention to $\mD_n$. We are tempted to do this since $\mD_n$ is a discretization of the Laplacian acting on a functions $f : \{1, 2, ..., n\} \to \bb{C}$ (i.e. $f \in \bb{C}^n$) with appropriate ``boundary conditions.'' Indeed, motivated by the fact that for a twice differentiable function $g$, $g''(x) \approx (\Delta x)^{-2}\{g(x+\Delta x) - 2 g(x) + g(x-\Delta x)\}$ for small $\Delta x$, we consider the expression $(\Delta j)^{-2}\{ f(j + \Delta j) - 2 f(j) + f(j - \Delta j)\}$ for $f \in \bb{C}^n$ and small $\Delta j$. However, the smallest value of $\Delta j$ we can hope for in this case is $\Delta j = 1$, and we therefore define $f''(j) := f(j + 1) - 2 f(j) + f(j-1)$ for $2 \leq j \leq n-1$. Noting that $f''(j) = \mD_n f(j)$ for $2 \leq j \leq n-1$, we now extend  $f''$ to $j=1$ and $j=n$. If we define the boundary conditions $f(0):= 0$ and $f(n+1):=-2f(n)$, then $\mD_n f(1) = -2f(1) + f(2) = f(0) - 2f(1) + f(2) =: f''(1)$, and $\mD_nf(n) = f(n-1)  - 4f(n) = f(n-1) - 2f(n) + f(n+1) =: f''(n)$. 

We now have a clue into the spectral behavior of $\mD_n$. Eigenvectors of the Laplacian should have the form $f(j) = a z^j + b z^{-j}$ for some $a, b, z \in \bb{C}$. Applying the boundary condition $f(0)=0$ yields  $f(j) = a(z^j - z^{-j})$. Further applying the boundary condition $f(n+1) = -2f(n)$ gives $a(z^{n+1} - z^{-(n+1)}) = -2a(z^{n} - z^{-n})$, which is satisfied whenever
\begin{align}
 z^{2(n+1)}+2 z^{2n+1} -2z -1 = 0. \label{eqn.zbdry}
\end{align}
 One can now check that for any $a \in \bb{C}$, $z \in \bb{C}$ satisfying Eq. (\ref{eqn.zbdry}), and $j \in \{1, 2, ..., n\}$,
 \begin{align}
  \label{eqn.Dneigen}
 \mc{D}_nf (j) = f''(j) = a(z^{j+1} - z^{-(j+1)}) - 2a(z^j - z^{-j}) + a(z^{j-1} - z^{-(j-1)}) \notag \\ 
 = a(z + z^{-1} - 2) (z^j - z^{-j}) = (z + z^{-1} -2) f(j).
 \end{align}

The remainder of the proof will now be broken into the following claims.
\begin{claim}
\label{claim.lpc1}
There is a strictly increasing sequence $\{ \tp_1 < \tp_2 < \cdots < \tp_{n-1} \}\subset (0, \pi)$ such that $z_k := e^{i\tp_k}$ satisfies Eq. (\ref{eqn.zbdry}) for each $1 \leq k \leq n-1$. Moreover, each of these $\tp_k$ satisfies Eq. (\ref{eqn.tk}).
\end{claim}
\begin{proof}
If we make the assumption that $z = e^{i \theta}$ for some $\theta \in \re$, Eq. (\ref{eqn.zbdry}) implies $e^{2(n+1)i \theta }(1 + 2 e^{-i \theta}) = (1+ 2 e^{i \theta})$, which rewritten gives, $e^{2(n+1) i \theta} = \zeta / \bar{\zeta}$ with $\zeta = 1+ 2 e^{i \theta}$. Taking advantage of the fact that $ \zeta / \bar{\zeta}$ lies on the unit circle, we can find $\varphi(\theta)$ satisfying 
\begin{align}
e^{i \varphi(\theta)} &= \zeta / \bar{\zeta}  = \frac{1 + 2e^{i \theta}}{1 + 2e^{-i \theta}} = Z\{ 1 + 4(\cos(\theta) + \cos(2\theta)) + i 4 ( \sin(\theta) + \sin(2 \theta)) \} \label{eqn.varphidef}
\end{align}  
where $Z = |\zeta|^{-2} = (5 + 4\cos(\theta))^{-1}$ is the normalization constant ensuring the total magnitude is 1. The right hand side of Eq.\,(\ref{eqn.varphidef}) is smooth as a function of $\theta$ and we can therefore assume that $\varphi$ is smooth. Moreover,
\begin{align*}
i \varphi'(\theta) e^{i \varphi(\theta)} = \frac{d}{d\theta}  \left(\frac{1 + 2 e^{i \theta}}{1+2e^{-i \theta}} \right)= i\left( \frac{4(\cos(\theta)+2)}{5+4\cos(\theta)}\right) \left( \frac{1+2e^{i\theta}}{1+2e^{-i\theta}} \right)
\end{align*}
implying that $\varphi'(\theta) = 4(\cos(\theta)+2)/(5+4\cos(\theta))$. In particular, $\varphi$ is strictly increasing with $1 \leq \varphi' \leq 4$. The only values of $\theta \in [0,\pi]$ giving a value of $1$ when evaluated in Eq.\,(\ref{eqn.varphidef}) are $\theta=0$ and $\theta=\pi$. Hence we can choose $\varphi(0) = 0$, which then implies that $\varphi(\pi) = 2\pi$.

Now, if $\theta \in [0, \pi]$ is chosen such that $e^{2(n+1)i \theta} = e^{i\varphi(\theta)}$, then $z = e^{i \theta}$ satisfies Eq. (\ref{eqn.zbdry}).  This condition is satisfied whenever 
\begin{align}
\theta - \frac{1}{2(n+1)} \varphi(\theta) = \frac{\pi k}{n+1} \label{eqn.thetadef}
\end{align} 
for some $k$. For any $k \in \{1, ..., n-1\}$, 
\[ \frac{\pi k}{n+1} - \frac{1}{2(n+1)} \varphi\left( \frac{\pi k}{n+1} \right) < \frac{\pi k}{n+1} < \frac{\pi(k+1)}{n+1} -\frac{1}{2(n+1)} \varphi\left( \frac{\pi(k+1)}{n+1}\right) \]
since $0 < \varphi(\theta) < 2 \pi$ when $\theta \in (0, \pi)$. Therefore, by the continuity of $\varphi$ and the intermediate value theorem, we are guaranteed a solution $\tp_k$ satisfying Eq. (\ref{eqn.thetadef}) such that $\frac{\pi k}{n+1} < \tp_k < \frac{\pi(k+1)}{n+1}$ for each $1 \leq k \leq n-1$. Rearranging Eq. (\ref{eqn.thetadef}) in terms of $\tp_k$, 
\begin{align}
\tp_k = \frac{\pi k}{n+1} + \frac{\varphi(\tp_k)}{2(n+1)} = \frac{\pi \left(k + [\varphi(\tp_k)/2\pi]\right)}{n+1} =: \frac{\pi(k+r_k)}{n+1} \label{eqn.thetarkdef}
\end{align}
That is, $r_k = \varphi(\tp_k)/2\pi$. This proves the claim.
\end{proof}

\begin{claim}
\label{claim.lpc2}
For $n \geq 2$, there exists some $\gamma_n \in (-2, -3/2)$ solving Eq. (\ref{eqn.zbdry}) such that $\gamma_n \to -2$ as $n \to \infty$.
\end{claim}
\begin{proof}
Manipulating the left hand side of Eq. (\ref{eqn.zbdry}) we find $z^{2(n+1)} + 2z^{2n+1} - 2z - 1 = (z^2 + 2z) z^{2n} - (2z + 1) =: a_n(z)$. Define the map $g_n(z)$ for $z \in (-2, -1)$ by $g_n(z) := z^{2n}(z^2 + 2z) + 3.$ Then $g_n(z) > a_n(z)$ for every $z \in (-2,-1)$, and on this interval, the minimum of $g_n(z)$ is easily found at $z^{(n)}_{min} := -(2n+1)/(n+1)$. Evaluating, 
\begin{align*}
g_n(z^{(n)}_{min}) &= 4^n\left(1 - \frac{1}{2(n+1)}\right)^{2n}\left( 4\left[ 1 - \frac{1}{2(n+1)} \right]^2 - 4 \left[ 1 - \frac{1}{2(n+1)} \right]\right) + 3 \\
&= - \frac{4^{n+1}\left(1 - \frac{1}{2(n+1)}\right)^{2n+1}}{2(n+1)} + 3 \approx -\frac{4^{n+1}}{2e(n+1)} + 3.
\end{align*} 
We can see that for sufficiently large $n$, $g_n(z^{(n)}_{min}) < 0$, implying that for these values of $n$,  $a_n(z^{(n)}_{min}) < 0$. Since $a_n(-2) = +3$, the intermediate value theorem then guarantees a solution $\gamma_n$ to $a_n(z)= 0$ for some $z \in (-2, z^{(n)}_{min})$. Moreover, since $a_n(z)$ is strictly decreasing in $n$ for a fixed $z \in (-2, -1)$, and $a_2(-3/2) < 0$, we can always ensure that $\gamma_n \in (-2, -3/2)$ for $n \geq 2$.
\end{proof}

\begin{claim}
\label{claim.lpc3}
For $1 \leq k \leq n$ and $1 \leq m \leq n$, let $\alpha_{k}^{m}$, $\beta^{\p}_k$, and $\lambda_{k}^{\p}$ be given as in the statement of Theorem \ref{thm:Lpthm}. Define $f_k = \beta_k^{\p} (\alpha^1_k, \alpha^2_k, ..., \alpha^{n}_k)^{tr}$. Then $\{ f_k : 1 \leq k \leq n\}$ is an orthonormal set of eigenvectors of $L_n$ such that $L_n f_k = \frac{6}{\Delta^3} \lambda^{\p}_k f_k$. Moreover, Eqs. (\ref{eqn.bk}) and (\ref{eqn.bn}) hold.
\end{claim}
\begin{proof}
For $1 \leq k \leq n-1$, define $z_k := e^{i \tp_k}$ where $\tp_k$ are given in Claim \ref{claim.lpc1}. Also define $z_n := \gamma_n$ where $\gamma_n$ is given in Claim \ref{claim.lpc2}. Let $\tilde{f_k} \in \bb{R}^n$ be defined by $\tilde{f}_k(m) = a_k (z_k^m - z_k^{-m})$, where $a_k = (2 i)^{-1}$ for $k = 1, 2, ..., n-1$, and $a_n = 1$. Then $\tilde{f}_k(m) = \alpha_k^m$ for each $k, m \in \{1, 2, ..., n\}$. By our choices for $z_k$ and the discussion leading to Eq.\,(\ref{eqn.Dneigen}), $\mD_n \tilde{f}_k = (z_k + z_k^{-1} - 2)\tilde{f}_k$ for each $k$, which implies that $L_n \tilde{f}_k = (z_k + z_k^{-1} + 4) \tilde{f}_k$ since $L_n = \mD_n + 6I_n$. For $1 \leq k \leq n-1$, $z_k + z_k^{-1}+4 = 2\cos(\tp_k) + 4 = \frac{6}{\Delta^3} \lambda_k^{\p}$. Also $z_n + z_n^{-1} + 4 = \gamma_n + \gamma_n^{-1} + 4 = \frac{6}{\Delta^3} \lambda_n^{\p}$. Since $L_n$ is symmetric and the eigenvalues of $\{ \tilde{f}_k \}$ are distinct, then this collection of eigenvectors must be orthogonal. 

By assumption $f_k = \betap_k \tilde{f}_k$. The $\betap_k$ were chosen as normalizing constants, so it only remains to show that   Eqs. (\ref{eqn.bk}) and (\ref{eqn.bn}) hold for each $k$. To this end,
\begin{align*}
&(\betap_k)^{-2} = \| \tilde{f}_k \|^2 \\
&= a_k^2 \sum_{m=1}^n (z_k^{m} - z_k^{-m})^2 = a_k^2 \sum_{m=1}^n (z_k^{2m} + z_k^{-2m} - 2)\\
&= a_k^2 \left( \frac{1-z_k^{2(n+1)}}{1-z_k^2} + \frac{1-z_k^{-2(n+1)}}{1-z_k^{-2}} - 2n - 2 \right) \\
&= a_k^2 \left( -2(n+1) + \frac{z_k^{2(n+1/2)} - z_k^{-2(n+1/2)}}{z_k-z_k^{-1}} \right)
\end{align*}
Eq. (\ref{eqn.bn}) now follows immediately with $z_n = \gamma_n$ and $a_n = 1$. For $k \neq n$,
\begin{align*}
(\betap_k)^{-2} = -\frac{1}{4} \left( -2(n+1) + \frac{e^{i 2(n+1/2) \tp_k} - e^{-i2(n+1/2)\tp_k}}{e^{i\tp_k}-e^{-i \tp_k}} \right) \\
= \frac{n}{2}\left( 1 - \frac{1}{n}\left[ \frac{\sin((2n+1)\tp_k)}{ \sin(\tp_k)}  - 1  \right] \right)
\end{align*}
From Eq.\,(\ref{eqn.tk}), $2(n+1) \tp_k = 2 \pi k + 2 \pi r_k$ and hence
\begin{align*}
\sin((2n+1)\tp_k) = \sin(2(n+1)\tp_k - \tp_k) =  &\sin(2 \pi r_k) \cos(\tp_k) - \cos(2 \pi r_k)\sin(\tp_k).
\end{align*}
 Therefore,
\begin{align*}
\left| \frac{\sin((2n+1)\tp_k)}{\sin(\tp_k)} \right|  = \left| \frac{\sin(2\pi r_k)}{\sin(\tp_k)}\cos(\tp_k) - \cos(2 \pi r_k) \right| \leq 1+  \left| \frac{\sin(2\pi r_k)}{\sin(\tp_k)}\right| 
\end{align*}
From Eq.\,(\ref{eqn.thetarkdef}), $r_k = (2 \pi)^{-1} \varphi(\tp_k)$. This gives the estimate,
\begin{align}
\left| \frac{\sin(2\pi r_k)}{\sin(\tp_k)}\right| \leq \sup_{x \in (0, \pi)} \left| \frac{\sin(\varphi(x))}{\sin(x)}\right| \label{eqn.sinbd}
\end{align}
In the proof of Claim \ref{claim.lpc1} we established that $\varphi(0) = 0$, $\varphi(\pi) =2 \pi$, and $1 \leq \varphi'(x) \leq 4$ for all $x \in [0,\pi]$. Hence L'Hopital tells us that $\sin(\varphi(x))/\sin(x)$  remains bounded as $x$ approaches $0$ or $\pi$, implying that the right hand side of Eq. (\ref{eqn.sinbd}) will be bounded independent of $k$ and $n$.
This shows that $(\betap_k)^2 = (2/n)(1- O(1/n))^{-1}$ and finishes the proof of Eq. (\ref{eqn.bk}).
\end{proof}

\begin{claim}
\label{claim.lpc4}
There exists an invertible $n \times n$ upper triangular matrix $A_n$ such that $L_n = A_n A_n^{tr}$. Moreover, if $i \leq j \leq n$, $\big| [A_n^{-1}]_{i,j} \big|^2 \leq 2^{-(j-i+1)}$.
\end{claim}
\begin{proof}
Define $d_n := \det(L_n)$ where for $n = 1$, we define $L_1$ to be the $1 \times 1$ matrix with entry 2. Using the cofactor expansion to calculate the determinant, we find the recursive formula 
\begin{align}
d_{n+2} = 4 d_{n+1} - d_{n}. \label{eqn.dnrecurse}
\end{align} 
By defining $d_0 := 1$, this equation holds true for every $n \geq 0$. With initial values $d_0 = 1$ and $d_1 = 2$, we can use elementary linear algebra to find the closed formula
\begin{align}
d_n = \frac{1}{2}\left\{ (2 + \sqrt{3})^n + (2-\sqrt{3})^n \right\}, \label{eqn.dnform}
\end{align}
valid for every $n \geq 0$. Define $A_n$ by,
\renewcommand\arraystretch{1.5}
\begin{align*}
[A_n]_{i,j} = 
\begin{cases}[1.5]
\sqrt{\frac{d_{n-i+1}}{d_{n-i}}} & i = j \\
\sqrt{\frac{d_{n-i-1}}{d_{n-i}}} & i+1 = j \leq n \\
0 & \text{ otherwise}
\end{cases}
\end{align*}

%\begin{align}
%A_n = 
%\left(
%\begin{array}{ccccc}
%\sqrt{\frac{d_n}{d_{n-1}}} & \sqrt{\frac{d_{n-2}}{d_{n-1}}} & 0 & 0 & 0 \\
%0 &  \sqrt{\frac{d_{n-1}}{d_{n-2}}} &  \sqrt{\frac{d_{n-3}}{d_{n-2}}} & 0 & 0 \\
%0 & 0 & \ddots  &  \ddots & 0 \\
%0 & 0 & 0 & \sqrt{\frac{d_2}{d_1}} & \sqrt{\frac{d_0}{d_1}} \\
%0 & 0 & 0 & 0 & \sqrt{\frac{d_1}{d_0}}
%\end{array}
%\right).
%\end{align}
In particular, using Eq. (\ref{eqn.dnrecurse}), 
\[ [ A_n A_n^{tr} ]_{k,k} = \frac{d_{n-k+1}}{d_{n-k}} + \frac{d_{n-k-1}}{d_{n-k}} = \frac{4 d_{n-k}}{d_{n- k}} = 4\]
for every $k \in \{1, 2, ..., n-1\}$, and also $[A_n A_n^{tr}]_{n,n} = d_1/d_0 = 2$. Moreover, the off-diagonal terms of $A_n A_n^{tr}$ are 1 when there is a product of the form $\sqrt{\frac{d_{k}}{d_{k-1}}}\sqrt{\frac{d_{k-1}}{d_k}}$, which occur exactly on the super- and sub-diagonal entries. All other elements are 0.  Therefore, $A_n A_n^{tr} = L_n$.

A Gaussian elimination allows us deduce the inverse,
\[
[A_n^{-1}]_{i,j} = \begin{cases}[1.5]
\sqrt{\frac{d_{n-i}}{d_{n-i+1}}} & i = j \\
(-1)^{j-i} \frac{d_{n-j}}{\sqrt{d_{n-i} d_{n-i+1}}} & i < j \\
0 & \text{ otherwise}.
\end{cases}
\]
To confirm,
\begin{align*}
[A_n A_n^{-1}]_{i,j} =  \sum_{k=1}^n [A_n]_{i,k} [A_n^{-1}]_{k,j} = [A_n]_{i,i} [A_n^{-1}]_{i,j} + 1_{\{i < n \}} [A_n]_{i,i+1} [A_n^{-1}]_{i+1,j}\\ 
= \begin{cases}[2]
\sqrt{\frac{d_{n-i+1}}{d_{n-i}}} \sqrt{\frac{d_{n-i}}{d_{n-i+1}}} = 1  &  i = j \\
\sqrt{\frac{d_{n-i+1}}{d_{n-i}}} \frac{d_{n-i-1}}{\sqrt{d_{n-i} d_{n-i+1}}} - \sqrt{\frac{d_{n-i-1}}{d_{n-i}}} \sqrt{\frac{d_{n-i-1}}{ d_{n-i}}} = 0 & i+1=j \\
\sqrt{\frac{d_{n-i+1}}{d_{n-i}}} \frac{d_{n-j}}{\sqrt{d_{n-i} d_{n-i+1}}} - \sqrt{\frac{d_{n-i-1}}{d_{n-i}}} \frac{d_{n-j}}{\sqrt{d_{n-i-1} d_{n-i}}} = 0 & i+1<j \\
0 & \text{ otherwise}
\end{cases}
\end{align*}
 Hence $A_n A_n^{-1} = I$. 
 
 To finish the proof of the claim, we need to get the appropriate size estimates for the entries in $A_n^{-1}$, which can be done by using Eq. (\ref{eqn.dnform}). Notice that $(2 + \sqrt{3})^{-1} = 2 - \sqrt{3}$. So, for any $k, l \geq 0$, 
\begin{align*}
&\frac{(2 + \sqrt{3})^k + (2-\sqrt{3})^k}{(2 + \sqrt{3})^l + (2-\sqrt{3})^l} = (2+\sqrt{3})^{k-l} \left( \frac{ 1 + (2-\sqrt{3})^{2k}}{1 + (2-\sqrt{3})^{2l}}\right) \\
&\qquad \leq 
\begin{cases}
(2+\sqrt{3})^{k-l} \left(1 + (2 - \sqrt{3})^2\right) = (2+\sqrt{3})^{k-l}(8-4\sqrt{3}) & \text{ for } k \geq 1 \\
2 (2+\sqrt{3})^{k-l} & \text{ for } k \geq 0
\end{cases}
\end{align*}
This implies
\begin{align*}
\left| [A_n^{-1}]_{i,i} \right|^2 = \frac{d_{n-i}}{d_{n-i+1}} \leq \frac{8-4\sqrt{3}}{2+\sqrt{3}} < \frac{1}{2}
\end{align*}
for $i < n$, and
\begin{align*}
\left| [A_n^{-1}]_{i,j} \right|^2 = \frac{d_{n-j}^2}{d_{n-i}d_{n-i+1}} = \frac{d_{n-j}}{d_{n-i}}\cdot\frac{d_{n-j}}{d_{n-i+1}}  \leq \frac{2}{2 + \sqrt{3}} \cdot \frac{2}{(2+\sqrt{3})^{2(j-i)}} < \frac{1}{2} \left( \frac{1}{2}\right)^{j-i}
\end{align*}
whenever $i < j \leq n$. With $|[A_n^{-1}]_{n,n}|^2 = \frac{1}{2}$, the claim is proved.
\end{proof}\\

\textbf{Conclusion of the Proof of Theorem \ref{thm:Lpthm}}. Recall that $\mLp = \frac{\Delta^3}{6} L_n \otimes I_d$. Claim \ref{claim.lpc3} along with standard results concerning the Kronecker product tell us that $\{ u_{k,a}^{\p} = f_{k} \otimes e_a : 1 \leq k \leq n, 1 \leq a \leq d\}$ forms a collection of orthonormal eigenvectors of $\mLp$ with respective eigenvalues $\{ \lambda_{k}^{\p} = \lambda_{k,a}^{\p} : 1 \leq k \leq n, 1 \leq a \leq d\}$. Further, from Claim \ref{claim.lpc4}, $\mc{A}_{\p} := \sqrt{(\Delta^3/6)}A_n \otimes I_d$ is an upper triangular matrix such that $\mLp = \mc{A}_{\p} \mc{A}_{\p}^{tr}$ with $| [\mc{A}_{\p}^{-1}]_{i,j}|^2 = (6/\Delta^3) |[A_n^{-1}]_{i,j}|^2$.  Thus, armed with Claims \ref{claim.lpc1} through \ref{claim.lpc4}, we have shown the validity of the assertions of Theorem \ref{thm:Lpthm}.
\end{proof}

\begin{cor}
\label{cor:BetaInt}
Suppose that $0 \leq r < s \leq1$ and  $f:[0,\pi]\to \re$ is Lipshitz. Then,
\begin{align}
\lim_{n\to \infty} \sum_{\{k : r < k/n < s \}} (\betap_k)^2 f(\tp_k) = 2 \int_r^s f(\pi t)dt.
\end{align}
\end{cor}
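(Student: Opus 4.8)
The plan is to recognize the sum as a Riemann sum for $2\int_r^s f(\pi t)\,dt$, with all deviations from an honest Riemann sum being of lower order. First I would note that the constraint $r < k/n < s$ with $s \le 1$ forces $k \le n-1$, so only the indices $1 \le k < n$ contribute and we are entitled to use the formulas from Theorem \ref{thm:Lpthm} valid in that range, namely $\tp_k = \pi(k+r_k)/(n+1)$ with $r_k = \varphi(\tp_k)/(2\pi) \in (0,1)$, and $(\betap_k)^2 = 2/n + O(1/n^2)$.

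The second step is a uniform estimate of $\bigl|\tp_k - \pi k/n\bigr|$. Writing $\tp_k - \pi k/n = \pi k\bigl(\tfrac{1}{n+1}-\tfrac{1}{n}\bigr) + \tfrac{\pi r_k}{n+1} = -\tfrac{\pi k}{n(n+1)} + \tfrac{\pi r_k}{n+1}$ and using $1\le k \le n$ and $0 < r_k < 1$, one gets $|\tp_k - \pi k/n| \le 2\pi/(n+1)$, uniformly in $k$. Since a Lipschitz function on $[0,\pi]$ is in particular bounded, with Lipschitz constant $L$ and bound $\|f\|_\infty$ we obtain $|f(\tp_k) - f(\pi k/n)| \le 2\pi L/(n+1)$ for all admissible $k$.

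Combining these, $(\betap_k)^2 f(\tp_k) = \bigl(\tfrac{2}{n} + O(1/n^2)\bigr)\bigl(f(\pi k/n) + O(1/n)\bigr) = \tfrac{2}{n} f(\pi k/n) + O(1/n^2)$, with the implied constants uniform in $k$ (they depend only on $L$, $\|f\|_\infty$, and the constant in $(\betap_k)^2 = 2/n + O(1/n^2)$). Summing over the at most $n$ indices $k$ with $r < k/n < s$, the accumulated error is $O(1/n)$, so it suffices to compute $\lim_{n\to\infty} \sum_{\{k:\, r<k/n<s\}} \tfrac{2}{n} f(\pi k/n)$. This is twice a Riemann sum, with mesh $1/n$ and sample points $k/n$, for the continuous function $t \mapsto f(\pi t)$ over the interval $[r,s]$; replacing the strict inequalities by non-strict ones, or equivalently inserting or deleting the at most two endpoint terms, changes the sum by $O(1/n)$, so the limit is $2\int_r^s f(\pi t)\,dt$.

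There is no genuine obstacle here; the proof is an elementary estimate plus convergence of Riemann sums. The only points meriting a little care are ensuring the $O(\cdot)$ bounds are uniform in $k$ (so that summing $n$ of them still gives something vanishing) and the harmless treatment of the endpoints of the summation range, where $f$'s boundedness makes boundary contributions $O(1/n)$.
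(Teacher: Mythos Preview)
Your proof is correct and follows essentially the same approach as the paper: both reduce the sum to the Riemann sum $\sum \tfrac{2}{n} f(\pi k/n)$ by showing that the per-term discrepancy $(\betap_k)^2 f(\tp_k) - \tfrac{2}{n} f(\pi k/n)$ is $O(1/n^2)$ uniformly in $k$, using the Lipschitz bound on $f$ together with the estimates $|\tp_k - \pi k/n| = O(1/n)$ and $(\betap_k)^2 = 2/n + O(1/n^2)$ from Theorem~\ref{thm:Lpthm}. Your treatment is slightly more explicit about why only indices $k \le n-1$ arise and about the endpoint terms of the Riemann sum, but the argument is the same.
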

\begin{proof}
 Notice that $\sum \frac{2}{n} f\left( \frac{\pi k}{n} \right)$ is the Riemann sum approximation to $2\int f(\pi t) dt$. Hence it suffices to show that $| \sum \left\{ (\betap_k)^2 f(\tp_k) - \frac{2}{n} f\left( \frac{\pi k}{n} \right) \right\}| \to 0$, which will be done by showing that the summand $ (\betap_k)^2 f(\tp_k) - \frac{2}{n} f\left( \frac{\pi k}{n} \right)$ is $O(1/n^2)$. By assumption, $|f(x) - f(y)| = O(|x-y|)$ and $\sup_{x \in [0,\pi]} |f(x)| < \infty$.  Also by assumption, $k \neq n$, thus Eqs.\,(\ref{eqn.tk}) and (\ref{eqn.bk}) imply  $|f(\tp_k) - f(\pi k / n) | = O(1/n)$, $|(\betap_k)^2 - 2/n| = O(1/n^2)$, and $(\betap_k)^2 = O(1/n)$. Therefore,
\begin{align*}
&\left| (\betap_k)^2 f(\tp_k) - \frac{2}{n} f\left( \frac{\pi k}{n} \right) \right| \\
&\quad \leq \underbrace{| (\betap_k)^2 |}_{O(1/n)} \cdot \underbrace{| f(\tp_k) - f( \frac{\pi k}{n}) |}_{O(1/n)} + \underbrace{| f( \frac{\pi k}{n})|}_{O(1)} \cdot \underbrace{| (\betap_k)^2 - \frac{2}{n} |}_{O(1/n^2)} = O(\frac{1}{n^2}).
\end{align*}
from which the result follows.
\end{proof}

\begin{cor}
\label{cor:ksumsize}
Take $0 <  \delta < 1$ and define $\bdry{\delta} = \bdry{\delta}(n) := \{ k \in \nats : \frac{\pi k}{n+1} \leq \delta \text{ or }  \frac{\pi k}{n+1} \geq \pi - \delta\}$ and $\bdryc{\delta} = \bdryc{\delta}(n) := \{ k \in \nats : \delta < \frac{\pi k}{n+1} < \pi - \delta\}$. Let $f:[0,\pi]\to \re$ be Lipshitz. Set $\Lambda<\infty$ with  $\sup_x \left| f(x) \right| \leq \Lambda$ and $|f(x) - f(y) | \leq \Lambda |x-y|$ for each $x,y \in [0,\pi]$. Then there exists a constant $C = C(\Lambda) < \infty$ such that if $j \in \bdryc{\delta}$,
\begin{align}
\left| \sum_{k=1}^{n-1}  (\betap_{k})^2 f(\tp_k) e^{i 2j\tp_k} \right| \leq \frac{C}{n \sin(\delta)}.
\end{align}
And for any $j = 1, 2, ..., n$, 
\begin{align}
\left| \sum_{k=1}^{n-1} (\betap_{k})^2 f(\tp_k) e^{i 2j \tp_k} \right| \leq C
\end{align}
where $C$ is independent of $n$.
\end{cor}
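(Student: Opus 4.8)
The plan is to prove both inequalities by Abel (summation‑by‑parts), exploiting that the coefficients $(\betap_k)^2 f(\tp_k)$ vary slowly in $k$ while $e^{i2j\tp_k}$ behaves like a geometric progression with ratio $q:=e^{i2\pi j/(n+1)}$.

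First I would dispose of the discrepancy between $(\betap_k)^2$ and $2/n$. By Theorem \ref{thm:Lpthm} (Eq.\,(\ref{eqn.bk})) one has $\bigl|(\betap_k)^2-\tfrac2n\bigr|=O(1/n^2)$ uniformly in $k<n$ with a universal constant, so
\[ \Bigl|\sum_{k=1}^{n-1}\bigl((\betap_k)^2-\tfrac2n\bigr)f(\tp_k)e^{i2j\tp_k}\Bigr|\le \Lambda(n-1)\cdot O(1/n^2)=O(\Lambda/n), \]
which is already bounded by $C(\Lambda)/(n\sin\delta)$ (since $\sin\delta\le1$) and by $C(\Lambda)$. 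Hence it suffices to estimate $\tfrac2n\bigl|\sum_{k=1}^{n-1}f(\tp_k)e^{i2j\tp_k}\bigr|$.

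Next, using $\tp_k=\pi(k+r_k)/(n+1)$ I write $e^{i2j\tp_k}=q^k c_k$ with $c_k:=e^{i2\pi j r_k/(n+1)}$, and set $b_k:=f(\tp_k)c_k$. I need $|b_k|\le\Lambda$ and $\sum_{k=1}^{n-2}|b_k-b_{k+1}|=O(\Lambda)$. For the second point, $b_k-b_{k+1}=f(\tp_k)(c_k-c_{k+1})+(f(\tp_k)-f(\tp_{k+1}))c_{k+1}$, so $|b_k-b_{k+1}|\le\Lambda|c_k-c_{k+1}|+\Lambda|\tp_k-\tp_{k+1}|$; since $\tp_k$ is increasing, $\sum_k|\tp_k-\tp_{k+1}|=\tp_{n-1}-\tp_1<\pi$. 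The one step that needs care is $\sum_k|c_k-c_{k+1}|$: here $|c_k-c_{k+1}|\le\tfrac{2\pi j}{n+1}|r_k-r_{k+1}|$, and because $r_k=\varphi(\tp_k)/2\pi$ is monotone in $k$ (as $\varphi$ is increasing and $\tp_k$ is increasing) with values in $[0,1]$, the telescoping bound $\sum_k|r_k-r_{k+1}|=r_{n-1}-r_1\le1$ gives $\sum_k|c_k-c_{k+1}|\le\tfrac{2\pi j}{n+1}<2\pi$. Thus $\sum_k|b_k-b_{k+1}|\le3\pi\Lambda$.

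Finally, with $T_m:=\sum_{l=1}^m q^l$ Abel summation yields $\sum_{k=1}^{n-1}b_kq^k=b_{n-1}T_{n-1}+\sum_{k=1}^{n-2}(b_k-b_{k+1})T_k$. For $1\le j\le n$ we have $q\neq1$ and $|T_m|\le 2/|q-1|=1/|\sin(\pi j/(n+1))|$; when $j\in\bdryc{\delta}$, $\pi j/(n+1)\in(\delta,\pi-\delta)$ forces $|\sin(\pi j/(n+1))|\ge\sin\delta$, so $|T_m|\le1/\sin\delta$ and $\tfrac2n|\sum b_kq^k|\le\tfrac{2}{n\sin\delta}\bigl(|b_{n-1}|+\sum|b_k-b_{k+1}|\bigr)=O(\Lambda/(n\sin\delta))$, which with the first step gives the first inequality with $C=C(\Lambda)$. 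For the second inequality and arbitrary $j$ no cancellation is used: $\bigl|\sum_{k=1}^{n-1}(\betap_k)^2f(\tp_k)e^{i2j\tp_k}\bigr|\le\Lambda\sum_{k=1}^{n-1}(\betap_k)^2\le4\Lambda$, using $(\betap_k)^2=\tfrac2n(1-\epsilon_k)^{-1}\le 4/n$ for $n$ large (Eq.\,(\ref{eqn.bk})) and absorbing the finitely many small $n$ into $C$. The main obstacle is exactly the point flagged above: the phase factor $c_k$ is not close to $1$ when $j$ is comparable to $n$, and the resolution is to note it has $O(1)$ total variation because $r_k=\varphi(\tp_k)/2\pi$ is monotone and bounded, a consequence of the monotonicity of $\varphi$ from Theorem \ref{thm:Lpthm}.
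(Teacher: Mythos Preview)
Your proof is correct and follows essentially the same route as the paper: Abel summation against the geometric progression $q^k=e^{i2\pi jk/(n+1)}$, with the partial sums bounded by $1/\sin\delta$ when $j\in\Omega_\delta$, and the trivial bound $(\betap_k)^2=O(1/n)$ for the second inequality. The only organizational differences are that the paper keeps $(\betap_k)^2$ inside the slowly varying coefficient (using $|(\betap_{k+1})^2-(\betap_k)^2|=O(1/n^2)$) rather than stripping it off first, and bounds $|c_{k+1}-c_k|$ pointwise by $O(1/n)$ via $\zeta_{k+1}-\zeta_k=O(1/n^2)$ rather than by telescoping the monotone $r_k$; your telescoping argument is a slightly cleaner variant of the same estimate.
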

\begin{proof}
From Eq. (\ref{eqn.tk}), for $1 \leq k \leq n-1$, $\zeta_k := \tp_k - \pi k / (n+1) = \pi r_k / (n+1)$. In particular, if $k \leq n-2$, then 
\begin{align*}
\zeta_{k+1} - \zeta_k =  \frac{\pi(r_{k+1} - r_k)}{n+1} = \frac{\varphi(\tp_{k+1}) - \varphi(\tp_k)}{2(n+1)} \leq \frac{4 (\tp_{k+1} - \tp_k)}{2(n+1)} = O(\frac{1}{n^2}).
\end{align*} 
This further implies that for any $1\leq j\leq n$, $|e^{i 2 j \zeta_{k+1}} - e^{i 2 j \zeta_{k}}| = O(1/n)$. Combining this with Eqs. (\ref{eqn.tk}) and (\ref{eqn.bk}),  
\begin{align*}
&\left| (\betap_{k+1})^2 f(\tp_{k+1}) e^{i 2j \zeta_{k+1}} - \right.  \left. (\betap_k)^2 f(\tp_k) e^{i 2j \zeta_k} \right| \\
&\quad \leq \overbrace{|f(\tp_{k+1})e^{i 2j \zeta_{k+1}}|}^{O(1)}\cdot \overbrace{|(\betap_{k+1})^2 - (\betap_k)^2 |}^{O(1/n^2)} + \overbrace{|(\betap_k)^2 e^{i2j\zeta_k}|}^{O(1/n)}\cdot \overbrace{|f(\tp_{k+1}) - f(\tp_k)|}^{O(1/n)} \\
&\qquad \qquad + \underbrace{|(\betap_k)^2 f(\tp_k)|}_{O(1/n)} \cdot \underbrace{|e^{i2j\zeta_{k+1}} - e^{i2j\zeta_{k}}|}_{O(1/n)} \\
&\quad = O(\frac{1}{n^2})
\end{align*}
where the implicit constant in the last equality is independent of $k$, but certainly depends on $\Lambda$.

Now, define the partial sum $S_m := \sum_{k=1}^m e^{i2j \frac{\pi k}{n+1}}$ and $S_0 := 0$. Using the fact that $S_m$ is a geometric series,
\begin{align*}
S_m &= \frac{ 1 - e^{i 2j \frac{\pi (m+1)}{n+1}} }{1- e^{i 2j \frac{\pi}{n+1}}} - 1 = \frac{1}{2i} \frac{e^{i 2j \frac{\pi (m+1/2)}{n+1}} - e^{i j \frac{\pi}{n+1}}}{\sin\left(  \frac{\pi j}{n+1} \right)}-1
\end{align*}
so that $\left| S_m \right| \leq (\sin(\delta))^{-1} + 1.$ 
Applying summation by parts,
\begin{align*}
&\sum_{k=1}^{n-1} (\betap_k)^2 f(\tp_k) e^{i 2j \tp_k} = (\betap_{n-1})^2 f(\tp_{n-1})e^{i2j\tp_{n-1}} + \sum_{k=1}^{n-2} (\betap_k)^2 f(\tp_k) e^{i 2j \zeta_k} e^{i 2j \frac{\pi k}{n+1}}\\
&\quad = (\betap_{n-1})^2 f(\tp_{n-1})e^{i2j\tp_{n-1}}  + \sum_{k=1}^{n-2} (\betap_k)^2 f(\tp_k) e^{i 2j \zeta_k} (S_k - S_{k-1})\\
&\quad = (\betap_{n-1})^2 f(\tp_{n-1}) (e^{i2j\tp_{n-1}} + e^{i 2 j \zeta_{n-1}}S_{n-2}) \\
&\qquad \qquad - \sum_{k=1}^{n-2} \left( (\betap_{k+1})^2 f(\tp_{k+1}) e^{i 2j \zeta_{k+1}} - (\betap_k)^2 f(\tp_k)e^{i2j\zeta_k} \right) S_{k}.
\end{align*}
Using the above estimates with Eq. (\ref{eqn.bk}),
\begin{align*}
\left| \sum_{k=1}^{n-1} (\betap_k)^2 f(\tk) e^{i 2j \tk} \right| &\leq  \frac{1}{\sin(\delta)}  \left( O( \frac{1}{n}) + \sum_{k=1}^{n-2} O(\frac{1}{n^2}) \right) \leq \frac{C}{n \sin(\delta)}.
\end{align*}
The second claim is nearly immediate since $(\betap_k)^2 \leq O(1/n)$ for any $1 \leq k \leq n-1$. Hence, 
\begin{align*}
\left| \sum_{k=1}^{n-1} (\betap_k)^2 f(\tp_k) e^{i 2j \tk} \right| &\leq  \sum_{k=1}^{n-1} O( \frac{1}{n} ) \leq C.
\end{align*}
\end{proof}

\begin{cor}
\label{cor:trLUL}
Let $U$ be an $n \times n$ symmetric tri-diagonal block matrix with $d \times d$ blocks. Then, 
\begin{align}
\tr(\mLp^{-1/2} U \mLp^{-1/2}) = \sum_{m,k=1}^n \frac{(\betap_k)^2}{\lambda^{\p}_k}\left[ (\alpha^m_k)^2 \tr([U]_{m,m}) + 2 \alpha_k^{m} \alpha_k^{m+1} \tr([U]_{m,m+1}) \right], \label{eqn:trLUL}
\end{align}
where $\betap_k, \lambda^{\p}_k,$ and $\alpha_k^m$ are as in Theorem \ref{thm:Lpthm}.
Moreover, this implies that there exists a constant $C = C(d) < \infty$ such that
\begin{align}
|\tr(\mLp^{-1/2} U \mLp^{-1/2})| &\leq \frac{C}{\Delta^3} \sum_{m=1}^n \left( |\tr([U]_{m,m})| + |\tr([U]_{m,m+1})| \right)\\
&\leq \frac{C}{\Delta^3} \sum_{m=1}^n \left( \|[U]_{m,m}\| + \|[U]_{m,m+1}\| \right), \label{eqn:trLULsize}
\end{align}
where we define $[U]_{n,n+1} := 0$.
\end{cor}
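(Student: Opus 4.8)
The plan is to diagonalize $\mLp$ in the orthonormal eigenbasis $\{\up_{k,a} : 1\le k\le n,\ 1\le a\le d\}$ furnished by Theorem \ref{thm:Lpthm}, with $\mLp\up_{k,a} = \lambda_k^{\p}\up_{k,a}$. Since the explicit eigenvalue formulas give $\lambda_k^{\p} > 0$ for every $k$, $\mLp$ is symmetric positive definite and $\mLp^{-1/2}$ is a well-defined self-adjoint operator with $\mLp^{-1/2}\up_{k,a} = (\lambda_k^{\p})^{-1/2}\up_{k,a}$. Expanding the trace in this basis and using self-adjointness,
\[
\tr(\mLp^{-1/2}U\mLp^{-1/2}) = \sum_{k=1}^n\sum_{a=1}^d \langle \up_{k,a},\, \mLp^{-1/2}U\mLp^{-1/2}\up_{k,a}\rangle = \sum_{k=1}^n\sum_{a=1}^d \frac{1}{\lambda_k^{\p}}\,\langle \up_{k,a},\, U\up_{k,a}\rangle .
\]
Writing $\up_{k,a} = \betap_k(\alpha_k^1 e_a,\dots,\alpha_k^n e_a)^{tr}$ and expanding the quadratic form block by block gives $\langle \up_{k,a}, U\up_{k,a}\rangle = (\betap_k)^2\sum_{m,l}\alpha_k^m\alpha_k^l\,\langle e_a, [U]_{m,l}e_a\rangle$, and summing over $a=1,\dots,d$ turns $\langle e_a,[U]_{m,l}e_a\rangle$ into $\tr([U]_{m,l})$.

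It remains to collapse the $l$-sum using the hypotheses on $U$. Tri-diagonality forces $[U]_{m,l}=0$ unless $l\in\{m-1,m,m+1\}$, and symmetry gives $[U]_{m+1,m}=[U]_{m,m+1}^{tr}$, hence $\tr([U]_{m+1,m})=\tr([U]_{m,m+1})$; combining the $l=m+1$ and $l=m-1$ contributions produces the factor $2\alpha_k^m\alpha_k^{m+1}\tr([U]_{m,m+1})$, with the convention $[U]_{n,n+1}:=0$ making the $m=n$ boundary term drop out. This is exactly Eq. (\ref{eqn:trLUL}).

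For the size estimate I would bound each factor in Eq. (\ref{eqn:trLUL}) uniformly in $n$ and $m$. From the explicit eigenvalue formulas in Theorem \ref{thm:Lpthm} one reads off $\lambda_k^{\p}\ge \Delta^3/4$ for all $k$, so $(\lambda_k^{\p})^{-1}\le 4/\Delta^3$. For $k<n$, $|\alpha_k^m| = |\sin(m\tp_k)|\le 1$ and $(\betap_k)^2 = 2/n + O(1/n^2)$ with the error uniform in $k$, so $\sum_{k=1}^{n-1}(\betap_k)^2(\alpha_k^m)^2 \le \sum_{k=1}^{n-1}(\betap_k)^2 = 2 + O(1/n)\le C$; for $k=n$, $|\alpha_n^m| = |\gamma_n^m - \gamma_n^{-m}|\le 2|\gamma_n|^n$ since $m\le n$ and $|\gamma_n|>1$, while $(\betap_n)^2 = O(\gamma_n^{-2n})$, so the single $k=n$ term contributes $O(1)$ as well; the same bounds handle $\sum_k(\betap_k)^2|\alpha_k^m\alpha_k^{m+1}|$. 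Pulling $4/\Delta^3$ and these $O(1)$ $k$-sums outside the $m$-sum yields the first inequality, and $|\tr(B)|\le d\|B\|$ for any $d\times d$ matrix $B$ converts it into the second, with $C=C(d)$. The one genuinely delicate point — and the step I expect to need the most care — is the top mode $k=n$, whose entries $\alpha_n^m$ grow geometrically in $m$: the estimate is rescued precisely because the normalization $(\betap_n)^2 = O(\gamma_n^{-2n})$ from Eq. (\ref{eqn.bn}) cancels the worst-case size $|\alpha_n^m|^2 = O(\gamma_n^{2n})$, and one must verify the implied constants are independent of $n$, which holds since $|\gamma_n|\in(3/2,2)$ stays bounded away from $1$ and from $\infty$.
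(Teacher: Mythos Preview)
Your proof is correct and follows essentially the same approach as the paper: expand the trace in the orthonormal eigenbasis of $\mLp$ (the paper phrases this as $\tr(U\mLp^{-1})$ via $\tr(AB)=\tr(BA)$, which is equivalent to your use of self-adjointness of $\mLp^{-1/2}$), collapse the block sum using tri-diagonality and symmetry, and then for the size estimate split off the $k=n$ mode and use $(\betap_n)^2 = O(\gamma_n^{-2n})$ to cancel the geometric growth of $|\alpha_n^m|^2$. Your identification of the $k=n$ term as the only delicate point matches the paper's treatment exactly.
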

\begin{proof}
The orthonormal basis of eigenvectors $ u_{k,a}^{\p} $ of $\mLp$ from Theorem \ref{thm:Lpthm} are also eigenvectors for $\mLp^{-1}$ with respective eigenvalues $1/\lambda^{\p}_{k}$. Let for $i = 1, ..., n$ let $u_{k,a}^i := \betap_k \alpha^i_k e_a$ so that $u_{k,a}^{\p} = (u_{k,a}^1, u_{k,a}^2, ..., u_{k,a}^n)^{tr}$. To keep our manipulations succinct, we also define $\alpha^{0}_k = \alpha^{n+1}_k = 0$ for each $k = 1, ..., n$.  Since $\tr(AB) = \tr(BA)$, 
\begin{align*}
&\tr \left( \mLp^{-1/2} U \mLp^{-1/2} \right) \\
&\qquad = \tr \left( U \mLp^{-1} \right) = \sum_{k=1}^n \sum_{a=1}^d U \mLp^{-1} u_{k,a}^{\p} \cdot u_{k,a}^{\p} = \sum_{k=1}^n  \sum_{a=1}^d \frac{1}{\lambda^{\p}_k} U u_{k,a}^{\p} \cdot u_{k,a}^{\p} \\
&\qquad = \sum_{k=1}^n \frac{1}{\lambda_k^{\p}} \sum_{a=1}^d \sum_{i,j=1}^n [U]_{i,j} u_{k,a}^j \cdot u_{k,a}^i \\
&\qquad =  \sum_{k=1}^n \frac{1}{\lambda_k^{\p}} \sum_{a=1}^d \sum_{i=1}^n (\betap_k)^2 ( \alpha_k^{i-1} \alpha_k^{i} [U]_{i,i-1} e_a + (\alpha_k^i)^2 [U_{i,i}]e_a + \alpha_k^i \alpha_k^{i+1} [U]_{i,i+1}) \\
&\qquad = \sum_{k=1}^n \sum_{i=1}^n \frac{(\betap_k)^2}{\lambda^{\p}_k}\left[ (\alpha^i_k)^2 \tr([U]_{i,i}) + 2 \alpha_k^{i} \alpha_k^{i+1} \tr([U]_{i,i+1}) \right],
\end{align*}
which is Eq. (\ref{eqn:trLUL}).

 For $1 \leq k \leq n-1$, the estimates for $\betap_{k}, \lambda^{\p}_k$, and $\alpha^m_k$ in Theorem \ref{thm:Lpthm} imply the existence of $C = C(d, \text{curvature}) < \infty$ such that
\begin{align*}
\sum_{m=1}^n\sum_{k=1}^{n-1} \frac{(\betap_k)^2}{\lambda^{\p}_k}\left[ (\alpha^m_k)^2 \tr([U]_{m,m}) + 2 \alpha_k^{m} \alpha_k^{m+1} \tr([U]_{m,m+1}) \right] \\
\leq \frac{C}{\Delta^3}\sum_{m=1}^n\sum_{k=1}^{n-1} \frac{1}{n} \left( |\tr([U]_{m,m})| + |\tr([U]_{m,m+1})| \right)\\
\leq  \frac{C}{\Delta^3} \sum_{m=1}^n \left( |\tr([U]_{m,m})| + |\tr([U]_{m,m+1})| \right).
\end{align*}
For $k=n$, $\alpha_n^m = \gamma_n^m - \gamma_n^{-m}$, so that both $(\alpha_n^m)^2$ and $\alpha_n^{m} \alpha_n^{m+1}$ are $O(\gamma_n^{2m})$. According to Theorem \ref{thm:Lpthm}, $(\betap_n)^2 = O(\gamma_n^{-2n})$. Therefore 
\begin{align*}
\frac{(\betap_n)^2}{\lambda^{\p}_n}\left[ (\alpha^m_n)^2 \tr([U]_{m,m}) + 2 \alpha_n^{m} \alpha_n^{m+1} \tr([U]_{m,m+1}) \right]\\
\leq \frac{C}{\Delta^3}  \left( |\tr([U]_{m,m})| + |\tr([U]_{m,m+1})| \right)
\end{align*}

Combining these cases for $k \leq n-1$ and $k = n$ along with the fact that $| \tr([U]_{i,j}) |\leq d \|[U]_{i,j}\|$ gives the necessary size estimates.
\end{proof}

\section{Size Estimates and Uniform Integrability}
The space on which $\rho_{\p}$ lives, $\Hp(M)$, depends on the partition $\p$ and is changing as $|\p| \to 0$. If we briefly ignore this fact, it appears as though we are attempting an $L^1$-limit in Theorem \ref{thm.mainthm}. We are able to follow this instinct and use an $L^1$-limit type argument by using $\rhot_{\p}$, which for any $\p$ is a map on $W(\re^d)$. Very concisely, the argument for the proof of the main theorem goes as translating from $\rho_{\p}$ to $\rhot_{\p}$, find the $L^1$-limit of $\rhot_{\p}$, and finally translate this argument back to a limit for $\rho_{\p}$. 

With this in mind, the original strategy to achieve the desired $L^1$-limit for $\rhot_{\p}$ was to show that the collection $\{ \rhot_{\p} : \p = \{0, 1/n, 2/n ..., 1\}, n \text{ sufficiently large} \}$ is uniformly integrable and that there is some map $\rho$ such that $\rhot_{\p} \to \rho$ in measure as $|\p| \to 0$. This idea later evolved into showing the $L^1$-limit more directly, yet the artifacts of the previous approach permeate the remaining sections. The main result from this section is Theorem \ref{thm:UniformIntegrability}, which will be used in proving the $L^1$-limit, but is also a result of the uniform integrability of $\rhot_{\p}$.

Define the remainder of $\GFp$ by,
\begin{align}
\Rp := \GFp - \mLp. \label{eqn.Rp}
\end{align}
Using the decomposition of $\mLp = \mAp \mAp^{tr}$ in Theorem \ref{thm:Lpthm} and Eq. (\ref{eqn.rhotp2}), 
\begin{align}
\rhot_{\p} &= \sqrt{ \det\left(I + \mAp^{-1} \Rp (\mAp^{-1})^{tr} \right)}.
\end{align}

\noindent To ease notation for the remainder of this chapter, we also introduce
\begin{align}
\Kp{i} := \sup_{0 \leq s \leq \Delta}  \| \Ap{i}(s) \| \label{eqn:K}
\end{align}
where $\Ap{i}$ is defined in Eq. (\ref{eqn:Ai}) and $\Delta$ is notation defined in Assumption \ref{assumption.Pequal}. Using Eq. (\ref{eqn:Aibound}),
\begin{align}
\Kp{i} \leq \kappa \frac{\| \Delta_i b \|^2}{\Delta^2} \label{eqn:Kibound}.
\end{align}

\subsect{Estimates on the remainder $\Rp$}
Here we give estimates on the non-zero $d \times d$ blocks of the remainder $\Rp$, which will in turn be used to estimate the size of $\rhot_{\p}$ sufficient for proving Theorem \ref{thm:UniformIntegrability}. From Eq. (\ref{eqn.Rp}) along with the definition of $\mLp$ in Eq. (\ref{eqn.Lp2}) and $\GFp$ in Eq. (\ref{eqn:GFpDiag}), the non-zero $d \times d$ blocks of $\Rp$ are of the form
\begin{align}
& [\Rp]_{i,i} =\begin{cases}
\int_0^{\Delta} \left( \Vp{i+1}(s)^{\tr} \Vp{i+1}(s) + \Sp{i}(s)^{\tr} \Sp{i}(s) \right)ds - \frac{2\Delta^3}{3} I  & i < n\\
\int_0^{\Delta}\left( \Sp{i}(s)^{\tr} \Sp{i}(s) \right)ds - \frac{\Delta^3}{3} I  & i = n
\end{cases} \notag \\
&=\begin{cases}
\int_0^{\Delta} \left\{ \left( \Vp{i+1}(s)^{\tr} \Vp{i+1}(s) - (\Delta-s)^2I \right) + \left( \Sp{i}(s)^{\tr} \Sp{i}(s) -s^2 I \right) \right \}ds & i < n \\
\int_0^{\Delta}\left( \Sp{n}(s)^{\tr} \Sp{n}(s) - s^2 I \right)ds  & i = n
\end{cases}. \label{eqn:RpDiag}
\end{align}
and for $1 \leq i < n$,
\begin{align}
[\Rp]_{i,i+1} = [\Rp]_{i+1,i}^{tr} &= \int_0^{\Delta} \Vp{i+1}(s)^{tr} \Sp{i+1}(s)ds - \frac{\Delta^3}{6}I  \notag \\
&= \int_0^{\Delta} ( \Vp{i+1}(s)^{tr} \Sp{i+1}(s) - (\Delta-s)sI)ds \label{eqn:RpOffDiag}
\end{align}
Written suggestively in Eqs. (\ref{eqn:RpDiag}) and (\ref{eqn:RpOffDiag}), we set out to estimate $\|  \Vp{i+1}(s)^{\tr} \Vp{i+1}(s) - (\Delta-s)^2I  \|$, $\| \Sp{i}(s)^{\tr} \Sp{i}(s) -s^2 I  \|$, and $\| \Vp{i+1}(s)^{tr} \Sp{i+1}(s) - (\Delta-s)sI \|$ . 

\begin{lem} 
\label{lem:SVlinear}
For $s \in [0, \Delta]$,
\begin{align}
&\left\| \Sp{i} (s) - sI \right\| \leq s \left( \cosh(\sqrt{\Kp{i}} \Delta) - 1 \right) \quad \text{ and} \label{eqn:Slinear} \\
&\left\| \Vp{i+1}(s) - (\Delta - s) I \right\| \notag \\ 
&\qquad \leq \frac{s}{\Delta} \left( \Delta-s \right) \left(\cosh(\sqrt{\Kp{i}} \Delta) \cosh(4 \sqrt{\Kp{i+1}} \Delta) - 1 \right). \label{eqn:Vlinear2}
\end{align}
\end{lem}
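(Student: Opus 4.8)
The plan is to reduce both bounds to one mechanism: turn each operator ODE $Z''=\Ap{i}Z$ into a Volterra integral equation, run Picard iteration, and compare the resulting majorant series with the Taylor series of $\cosh$ and $\sinh$. For Eq.~(\ref{eqn:Slinear}): since $\Sp{i}(0)=0$ and $\tfrac{d}{ds}\Sp{i}(0)=I$, the function $\Sp{i}$ satisfies $\Sp{i}(s)=sI+\int_0^s(s-r)\Ap{i}(r)\Sp{i}(r)\,dr$, and likewise $\Cp{i}(s)=I+\int_0^s(s-r)\Ap{i}(r)\Cp{i}(r)\,dr$. Starting Picard iteration from $sI$ (resp.\ $I$) and using $\|\Ap{i}(r)\|\le\Kp{i}$, an easy induction bounds the $m$-th successive difference by $\Kp{i}^{m+1}s^{2m+3}/(2m+3)!$ (resp.\ $\Kp{i}^{m+1}s^{2m+2}/(2m+2)!$); summing gives $\|\Sp{i}(s)\|\le\sinh(\sqrt{\Kp{i}}\,s)/\sqrt{\Kp{i}}\le s\cosh(\sqrt{\Kp{i}}\,\Delta)$, $\|\Cp{i}(s)\|\le\cosh(\sqrt{\Kp{i}}\,s)$, and the deviation bounds
\[
\big\|\Sp{i}(s)-sI\big\|\le\frac{\sinh(\sqrt{\Kp{i}}\,s)}{\sqrt{\Kp{i}}}-s=\sum_{j\ge1}\frac{\Kp{i}^{\,j}s^{2j+1}}{(2j+1)!},\qquad \big\|\Cp{i}(s)-I\big\|\le\cosh(\sqrt{\Kp{i}}\,s)-1 .
\]
For $s\le\Delta$ one has $\Kp{i}^{\,j}s^{2j+1}/(2j+1)!\le s\,\Kp{i}^{\,j}\Delta^{2j}/(2j)!$ term by term, so the first series is $\le s\big(\cosh(\sqrt{\Kp{i}}\,\Delta)-1\big)$, which is Eq.~(\ref{eqn:Slinear}); the $\Cp{i}$ estimate, recorded for later, follows the same way.

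For Eq.~(\ref{eqn:Vlinear2}) the idea is to expose the two-point structure of $\Vp{i+1}$. Under Assumption~\ref{assumption.nonpossect} each $\Ap{i}(s)$, which by Eq.~(\ref{eqn:Ai}) has the form $\W_f(v,\cdot)v$ with $f=\up_{s+s_{i-1}}$ and $v=\Delta_ib/\Delta$, is a nonnegative symmetric operator: the symmetries of $R$ give $\langle\Ap{i}(s)w,w\rangle=\langle R(fv,fw)fv,fw\rangle=-K(fv,fw)\,|fv\wedge fw|^2\ge0$. Hence the Jacobi equation has no conjugate point on $[0,\Delta]$, so $\Sp{i+1}(\Delta)$ is invertible (the fact recorded just before Assumption~\ref{assumption.nonpossect}), and since $s\mapsto|\Sp{i+1}(s)x|$ is convex with right-derivative $|x|$ at $0$ we get $|\Sp{i+1}(s)x|\ge s|x|$, i.e.\ $\|\Sp{i+1}(\Delta)^{-1}\|\le1/\Delta$, whence also $\|\Sp{i+1}(\Delta)^{-1}-\tfrac1\Delta I\|\le\tfrac1\Delta\big(\cosh(\sqrt{\Kp{i+1}}\,\Delta)-1\big)$. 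Setting $\Psi(s):=\Cp{i+1}(s)-\Sp{i+1}(s)\,\Sp{i+1}(\Delta)^{-1}\Cp{i+1}(\Delta)$, Eq.~(\ref{eqn:Vi}) reads $\Vp{i+1}(s)=\Psi(s)\Sp{i}(\Delta)$, and $\Psi$ solves $\Psi''=\Ap{i+1}\Psi$ with $\Psi(0)=I$, $\Psi(\Delta)=0$, the flat model being $\Psi(s)=\tfrac{\Delta-s}{\Delta}I$.

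Now estimate $\|\Psi(s)-\tfrac{\Delta-s}{\Delta}I\|$ from both endpoints. Writing $\Psi(s)-\tfrac{\Delta-s}{\Delta}I=[\Cp{i+1}(s)-I]-[\Sp{i+1}(s)\Sp{i+1}(\Delta)^{-1}\Cp{i+1}(\Delta)-\tfrac s\Delta I]$ and inserting the bounds above (using $(1+x)^3-1=3x+3x^2+x^3$ on the three-factor term, with $x=\cosh(\sqrt{\Kp{i+1}}\,\Delta)-1$) gives the one-sided estimate $\le\tfrac{2s}{\Delta}\big(\cosh^3(\sqrt{\Kp{i+1}}\,\Delta)-1\big)$; running the identical computation on $s\mapsto\Psi(\Delta-s)$, which interchanges the two boundary data, gives the mirror estimate $\le\tfrac{2(\Delta-s)}{\Delta}\big(\cosh^3(\sqrt{\Kp{i+1}}\,\Delta)-1\big)$. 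Splitting at $s=\Delta/2$ — so $s\le\tfrac{2s(\Delta-s)}{\Delta}$ on the left half and $\Delta-s\le\tfrac{2s(\Delta-s)}{\Delta}$ on the right — combines these into $\|\Psi(s)-\tfrac{\Delta-s}{\Delta}I\|\le\tfrac{s(\Delta-s)}{\Delta}\cdot\tfrac4\Delta\big(\cosh^3(\sqrt{\Kp{i+1}}\,\Delta)-1\big)$. Finally $\Vp{i+1}(s)-\tfrac{\Delta-s}{\Delta}\Sp{i}(\Delta)=\big(\Psi(s)-\tfrac{\Delta-s}{\Delta}I\big)\Sp{i}(\Delta)$; multiplying by $\|\Sp{i}(\Delta)\|\le\Delta\cosh(\sqrt{\Kp{i}}\,\Delta)$ and using the elementary identities $\cosh(4x)-1=8\cosh^2x(\cosh^2x-1)\ge4(\cosh^3x-1)$ and $\cosh a\cosh b-1\ge\cosh a(\cosh b-1)$ collapses everything to $\tfrac{s(\Delta-s)}{\Delta}\big(\cosh(\sqrt{\Kp{i}}\,\Delta)\cosh(4\sqrt{\Kp{i+1}}\,\Delta)-1\big)$, which is Eq.~(\ref{eqn:Vlinear2}), with $\tfrac{\Delta-s}{\Delta}\Sp{i}(\Delta)$ in the role of the flat comparison and the factor $4$ arising exactly from the two-sided interpolation.

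The step I expect to be the main obstacle is this last one. A naive term-by-term expansion of $\Vp{i+1}(s)=\Cp{i+1}(s)\Sp{i}(\Delta)-\Sp{i+1}(s)\Fp{i}$ only sees the boundary datum at $s=0$, hence produces a one-sided factor of $s$ and misses the vanishing of the error at $s=\Delta$; recovering the symmetric weight $s(\Delta-s)/\Delta$ genuinely forces the use of both endpoints (equivalently, the Dirichlet Green's function of $d^2/ds^2$ on $[0,\Delta]$, whose kernel is $\le s(\Delta-s)/\Delta$), and the delicate part is then making the corrections coming from $\Sp{i}$, $\Cp{i+1}$ and $\Sp{i+1}(\Delta)^{-1}$ recombine into the single clean product $\cosh(\sqrt{\Kp{i}}\Delta)\cosh(4\sqrt{\Kp{i+1}}\Delta)-1$ with no smallness hypothesis on $\Kp{i+1}\Delta^2$ (a Neumann‑series/Gronwall treatment of the Green's identity would instead leave a rational factor blowing up near $\Kp{i+1}\Delta^2=6$, which is why the two-sided expansion via $\|\Sp{i+1}(\Delta)^{-1}\|\le 1/\Delta$ is the right route).
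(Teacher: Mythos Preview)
Your treatment of Eq.~(\ref{eqn:Slinear}) via Picard iteration is exactly what the paper does through Proposition~\ref{prop:ODE}. For Eq.~(\ref{eqn:Vlinear2}) you take a genuinely different route: the paper applies the Dirichlet Green's-function estimate of Proposition~\ref{prop:GREENest} directly to $\Vp{i+1}$ (which has $\Vp{i+1}(0)=\Sp{i}(\Delta)$, $\Vp{i+1}(\Delta)=0$), obtaining the prefactor $s(1-s/\Delta)$ in one stroke from the pointwise bound $0\le G(s,t)\le s(1-s/\Delta)$, and then massages the $\cosh$ factors; you instead factor $\Vp{i+1}=\Psi\,\Sp{i}(\Delta)$, bound $\|\Psi(s)-\tfrac{\Delta-s}{\Delta}I\|$ from each endpoint separately, and glue the two one-sided estimates at $s=\Delta/2$. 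Both routes use $\|\Sp{i+1}(\Delta)^{-1}\|\le 1/\Delta$ from nonpositive curvature (the paper's Proposition~\ref{prop:PosSemiDefBound}) in the same way. Your argument is correct and more hands-on; the paper's is cleaner once Proposition~\ref{prop:GREENest} is available. One point worth noting: what both you and the paper actually establish is the bound for $\big\|\Vp{i+1}(s)-\tfrac{\Delta-s}{\Delta}\Sp{i}(\Delta)\big\|$. Passing to $\|\Vp{i+1}(s)-(\Delta-s)I\|$ via the triangle inequality adds $(1-s/\Delta)\|\Sp{i}(\Delta)-\Delta I\|\le(\Delta-s)\big(\cosh(\sqrt{\Kp{i}}\Delta)-1\big)$, which carries only a factor $(\Delta-s)$, not $\tfrac{s}{\Delta}(\Delta-s)$; indeed Eq.~(\ref{eqn:Vlinear2}) as literally stated fails at $s=0$. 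You explicitly flag this by saying your bound holds ``with $\tfrac{\Delta-s}{\Delta}\Sp{i}(\Delta)$ in the role of the flat comparison''; the paper's proof makes the same slip silently. It is harmless downstream, since Proposition~\ref{prop:SVdiag} only needs the $(\Delta-s)$ prefactor.
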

\begin{proof}
Eq. (\ref{eqn:Slinear}) is a direct consequence of Proposition \ref{prop:ODE} along with the fact that $\cosh$ is strictly increasing on $[0,\infty)$. Next, 
\begin{align*}
 \| \Fp{i} \| =  \| \Sp{i+1}(\Delta)^{-1} \Cp{i+1}(\Delta) \Sp{i}(\Delta) \| \leq \left\| \Cp{i+1}(\Delta) \frac{\Sp{i}(\Delta)}{\Delta} \right\| \\
  \leq \cosh(\sqrt{\Kp{i+1}} \Delta )\cosh(\sqrt{\Kp{i}} \Delta), 
\end{align*}
where the first inequality follows from Proposition \ref{prop:PosSemiDefBound} and the second from another application of Proposition \ref{prop:ODE}.
 Hence, by Proposition \ref{prop:GREENest},
\begin{align*}
&\left\| \Vp{i+1}(s) - \frac{\Sp{i} (\Delta)}{\Delta}(\Delta- s) \right\| \\
& \leq s\left( 1 - \frac{s}{\Delta} \right) \left[ \| \Sp{i} (\Delta) \| \Kp{i+1} \Delta \cosh(\sqrt{\Kp{i+1}} \Delta) + \| \Fp{i} \| \left( \cosh(\sqrt{\Kp{i+1}} \Delta) - 1 \right) \right] \\
& \leq s \left( 1 - \frac{s}{\Delta} \right) \cosh(\sqrt{\Kp{i}}\Delta)\cosh(\sqrt{\Kp{i+1}} \Delta) \left( \Kp{i+1} \Delta^2 +  \cosh(\sqrt{\Kp{i+1}} \Delta) - 1 \right) \\
& \leq s \left( 1 - \frac{s}{\Delta} \right) \cosh(\sqrt{\Kp{i}}\Delta)\cosh(\sqrt{\Kp{i+1}} \Delta) \left( \left( \Kp{i+1} \Delta^2 +  1 \right)\cosh(\sqrt{\Kp{i+1}} \Delta) - 1 \right) ~~\\
& \leq s \left( 1 - \frac{s}{\Delta} \right) \cosh(\sqrt{\Kp{i}}\Delta)\cosh(\sqrt{\Kp{i+1}} \Delta) \left( \cosh^2(\sqrt{2\Kp{i+1}} \Delta) - 1 \right)\\
& \leq s \left( 1 - \frac{s}{\Delta} \right) \cosh(\sqrt{\Kp{i}}\Delta) \left( \cosh(4\sqrt{\Kp{i+1}} \Delta) - 1 \right),
\end{align*}
with the final inequality resulting from Eqs. (\ref{eqn.coshacoshb}) and (\ref{eqn.coshabminus1}).
Therefore, for Eq. (\ref{eqn:Vlinear2}),  
\begin{align*}
& \left\| \Vp{i+1}(s) - (\Delta-s)I \right\| \\
& \leq \left\| \Vp{i+1}(s) - \frac{\Sp{i}(\Delta)}{\Delta}(\Delta-s) \right\| + (1 - \frac{s}{\Delta} )\left\| \Sp{i}(\Delta) - \Delta I \right\| \\
& \leq \frac{s}{\Delta} \left( \Delta-s \right) \left[ \cosh(\sqrt{\Kp{i}} \Delta)\left( \cosh(4 \sqrt{\Kp{i+1}} \Delta) - 1 \right) + \left( \cosh(\sqrt{\Kp{i}} \Delta) - 1 \right)  \right]\\
&= \frac{s}{\Delta} \left( \Delta-s \right) \left( \cosh(\sqrt{\Kp{i}}\Delta) \cosh(4\sqrt{\Kp{i+1}}\Delta) - 1 \right)
\end{align*}
\end{proof}

\begin{prop} \label{prop:SVdiag}
 For $s \in [0,\Delta]$, 
\begin{align}
&\left\| \Vp{i+1}(s)^{\tr} \Vp{i+1}(s) - (\Delta-s)^2I \right\| \notag \\
&\qquad \leq 3(\Delta-s)^2 \left( \cosh(2\sqrt{\Kp{i}}\Delta)\cosh(8\sqrt{\Kp{i+1}}\Delta)-1 \right), \\
&\left\| \Sp{i}(s)^{\tr} \Sp{i}(s) - s^2I \right\| \leq 3s^2 \left( \cosh(2 \sqrt{\Kp{i}} \Delta) - 1 \right) 
\end{align}
and
\begin{align}
&\| \Vp{i+1}(s)^{tr} \Sp{i+1}(s) - (\Delta-s)sI \| \notag \\
&\qquad \leq 3s(\Delta-s) \left( \cosh(\sqrt{\Kp{i}} \Delta) \cosh(5 \sqrt{\Kp{i+1}} \Delta) - 1 \right).
\end{align}
\end{prop}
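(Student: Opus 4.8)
The plan is to derive all three bounds from a single elementary identity: for $d\times d$ matrices $A,B$ and scalars $a,b$, writing $E:=A-aI$ and $E':=B-bI$,
\begin{align*}
A^{tr}B-ab\,I \;=\; aE' + bE^{tr} + E^{tr}E'.
\end{align*}
Thus $A^{tr}B-abI$ is controlled entirely by the \emph{linear} errors $E,E'$, which are exactly the quantities bounded in Lemma \ref{lem:SVlinear}; in particular no separate estimate on $\|\Sp{i}(s)\|$ or $\|\Vp{i+1}(s)\|$ is needed. The strategy is then to apply this identity with the appropriate choices of $A,B,a,b$ in each of the three cases, bound each of the three error terms on the right, and collapse them using a couple of scalar inequalities for $\cosh$.

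First I would treat $\Sp{i}(s)^{tr}\Sp{i}(s)-s^2I$ by taking $A=B=\Sp{i}(s)$ and $a=b=s$, so $E=E'=\Sp{i}(s)-sI$ with $\|E\|\le s(c_i-1)$, where $c_i:=\cosh(\sqrt{\Kp{i}}\Delta)\ge 1$ by \eqref{eqn:Slinear}. Each of $sE$, $sE^{tr}$, $E^{tr}E$ then has norm at most $s^2(c_i-1)$ or $s^2(c_i-1)^2$, and both are $\le s^2(c_i^2-1)$ since $c_i\ge 1$; summing the three gives the factor $3$, and $c_i^2=\cosh^2(\sqrt{\Kp{i}}\Delta)\le\cosh(2\sqrt{\Kp{i}}\Delta)$ produces the claimed bound. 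For $\Vp{i+1}(s)^{tr}\Vp{i+1}(s)-(\Delta-s)^2I$ I would take $A=B=\Vp{i+1}(s)$, $a=b=\Delta-s$, and use \eqref{eqn:Vlinear2} together with $s/\Delta\le 1$ to get $\|E\|\le(\Delta-s)(D_i-1)$ with $D_i:=\cosh(\sqrt{\Kp{i}}\Delta)\cosh(4\sqrt{\Kp{i+1}}\Delta)$; the identical three-term bookkeeping yields $\le 3(\Delta-s)^2(D_i^2-1)$, and $D_i^2\le\cosh(2\sqrt{\Kp{i}}\Delta)\cosh(8\sqrt{\Kp{i+1}}\Delta)$ finishes it. For the cross term I would take $A=\Vp{i+1}(s)$, $a=\Delta-s$, $B=\Sp{i+1}(s)$, $b=s$, so $\|E\|\le(\Delta-s)(D_i-1)$ and $\|E'\|\le s(c_{i+1}-1)$; the three terms $(\Delta-s)E'$, $sE^{tr}$, $E^{tr}E'$ have norms $\le s(\Delta-s)(c_{i+1}-1)$, $\le s(\Delta-s)(D_i-1)$, $\le s(\Delta-s)(D_i-1)(c_{i+1}-1)$, each of which is $\le s(\Delta-s)(c_{i+1}D_i-1)$ because $c_{i+1},D_i\ge 1$, and $c_{i+1}D_i=\cosh(\sqrt{\Kp{i}}\Delta)\cosh(\sqrt{\Kp{i+1}}\Delta)\cosh(4\sqrt{\Kp{i+1}}\Delta)\le\cosh(\sqrt{\Kp{i}}\Delta)\cosh(5\sqrt{\Kp{i+1}}\Delta)$.

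The only scalar inequalities used are $(c-1)^2\le c^2-1$ and $(c-1)(c'-1)\le cc'-1$ for $c,c'\ge 1$, together with $\cosh^2 x\le\cosh 2x$ and $\cosh\alpha\cosh\beta\le\cosh(\alpha+\beta)$ for $\alpha,\beta\ge 0$ — the same facts already used in proving Lemma \ref{lem:SVlinear} (cf. \eqref{eqn.coshacoshb}, \eqref{eqn.coshabminus1}). I do not expect any real obstacle: the argument is pure bookkeeping, the uniform constant $3$ is simply the number of error terms in the identity above, and the only care needed is to bound each error term by the single largest $\cosh$-product (minus $1$) so that the three pieces collapse to the stated form.
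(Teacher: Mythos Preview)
Your proposal is correct and follows essentially the same approach as the paper: the paper's proof also rests on the identity $A^{tr}B - abI = (A^{tr}-aI)(B-bI) + a(B-bI) + b(A^{tr}-aI)$, applies Lemma~\ref{lem:SVlinear} to bound the linear errors, and collapses the three terms using $s/\Delta\le 1$ together with the $\cosh$ inequalities~\eqref{eqn.coshacoshb} and~\eqref{eqn.coshabminus1}. Your write-up is in fact more explicit than the paper's, which leaves the ``judicious choices'' and the scalar bookkeeping to the reader.
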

\begin{proof}
We apply Lemma \ref{lem:SVlinear} above by noticing the following. For operators $A$ and $B$ and real numbers $a$ and $b$,
\begin{align*}
A^{tr}B - ab I = (A^{tr}-aI)(B- bI) + a(B-bI) + b(A^{tr} -aI).
\end{align*} 
The asserted inequalities now follow with judicious choices for $A$ and $B$ as well as Eqs. (\ref{eqn.coshacoshb}) and (\ref{eqn.coshabminus1}) along with the fact that $s/\Delta \leq 1$.
\end{proof}

Applying Proposition \ref{prop:SVdiag} to Eqs (\ref{eqn:RpDiag}) and (\ref{eqn:RpOffDiag}) gives the estimates we need on $\Rp$ to continue forward.

\begin{prop} \label{prop:RpDiagSize}
For $1\leq i \leq n$, 
\begin{align}
\left\| \left[ \Rp \right]_{i,i} \right\| &\leq 2 \Delta^3 \left( \cosh( 2 \sqrt{\Kp{i}} \Delta) \cosh(8 \sqrt{\Kp{i+1}} \Delta) - 1\right) \label{eqn:RpDiagSize}
\end{align}
and for $1 \leq i < n$,
\begin{align}
\left\| \left[ \Rp \right]_{i,i+1} \right\| = \left\| \left[ \Rp \right]_{i+1,i} \right\| \leq \frac{\Delta^3}{2} \left( \cosh(\sqrt{\Kp{i}}\Delta) \cosh(5\sqrt{\Kp{i+1}} \Delta) - 1 \right). \label{eqn:RpOffDiagSize}
\end{align}
Moreover, this implies that,
\begin{align}
\left\| \left[ \mAp^{-1} \Rp (\mAp^{-1})^{tr} \right]_{i,i} \right\| \leq \sum_{j=i}^n \lambda_{i,j}\left( \cosh(30\sqrt{\Kp{j}} \Delta) \cosh(120 \sqrt{\Kp{j+1} } \Delta) - 1 \right) \label{eqn:ARAdiagSize}
\end{align}
where $\lambda_{i,j} := \left(\frac{1}{2} \right)^{j-i} \left( \sum_{j=i}^n \left(\frac{1}{2}\right)^{j-i} \right)^{-1} =  \left(\frac{1}{2} \right)^{j-i}\left(2 - \left(\frac{1}{2}\right)^{n-i} \right)^{-1}$ are chosen so that $\sum_{j=i}^n \lambda_{i,j} = 1$.
\end{prop}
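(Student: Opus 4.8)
The plan is to establish the three displayed estimates in order. The first two come directly from the integral representations (\ref{eqn:RpDiag})--(\ref{eqn:RpOffDiag}) of the nonzero blocks of $\Rp$ together with Proposition \ref{prop:SVdiag}; the third is a block-matrix expansion that feeds the first two into the Cholesky-type factor estimate (\ref{eqn:LpLUsize}) of Theorem \ref{thm:Lpthm}.

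\textbf{Bounds on the blocks of $\Rp$.} For (\ref{eqn:RpDiagSize}) I would pass the operator norm inside the integrals in (\ref{eqn:RpDiag}), bound the two integrands by the first two inequalities of Proposition \ref{prop:SVdiag}, and evaluate the elementary integrals $\int_0^{\Delta} s^2\,ds = \int_0^{\Delta}(\Delta-s)^2\,ds = \Delta^3/3$. For $i<n$ this produces $\Delta^3\big(\cosh(2\sqrt{\Kp{i}}\Delta)\cosh(8\sqrt{\Kp{i+1}}\Delta)-1\big)$ from the $\Vp{i+1}$-term plus $\Delta^3\big(\cosh(2\sqrt{\Kp{i}}\Delta)-1\big)$ from the $\Sp{i}$-term; since $\cosh\ge 1$ the second summand is dominated by the first, which yields the factor $2$. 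The case $i=n$ has only the $\Sp{n}$-term and is bounded by the same expression with the convention $\Kp{n+1}:=0$. For (\ref{eqn:RpOffDiagSize}) I would similarly apply the third inequality of Proposition \ref{prop:SVdiag} to (\ref{eqn:RpOffDiag}) and use $\int_0^{\Delta} s(\Delta-s)\,ds = \Delta^3/6$, together with $\|[\Rp]_{i+1,i}\| = \|[\Rp]_{i,i+1}^{tr}\| = \|[\Rp]_{i,i+1}\|$.

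\textbf{Bound on the diagonal blocks of $\mAp^{-1}\Rp(\mAp^{-1})^{tr}$.} Write $[\mAp^{-1}\Rp(\mAp^{-1})^{tr}]_{i,i} = \sum_{j,k}[\mAp^{-1}]_{i,j}[\Rp]_{j,k}[\mAp^{-1}]_{i,k}^{tr}$. Upper-triangularity of $\mAp^{-1}$ restricts the sum to $j,k\ge i$, and block tri-diagonality of $\Rp$ restricts it to $|j-k|\le 1$, so the terms split into the diagonal ones ($k=j$) and the neighbouring pairs $\{(j,j+1),(j+1,j)\}$. Applying $\|[\mAp^{-1}]_{i,j}\|^2\le (3/\Delta^3)(1/2)^{j-i}$ from (\ref{eqn:LpLUsize}) together with (\ref{eqn:RpDiagSize}) and (\ref{eqn:RpOffDiagSize}), and attaching each neighbouring pair to its \emph{smaller} index $j$ (using $\|[\Rp]_{j,j+1}\|=\|[\Rp]_{j+1,j}\|$ so that only $\Kp{j}$ and $\Kp{j+1}$ appear), the powers of $\Delta^3$ cancel and one is left with a bound of the form $\sum_{j=i}^{n}(1/2)^{j-i}\big[6\big(\cosh(2\sqrt{\Kp{j}}\Delta)\cosh(8\sqrt{\Kp{j+1}}\Delta)-1\big)+\tfrac{3}{\sqrt2}\big(\cosh(\sqrt{\Kp{j}}\Delta)\cosh(5\sqrt{\Kp{j+1}}\Delta)-1\big)\big]$. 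Finally I would write $(1/2)^{j-i} = \lambda_{i,j}\sum_{l=i}^{n}(1/2)^{l-i}$ with $\sum_{l=i}^{n}(1/2)^{l-i}\le 2$, and absorb all numerical constants together with the two products of $\cosh$'s into the single factor $\cosh(30\sqrt{\Kp{j}}\Delta)\cosh(120\sqrt{\Kp{j+1}}\Delta)-1$, using monotonicity of $\cosh$, the inequality $\cosh u\cosh v\le\cosh(u+v)$, the elementary $(\cosh u-1)(\cosh v-1)\ge 0$, and the superadditivity $m\big(\cosh u\cosh v-1\big)\le\cosh(mu)\cosh(mv)-1$ for $m\ge 1$ (immediate from the power series, and of the type recorded in Eqs.\,(\ref{eqn.coshacoshb})--(\ref{eqn.coshabminus1})).

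The first two bounds and the integral evaluations are routine. The main obstacle is the bookkeeping in the last step: one must attach each off-diagonal block to the correct (smaller) index so the final estimate depends only on $\Kp{j}$ and $\Kp{j+1}$ rather than also on $\Kp{j-1}$, and then verify that the accumulated constants — the $6$ from the diagonal blocks, the $3/\sqrt2$ from the two neighbouring blocks, and the factor $\le 2$ from the geometric sum defining $\lambda_{i,j}$ — all fit under the asserted coefficients; this is precisely what forces the (otherwise unmotivated) constants $30$ and $120$ appearing in (\ref{eqn:ARAdiagSize}).
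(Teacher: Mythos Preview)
Your proposal follows essentially the same route as the paper: integrate the pointwise bounds of Proposition~\ref{prop:SVdiag} over $[0,\Delta]$ to get (\ref{eqn:RpDiagSize})--(\ref{eqn:RpOffDiagSize}), then expand $[\mAp^{-1}\Rp(\mAp^{-1})^{tr}]_{i,i}$ blockwise using upper-triangularity and tri-diagonality, feed in (\ref{eqn:LpLUsize}), and absorb the resulting numerical factor via the superadditivity inequality (\ref{eqn.alphacoshabminus1}). The only point to watch is the final constant count: your off-diagonal contribution $3/\sqrt{2}$ is (correctly) a bit larger than the $3/2$ that appears in the paper's write-up, so your accumulated factor $2\,(6+3/\sqrt{2})\approx 16.2$ slightly overshoots the $15$ that would land exactly on $30$ and $120$ --- this is immaterial for the method and for every downstream use, but if you want the displayed constants verbatim you should tighten the off-diagonal step (e.g.\ bound $\|[\mAp^{-1}]_{i,j+1}\|$ by $\|[\mAp^{-1}]_{i,j}\|$, which the explicit formula in Claim~\ref{claim.lpc4} justifies).
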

\begin{proof}
Eqs. (\ref{eqn:RpDiagSize}) and (\ref{eqn:RpOffDiagSize}) come from integrating the bounds given  in Proposition \ref{prop:SVdiag} with respect to $s$ for $s \in [0, \Delta]$. 

For Eq. (\ref{eqn:ARAdiagSize}), recall that $\mAp$ is upper diagonal, and hence so is $\mAp^{-1}$, and that $\Rp$ is tri-diagonal, yielding
\begin{align*}
&[\mAp^{-1} \Rp (\mAp^{-1})^{tr}]_{i,i} \\
&\qquad = \sum_{j,k = 1}^n [\mAp^{-1}]_{i,j} [\Rp]_{j,k} [(\mAp^{-1})^{tr}]_{k,i} \\
&\qquad =  \sum_{j=i}^n \left\{  [\mAp^{-1}]_{i,j} [\Rp]_{j,j} [(\mAp^{-1})^{tr}]_{j,i} + [\mAp^{-1}]_{i,j+1} [\Rp]_{j+1,j} [(\mAp^{-1})^{tr}]_{j,i}  \right. \\
&\qquad \qquad  \left. + [\mAp^{-1}]_{i,j} [\Rp]_{j,j+1} [(\mAp^{-1})^{tr}]_{j+1,i} \right\}
\end{align*}
where we keep the convention that for $j=n$, $[~\cdot~]_{n,n+1} = [~\cdot~]_{n+1,n} = 0$.
Therefore, from Eq. (\ref{eqn:LpLUsize}),
\begin{align*}
&\|[\mAp^{-1} \Rp (\mAp^{-1})^{tr}]_{i,i} \| \\
& \qquad \leq \sum_{j=i}^n \left\{ \| [ \mAp^{-1} ]_{i,j} \|^2 \| [ \Rp ]_{j,j} \| + 2 \| [\mAp^{-1}]_{i,j+1} \| \| [\mAp^{-1} ]_{i,j}\| \| [\Rp]_{j+1,j} \| \right\} \\
& \qquad \leq \sum_{j=i}^n \left\{ 3 \left(\frac{1}{2}\right)^{j-i} \left[ 2 \left( \cosh( 2 \sqrt{\Kp{j}} \Delta) \cosh(8 \sqrt{\Kp{j+1}} \Delta) - 1\right)  \right. \right. \\
& \qquad \qquad \qquad + \left. \left. \frac{1}{2} \left( \cosh(\sqrt{\Kp{j}}\Delta) \cosh(5\sqrt{\Kp{j+1}} \Delta) - 1 \right) \right] \right\} \\
& \qquad \leq \sum_{j=i}^n \frac{15}{2} \left( \frac{1}{2} \right)^{j-i} \left( \cosh( 2 \sqrt{\Kp{j}} \Delta) \cosh(8 \sqrt{\Kp{j+1}} \Delta) - 1 \right) \\
& \qquad \leq \sum_{j=i}^n 15 \lambda_{i,j}  \left( \cosh( 2 \sqrt{\Kp{j}} \Delta) \cosh(8 \sqrt{\Kp{j+1}} \Delta) - 1 \right) \\
& \qquad \leq \sum_{j=i}^n  \lambda_{i,j}  \left( \cosh( 30 \sqrt{\Kp{j}} \Delta) \cosh(120 \sqrt{\Kp{j+1}} \Delta) - 1 \right)
\end{align*}
wherein the last inequality we used Eq. (\ref{eqn.alphacoshabminus1}).
\end{proof}

\subsect{Bounds on $\rhot_{\p}$ and Uniform Integrability}
\begin{lem} 
\label{lem:reformdet}
Let $\{ \lambda_{i,j} : 1 \leq i \leq n, i \leq j \leq n \}$ be defined as in Proposition \ref{prop:RpDiagSize}. Define $\{ p_{i,j} : 1 \leq i \leq n, i \leq j \leq n\}$ by
\begin{align}
p_{i,j} :=
\begin{cases}
\frac{1}{2}(\lambda_{i,j} + \lambda_{i,j-1}) & j>i \\
\frac{1}{2}(\lambda_{i,i} + \lambda_{i,n}) & j=i
\end{cases}.
\end{align}
Then,
\begin{align}
\det\left( I + \mAp^{-1} \Rp (\mAp^{tr})^{-1}\right) \leq \prod_{i=1}^n\left( \sum_{j=i}^n p_{i,j} \cosh(240 \sqrt{\Kp{j}} \Delta) \right)^d.
\end{align}
Moreover, $\sum_{j=i}^n p_{i,j} = 1$ and $\sum_{i=1}^j p_{i,j} < 3$
\end{lem}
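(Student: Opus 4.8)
The plan is to reduce the determinant estimate to Hadamard's inequality for a positive semidefinite matrix, after first recognizing the matrix in question as a disguised Gram matrix. Since $\Rp = \GFp - \mLp$ and $\mLp = \mAp\mAp^{tr}$ by Theorem~\ref{thm:Lpthm}, one has $I + \mAp^{-1}\Rp(\mAp^{tr})^{-1} = \mAp^{-1}\GFp(\mAp^{tr})^{-1}$, and $\GFp$ is positive definite because it represents the Riemannian metric $G_{\p}$ in the basis $\F_{\p}$. Hence $I + \mAp^{-1}\Rp(\mAp^{tr})^{-1}$ is positive definite, so Hadamard's inequality bounds its determinant by the product of its $nd$ diagonal entries. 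Writing $M := \mAp^{-1}\Rp(\mAp^{tr})^{-1}$, the $a$-th diagonal entry of the $i$-th diagonal block is $1 + e_a^{tr}[M]_{i,i}e_a \leq 1 + \|[M]_{i,i}\|$ (using that $[M]_{i,i}$ is symmetric), so grouping the $nd$ diagonal entries into the $n$ blocks of size $d$ gives $\det(I + M) \leq \prod_{i=1}^n (1 + \|[M]_{i,i}\|)^d$.

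Next I would feed in the block estimate \eqref{eqn:ARAdiagSize} of Proposition~\ref{prop:RpDiagSize}. Because $\sum_{j=i}^n \lambda_{i,j} = 1$, that estimate rearranges to $1 + \|[M]_{i,i}\| \leq \sum_{j=i}^n \lambda_{i,j}\cosh\!\big(30\sqrt{\Kp{j}}\,\Delta\big)\cosh\!\big(120\sqrt{\Kp{j+1}}\,\Delta\big)$, with the convention $\Kp{n+1}:=0$. To move the two hyperbolic factors onto a single index, I use $xy \leq \tfrac12(x^2+y^2)$ together with $\cosh^2 t = \tfrac12(1 + \cosh(2t)) \leq \cosh(2t)$ and the monotonicity of $\cosh$ on $[0,\infty)$, obtaining $\cosh\!\big(30\sqrt{\Kp{j}}\,\Delta\big)\cosh\!\big(120\sqrt{\Kp{j+1}}\,\Delta\big) \leq \tfrac12\big(\cosh(240\sqrt{\Kp{j}}\,\Delta) + \cosh(240\sqrt{\Kp{j+1}}\,\Delta)\big)$. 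Summing against $\lambda_{i,j}$ and reindexing $j\mapsto j-1$ in the half carrying the $\Kp{j+1}$ factors recombines the weights into $\tfrac12(\lambda_{i,j}+\lambda_{i,j-1}) = p_{i,j}$ for $i<j\leq n$, while the leftover $\tfrac12\lambda_{i,n}$ at the upper end $j=n+1$ (where $\cosh(240\sqrt{\Kp{n+1}}\,\Delta)=1$) is absorbed into the $j=i$ term using $1\leq\cosh(240\sqrt{\Kp{i}}\,\Delta)$, which is exactly what makes that coefficient $\tfrac12(\lambda_{i,i}+\lambda_{i,n}) = p_{i,i}$. This yields $1 + \|[M]_{i,i}\| \leq \sum_{j=i}^n p_{i,j}\cosh(240\sqrt{\Kp{j}}\,\Delta)$, and substituting into the Hadamard bound gives the claimed inequality.

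Finally, the two ``moreover'' statements follow from the closed form $\lambda_{i,j} = (1/2)^{j-i}\big(2 - (1/2)^{n-i}\big)^{-1}$. For $\sum_{j=i}^n p_{i,j} = 1$, telescoping gives $\sum_{j=i+1}^n(\lambda_{i,j}+\lambda_{i,j-1}) = 2\sum_{j=i}^n\lambda_{i,j} - \lambda_{i,i} - \lambda_{i,n} = 2 - \lambda_{i,i} - \lambda_{i,n}$, so $\sum_{j=i}^n p_{i,j} = \tfrac12(\lambda_{i,i}+\lambda_{i,n}) + \tfrac12(2-\lambda_{i,i}-\lambda_{i,n}) = 1$. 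For $\sum_{i=1}^j p_{i,j} < 3$, note that $2-(1/2)^{n-i}\geq 1$ forces $\lambda_{i,j}\leq (1/2)^{j-i}$, hence $p_{j,j}\leq \tfrac12(1 + (1/2)^{n-j})\leq 1$ and $\sum_{i=1}^{j-1}(\lambda_{i,j}+\lambda_{i,j-1}) \leq \sum_{i=1}^{j-1}(1/2)^{j-i} + \sum_{i=1}^{j-1}(1/2)^{j-1-i} < 1 + 2$, giving $\sum_{i=1}^j p_{i,j} < 1 + \tfrac32 < 3$. The one step that needs genuine care is the reindexing in the middle paragraph, where the boundary contribution at $j=n+1$ must be tracked and is precisely what dictates the asymmetric definition of $p_{i,i}$; the positivity input and the $\cosh$ manipulations are otherwise routine.
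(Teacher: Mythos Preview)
Your argument is correct and matches the paper's proof almost step for step. The only cosmetic difference is that the paper invokes Fischer's inequality on the $d\times d$ blocks and then bounds each block determinant by $(1+\|[M]_{i,i}\|)^d$, whereas you apply Hadamard's inequality entry-wise and group afterwards; both routes land on $\prod_{i=1}^n(1+\|[M]_{i,i}\|)^d$ immediately, and from there the $xy\leq\tfrac12(x^2+y^2)$ reindexing and the $p_{i,j}$ bookkeeping are identical. Your tracking of the boundary contribution at $j=n+1$ (the leftover $\tfrac12\lambda_{i,n}$ absorbed into $p_{i,i}$ via $1\leq\cosh(240\sqrt{\Kp{i}}\,\Delta)$) is in fact more explicit than the paper's own presentation.
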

\begin{proof}
The matrix $I + \mAp^{-1} \Rp (\mAp^{tr})^{-1}$ is symmetric positive definite, so we can apply Fischer's inequalty (see \cite[Theorem 7.8.3]{MR832183}),
\begin{align*}
\det(I + \mAp^{-1} \Rp (\mAp^{tr})^{-1}) &\leq  \prod_{i=1}^n \det([I + \mAp^{-1} \Rp (\mAp^{tr})^{-1}]_{i,i}) \\
& \leq \prod_{i=1}^n \left( 1 + \| [ \mAp^{-1} \Rp (\mAp^{tr})^{-1}]_{i,i} \| \right)^d.
\end{align*} 
From Proposition \ref{prop:RpDiagSize},
\begin{align*}
&\leq \prod_{i=1}^n \left( 1+ \sum_{j=i}^n \lambda_{i,j}  \left( \cosh( 30 \sqrt{\Kp{j}} \Delta) \cosh(120 \sqrt{\Kp{j+1}} \Delta) - 1 \right) \right)^d \\
& = \prod_{i=1}^n \left( \sum_{j=i}^n \lambda_{i,j} \cosh( 30 \sqrt{\Kp{j}} \Delta) \cosh(120 \sqrt{\Kp{j+1}} \Delta)  \right)^d.
\end{align*}
Using that $xy \leq \frac{1}{2} (x^2 + y^2)$, 
\begin{align*}
&\leq \prod_{i=1}^n \left( \sum_{j=i}^n \frac{1}{2}(\lambda_{i,j} + \lambda_{i,j-1}) \cosh^2(120 \sqrt{\Kp{j}} \Delta) \right)^d \\
&\leq \prod_{i=1}^n \left( \sum_{j=i}^n p_{i,j} \cosh(240 \sqrt{\Kp{j}} \Delta) \right)^d 
\end{align*}
where we define $\lambda_{i,i-1} := 0$ and use Eq. (\ref{eqn.coshacoshb}). 

\noindent We now calculate, $\sum_{j=i}^n p_{i,j} = \frac{1}{2} \left( 2 \sum_{j=i}^n \lambda_{i,j} \right) = 1.$
From the definition of $\lambda_{i,j} = \left(\frac{1}{2} \right)^{j-i} \left( 2 - \left(\frac{1}{2} \right)^{n-i} \right)^{-1} \leq \left( \frac{1}{2} \right)^{j-i}$, and therefore, $\sum_{i=1}^j p_{i,j} \leq \sum_{i=1}^j \frac{3}{2} \left( \frac{1}{2} \right)^{j-i} < 3.$
\end{proof}

\begin{prop}
\label{prop:limsupPreUI}
Let $\zeta>0$, $p \in \nats$, and $\{ p_{i,j} : 1 \leq i \leq n, i \leq j \leq n\}$ be defined as in Lemma \ref{lem:reformdet}. Then,
\begin{align}
\limsup_{n \to \infty} \E\left[ \prod_{i=1}^n \left( \sum_{j=i}^n p_{i,j} \cosh(\zeta \| \Delta_j b \| \right)^p \right] < \infty.
\end{align}
\end{prop}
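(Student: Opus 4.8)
The plan is to reduce the product over $i$ to something manageable by exploiting the two structural facts from Lemma~\ref{lem:reformdet}: that $\sum_{j=i}^n p_{i,j} = 1$ (so each inner sum is a convex combination of the $\cosh(\zeta\|\Delta_j b\|)$) and that $\sum_{i=1}^j p_{i,j} < 3$ (so each fixed block index $j$ contributes to only "three blocks' worth" of the product, in a weighted sense). First I would recall that under $\muS{\p}$ the increments $\Delta_1 b, \dots, \Delta_n b$ are independent with $\Delta_i b \ed \sqrt{\Delta}\, b_1$ (using Theorem~\ref{thm:lawofbp} and the remark following it), so that after raising to the $p$-th power and expanding the product, one may hope to factor the expectation across blocks. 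The obstacle to a naive factorization is that each factor $\bigl(\sum_{j=i}^n p_{i,j}\cosh(\zeta\|\Delta_j b\|)\bigr)^p$ mixes \emph{many} increments, and a given increment $\Delta_j b$ appears in factors $i=1,\dots,j$.

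The key step is a convexity/Hölder maneuver to disentangle the blocks. Since $\sum_{j=i}^n p_{i,j}=1$ and $x \mapsto x^p$ is convex, Jensen gives
\[
\left( \sum_{j=i}^n p_{i,j} \cosh(\zeta \|\Delta_j b\|) \right)^p \leq \sum_{j=i}^n p_{i,j} \cosh(\zeta \|\Delta_j b\|)^p .
\]
Now take expectations of the product over $i$. Writing $c_j := \E\bigl[\cosh(\zeta\|\Delta_j b\|)^p\bigr] = \E\bigl[\cosh(\zeta\sqrt{\Delta}\,\|b_1\|)^p\bigr]$, which is a finite Gaussian integral independent of $j$ (call the common value $c = c(\zeta,p,d,\Delta)$, and note $c = 1 + O(\Delta)$ since $\cosh(\zeta\sqrt\Delta\|b_1\|)^p = 1 + O(\Delta)$ pointwise with a Gaussian-integrable dominating bound), I would expand
\[
\E\left[ \prod_{i=1}^n \left( \sum_{j=i}^n p_{i,j} \cosh(\zeta\|\Delta_j b\|)^p \right) \right]
= \sum_{j_1 \geq 1}\sum_{j_2 \geq 2}\cdots\sum_{j_n \geq n} \left( \prod_{i=1}^n p_{i,j_i} \right) \E\left[ \prod_{i=1}^n \cosh(\zeta\|\Delta_{j_i} b\|)^p \right],
\]
and then bound each inner expectation by independence: grouping equal indices among $j_1,\dots,j_n$, if an index value $\ell$ occurs $m_\ell$ times then that group contributes $\E[\cosh(\zeta\|\Delta_\ell b\|)^{p m_\ell}] \leq (\text{something})$. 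Here I must be slightly careful — repeated indices raise the power — but since $p_{i,j_i}$ decays geometrically in $j_i - i$ and $\sum_i p_{i,j} < 3$, the multiplicities are controlled on average; the cleanest route is to first bound $\cosh(\zeta\|\Delta_j b\|)^p \leq \tfrac12\bigl(e^{p\zeta\|\Delta_j b\|} + 1\bigr) \leq e^{p\zeta\|\Delta_j b\|}$ and use that $\E[e^{q\sqrt\Delta\|b_1\|}] = 1 + O(q^2\Delta)$ uniformly for $q$ in bounded sets, so that even with multiplicities the per-block contribution is $1 + O(\Delta)$ as long as the total power in that block is $O(1)$, which it is because $\sum_i p_{i,j} < 3$ forces the effective number of $i$'s attached to block $j$ to be $O(1)$ in the relevant weighted count.

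Assembling: after the Jensen step and independence, the whole expectation is bounded by $\prod_{j=1}^n \bigl(1 + O(\Delta)\bigr)$ where the $O(\Delta)$ constant is uniform in $n$ (depending only on $\zeta,p,d$ via the geometric weights $p_{i,j}$ and the constraint $\sum_i p_{i,j}<3$), hence bounded by $e^{O(n\Delta)} = e^{O(1)}$ since $n\Delta = 1$. Taking $\limsup_{n\to\infty}$ gives a finite bound. The main obstacle I anticipate is making the "repeated indices are harmless" bookkeeping rigorous — i.e., showing that when one expands the product and collects equal block-indices, the exponential moments $\E[e^{q\sqrt\Delta\|b_1\|}]$ with $q$ proportional to the local multiplicity still aggregate to a convergent product; the fix is to be generous and bound $\sum_{i\le j} p_{i,j} < 3$ crudely, treating every block as if it carried total exponent at most $3p\zeta$, which makes each block factor $\E[e^{3p\zeta\sqrt\Delta\|b_1\|}] = 1 + O(\Delta)$ and closes the estimate with room to spare.
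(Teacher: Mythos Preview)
Your Jensen step and the expansion by independence are correct as far as they go, but the ``fix'' at the end is where the argument breaks. After expanding you are faced with
\[
\sum_{(j_1,\dots,j_n)} \Bigl(\prod_i p_{i,j_i}\Bigr)\,\prod_\ell \E\bigl[\cosh(\zeta\|\Delta_\ell b\|)^{p\,m_\ell}\bigr],
\]
where $m_\ell=\#\{i:j_i=\ell\}$. The constraint $\sum_{i\le j}p_{i,j}<3$ is an \emph{average} statement and does not force $m_\ell\le 3$ term-by-term. Any attempt to pass from that constraint to ``each block carries exponent at most $3p\zeta$'' must go through an inequality of the type $1+\sum_j p_{i,j}(X_j-1)\le e^{\sum_j p_{i,j}(X_j-1)}$, and multiplying over $i$ then yields $\exp\bigl(3\sum_j(X_j-1)\bigr)$ with $X_j=\cosh(\zeta\|\Delta_j b\|)^p$. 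But $\E\bigl[\exp\{3(\cosh(\zeta\sqrt{\Delta}\,\|b_1\|)^p-1)\}\bigr]=\infty$ for every $\Delta>0$: the integrand grows double-exponentially in $\|b_1\|$ and is not killed by the Gaussian tail. So the ``room to spare'' you anticipate is in fact a divergence. (A second, smaller issue: $\E[e^{q\sqrt{\Delta}\|b_1\|}]=1+q\sqrt{\Delta}\,\E\|b_1\|+O(q^2\Delta)$, so the leading correction is $O(\sqrt{\Delta})$, not $O(\Delta)$; even if the main obstacle were absent, the product over $n=1/\Delta$ blocks would then blow up.)

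The paper avoids this trap by a different route. Rather than expanding, it applies the arithmetic--geometric mean inequality to collapse the entire product into $\bigl(\tfrac{1}{n}\sum_i\sum_j p_{i,j}\cosh(x_j)\bigr)^{np}=\bigl(1+\tfrac{1}{n}\sum_j(\sum_i p_{i,j})(\cosh(x_j)-1)\bigr)^{np}$; only then does $\sum_i p_{i,j}<3$ enter, yielding $\bigl(1+\tfrac{3}{n}\sum_j(\cosh(x_j)-1)\bigr)^{np}$. The paper next bounds $\sum_j(\cosh(x_j)-1)\le\cosh(\|\mathbf{x}\|)-1\le C_\alpha(e^{\alpha\|\mathbf{x}\|^2}-1)$ and invokes the dedicated Lemma~\ref{lem:limsupGeneral}, whose proof uses It\^o's formula and Gronwall to control $\E\bigl[(1+\tfrac{\beta}{n}(e^{\alpha\|\mathbf{B}_n\|^2}+\gamma))^{np}\bigr]$. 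The point is that keeping everything inside a single $(1+\text{small}/n)^{np}$ prevents the double-exponential from ever appearing, at the cost of needing a nontrivial stochastic-calculus lemma.
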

\begin{proof}
For convenience define $x_j := \zeta \| \Delta_j b \|$. Using the geometric-arithmetic mean inequality, 
\begin{align*}
\prod_{i=1}^n \left( \sum_{j=i}^n p_{i,j} \cosh(x_j) \right)^p &\leq \left( \sum_{i=1}^n \frac{1}{n}  \left( \sum_{j=i}^n p_{i,j} \cosh(x_j) \right) \right)^{np} \\
&= \left( \sum_{i=1}^n \sum_{j=i}^n \frac{p_{i,j}}{n} \cosh(x_j)\right)^{np} \\
&= \left(1 + \sum_{i=1}^n \sum_{j=i}^n \frac{p_{i,j}}{n} (\cosh(x_j) -1) \right)^{np} \\
&< \left( 1 + \frac{3}{n} \sum_{j=1}^n (\cosh(x_j) - 1) \right)^{np}
\end{align*}
Where we used Lemma \ref{lem:reformdet} to realize $\sum_{i=1}^n \sum_{j=i}^n \frac{p_{i,j}}{n} = \sum_{i=1}^n \frac{1}{n}= 1$ and $\sum_{i=1}^j p_{i,j} < 3$. Estimating the sum on the right hand side of the inequality,
\begin{align*}
\sum_{j=1}^n (\cosh(x_j) - 1) &= \sum_{j=1}^n \sum_{k=1}^{\infty} \frac{x_j^{2k}}{(2k)!} \leq  \sum_{k=1}^{\infty}  \frac{\left( \sum_{j=1}^n x_j^{2}\right)^k}{(2k)!} \\
& \leq  \sum_{k=1}^{\infty} \frac{\| \mathbf{x} \|^{2k}}{(2k)!} 
= \cosh(\|\mathbf{x} \|) - 1
\end{align*}
where $\mathbf{x} := (x_1, ..., x_n)$. Fix some $\alpha \in (0, \frac{1}{4 \zeta^2 p})$ and let $C_{\alpha}< \infty$ such that 
\begin{align*}
\cosh(\|\mathbf{x} \|) - 1 &\leq C_{\alpha}( e^{\alpha \| \mathbf{x} \|^2} - 1) = C_{\alpha}( e^{\tilde{\alpha} \| \mathbf{B}_n \|^2} - 1) 
\end{align*}
where $\mathbf{B}_n := ( \Delta_1 b, \cdots, \Delta_n b)$ and $\tilde{\alpha} = \zeta^2 \alpha \in (0, \frac{1}{4p})$.
Therefore, using the above inequalities and Lemma \ref{lem:limsupGeneral},
\begin{align*}
&\limsup_{n \to \infty} \E\left[ \prod_{i=1}^n \left( \sum_{j=i}^n p_{i,j} \cosh(\zeta \| \Delta_j b \| \right)^p \right] \\
&\leq \limsup_{n \to \infty} \E \left[  \left( 1 + \frac{3}{n} \sum_{j=1}^n (\cosh(\zeta \| \Delta_j b \|) - 1) \right)^{np} \right] \\
&\leq \limsup_{n \to \infty} \E \left[  \left( 1 + \frac{3C_{\alpha}}{n} (e^{\tilde{\alpha}\| \mathbf{B}_n\|^2} - 1) \right)^{np} \right] < \infty.
\end{align*}
\end{proof}

The following Theorem is the main result for this section and is, in fact, just a corollary of what we've shown thus far.

\begin{thm}
\label{thm:UniformIntegrability}
For any $p \in \nats$, 
\begin{align}
\limsup_{|\p| \to 0} \E \left[ \left(\det ( I +\mAp^{-1} \Rp (\mAp^{tr})^{-1} ) \right)^p\right]  < \infty. \label{eqn:rhotsize1}
\end{align}
In particular, given some $p \in \nats$, there exists $N \in \nats$ and $C< \infty$ such that 
\begin{align}
\sup_{n\geq N} \{ \E[(\rhot_{\p})^p] : \#(\p) = n \} <  C. \label{eqn:rhotsize2}
\end{align}
This further implies that $\{ \rhot_{\p} : \#(\p)=n, n \in \nats \}$ are uniformly integrable.
\end{thm}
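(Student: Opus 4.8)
The plan is to chain together the two heavy inputs already proved: Lemma~\ref{lem:reformdet}, which bounds $\det(I+\mAp^{-1}\Rp(\mAp^{tr})^{-1})$ by a product of averages of $\cosh(240\sqrt{\Kp{j}}\Delta)$, and Proposition~\ref{prop:limsupPreUI}, which controls the $\limsup$ of the expectation of exactly such a product once the $\Kp{j}$ are replaced by Gaussian increments. The bridge between the two is the deterministic bound $\Kp{j}\leq\kappa\,\|\Delta_j b\|^2/\Delta^2$ from Eq.~(\ref{eqn:Kibound}), i.e. $\sqrt{\Kp{j}}\,\Delta\leq\sqrt{\kappa}\,\|\Delta_j b\|$.

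First, fix $p\in\nats$. Raising the inequality of Lemma~\ref{lem:reformdet} to the $p$-th power gives
\[
\left(\det\!\left(I+\mAp^{-1}\Rp(\mAp^{tr})^{-1}\right)\right)^p\ \leq\ \prod_{i=1}^n\left(\sum_{j=i}^n p_{i,j}\,\cosh\!\left(240\sqrt{\Kp{j}}\,\Delta\right)\right)^{dp}.
\]
Since $\cosh$ is increasing on $[0,\infty)$, the $p_{i,j}$ are nonnegative, and $\sqrt{\Kp{j}}\,\Delta\leq\sqrt{\kappa}\,\|\Delta_j b\|$, we may replace each $\cosh(240\sqrt{\Kp{j}}\,\Delta)$ by $\cosh(\zeta\,\|\Delta_j b\|)$ with $\zeta:=240\sqrt{\kappa}$, landing precisely on the expression appearing in Proposition~\ref{prop:limsupPreUI} with exponent $dp\in\nats$. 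Taking $\E[\cdot]$ and invoking that proposition yields $\limsup_{n\to\infty}\E[(\det(I+\mAp^{-1}\Rp(\mAp^{tr})^{-1}))^p]<\infty$; as $|\p|=1/n$ under Assumption~\ref{assumption.Pequal}, this is Eq.~(\ref{eqn:rhotsize1}).

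For Eq.~(\ref{eqn:rhotsize2}), recall $\rhot_{\p}^{\,2}=\det(I+\mAp^{-1}\Rp(\mAp^{tr})^{-1})$, so $(\rhot_{\p})^p=(\det(\cdot))^{p/2}$ and, by Cauchy--Schwarz, $\E[(\rhot_{\p})^p]\leq\E[(\det(\cdot))^p]^{1/2}$. Hence $\limsup_{n\to\infty}\E[(\rhot_{\p})^p]<\infty$ by Eq.~(\ref{eqn:rhotsize1}), so there exist $N\in\nats$ and $C<\infty$ with $\E[(\rhot_{\p})^p]<C$ for every equally-spaced $\p$ with $n\geq N$ intervals, which is Eq.~(\ref{eqn:rhotsize2}). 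For uniform integrability of $\{\rhot_{\p}\}$, apply this with $p=2$ to get $\sup_{n\geq N}\E[\rhot_{\p}^{\,2}]<\infty$; the finitely many remaining $\rhot_{\p}$ with $n<N$ are each in $L^2$ (their squares are polynomially controlled by $\cosh$ of the Gaussian increments, which has all moments), so the whole family is bounded in $L^2$, and $\E[\rhot_{\p}\mathbf 1_{\{\rhot_{\p}>M\}}]\leq M^{-1}\E[\rhot_{\p}^{\,2}]$ gives uniform integrability.

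The substance of the argument is entirely contained in the earlier Lemma~\ref{lem:reformdet} and Proposition~\ref{prop:limsupPreUI}; the remaining work is bookkeeping. The one point to handle with care is verifying that, after the substitution $\sqrt{\Kp{j}}\,\Delta\rightsquigarrow\sqrt{\kappa}\,\|\Delta_j b\|$, the resulting quantity is \emph{literally} of the form covered by Proposition~\ref{prop:limsupPreUI} — in particular that $\zeta$ is allowed to be any fixed positive constant and that the exponent $dp$ is a positive integer — and, secondarily, dealing with the square root in $\rhot_{\p}$ via the Cauchy--Schwarz step above.
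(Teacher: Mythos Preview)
Your proof is correct and follows essentially the same route as the paper: chain Lemma~\ref{lem:reformdet} with the bound $\sqrt{\Kp{j}}\,\Delta\leq\sqrt{\kappa}\,\|\Delta_j b\|$ from Eq.~(\ref{eqn:Kibound}) to land on Proposition~\ref{prop:limsupPreUI} with $\zeta=240\sqrt{\kappa}$ and exponent $dp$. You are in fact a bit more explicit than the paper about the passage from Eq.~(\ref{eqn:rhotsize1}) to Eq.~(\ref{eqn:rhotsize2}) (the Cauchy--Schwarz step handling the square root in $\rhot_{\p}$) and about why $L^2$-boundedness gives uniform integrability, both of which the paper leaves implicit.
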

\begin{proof}
From Lemma \ref{lem:reformdet} and Eq. (\ref{eqn:Kibound}), 
\begin{align*}
\left(\det ( I +\mAp^{-1} \Rp (\mAp^{tr})^{-1} ) \right)^p &\leq \prod_{i=1}^n\left( \sum_{j=i}^n p_{i,j} \cosh(240 \sqrt{\Kp{j}} \Delta) \right)^{dp} \\
&\leq \prod_{i=1}^n\left( \sum_{j=i}^n p_{i,j} \cosh(240 \sqrt{\kappa} \| \Delta_j b \|) \right)^{dp}.
\end{align*}
Applying Proposition \ref{prop:limsupPreUI},
\begin{align*}
\limsup_{|\p| \to 0} \E \left[ \prod_{i=1}^n\left( \sum_{j=i}^n p_{i,j} \cosh(240 \sqrt{\kappa} \| \Delta_j b \|) \right)^{dp} \right] < \infty,
\end{align*}
concluding the proof of Eq. (\ref{eqn:rhotsize1}). From here, Eq. (\ref{eqn:rhotsize2}) is simply a matter of combining the definitions of $\rhot_{\p}$ and $\limsup$.
\end{proof}

\section{The Space $\Hpe$}
We now begin the journey to find an expression for the limit of $\rhot_{\p}$. Lemma \ref{lem:MeasHpe} below will be used to allow us to restrict $\Hp(M)$ to the better-behaved space $\Hpe(M)$, where we control the size of $\| \Delta_i b \|$. This then gives us the ability to Taylor expand the pieces making up $\rhot_{\p}$ in terms of $\| \Delta_i b \|$ and eventually neglect higher order terms in the limit. 

\subsect{Definition of $\Hpe$ and Preliminary Estimates}
Let $\epsi > 0$. Define the subspace $\HpeR \subset \Hp(\re^d)$ by,
\begin{align}
\HpeR :=  \left\{ \vee_{i=1}^n \| \Delta_i b \| \leq \epsi \right\} \cap \Hp(\bbR^d). \label{eqn:Hpe}
\end{align}
From here we can define $\Hpe(M) \subset H_{\p}(M)$ by
\begin{align}
\label{eqn:HpeM}
\Hpe(M) = \phi(\Hpe(\re^d))
\end{align}
where $\phi$ is Cartan's Development discussed in Section \ref{section:CartanDevelopment}. It is worth noting that we might have also chosen to define $\Hpe(M)$ equivalently as,
\begin{align}
\Hpe(M) = \left\{ \sigma \in \Hp(M) : \vee_{i=1}^n \int_{s_{i-1}}^{s_i} \| \sigma'(s) \| ds \leq \epsi \right\}.
\end{align}

Lemma \ref{lem:MeasHpeSp} is proved in \cite[Proposition 5.13]{AndDrive:1999} and left unproved here. Lemma \ref{lem:MeasHpe} is a similar estimate that we need so we can focus on the limiting behavior of $\rhot_{\p}$ on $\Hpe(\bbR^d)$. Recall from Assumption \ref{assumption.Pequal} that we are using an equally spaced partition and $|\p| = \Delta$. 

\begin{lem}
\label{lem:MeasHpeSp}
For any $\epsi > 0$, there is a constant $C = C(d) < \infty$ such that 
\begin{align}
\nuS{\p} ( \Hp(M) \backslash \Hpe(M)) \leq \frac{C}{\epsi^2} \ex\left\{ - \frac{\epsi^2}{4 \Delta} \right\}.
\end{align}
\end{lem}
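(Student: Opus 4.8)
The plan is to push the estimate forward to flat Euclidean space, where the complement of $\Hpe$ becomes an elementary event about the increments of a Brownian motion, and then to apply a Gaussian tail bound with a weight chosen so that both the exponent $\epsi^2/(4\Delta)$ and the prefactor $\epsi^{-2}$ come out exactly as stated.

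First I would invoke Theorem~\ref{thm:lawofbp}. Since $\muS{\p}=\phi^{*}\nuS{\p}$ and $\phi$ restricts to a bijection $\Hp(\re^d)\to\Hp(M)$ carrying $\Hpe(\re^d)$ onto $\Hpe(M)$ (by Eq.~(\ref{eqn:HpeM})), applying $\muS{\p}(A)=\nuS{\p}(\phi(A))$ with $A=\Hp(\re^d)$ and with $A=\Hpe(\re^d)$ yields
\[
\nuS{\p}\bigl(\Hp(M)\setminus\Hpe(M)\bigr)=1-\muS{\p}\bigl(\Hpe(\re^d)\bigr)=\muS{\p}\bigl(\Hp(\re^d)\setminus\Hpe(\re^d)\bigr).
\]
Because $\muS{\p}=\Law_{\mu}(\bp)$ and $\Delta_i\bp=\Delta_i b$ — so that $\bp(\w)\in\Hpe(\re^d)$ precisely when $\vee_{i=1}^n\|\Delta_i b(\w)\|\le\epsi$ — the last quantity equals $\prob\bigl(\vee_{i=1}^n\|\Delta_i b\|>\epsi\bigr)$, evaluated on $(W(\re^d),\mu)$, where $(b_s)$ is a genuine $\re^d$-valued Brownian motion. (Equivalently, apply the Remark following Theorem~\ref{thm:lawofbp} to the indicator of $\{\vee_i\|\Delta_i b\|>\epsi\}$.)

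Under the equally spaced partition the increments $\Delta_1 b,\dots,\Delta_n b$ are i.i.d.\ with $\Delta_1 b\ed\sqrt{\Delta}\,\xi$ for a standard $d$-dimensional Gaussian $\xi$. Writing $X:=\|\Delta_1 b\|^2$ and using $\{\|\Delta_i b\|>\epsi\}=\{\|\Delta_i b\|^2>\epsi^2\}$, the union bound together with Markov's inequality applied to the increasing function $x\mapsto x\,e^{x/(4\Delta)}$ gives
\[
\prob\Bigl(\bigvee_{i=1}^n\|\Delta_i b\|>\epsi\Bigr)\le n\,\prob\bigl(X>\epsi^2\bigr)\le n\,\frac{\E\bigl[X\,e^{X/(4\Delta)}\bigr]}{\epsi^2\,e^{\epsi^2/(4\Delta)}}=\frac{n\Delta}{\epsi^2}\,\E\bigl[\|\xi\|^2 e^{\|\xi\|^2/4}\bigr]\,e^{-\epsi^2/(4\Delta)}.
\]
Since $1/4<1/2$, the constant $K_d:=\E[\|\xi\|^2 e^{\|\xi\|^2/4}]$ is finite and depends only on $d$, and $n\Delta=1$; combining with the previous paragraph gives $\nuS{\p}(\Hp(M)\setminus\Hpe(M))\le K_d\,\epsi^{-2}e^{-\epsi^2/(4\Delta)}$, i.e.\ the claim with $C=C(d):=K_d$.

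The computation is short; the one point that requires care is the choice of the weight $x\,e^{x/(4\Delta)}$ in Markov's inequality. The exponential factor must carry rate exactly $1/(4\Delta)$ to reproduce the stated exponent, the linear factor $x$ supplies the $\Delta$ that cancels the union-bound count $n=1/\Delta$ and leaves the $\epsi^{-2}$, and these two requirements are simultaneously satisfiable only because $1/4<1/2$ keeps the Gaussian moment finite; a plain Chernoff bound with rate $1/4$ would instead leave a spurious factor $1/\Delta$ in place of $1/\epsi^2$.
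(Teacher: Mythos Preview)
Your proof is correct. The paper does not actually prove this lemma, deferring instead to \cite[Proposition~5.13]{AndDrive:1999}; your argument---push to flat space via Theorem~\ref{thm:lawofbp}, union bound over the $n$ increments, then a Gaussian tail estimate---is exactly the outline the paper follows for the companion Lemma~\ref{lem:MeasHpe}, where it invokes Lemma~\ref{lem:GaussBound} (with $k=0$ and $a=\epsi/\sqrt{\Delta}$) in place of your direct weighted-Markov computation.
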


\begin{lem}
\label{lem:MeasHpe}
For $\epsi > 0$ and sufficiently small $\Delta$, there exists a $C = C(\ddim) < \infty$ such that 
\begin{align}
\nuG{\p}( H_{\p}(M) \backslash H_{\p}^{\epsi}(M) ) \leq  \frac{C}{ \sqrt{\Delta}  \epsi} \ex\{ - \frac{\epsi^2}{8 \Delta} \}.
\end{align}
\end{lem}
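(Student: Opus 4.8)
The plan is to compare $\nuG{\p}$ with $\nuS{\p}$ through the density $\rho_{\p}$ and split the contribution of the ``bad'' set from that of $\rho_{\p}$ by Cauchy--Schwarz, then control each of the two resulting factors by results already established. Write $A := \Hp(M)\setminus\Hpe(M)$; since $\Hpe(M) = \phi(\Hpe(\re^d))$ is closed (the defining functional $\sigma\mapsto\vee_i\int_{s_{i-1}}^{s_i}\|\sigma'(s)\|\,ds$ is continuous and $\phi$ is a diffeomorphism onto $\Hp(M)$), $A$ is Borel. Using $d\nuG{\p} = \rho_{\p}\,d\nuS{\p}$ from Eq.\,(\ref{eqn.firstrhop}) and Cauchy--Schwarz with respect to $\nuS{\p}$,
\begin{align*}
\nuG{\p}(A) = \int_{A} \rho_{\p}\, d\nuS{\p} \leq \left( \int_{\Hp(M)} \rho_{\p}^2\, d\nuS{\p} \right)^{1/2} \left( \nuS{\p}(A) \right)^{1/2},
\end{align*}
so it suffices to bound the two factors on the right.

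For the first factor, I would invoke Theorem \ref{thm:lawofbp}: since $\nuS{\p} = \Law_{\mu}(\phi(\bp))$, for every nonnegative Borel $g$ on $\Hp(M)$ one has $\int_{\Hp(M)} g\, d\nuS{\p} = \E[g(\phi(\bp))]$. Applying this with $g = \rho_{\p}^2$ and recalling $\rhot_{\p} = \rho_{\p}(\phi(\bp))$ from Eq.\,(\ref{eqn.firstrhotp}) gives $\int_{\Hp(M)}\rho_{\p}^2\, d\nuS{\p} = \E[\rhot_{\p}^2]$. Theorem \ref{thm:UniformIntegrability}, applied with $p = 2$, then produces $N\in\nats$ and a constant $C_1 = C_1(d) < \infty$ with $\E[\rhot_{\p}^2] \leq C_1$ whenever $\#(\p) = n \geq N$, i.e.\ for all sufficiently small $\Delta$. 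For the second factor I would simply quote Lemma \ref{lem:MeasHpeSp}, which gives $\nuS{\p}(A) \leq \frac{C_2}{\epsi^2}\,\ex\{-\epsi^2/(4\Delta)\}$ with $C_2 = C_2(d)$.

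Combining these, for $\Delta \leq 1/N$ (we may also assume $\Delta \leq 1$),
\begin{align*}
\nuG{\p}(A) \leq \sqrt{C_1}\cdot\frac{\sqrt{C_2}}{\epsi}\,\ex\left\{-\frac{\epsi^2}{8\Delta}\right\} \leq \frac{\sqrt{C_1 C_2}}{\sqrt{\Delta}\,\epsi}\,\ex\left\{-\frac{\epsi^2}{8\Delta}\right\},
\end{align*}
where the last inequality uses $\sqrt{\Delta}\leq 1$; this is the asserted estimate with $C = \sqrt{C_1 C_2}$, depending only on $d$. I do not expect a genuine obstacle here: the only points needing a word of care are the measurability of $A$ and the change-of-variables identity $\int_{\Hp(M)}\rho_{\p}^2\,d\nuS{\p} = \E[\rhot_{\p}^2]$, both of which are immediate from the setup. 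The one mildly non-obvious idea is that one should not attempt to bound $\rho_{\p}$ pointwise on $A$, but instead exploit its uniform $L^2$-control from Theorem \ref{thm:UniformIntegrability} via Cauchy--Schwarz, which costs only a factor of $2$ in the Gaussian exponent compared with Lemma \ref{lem:MeasHpeSp}.
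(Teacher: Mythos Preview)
Your proof is correct and uses the same essential ingredients as the paper (the density $\rho_{\p}$, Cauchy--Schwarz, the uniform $L^2$ bound from Theorem~\ref{thm:UniformIntegrability}, and a Gaussian tail estimate), but your execution is slightly more direct. The paper first pulls everything back to $W(\re^d)$, decomposes the bad set as $B=\bigcup_i B_i$ with $B_i=\{\|\Delta_i b\|>\epsi\}$, applies Cauchy--Schwarz to each $B_i$ separately, bounds $\E[1_{B_i}]$ via Lemma~\ref{lem:GaussBound}, and then sums over $i$; this summation over $n=1/\Delta$ terms is precisely what produces the $1/\sqrt{\Delta}$ factor in the stated bound. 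You instead apply Cauchy--Schwarz once on the full set and invoke Lemma~\ref{lem:MeasHpeSp} directly for $\nuS{\p}(A)$, which avoids the union bound and actually yields the sharper estimate $\nuG{\p}(A)\leq C\epsi^{-1}\ex\{-\epsi^2/(8\Delta)\}$; you then weaken it by $\sqrt{\Delta}\leq 1$ only to match the statement. Both routes are fine; yours is marginally cleaner and shows the $1/\sqrt{\Delta}$ in the lemma is an artifact of the paper's decomposition rather than intrinsic.
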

\begin{proof}
Set $\Gamma = H_{\p}(M) \backslash H_{\p}^{\epsi}(M)$. Let $B_i := \{ \| \Delta_i b \| > \epsi\} \subset W(\bbR^d)$, and $B := \cup_{i} B_i$. We have, $\phi^{-1}(\Gamma) = \Hp(\re^d) \backslash \Hpe(\re^d) = B \cap \Hp(\re^d),$ and ${(\bp)}^{-1}( B \cap \Hp(\re^d) ) = \cup_i  \{ \| \Delta_i b \| > \epsi \} = B.$ Therefore, $\phi(\bp)^{-1}(\Gamma) = B$ and 
\begin{align*}
\nuG{\p}(\Gamma) &= \int_{\Gamma} d\nuG{\p} = \int_{\Gamma} \rho_{\p} d\nuS{\p} = \int_{B} \rhot_{\p} d\mu \\
&\leq \sum_{i=1}^n \int_{B_i} \rhot_{\p} d\mu \leq \sum_{i=1}^n \left(\E[1_{B_i}] \right)^{1/2} \left( \E[ (\rhot_{\p})^2] \right)^{1/2}.
\end{align*}
Here the second equality comes from $\nuG{\p} = \rho_{\p} \nuS{\p}$, the third equality comes from Theorem \ref{thm:lawofbp}. Applying Theorem \ref{thm:UniformIntegrability} with $n$ sufficiently large (equivalently, $\Delta$ sufficiently small), we find a constant $C < \infty$ such that 
\begin{align*}
\sup\{ \left( \E[ (\rhot_{\p})^2] \right)^{1/2} : \#(\p) = n, n \text{ sufficiently large } \} \leq C.
\end{align*}
Further, from Lemma \ref{lem:GaussBound} with $k = 0$ and $a =  \epsi / \sqrt{|\p|} = \epsi / \sqrt{\Delta}$, 
\begin{align*}
\E[1_{B_i}] \leq \frac{ C  \Delta }{\epsi^2} \ex\{ - \frac{\epsi^2}{4 \Delta} \}.
\end{align*}
Therefore,
\begin{align*}
\nuG{\p}(\Gamma) &\leq \sum_{i=1}^n \left(\E[1_{B_i}] \right)^{1/2} \left( \E[ (\rhot_{\p})^2] \right)^{1/2} \leq  C  \sum_{i=1}^n \frac{\sqrt{  \Delta }}{\epsi} \ex\{ - \frac{\epsi^2}{8  \Delta} \} \\
&=  C \frac{\sqrt{  \Delta }}{ \Delta  \epsi} \ex\{ - \frac{\epsi^2}{8\Delta} \} =  \frac{C}{ \sqrt{\Delta}  \epsi} \ex\{ - \frac{\epsi^2}{8 \Delta} \},
\end{align*}
where we are certainly using the fact that $\Delta = 1/n$. 
\end{proof}

We conclude this section with a few estimates which motivate several of the bounds that we make in the sequel.
\begin{cor}
\label{cor.HpOutHpe}
Given a bounded measurable function $f$ on $\Hp(M)$ and sufficiently small $\Delta$, there exists a $C < \infty$  depending only on $d$ and the bound on $f$ such that
\begin{align*}
\left| \int_{\Hp(M) \backslash \Hpe(M)} f(\sigma) \rho_{\p}(\sigma) d\nuS{\p}(\sigma) \right| \leq \frac{C}{ \sqrt{\Delta}  \epsi} \ex\{ - \frac{\epsi^2}{8 \Delta} \}.
\end{align*}
\end{cor}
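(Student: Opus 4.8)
The plan is to read this off directly from Lemma \ref{lem:MeasHpe}. First I would use the defining relation $d\nuG{\p} = \rho_{\p}\,d\nuS{\p}$ from Eq. (\ref{eqn.firstrhop}) to rewrite the integrand, so that
\[
\int_{\Hp(M)\backslash\Hpe(M)} f(\sigma)\,\rho_{\p}(\sigma)\,d\nuS{\p}(\sigma) = \int_{\Hp(M)\backslash\Hpe(M)} f(\sigma)\,d\nuG{\p}(\sigma).
\]
Since $f$ is bounded, set $M_f := \sup_{\Hp(M)}|f| < \infty$; then the triangle inequality for integrals gives
\[
\left| \int_{\Hp(M)\backslash\Hpe(M)} f\,d\nuG{\p} \right| \leq M_f\,\nuG{\p}\big(\Hp(M)\backslash\Hpe(M)\big).
\]

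Next I would invoke Lemma \ref{lem:MeasHpe}: for $\epsi > 0$ and $\Delta$ sufficiently small there is a constant $C_0 = C_0(d) < \infty$, uniform in the partition, with
\[
\nuG{\p}\big(\Hp(M)\backslash\Hpe(M)\big) \leq \frac{C_0}{\sqrt{\Delta}\,\epsi}\,\ex\Big\{ -\frac{\epsi^2}{8\Delta} \Big\}.
\]
Combining the two displays and setting $C := M_f C_0$, which depends only on $d$ and the bound on $f$, yields the asserted estimate.

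There is no genuine obstacle here; the only points deserving a word of care are that the set $\Hp(M)\backslash\Hpe(M)$ is measurable — it is $\phi$ of $\Hp(\re^d)\backslash\Hpe(\re^d)$, exactly as used in the proof of Lemma \ref{lem:MeasHpe} — and that the constant in Lemma \ref{lem:MeasHpe} is uniform over partitions once $\Delta$ is small, which is precisely how that lemma is phrased. (Alternatively, one could bypass Lemma \ref{lem:MeasHpe} and mimic its proof directly: push forward to $W(\re^d)$ via $\phi(\bp)$, split $\Hp(M)\backslash\Hpe(M)$ into the events $\{\|\Delta_i b\| > \epsi\}$, and apply Cauchy--Schwarz together with the uniform bound on $\E[(\rhot_{\p})^2]$ from Theorem \ref{thm:UniformIntegrability} and the Gaussian tail estimate; but citing Lemma \ref{lem:MeasHpe} is the shortest route.)
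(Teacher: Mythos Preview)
Your proof is correct and follows exactly the same route as the paper: rewrite the integral using $d\nuG{\p} = \rho_{\p}\,d\nuS{\p}$, bound by the sup of $|f|$, and apply Lemma~\ref{lem:MeasHpe}. The paper's own proof is the one-line version of what you wrote.
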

\begin{proof}
This is immediate using Lemma \ref{lem:MeasHpe} along with the fact that $d\nuG{\p} = \rho d\nuS{\p}$.
\end{proof}

\begin{cor}
\label{cor:sumbto0}
Take $a, c>0$ and $p \in \bbN$ with $p \geq 2$. Let $\Gamma \subset \{1, 2, ..., n\}$ with $\#(\Gamma) = m$. Suppose  $Y$ is a random variable on $\Hp(\re^d)$ such that $|Y| \leq c \sum_{i \in \Gamma} \left\| \Delta_{i} b \right\|^p$. For sufficiently small $\Delta$ and $\epsi$, there exists a $C = C(d, c, a, p) < \infty$ such that, 
\begin{align}
\int_{\Hpe(\re^d)} (e^Y - 1)^a d\muS{\p} \leq C m \Delta^{\frac{p}{2}}. \label{eqn.sumbto01}
\end{align}
\end{cor}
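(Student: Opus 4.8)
The plan is to bound the integrand $(e^Y-1)^a$ by an expression involving $e^{ac|Y|/c}$-type growth, integrate termwise against $\muS{\p}$, and then exploit the independence-and-scaling structure of the increments $\Delta_i b$ under $\muS{\p}$ that was established in the Remark following Theorem \ref{thm:lawofbp}. Concretely, since $|Y| \le c\sum_{i\in\Gamma}\|\Delta_i b\|^p$, on the space $\Hpe(\re^d)$ each $\|\Delta_i b\| \le \epsi$, so $|Y| \le c\,\epsi^{p-2}\sum_{i\in\Gamma}\|\Delta_i b\|^2 \le c\,\epsi^{p-2}\|\mathbf{B}_n\|^2$; hence $Y$ is small when $\epsi$ is small, and $(e^Y-1)^a \le (e^{|Y|}-1)^a$. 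Using the elementary inequality $(e^t-1)^a \le C_a(e^{a t}-1)$ for $t\ge 0$ bounded (an analogue of Eq.\,(\ref{eqn.eaminus1})/(\ref{eqn.eaminus1})-type estimates already invoked in the paper), or more directly $(e^t-1)^a \le C_a\, t^a$ for $t$ in a bounded range, reduces the problem to estimating $\int_{\Hpe(\re^d)} \big(\sum_{i\in\Gamma}\|\Delta_i b\|^p\big)^a d\muS{\p}$.

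**Next I would** handle the case $a\ge 1$ and $0<a<1$ slightly differently but with the same idea. For $a \ge 1$, by the power-mean (or Jensen) inequality, $\big(\sum_{i\in\Gamma}\|\Delta_i b\|^p\big)^a \le m^{a-1}\sum_{i\in\Gamma}\|\Delta_i b\|^{pa}$, so it suffices to bound each $\int_{\Hpe(\re^d)} \|\Delta_i b\|^{pa} d\muS{\p} \le \E[\|\Delta_i b\|^{pa}]$ (dropping the indicator of $\Hpe$ only enlarges the integral). For $0<a<1$ one instead uses subadditivity, $\big(\sum_{i\in\Gamma}\|\Delta_i b\|^p\big)^a \le \sum_{i\in\Gamma}\|\Delta_i b\|^{pa}$, which is even cleaner. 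In either case, by the scaling $\Delta_i b \ed \sqrt{\Delta}\, b_1$ under $\mu$ (using $\Delta_i s = \Delta$ for the equally-spaced partition, and the Remark after Theorem \ref{thm:lawofbp}), one gets $\E[\|\Delta_i b\|^{pa}] = \Delta^{pa/2}\,\E[\|b_1\|^{pa}]$, and $\E[\|b_1\|^{pa}]$ is a finite constant depending only on $d$ and $pa$. Summing over the $m$ indices in $\Gamma$ gives a bound of the form $C\, m^{\max(a,1)}\,\Delta^{pa/2}$.

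**The one subtlety** to reconcile with the claimed exponent is that the statement asserts a bound $C\,m\,\Delta^{p/2}$, not $C\,m^{\max(a,1)}\,\Delta^{pa/2}$; I would therefore first pull out the smallness more carefully before raising to the power $a$. That is: write $(e^Y-1)^a = (e^Y-1)^{a-1}(e^Y-1)$ when $a\ge 1$; on $\Hpe(\re^d)$ the factor $(e^Y-1)^{a-1}$ is bounded by a constant (since $|Y|\le c\,\epsi^{p-2}\,\ddim\,\epsi^2$-ish is bounded for small $\epsi$... more precisely $|Y|\le c\sum\|\Delta_i b\|^p$ with each term $\le\epsi^p$, and there are at most $n=1/\Delta$ of them, so one must be a little careful, but $\sum\|\Delta_i b\|^p \le \epsi^{p-2}\|\mathbf B_n\|^2$ which is not a priori bounded — so instead bound $(e^Y-1)^{a-1}\le e^{(a-1)Y}\le e^{(a-1)c\,\epsi^{p-2}\|\mathbf B_n\|^2}$ and absorb it via a Gaussian moment-generating-function estimate as in Lemma \ref{lem:limsupGeneral}/Proposition \ref{prop:limsupPreUI}, valid for $\epsi$ small enough that $(a-1)c\,\epsi^{p-2} < \tfrac{1}{4\Delta}$... but $\Delta\to 0$ makes this fail). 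The cleaner route, and the one I expect works, is: bound $(e^Y - 1)^a \le \big(e^{c\sum_{i\in\Gamma}\|\Delta_i b\|^p} - 1\big)^a$, then use $(e^u - 1)^a \le e^{a u} - 1$ only when $a \ge 1$ (true since $e^{au}-1 \ge (e^u-1)^a$ by convexity for $u\ge0$), while for $a<1$ use $(e^u-1)^a \le u^a \cdot e^{a u} $... and finally estimate $e^{a c\sum\|\Delta_i b\|^p} - 1 \le a c \sum_{i\in\Gamma}\|\Delta_i b\|^p e^{ac\sum_j \|\Delta_j b\|^p}$ exactly as in the proof of Lemma \ref{lem:fancyS}, then factor the expectation over independent increments, use $\E[\|\Delta_i b\|^p e^{\cdots}] = \Delta^{p/2}\E[\|b_1\|^p e^{\cdots}]$ with the remaining product $\prod_{j\ne i}(1 + O(\Delta^{p/2}))\le \exp\{O(m\Delta^{p/2})\} = O(1)$ since $m\Delta^{p/2}\le n\Delta^{p/2}=\Delta^{p/2-1}\to 0$ for $p>2$ (and $=1$ for $p=2$).

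**The main obstacle**, then, is bookkeeping the exponent correctly: making sure the factored-out product $\prod_{j\ne i}\E[e^{ac\,\Delta_j s\,\|\Delta_j b\|^p / \Delta_j s}]$ stays bounded (which needs $m\Delta^{p/2}$ bounded, hence the restriction $p\ge 2$ is essential — for $p=2$ one needs $\epsi$ genuinely small so that $ac\,\epsi^{?}$ beats the Gaussian tail, exactly paralleling the "$\alpha \in (0,\tfrac{1}{4p})$" choice in Proposition \ref{prop:limsupPreUI}), and handling $0<a<1$ versus $a\ge1$ uniformly. I would organize the final write-up as: (i) reduce to $\int_{\Hpe(\re^d)}(e^{c\sum_{i\in\Gamma}\|\Delta_i b\|^p}-1)^a\,d\muS{\p}$; (ii) apply the $(e^u-1)^a$ inequalities to pass to $a$ times a single sum times an exponential; (iii) apply Theorem \ref{thm:lawofbp} to replace $\muS{\p}$-integration by $\E$ and factor over independent increments; (iv) apply the scaling $\Delta_i b\ed\sqrt\Delta\,b_1$ and bound $\E[\|b_1\|^p e^{\epsi\text{-small}\|b_1\|^p}]$ by a $d,p,c$-dependent constant, with the product over $j\ne i$ absorbed into $C$ using $m\Delta^{p/2}\le 1$; (v) sum the $m$ terms to obtain $C\,m\,\Delta^{p/2}$.
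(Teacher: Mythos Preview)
Your final outline (i)--(v) is correct and is essentially the paper's argument. The paper streamlines your step (ii) by the single reduction $(e^Y-1)^a \le e^{a|Y|}-1$ (Eq.~(\ref{eqn.eaminus1}); note this needs $a\ge 1$, which is all that is ever used), absorbing $a$ into $c$ and thereby reducing to $a=1$; it then bounds $e^{|Y|}-1 \le |Y|e^{|Y|} \le c\sum_{i\in\Gamma}\|\Delta_i b\|^p \exp\{c\,\epsi^{p-2}\sum_{j\in\Gamma}\|\Delta_j b\|^2\}$ on $\Hpe(\re^d)$ and invokes Lemma~\ref{lem:sumbpto0} (Eq.~(\ref{eqn.deltabineq2})) directly, which packages your steps (iii)--(v) and removes the bookkeeping you worry about.
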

\begin{proof}
Since  $(e^Y -1)^a \leq e^{a|Y|} - 1$, we will assume that $a=1$ without losing generality. From Eq. (\ref{eqn.eaminus1}) and Theorem (\ref{thm:lawofbp}),
\begin{align*}
&\int_{\Hpe(M)} \left| e^Y - 1 \right| d\muS{\p}\\
&\qquad \leq c \sum_{i \in \Gamma} \int_{\Hpe(\re^d)} \| \Delta_{i} b \|^{p} \ex \left\{ c \epsi^{p-2} \sum_{j \in \Gamma} \| \Delta_{j} b \|^2 \right\} d\muS{\p} \\
&\qquad \leq c \sum_{i \in \Gamma} \int_{\Hp(\re^d)} \| \Delta_{i} b \|^{p} \ex \left\{ c \epsi^{p-2} \sum_{j \in \Gamma} \| \Delta_{j} b \|^2 \right\} d\muS{\p} \\
&\qquad = c \sum_{i\in \Gamma} \E \left[ \| \Delta_{i} b \|^p \ex \left\{ c \epsi^{p-2} \sum_{j \in \Gamma} \| \Delta_{j} b \|^2 \right\} \right]  \\
&\qquad \leq C m \Delta^{\frac{p}{2}}
\end{align*}
where the last inequality follows from Eq. (\ref{eqn.deltabineq2}) in Lemma \ref{lem:sumbpto0}.
%For the second claim use Holder's inequality,
%\begin{align*}
%\int_{\Hpe(\re^d)} |e^Y - e^X| d\muS{\p}  \leq \left[ \int_{\Hp(\re^d)} e^{2X} d\muS{\p} \right]^{1/2}  \left[ \int_{\Hpe(\re^d)} (e^{|Y-X|}-1)^2 d\muS{\p} \right]^{1/2}.
%\end{align*}
%Now the result follows from Eq. (\ref{eqn.sumbto01}).
\end{proof}

\subsect{Taylor Expansions in $\Hpe$} Our approach to come upon the limiting function of $\rhot_{\p}$ as $|\p| \to 0$ is to expand the entries in $\GFp$,  $\int_0^{\Delta}\{ \Sp{i}(s)^{\tr} \Sp{i}(s) + \Vp{i+1}(s)^{\tr} \Vp{i+1}(s) \}ds$ and $\int_0^{\Delta} \Vp{i+1}(s)^{\tr} \Sp{i+1}(s) ds$, in terms of $\| \Delta_i b \|$. Fortunately we will see later that terms of order $\| \Delta_i b \|^3$ and higher will vanish. It is important to remember that these expansions are taking place on $\Hpe(\re^d)$ which insures that $\| \Delta_i b \| \leq \epsi$, which is a fact that is frequently used in the below estimates.

It is convenient here to use the notation $A  = O(r)$ for an operator $A$ and $r >0$ to mean $\| A \| \leq C r$ for some bounding constant $C$ possibly depending on $\epsi$ and the curvature of $M$. Using this notation, Eqs. (\ref{eqn:Aibound}) and (\ref{eqn:Aipbound}) can be restated as $\Ap{i} \Delta^2 = O(\|\Delta_i b\|^2)$ and $(\frac{d}{ds} \Ap{i} ) \Delta^3 = O(\| \Delta_i b\|^3)$ with  $C = \kappa$.

\begin{lem}
\label{lem:SpCpVpest}
For sufficiently small $\epsi > 0$ and $s \in (0, \Delta]$, 
\begin{align}
&\Sp{i}(s) = sI + \frac{s^3}{6}\Ap{i}(0) + O(s \| \Delta_i b \|^3), \label{eqn:Sptaylor}\\
&\Cp{i}(s) = I + \frac{s^2}{2}\Ap{i}(0) + O(\| \Delta_i b \|^3), \label{eqn:Cptaylor} 
\end{align}
and
\begin{align}
\Vp{i+1}(s) = ~(\Delta-s)I +\left( \frac{\Delta^3 - s \Delta^2}{6} \right)\Ap{i}(0) + \left( \frac{3\Delta s^2 - 2\Delta^2 s - s^3}{6} \right)\Ap{i+1}(0) \label{eqn:Vptaylor} \\
 + O(\Delta \{ \| \Delta_i b \|^3 + \| \Delta_{i+1} b \|^3 \}) \notag
\end{align}
on $\HpeR$. 
Moreover, the bounding constant can be taken independent of $i$.
\end{lem}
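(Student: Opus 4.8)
The plan is to treat all three expansions as consequences of a single estimate on solutions of the second-order linear ODE in Eq.~(\ref{eqn:OpFlatJacobi}), and then to propagate the base-case expansions for $\Sp{i}$ and $\Cp{i}$ through the variation-of-parameters formula that defines $\Vp{i+1}$ in Eq.~(\ref{eqn:Vi}). First I would establish the generic fact: if $Z$ solves $Z'' = A(s)Z$ on $[0,\Delta]$ with $\|A(s)\| \le K$ and $\|A'(s)\| \le K'$, and $Z$ has prescribed initial data $(Z(0), Z'(0))$, then the Picard iteration (equivalently the integral equation $Z(s) = Z(0) + sZ'(0) + \int_0^s (s-r)A(r)Z(r)\,dr$) gives $Z(s) = Z(0) + sZ'(0) + \int_0^s (s-r)A(r)\,dr \cdot Z(0) + (\text{remainder})$, where the remainder is controlled by $\cosh(\sqrt K \Delta)-1$-type quantities already appearing in Lemma~\ref{lem:SVlinear} and Proposition~\ref{prop:ODE}. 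Since on $\HpeR$ we have $\|\Delta_i b\| \le \epsi$ small, the quantities $\Kp{i}\Delta^2 = O(\|\Delta_i b\|^2)$ and $K'\Delta^3 = O(\|\Delta_i b\|^3)$ are small; replacing $A(r)$ by $\Ap{i}(0)$ inside the integral costs $O(s^2 \cdot K' \cdot s) = O(s^3 \|\Delta_i b\|^3 / \Delta^3)$, and iterating once more costs a term of order $(\Kp{i}\Delta^2)^2 = O(\|\Delta_i b\|^4)$, which is absorbed into the stated $O(\cdot)$.

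Next I would specialize. For $\Sp{i}$ (initial data $0, I$) the integral equation gives $\Sp{i}(s) = sI + \int_0^s (s-r)A(r)\Sp{i}(r)\,dr$; inserting the crude bound $\Sp{i}(r) = rI + O(r\|\Delta_i b\|^2)$ from Eq.~(\ref{eqn:Slinear}) and freezing $A$ at $0$ yields $\int_0^s (s-r)\Ap{i}(0)\, rI\,dr = \frac{s^3}{6}\Ap{i}(0)$, with the errors (freezing $A$, and the $O(r\|\Delta_i b\|^2)$ correction to $\Sp{i}(r)$, the latter multiplied by $A = O(\|\Delta_i b\|^2/\Delta^2)$ and integrated) all of size $O(s\|\Delta_i b\|^3)$ after using $s \le \Delta$. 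This is Eq.~(\ref{eqn:Sptaylor}); Eq.~(\ref{eqn:Cptaylor}) is identical with initial data $I, 0$, giving leading correction $\frac{s^2}{2}\Ap{i}(0)$. For Eq.~(\ref{eqn:Vptaylor}) I would start from the closed form $\Vp{i+1}(s) = \Cp{i+1}(s)\Sp{i}(\Delta) - \Sp{i+1}(s)\Fp{i}$ in Eq.~(\ref{eqn:Vi}), substitute the just-proved expansions for $\Cp{i+1}(s)$, $\Sp{i+1}(s)$, and $\Sp{i}(\Delta)$ (the last equals $\Delta I + \frac{\Delta^3}{6}\Ap{i}(0) + O(\Delta\|\Delta_i b\|^3)$), expand $\Fp{i} = \Sp{i+1}(\Delta)^{-1}\Cp{i+1}(\Delta)\Sp{i}(\Delta)$ using $\Sp{i+1}(\Delta)^{-1} = \frac{1}{\Delta}I - \frac{\Delta}{6}\Ap{i+1}(0) + O(\|\Delta_{i+1}b\|^3/\Delta)$ (from a Neumann series on Eq.~(\ref{eqn:Sptaylor})), and collect terms by order. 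The coefficient of $\Ap{i}(0)$ will be $\frac{\Delta^2}{6} - \frac{s\Delta^2}{6}\cdot\frac{1}{\Delta}\cdot\Delta = \frac{\Delta^3 - s\Delta^2}{6}$ after bookkeeping, and the coefficient of $\Ap{i+1}(0)$ collects the $\frac{s^2}{2}$ term from $\Cp{i+1}$, the $-\frac{s^3}{6}$ from $-\Sp{i+1}(s)$ times $\frac{1}{\Delta}\cdot\Delta I$, and the $+\frac{\Delta^2}{6}\cdot\Delta$ correction routed through $\Fp{i}$, producing $\frac{3\Delta s^2 - 2\Delta^2 s - s^3}{6}$.

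The main obstacle is the bookkeeping in the $\Vp{i+1}$ computation: one must carry two independent small operators $\Ap{i}(0)$ and $\Ap{i+1}(0)$ through a product of three factors plus an inverse, keeping everything to second order in $s$-type quantities while confirming that every dropped term is genuinely $O(\Delta\{\|\Delta_i b\|^3 + \|\Delta_{i+1}b\|^3\})$ — in particular the cross terms of the form $\Ap{i}(0)\Ap{i+1}(0)\cdot(\text{polynomial in }s,\Delta)$, which are $O(\|\Delta_i b\|^2\|\Delta_{i+1}b\|^2/\Delta^4 \cdot \Delta^4) = O(\|\Delta_i b\|^2\|\Delta_{i+1}b\|^2)$ and hence absorbed via $xy^2 \le \frac12(x\cdot\text{stuff})$ bounds together with $\|\Delta b\| \le \epsi$. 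That the bounding constants depend only on $\epsi$ and the curvature bound $\kappa$ (and not on $i$ or the partition) is immediate since every estimate invoked — Proposition~\ref{prop:ODE}, Lemma~\ref{lem:SVlinear}, and Eqs.~(\ref{eqn:Aibound})--(\ref{eqn:Aipbound}) — has this uniformity built in; I would remark on this at the end rather than belabor it.
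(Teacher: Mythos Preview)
Your proposal is correct and follows essentially the same route as the paper: the expansions for $\Sp{i}$ and $\Cp{i}$ come directly from the Picard-iteration estimate that is packaged as Proposition~\ref{prop:ODE}, and the expansion for $\Vp{i+1}$ is obtained by inverting $\Sp{i+1}(\Delta)/\Delta$ via a Neumann series, computing $\Fp{i} = I + \tfrac{\Delta^2}{6}\Ap{i}(0) + \tfrac{\Delta^2}{3}\Ap{i+1}(0) + O(\cdots)$, and substituting into $\Vp{i+1}(s) = \Cp{i+1}(s)\Sp{i}(\Delta) - \Sp{i+1}(s)\Fp{i}$. Your sketched coefficients contain a couple of arithmetic slips (e.g.\ the $\Ap{i+1}(0)$ part of $\Fp{i}$ is $\tfrac{\Delta^2}{3}$, not $\tfrac{\Delta^2}{6}$), but once the bookkeeping is done carefully the stated formulas drop out exactly as in the paper.
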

\begin{proof}
The fact that we can choose the bounding constant independent of $i$ comes from Eqs (\ref{eqn:Aibound}) and (\ref{eqn:Aipbound}), where the bound on $\Ap{i}$ and its derivative can be chosen independent of $i$. From here Eqs. (\ref{eqn:Sptaylor}) and (\ref{eqn:Cptaylor}) are a direct consequence of \ref{prop:ODE}.
Combining Eq. (\ref{eqn:Aibound}) and Eq. (\ref{eqn:Sptaylor}) implies that $s^{-1}\Sp{i}(s) - I  = O( \|\Delta_i b \|^2)$.
Therefore with $\epsi$ sufficiently small, 
\begin{align*}
\left[ \frac{\Sp{i}(s)}{s} \right]^{-1} &= \left[ I + \frac{s^2}{6} \Ap{i}(0) + \left(\frac{\Sp{i}(s)}{s} -I - \frac{s^2}{6}\Ap{i}(0) \right)\right]^{-1} \\
&= I - \frac{s^2}{6} \Ap{i}(0) - \left[ \frac{\Sp{i}(s)}{s} -I - \frac{s^2}{6}\Ap{i}(0) \right]+ \sum_{j=2}^{\infty}(-1)^j\left( \frac{\Sp{i}(s)}{s} - I \right)^j \\
&= I - \frac{s^2}{6} \Ap{i}(0) + O(\| \Delta_i b \|^3),
\end{align*}
and hence,
\begin{align*}
\Fp{i} =  I + \frac{\Delta^2}{6} \Ap{i}(0) + \frac{\Delta^2}{3}\Ap{i+1}(0) + O(\| \Delta_i b \|^3 + \| \Delta_{i+1} b \|^3),
\end{align*}
where $\Fp{i}$ is defined in Eq. (\ref{eqn:Fi}).
Finally,
\begin{align*}
\Vp{i+1}(s) &= \Cp{i+1}(s) \Sp{i}(\Delta) - \Sp{i+1}(s) \Fp{i} \\
&= (\Delta-s)I +\left( \frac{\Delta^3 - s \Delta^2}{6} \right)\Ap{i}(0) + \left( \frac{3\Delta s^2 - 2\Delta^2 s - s^3}{6} \right)\Ap{i+1}(0) \notag \\
& ~~  + O(\Delta \{ \| \Delta_i b \|^3 + \| \Delta_{i+1} b \|^3 \}).
\end{align*}
\end{proof}

\begin{prop}
\label{prop:GFptaylor}
For sufficiently small $\epsi > 0$,
\begin{align}
 \int_0^{\Delta} \Sp{i}(s)^{\tr} \Sp{i}(s) ds =& ~\frac{\Delta^3}{3} I + \frac{\Delta^5}{15} \Ap{i}(0) + O( \Delta^3 \| \Delta_i b \|^3), \\
 \int_0^{\Delta} \Vp{i+1}(s)^{\tr} \Vp{i+1}(s) ds =& ~\frac{\Delta^3}{3} I + \frac{\Delta^5}{9} \Ap{i}(0) - \frac{2 \Delta^5}{45} \Ap{i+1}(0) \notag \\
 & + O( \Delta^3 \{\| \Delta_i b\|^3 + \| \Delta_{i+1} b \|^3 \}) \\
 \int_0^{\Delta} \Vp{i+1}(s)^{\tr} \Sp{i+1}(s) ds =&~ \frac{\Delta^3}{6}I + \frac{13 \Delta^5}{360} \Ap{i}(0) - \frac{7 \Delta^5}{360}\Ap{i+1}(0) \notag \\
 &+ O(\Delta^3 \{ \| \Delta_i b \|^3 + \| \Delta_{i+1} b \|^3 \}) 
\end{align}
on $\HpeR$. Moreover, the bounding constant can be taken independent of $i$. 
\end{prop}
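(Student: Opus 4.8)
The proof is a direct substitution of the expansions from Lemma~\ref{lem:SpCpVpest} into the three integrands, followed by multiplying out, integrating the resulting polynomials in $s$ over $[0,\Delta]$ monomial by monomial, and sorting the result into the explicit terms (the $\frac{\Delta^3}{3}I$ term and the $\Delta^5\,\Ap{i}(0)$, $\Delta^5\,\Ap{i+1}(0)$ terms) and a remainder that must be shown to be $O(\Delta^3\{\|\Delta_i b\|^3+\|\Delta_{i+1}b\|^3\})$ on $\HpeR$.

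The first thing I would record is the one structural fact that makes the products expand cleanly: each operator $\Ap{j}(0)=\W_{\up_{s_{j-1}}}(\tfrac{\Delta_j b}{\Delta},\cdot)\tfrac{\Delta_j b}{\Delta}$ is $g$-self-adjoint, since $v\mapsto R(X,v)X$ is symmetric by the pair symmetry of the curvature tensor. Hence the transposes $\Sp{i}(s)^{\tr}$ and $\Vp{i+1}(s)^{\tr}$ are obtained from the Lemma~\ref{lem:SpCpVpest} expansions by leaving every explicit coefficient unchanged and transposing only the $O(\cdot)$ terms. With this in hand, $\Sp{i}(s)^{\tr}\Sp{i}(s)=s^2I+\tfrac{s^4}{3}\Ap{i}(0)+(\text{remainder})$, where the $s^4$ term is the symmetrized cross term $2\cdot s\cdot\tfrac{s^3}{6}$; integrating gives $\tfrac{\Delta^3}{3}I+\tfrac{\Delta^5}{15}\Ap{i}(0)$. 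For $\Vp{i+1}(s)^{\tr}\Vp{i+1}(s)$ the leading term integrates to $\int_0^\Delta(\Delta-s)^2\,ds=\tfrac{\Delta^3}{3}$, and the two surviving cross terms of $(\Delta-s)I$ with the $\Ap{i}(0)$ and $\Ap{i+1}(0)$ coefficients reduce to the elementary integrals $\int_0^\Delta\tfrac{\Delta^2(\Delta-s)^2}{3}\,ds=\tfrac{\Delta^5}{9}$ and $\tfrac13\int_0^\Delta(\Delta-s)(3\Delta s^2-2\Delta^2 s-s^3)\,ds$ — whose integrand factors as $s(\Delta-s)^2(s-2\Delta)$, giving $-\tfrac{2\Delta^5}{45}$ — producing the $\tfrac{\Delta^5}{9}\Ap{i}(0)$ and $-\tfrac{2\Delta^5}{45}\Ap{i+1}(0)$ terms. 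The mixed integral $\int_0^\Delta\Vp{i+1}(s)^{\tr}\Sp{i+1}(s)\,ds$ is the same bookkeeping with the second factor replaced by $\Sp{i+1}(s)=sI+\tfrac{s^3}{6}\Ap{i+1}(0)+O(s\|\Delta_{i+1}b\|^3)$; collecting the surviving cross terms and evaluating the corresponding polynomial integrals over $[0,\Delta]$ gives the asserted $\frac{\Delta^3}{6}I$ together with the stated $\Delta^5\,\Ap{i}(0)$ and $\Delta^5\,\Ap{i+1}(0)$ coefficients.

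The step requiring care — and what I would flag as the main obstacle, although it is bookkeeping rather than a conceptual difficulty — is the remainder estimate. Every remainder contribution is the integral over $[0,\Delta]$ of a product of an explicitly bounded power of $s$ with at least one factor that is either an $O(s\|\Delta_i b\|^3)$ (or $O(\Delta\{\|\Delta_i b\|^3+\|\Delta_{i+1}b\|^3\})$) error from Lemma~\ref{lem:SpCpVpest} or a product of two curvature factors. Using $\|\Ap{j}(0)\|\le\kappa\|\Delta_j b\|^2/\Delta^2$ from Eq.~(\ref{eqn:Aibound}) and integrating, every such term is $O(\Delta^3\|\Delta_i b\|^a\|\Delta_{i+1}b\|^b)$ with $a+b\ge 3$; the terms with $a+b=3$ are already of the asserted form, and since we work on $\HpeR$, where $\|\Delta_i b\|\le\epsi$, a term with $a+b>3$ is bounded by $\epsi^{\,a+b-3}$ times something of the asserted form. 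This is precisely why the bounding constant is allowed to depend on $\epsi$ and the curvature. Finally, the $i$-independence of the constant is inherited from the $i$-independence in Lemma~\ref{lem:SpCpVpest} together with the fact that $\kappa$ in Eqs.~(\ref{eqn:Aibound})–(\ref{eqn:Aipbound}) does not depend on $i$.
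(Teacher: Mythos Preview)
Your proposal is correct and follows exactly the paper's approach: the paper's proof is a single sentence directing the reader to multiply out the expansions from Lemma~\ref{lem:SpCpVpest}, use $(\Ap{i})^{\tr}=\Ap{i}$, and integrate over $[0,\Delta]$. You carry this out with more detail, explicitly noting the self-adjointness of $\Ap{j}(0)$, working out the polynomial integrals, and spelling out the remainder bookkeeping on $\HpeR$ that the paper leaves implicit.
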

\begin{proof}
This follows from multiplying together the appropriate operators using the estimates from Lemma \ref{lem:SpCpVpest} and integrating over $[0,\Delta]$ keeping in mind that $(\Ap{i})^{\tr} = \Ap{i}$.
\end{proof}

\noindent In light of Proposition \ref{prop:GFptaylor}, we decompose $\Rp$ (on $\HpeR$) in the following way, 
\begin{align}
\Rp =  \Up + \ep \label{eqn:GFpLpUpep}
\end{align}
where $\Up$ is defined by 
\begin{align}
\left[ \Up \right]_{i,j} = 
\frac{\Delta^5}{360} 
\begin{cases}
16(4 \Ap{i}(0) - \Ap{i+1}(0)) & i=j\\
13 \Ap{i}(0) - 7 \Ap{i+1}(0) & i=j+1, i+1=j\\
0 & \text{otherwise}
\end{cases} \label{eqn:Up}
\end{align}
with $\Ap{n+1} \equiv 0$, and the blocks of $\ep$ have size estimates,
\begin{align}
\left[ \ep \right]_{i,j} = 
\begin{cases}
O(\Delta^3 \{\| \Delta_i b \|^3 + \| \Delta_{i+1} b \|^3 \}) & i=j, i=j+1, i+1=j \\
0 & \text{otherwise}
\end{cases} \label{eqn:ep}
\end{align}
where the bounding constant can be taken independent of $i$ and further depends only on $\epsi$ and the curvature of the manifold, and remains bounded as $\epsi \to 0$. 

\section{The Proof of Theorem \ref{thm.mainthm}}
\begin{proof}[Proof of Theorem \ref{thm.mainthm}] We decompose the limit in Eq (\ref{eqn.mainthm}) into three parts, each which we show goes to zero:
\begin{align}
\lim_{|\p|\to 0}\left| \int_{\Hp(M)} f(\sigma) d\nuG{\p}(\sigma) - \int_{\Hpe(\re^d)} f(\phi(\w)) \rhot(\w) d\muS{\p}(\w) \right|, \label{eqn.limit1}
\end{align}
\begin{align}
\lim_{|\p|\to 0}\left| \int_{\Hpe(\re^d)} f(\phi(\w)) \left[ \rhot(\w) - e^{-\frac{2+\sqrt{3}}{20\sqrt{3}} \int_0^1 \Scal(\phi(\w)(s))ds} \right] d\muS{\p}(\w) \right|, \label{eqn.limit2}
\end{align}
and
\begin{align}
\lim_{|\p|\to 0}\left| \int_{\Hpe(\re^d)} f(\phi(\w)) e^{-\frac{2+\sqrt{3}}{20\sqrt{3}} \int_0^1 \Scal(\phi(\w)(s))ds} d\muS{\p}(\w) \right. \qquad \label{eqn.limit3} \\
 - \left. \int_{W(M)} f(\sigma)e^{-\frac{2+\sqrt{3}}{20\sqrt{3}} \int_0^1 \Scal(\sigma(s))ds} d\nu(\sigma)  \right|. \notag 
\end{align}

Proposition \ref{prop.limit13} below shows Eqs. (\ref{eqn.limit1}) and (\ref{eqn.limit3}) are both zero. Proposition \ref{prop.limit2} below shows Eq. (\ref{eqn.limit2}) is zero. This completes the proof of Theorem \ref{thm.mainthm} once these two propositions have been established.
\end{proof}

This proof holds true to our general scheme, where Eq. (\ref{eqn.limit1}) lets us translate from the manifold setting to flat space, Eq. (\ref{eqn.limit2}) finds the limit of $\rhot_{\p}$ in flat space, and Eq. (\ref{eqn.limit3}) then translates the flat space limit back to the manifold setting.

We already have the enough machinery to take care of two of these limits immediately.
\begin{prop}
\label{prop.limit13}
Under the hypotheses of Theorem \ref{thm.mainthm}, the limits in Eqs. (\ref{eqn.limit1}) and (\ref{eqn.limit3}) are zero.
\end{prop}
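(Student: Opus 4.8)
The plan is to derive both vanishing statements from machinery already assembled, chiefly Proposition~\ref{prop:nuGvsmu}, Theorem~\ref{thm:lawofbp}, the exponentially small tail bounds of Lemmas~\ref{lem:MeasHpe} and \ref{lem:MeasHpeSp} (which themselves rest on the uniform integrability of Theorem~\ref{thm:UniformIntegrability}), and the Andersson--Driver convergence Theorem~\ref{thm.AndDrive}. Throughout, $\Delta = |\p| = 1/n$ and $\epsi > 0$ is fixed. For Eq.~(\ref{eqn.limit1}): by Proposition~\ref{prop:nuGvsmu}, $\int_{\Hp(M)} f\, d\nuG{\p} = \int_{\Hp(\re^d)} f(\phi)\,\rhot_{\p}\, d\muS{\p}$, so the expression inside the limit in (\ref{eqn.limit1}) is exactly $\bigl|\int_{\Hp(\re^d)\setminus\HpeR} f(\phi)\,\rhot_{\p}\, d\muS{\p}\bigr|$. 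Transporting this through $\phi$ by Theorem~\ref{thm:lawofbp} (using $\muS{\p} = \phi^{*}\nuS{\p}$, the identity $\phi(\Hpe(\re^d)) = \Hpe(M)$ from Eq.~(\ref{eqn:HpeM}), and $\rho_{\p}\circ\phi = \rhot_{\p}$ on $\Hp(\re^d)$), it equals $\bigl|\int_{\Hp(M)\setminus\Hpe(M)} f\,\rho_{\p}\, d\nuS{\p}\bigr|$, which Corollary~\ref{cor.HpOutHpe} bounds by $\frac{C}{\sqrt{\Delta}\,\epsi}\,\ex\{-\epsi^2/(8\Delta)\}$. For fixed $\epsi$ this tends to $0$ as $\Delta \to 0$, so (\ref{eqn.limit1}) is zero.

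For Eq.~(\ref{eqn.limit3}) write $c = (2+\sqrt3)/(20\sqrt3)$. First I would replace the domain $\HpeR$ by all of $\Hp(\re^d)$: since the curvature of $M$ is bounded, $\Scal$ is bounded on $M$, so $\ex\{-c\int_0^1\Scal(\phi(\w)(s))\,ds\}$ is bounded, and $f$ is bounded, whence the error incurred is at most a constant times $\muS{\p}(\Hp(\re^d)\setminus\HpeR)$. By Theorem~\ref{thm:lawofbp} this equals $\nuS{\p}(\Hp(M)\setminus\Hpe(M))$, which Lemma~\ref{lem:MeasHpeSp} bounds by $\frac{C}{\epsi^2}\,\ex\{-\epsi^2/(4\Delta)\}\to 0$. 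It then remains to show that $\int_{\Hp(\re^d)} f(\phi(\w))\,\ex\{-c\int_0^1\Scal(\phi(\w)(s))\,ds\}\, d\muS{\p}(\w)$ converges to $\int_{W(M)} f(\sigma)\,\ex\{-c\int_0^1\Scal(\sigma(s))\,ds\}\, d\nu(\sigma)$. Applying Theorem~\ref{thm:lawofbp} once more, the left-hand side is $\int_{\Hp(M)} g\, d\nuS{\p}$ with $g(\sigma) := f(\sigma)\,\ex\{-c\int_0^1\Scal(\sigma(s))\,ds\}$; since $\Scal$ is continuous and bounded, $\sigma \mapsto \int_0^1\Scal(\sigma(s))\,ds$ is continuous on $W(M)$ in the uniform topology (dominated convergence), so $g$ is bounded and continuous, and Theorem~\ref{thm.AndDrive} gives $\int_{\Hp(M)} g\, d\nuS{\p} \to \int_{W(M)} g\, d\nu$, which is exactly the required limit.

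I do not expect a genuine obstacle here: each of the two limits reduces to (i) discarding the complement of $\HpeR$, controlled by the exponentially small tail estimates above (ultimately powered by the uniform $L^2$ bound on $\rhot_{\p}$ in Theorem~\ref{thm:UniformIntegrability}), and (ii) recognizing the surviving integral, once transported along $\phi$ via Theorem~\ref{thm:lawofbp}, as that of a bounded continuous functional on $W(M)$ integrated against $\nuS{\p}$, to which Theorem~\ref{thm.AndDrive} applies verbatim. The only step that deserves a sentence of care is the verification that $\sigma \mapsto \ex\{-c\int_0^1\Scal(\sigma(s))\,ds\}$ is bounded and continuous on $W(M)$ — this is precisely where the standing assumption of bounded curvature is used.
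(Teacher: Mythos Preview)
Your proposal is correct and follows essentially the same route as the paper: for Eq.~(\ref{eqn.limit1}) you reduce to the complement of $\HpeR$ via Proposition~\ref{prop:nuGvsmu}/Theorem~\ref{thm:lawofbp} and control it with Corollary~\ref{cor.HpOutHpe}, and for Eq.~(\ref{eqn.limit3}) you discard the complement, transport through $\phi$, and apply Theorem~\ref{thm.AndDrive} to a bounded continuous functional. If anything you are slightly more careful than the paper, which invokes Corollary~\ref{cor.HpOutHpe} uniformly for both limits---your use of Lemma~\ref{lem:MeasHpeSp} for Eq.~(\ref{eqn.limit3}) (where no $\rho_{\p}$ is present) and your explicit verification that $\sigma\mapsto\ex\{-c\int_0^1\Scal(\sigma(s))\,ds\}$ is bounded and continuous are the right refinements.
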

\begin{proof}
From Corollary \ref{cor.HpOutHpe} it suffices to assume the space $\Hpe(\re^d)$ is replaced with $\Hp(\re^d)$. The fact that Eq. (\ref{eqn.limit1}) vanishes is now just a quick application of Theorem \ref{thm:lawofbp} along with the recollection that on $\Hp(\re^d)$, $\rhot_{\p} = \rho_{\p}(\phi)$. Another application of Theorem \ref{thm:lawofbp} reduces Eq. (\ref{eqn.limit3}) to,
\begin{align*}
\lim_{|\p|\to 0}\left| \int_{\Hp(M)} f(\sigma) e^{-\frac{2+\sqrt{3}}{20\sqrt{3}} \int_0^1 \Scal(\sigma(s))ds} d\nuS{\p}(\sigma) \right. \qquad \\
 - \left. \int_{W(M)} f(\sigma)e^{-\frac{2+\sqrt{3}}{20\sqrt{3}} \int_0^1 \Scal(\sigma(s))ds} d\nu(\sigma)  \right|.  
\end{align*}
However, the fact that this vanishes is the substance of Theorem \ref{thm.AndDrive} and precisely why we chose to compare $\nuG{\p}$ to $\nuS{\p}$.
\end{proof}

 Therefore, we need only focus on the limit in Eq. (\ref{eqn.limit2}) to complete the proof of Theorem \ref{thm.mainthm}.

\subsect{Finding the Limit of Equation (\ref{eqn.limit2})}
It is fruitful for us to rewrite $\rhot_{\p}$ in Eq. (\ref{eqn.rhotp2}) as
\begin{align}
\rhot_{\p} &= \sqrt{ \det \left( I + \mLp^{-1/2} \Rp \mLp^{-1/2} \right) } \notag \\
&= \sqrt{ \det \left( I + \mLp^{-1/2} (\Up + \ep) \mLp^{-1/2} \right) }, \label{eqn.rhotp4}
\end{align}
keeping Eq. (\ref{eqn:GFpLpUpep}) in mind.

At first glance it might seem somewhat mysterious that $\rhot_{\p}$ limits to the desired exponential term, however, we have the following lemma.

\begin{lem}
\label{lem:ExpDetForm}
Let $U \in \fancyL(V)$ with $\| U \| < 1$. Define $\T_k(U) = \sum_{m=1}^k \frac{(-1)^{m+1}}{m} \tr(U^m)$ and $\Psi_{k+1}(U) = \sum_{m=k+1}^{\infty}\frac{(-1)^{m+1}}{m} \tr(U^m)$. Then,
\begin{align}
\det(I + U) = \ex \left\{ \T_k(U) + \Psi_{k+1}(U) \right\} \label{eqn:ExpDetForm}
\end{align}
with
\begin{align}
| \Psi_{k+1}(U) | \leq \frac{N \| U \|^{k+1}}{1- \|U\|}. \label{eqn:ExpDetSize1}
\end{align}
If we further assume that $U$ is normal then,
\begin{align}
| \Psi_{k+1}(U) | \leq  \| U \|_2^2\frac{ \|U\|^{k-1}}{1- \|U\|}. \label{eqn:ExpDetSize2}
\end{align}
\end{lem}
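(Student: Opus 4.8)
The plan is to read the exponent in (\ref{eqn:ExpDetForm}) as $\tr\log(I+U)$ and to reduce both the identity and the estimates to elementary facts about the eigenvalues of $U$. Since $V$ is finite dimensional, write $N=\dim V$ and apply Schur triangularization, $U=QTQ^{*}$ with $T$ upper triangular and $Q$ unitary; the eigenvalues $\lambda_1,\dots,\lambda_N$ of $U$ (listed with algebraic multiplicity) then occupy the diagonal of $T$, and $\|U\|<1$ forces $|\lambda_i|<1$ for every $i$. Consequently $\tr(U^m)=\tr(T^m)=\sum_{i=1}^N\lambda_i^m$ for all $m\ge1$ and $\det(I+U)=\det(I+T)=\prod_{i=1}^N(1+\lambda_i)$.

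Next I would invoke the scalar series $\sum_{m\ge1}\frac{(-1)^{m+1}}{m}z^m=\log(1+z)$, valid for $|z|<1$ with $\log$ the principal branch (unambiguous because $\mathrm{Re}(1+z)>0$). Applying this with $z=\lambda_i$, summing over the finitely many $i$, and interchanging the resulting double sum --- legitimate since $\sum_i\sum_m\frac1m|\lambda_i|^m\le\sum_i\frac{|\lambda_i|}{1-|\lambda_i|}<\infty$ --- gives $\sum_{m\ge1}\frac{(-1)^{m+1}}{m}\tr(U^m)=\sum_{i=1}^N\log(1+\lambda_i)$. Exponentiating recovers $\det(I+U)$, and splitting the series at $m=k$ into $\T_k(U)$ and $\Psi_{k+1}(U)$ yields (\ref{eqn:ExpDetForm}). (Equivalently, one may integrate Jacobi's formula $\frac{d}{dt}\det(I+tU)=\det(I+tU)\,\tr((I+tU)^{-1}U)$ over $t\in[0,1]$ after expanding the resolvent as a Neumann series, using the auxiliary function $t\mapsto\det(I+tU)\,e^{-h(t)}$ with $h(t)=\int_0^t\tr((I+sU)^{-1}U)\,ds$ to sidestep any branch choice.)

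For the tail bounds I would estimate $|\Psi_{k+1}(U)|\le\sum_{m\ge k+1}\frac1m|\tr(U^m)|$ term by term. In general $|\tr(U^m)|\le N\|U^m\|\le N\|U\|^m$ and $\frac1m\le1$, so $|\Psi_{k+1}(U)|\le N\sum_{m\ge k+1}\|U\|^m=N\|U\|^{k+1}/(1-\|U\|)$, which is (\ref{eqn:ExpDetSize1}). If $U$ is normal, diagonalize it unitarily and use $|\lambda_i|\le\|U\|$ (the spectral radius equals the operator norm for normal operators) together with $\sum_i|\lambda_i|^2=\tr(U^{*}U)=\|U\|_2^2$ to get $|\tr(U^m)|=\big|\sum_i\lambda_i^m\big|\le\|U\|^{m-2}\sum_i|\lambda_i|^2=\|U\|_2^2\,\|U\|^{m-2}$ for $m\ge2$; summing over $m\ge k+1$ (with $k\ge1$) gives $|\Psi_{k+1}(U)|\le\|U\|_2^2\,\|U\|^{k-1}/(1-\|U\|)$, which is (\ref{eqn:ExpDetSize2}).

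I do not expect a genuine obstacle here. The two points needing a word of care are the interchange of the double summation --- which is exactly where the hypothesis $\|U\|<1$ is used decisively, via absolute convergence --- and the (harmless) choice of branch of the logarithm, which is circumvented entirely by working with the eigenvalues, or in the alternative argument by the auxiliary function above. The remaining steps are routine manipulations of geometric series.
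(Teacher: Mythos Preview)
Your proposal is correct and follows essentially the same approach as the paper: both establish the identity via $\log\det(I+U)=\sum_{m\ge1}\frac{(-1)^{m+1}}{m}\tr(U^m)$ and then bound the tail using $|\tr(U^m)|\le N\|U\|^m$ in general and $|\tr(U^m)|\le\|U\|_2^2\|U\|^{m-2}$ in the normal case. Your treatment is in fact more careful than the paper's, which simply invokes the trace--log formula as ``familiar'' and asserts $\Re(\det(I+U))>0$ without further comment; your eigenvalue argument (or the Jacobi-formula alternative) makes the branch issue transparent.
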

\begin{proof}
With $\| U \| < 1$, $\Re(\det(I+U)) > 0$, and hence we have the familiar formula 
\begin{align}
\log(\det(I + U)) = \sum_{m=1}^{\infty} \frac{(-1)^{m+1}}{m} \tr(U^{m}).
\end{align}
Eq. (\ref{eqn:ExpDetForm}) now results. Eq. (\ref{eqn:ExpDetSize1}) then comes from estimating the trace as $|\tr(U^m)| \leq N \| U \|^m$ which yields,
\begin{align*}
\left| \Psi_{k+1}(U) \right| \leq N \sum_{m=k+1}^{\infty} \frac{\| U \|^m}{ m } \leq N \frac{ \| U \|^{k+1}}{1- \|U\|}.
\end{align*}
If $U$ is normal then, $|U^2| = |U|^2$. For $m \geq 2$,  
\begin{align*}
|\tr(U^m)| \leq \|U\|^{m-2} \tr(|U^2|) =  \|U\|^{m-2} \tr(|U|^2)=  \|U\|^{m-2} \|U\|_2^2.
\end{align*} Thus for $k \geq 1$,
\begin{align*}
\left| \Psi_{k+1}(U) \right| \leq  \sum_{m=k+1}^{\infty} \frac{\| U \|^{m-2}}{ m } \|U\|_2^2 \leq  \| U \|_2^2\frac{ \|U\|^{k-1}}{1- \|U\|}.
\end{align*}
\end{proof}

Hence on the event that $\| \mLp^{-1/2} (\Up + \ep) \mLp^{-1/2} \| < 1$ we can apply Lemma \ref{lem:ExpDetForm}, yielding 
\begin{align}
\rhot_{\p} = \ex \frac{1}{2} \left\{  \tr(\mLp^{-1/2} \Up \mLp^{-1/2}) + \tr(\mLp^{-1/2} \ep \mLp^{-1/2}) +  \Psi_2(\mLp^{-1/2} \Rp \mLp^{-1/2}) \right\} \label{eqn:MainExpForm}
\end{align}
with
\begin{align}
\left| \Psi_2(\mLp^{-1/2} \Rp \mLp^{-1/2}) \right| \leq  \frac{\| \mLp^{-1/2} \Rp \mLp^{-1/2} \|^2_2}{1-\|\mLp^{-1/2} \Rp \mLp^{-1/2}\|}. \label{eqn:Psi2Bound}
\end{align}
Proposition \ref{prop:onlytrU} below shows that $\ex\frac{1}{2}\{ \tr(\mLp^{-1/2}\Up \mLp^{-1/2})\}$  is what contributes in the limit as $| \p | \to 0$, while the other factors vanish. This will then turn our focus to understanding the behavior of $\tr(\mLp^{-1/2} \Up \mLp^{-1/2})$.

\begin{lem}
\label{lem:trLULsize}
Let $\Up$ and $\ep$ be as in Eqs. (\ref{eqn:Up}) and (\ref{eqn:ep}) respectively. Then there exists some $C = C(d, \text{curvature})<\infty$ such that,
\begin{align}
| \tr(\mLp^{-1/2} \Up \mLp^{-1/2}) | \leq C \sum_{i=1}^n \| \Delta_i b \|^2 \label{eqn:trLUpLsize}
\end{align}
and
\begin{align}
| \tr(\mLp^{-1/2} \ep \mLp^{-1/2}) | \leq C \sum_{i=1}^n \| \Delta_i b \|^3. \label{eqn:trLepLsize}
\end{align}
\end{lem}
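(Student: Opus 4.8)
The plan is to apply Corollary~\ref{cor:trLUL} directly to $U = \Up$ and to $U = \ep$, since by Eqs.~(\ref{eqn:Up}) and (\ref{eqn:ep}) both are symmetric tri-diagonal block matrices with $d \times d$ blocks, exactly the hypothesis of that corollary. The only extra input needed is a size bound on the central and super-diagonal blocks of each matrix, which is already at hand.

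First I would estimate the blocks of $\Up$. From Eq.~(\ref{eqn:Up}), each nonzero block $[\Up]_{m,m}$ or $[\Up]_{m,m+1}$ is a fixed linear combination of $\Ap{m}(0)$ and $\Ap{m+1}(0)$ scaled by $\Delta^5/360$, so $\|[\Up]_{m,m}\| + \|[\Up]_{m,m+1}\| \leq C \Delta^5 \left( \|\Ap{m}(0)\| + \|\Ap{m+1}(0)\| \right)$. Applying the bound $\|\Ap{i}(0)\| \leq \kappa \|\Delta_i b\|^2/\Delta^2$ from Eq.~(\ref{eqn:Aibound}) converts this into $C \Delta^3 \left( \|\Delta_m b\|^2 + \|\Delta_{m+1} b\|^2 \right)$. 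Feeding this into the estimate~(\ref{eqn:trLULsize}) of Corollary~\ref{cor:trLUL}, the prefactor $1/\Delta^3$ cancels the $\Delta^3$, and after reindexing the shifted sum (using the convention $\Ap{n+1} \equiv 0$, equivalently $[\,\cdot\,]_{n,n+1} = 0$) one obtains $|\tr(\mLp^{-1/2} \Up \mLp^{-1/2})| \leq C \sum_{i=1}^n \|\Delta_i b\|^2$, which is Eq.~(\ref{eqn:trLUpLsize}).

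The estimate for $\ep$ is identical in structure: Eq.~(\ref{eqn:ep}) gives $\|[\ep]_{m,m}\| + \|[\ep]_{m,m+1}\| \leq C \Delta^3 \left( \|\Delta_m b\|^3 + \|\Delta_{m+1} b\|^3 \right)$ directly, and again Corollary~\ref{cor:trLUL} absorbs the $\Delta^3$ against the $1/\Delta^3$, leaving $|\tr(\mLp^{-1/2} \ep \mLp^{-1/2})| \leq C \sum_{i=1}^n \|\Delta_i b\|^3$, which is Eq.~(\ref{eqn:trLepLsize}). Here I would also recall that the decomposition $\Rp = \Up + \ep$ of Eq.~(\ref{eqn:GFpLpUpep}) and the block bound~(\ref{eqn:ep}) are understood on $\HpeR$, so the $\ep$ estimate is stated there.

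There is essentially no hard step; the lemma is a bookkeeping corollary of Corollary~\ref{cor:trLUL} together with the block estimates already established for $\Up$ and $\ep$. The only points requiring minor care are matching the tri-diagonal block structure to the corollary's hypothesis and observing that the shifted sums $\sum_m \|\Delta_{m+1} b\|^2$ and $\sum_m \|\Delta_{m+1} b\|^3$ are themselves dominated by $\sum_i \|\Delta_i b\|^2$ and $\sum_i \|\Delta_i b\|^3$ — immediate once the boundary convention $\Ap{n+1} \equiv 0$ is invoked.
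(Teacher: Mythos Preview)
The proposal is correct and follows essentially the same approach as the paper: apply Corollary~\ref{cor:trLUL} to the symmetric tri-diagonal block matrices $\Up$ and $\ep$, then bound the individual blocks using Eqs.~(\ref{eqn:Up}), (\ref{eqn:ep}), and (\ref{eqn:Aibound}). Your write-up is in fact slightly more explicit about the $\Delta^3$ cancellation and the reindexing, but the argument is the same.
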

\begin{proof}
Both $\Up$ and $\ep$ are symmetric, so we can therefore apply Corollary \ref{cor:trLUL} to find some $\Lambda = \Lambda(\ddim)< \infty$ with 
\begin{align*}
| \tr(\mLp^{-1/2} \Up \mLp^{-1/2}) | \leq \frac{\Lambda}{\Delta^3} \sum_{i=1}^n  \left( |\tr([\Up]_{i,i})| + |\tr([\Up]_{i,i+1})| \right)
\end{align*}
and
\begin{align*}
| \tr(\mLp^{-1/2} \ep \mLp^{-1/2}) | \leq \frac{\Lambda}{\Delta^3} \sum_{i=1}^n  \left( \|[\ep]_{i,i}\| + \|[\ep]_{i,i+1}\| \right).
\end{align*}
Eq. (\ref{eqn:ep}) along with the above estimate is enough to imply Eq. (\ref{eqn:trLepLsize}). To finish the proof of Eq. (\ref{eqn:trLUpLsize}), it is sufficient to show that there is some $C = C(d, \text{curvature}) < \infty$ such that $|\tr([\Up]_{m,m})| $ and $|\tr([\Up]_{m,m+1})|$ are bounded by $C(\| \Delta_i b \|^2 + \| \Delta_{m+1} b \|^2)$. 
However, the bound on curvature along with Eqs.  (\ref{eqn:Aibound}) and (\ref{eqn:Up}) imply the existence of just such a $C$.
\end{proof}

\begin{lem}
\label{lem:remsize}
Define the event $\mathpzc{A}_{\p} \subset W(\re^d)$ by 
\begin{align*}
\mathpzc{A}_{\p} = \left\{ \sum_{i=1}^n \| \Delta_i b \|^4 \leq \frac{1}{4} \right\}.
\end{align*} 
For sufficiently small $\epsi$ and $\Delta$, there is a constant $C = C(d, \text{curvature}) < \infty$ such that 
\begin{align}
\left| \tr(\mLp^{-1/2} \ep \mLp^{-1/2}) \right| +  \left| \Psi_2(\mLp^{-1/2} \Rp \mLp^{-1/2}) \right| \leq C \sum_{i=1}^d \| \Delta_i b \|^3 \label{eqn:trPsisize}
\end{align}
on $\Hpe(\re^d) \cap \mathpzc{A}_{\p}$.
\end{lem}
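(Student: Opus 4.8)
\textbf{Proof strategy for Lemma \ref{lem:remsize}.} The plan is to bound the two summands on the left of Eq.\,(\ref{eqn:trPsisize}) separately. The first, $|\tr(\mLp^{-1/2}\ep\mLp^{-1/2})|$, needs no new work: Eq.\,(\ref{eqn:trLepLsize}) of Lemma \ref{lem:trLULsize} already gives $|\tr(\mLp^{-1/2}\ep\mLp^{-1/2})| \le C\sum_{i=1}^n\|\Delta_i b\|^3$ with $C = C(d,\text{curvature})$. So the remaining task is to control $\Psi_2(\mLp^{-1/2}\Rp\mLp^{-1/2})$, and for that one starts from Eq.\,(\ref{eqn:Psi2Bound}),
\[
\left| \Psi_2(\mLp^{-1/2}\Rp\mLp^{-1/2}) \right| \le \frac{\|\mLp^{-1/2}\Rp\mLp^{-1/2}\|^2_2}{1 - \|\mLp^{-1/2}\Rp\mLp^{-1/2}\|} .
\]
Thus one needs (i) a lower bound for the denominator and (ii) the estimate $\|\mLp^{-1/2}\Rp\mLp^{-1/2}\|^2_2 \le C\sum_{i=1}^n\|\Delta_i b\|^3$, both on $\Hpe(\re^d)\cap\mathpzc{A}_{\p}$; the driving constraint in both cases will be $\|\Delta_i b\| \le \epsi$, which holds on $\Hpe(\re^d)$ by Eq.\,(\ref{eqn:Hpe}).

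The common ingredients are the operator-norm bound $\|\mLp^{-1}\| \le 4/\Delta^3$, which follows from the explicit eigenvalue formulas of Theorem \ref{thm:Lpthm} (every eigenvalue of $\mLp$ exceeds $\Delta^3/4$; alternatively, $\mLp^{-1}=(\mAp^{-1})^{tr}\mAp^{-1}$ together with Eq.\,(\ref{eqn:LpLUsize})), and the block bounds of Proposition \ref{prop:RpDiagSize}. On $\Hpe(\re^d)$ one has $\sqrt{\Kp{i}}\,\Delta \le \sqrt{\kappa}\,\|\Delta_i b\| \le \sqrt{\kappa}\,\epsi$ by Eq.\,(\ref{eqn:Kibound}); feeding this and the elementary inequality $\cosh x\cosh y - 1 \le C(x^2+y^2)$ (valid for $x,y$ in a bounded range) into Eqs.\,(\ref{eqn:RpDiagSize}) and (\ref{eqn:RpOffDiagSize}) gives
\[
\|[\Rp]_{i,i}\| + \|[\Rp]_{i,i+1}\| \le C\Delta^3\big(\|\Delta_i b\|^2 + \|\Delta_{i+1}b\|^2\big) \le C\Delta^3\epsi^2 .
\]
Since $\Rp$ is block tridiagonal, its operator norm is controlled by the largest block row-sum (a Schur-test estimate), so $\|\Rp\| \le C\Delta^3\epsi^2$ and hence $\|\mLp^{-1/2}\Rp\mLp^{-1/2}\| \le \|\mLp^{-1}\|\,\|\Rp\| \le 4C\epsi^2 \le \tfrac12$ once $\epsi$ is small; this is the denominator bound $1 - \|\mLp^{-1/2}\Rp\mLp^{-1/2}\| \ge \tfrac12$.

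For (ii) I would write $\|\mLp^{-1/2}\Rp\mLp^{-1/2}\|_2 \le \|\mLp^{-1}\|\,\|\Rp\|_2 \le (4/\Delta^3)\|\Rp\|_2$, expand the Hilbert--Schmidt norm over the $d\times d$ blocks of the block-tridiagonal $\Rp$ using $\|B\|_2 \le \sqrt{d}\,\|B\|$, and obtain $\|\Rp\|^2_2 \le d\sum_{i=1}^n\big(\|[\Rp]_{i,i}\|^2 + 2\|[\Rp]_{i,i+1}\|^2\big)$. Inserting the block bound above and using $(\|\Delta_i b\|^2 + \|\Delta_{i+1}b\|^2)^2 \le 2(\|\Delta_i b\|^4 + \|\Delta_{i+1}b\|^4)$ yields $\|\Rp\|^2_2 \le C\Delta^6\sum_{i=1}^n\|\Delta_i b\|^4$, hence $\|\mLp^{-1/2}\Rp\mLp^{-1/2}\|^2_2 \le C\sum_{i=1}^n\|\Delta_i b\|^4$. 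Finally, on $\Hpe(\re^d)$ one has $\|\Delta_i b\|^4 \le \epsi\|\Delta_i b\|^3$, so $\sum_{i=1}^n\|\Delta_i b\|^4 \le \sum_{i=1}^n\|\Delta_i b\|^3$ once $\epsi<1$; combining this with the denominator bound gives $|\Psi_2(\mLp^{-1/2}\Rp\mLp^{-1/2})| \le 2C\sum_{i=1}^n\|\Delta_i b\|^3$, and adding Eq.\,(\ref{eqn:trLepLsize}) establishes Eq.\,(\ref{eqn:trPsisize}).

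The only point requiring care — the main obstacle — is (i): one must be sure that $\|\Rp\|$ is small compared with $\Delta^3$ (equivalently, with the smallest eigenvalue of $\mLp$) \emph{uniformly in the partition} $\p$, so that $1 - \|\mLp^{-1/2}\Rp\mLp^{-1/2}\|$ does not degenerate as $|\p|\to 0$. This uniformity is exactly what the eigenvalue estimates of Theorem \ref{thm:Lpthm} and the curvature-uniform block bounds of Proposition \ref{prop:RpDiagSize} are built to provide; once it is in hand, everything else is routine Hilbert--Schmidt bookkeeping, and the event $\mathpzc{A}_{\p}$ from the hypothesis plays no essential role in the estimates above.
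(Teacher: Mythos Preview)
Your proof is correct and follows the same overall architecture as the paper: cite Eq.~(\ref{eqn:trLepLsize}) for the trace term, bound $\|\mLp^{-1/2}\Rp\mLp^{-1/2}\|_2^2$ by $C\sum_i\|\Delta_ib\|^4$ via $\|\mLp^{-1}\|\le 4/\Delta^3$ together with the block estimates of Proposition~\ref{prop:RpDiagSize}, and then convert $\|\Delta_ib\|^4\le\epsi\,\|\Delta_ib\|^3$ on $\Hpe(\re^d)$. The one genuine difference is in the denominator bound. The paper controls $\|\mLp^{-1/2}\Rp\mLp^{-1/2}\|$ by passing through the Hilbert--Schmidt norm and then invoking the event $\mathpzc{A}_{\p}$ to get $\|\cdot\|\le\|\cdot\|_2\le\big(\sum_i\|\Delta_ib\|^4\big)^{1/2}\le\tfrac12$. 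You instead bound the operator norm $\|\Rp\|$ directly by a block row-sum (Schur-type) estimate, obtaining $\|\mLp^{-1/2}\Rp\mLp^{-1/2}\|\le\|\mLp^{-1}\|\,\|\Rp\|\le C\epsi^2\le\tfrac12$ for small $\epsi$, which uses only membership in $\Hpe(\re^d)$. Your closing remark that $\mathpzc{A}_{\p}$ plays no essential role is therefore correct for \emph{your} argument (it is essential in the paper's); adopting your route would in fact let one drop the separate treatment of $\Hpe(\re^d)\cap\mathpzc{A}_{\p}^c$ in the proof of Proposition~\ref{prop:onlytrU}.
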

\begin{proof}
Focusing on the second term, from Theorem \ref{thm:Lpthm},
\begin{align*}
\| \mLp^{-1/2} \Rp \mLp^{-1/2} \|_2^2 &\leq \| \mLp^{-1} \|^2 \| \Rp \|_2^2 \leq 16 \Delta^{-6}\| \Rp \|_2^2.
\end{align*}
From Proposition \ref{prop:RpDiagSize},
\begin{align*}
&\left\| [ \Rp]_{i,i}\right\|, \left\| [ \Rp]_{i,i+1}\right\|, \left\| [ \Rp]_{i+1,i}\right\| \\
&\qquad  \leq 2 \Delta^3 \left( \cosh(2 \sqrt{\Kp{i}} \Delta) \cosh(8 \sqrt{\Kp{i+1}} \Delta)-1 \right) \\
&\qquad  \leq 2 \Delta^3 \left( \cosh(2 \kappa \|\Delta_i b \|) \cosh(8 \kappa \|\Delta_{i+1} b \|)-1 \right)
\end{align*}
where the second inequality follows from Eq. (\ref{eqn:Kibound}). Further, on $\Hpe(\re^d)$,
\begin{align*}
\left( \cosh(2 \kappa \|\Delta_i b \|) \cosh(8 \kappa \|\Delta_{i+1} b \|)-1 \right)  \leq C ( \| \Delta_i b\|^2 + \| \Delta_{i+1} b \|^2 ).
\end{align*}
For sufficiently small $\epsi$ we can take $C$ such that $24 \ddim C^2 \leq 1/16$. Now, 
\begin{align*}
\| \Rp \|_2^2 &\leq d \sum_{i=1}^n \left(  \left\| [ \Rp]_{i,i}\right\|^2 + \left\| [ \Rp]_{i,i+1}\right\|^2 + \left\| [ \Rp]_{i+1,i}\right\|^2 \right) \\
&\leq 12 \ddim C^2 \Delta^6 \sum_{i=1}^n \left( \| \Delta_i b \|^2 + \| \Delta_{i+1} b\|^2 \right)^2\\
&\leq 24 \ddim C^2 \Delta^6 \sum_{i=1}^n  \| \Delta_i b \|^4 \leq \frac{1}{16} \Delta^6 \sum_{i=1}^n  \| \Delta_i b \|^4.
\end{align*}
Therefore $\| \mLp^{-1/2} \Rp \mLp^{-1/2} \|_2^2 \leq \sum_{i=1}^n  \| \Delta_i b \|^4$. This further implies that on $\pzc{A}_{\p}$, 
\begin{align*}
\| \mLp^{-1/2} \Rp \mLp^{-1/2} \| &\leq \| \mLp^{-1/2} \Rp \mLp^{-1/2} \|_2 \leq \sqrt{\sum_{i=1}^n  \| \Delta_i b \|^4} \leq \frac{1}{2}.
\end{align*}
Hence, $1 - \| \mLp^{-1/2} \Rp \mLp^{-1/2} \| \geq \frac{1}{2}$ and we have
\begin{align*}
\left| \Psi_2(\mLp^{-1/2} \Rp \mLp^{-1/2}) \right| &\leq \frac{\| \mLp^{-1/2} \Rp \mLp^{-1/2} \|_2^2}{1 - \| \mLp^{-1/2} \Rp \mLp^{-1/2} \|} \leq 2 \sum_{i=1}^n \| \Delta_i b \|^4.
\end{align*}

Eq. (\ref{eqn:trLepLsize}) in Lemma \ref{lem:trLULsize} above gives the existence of some $\Lambda <\infty$ depending only on curvature and $d$ such that,
\begin{align*}
 |\tr(\mLp^{-1/2} \ep \mLp^{-1/2}) | &\leq \Lambda \sum_{i=1}^n \| \Delta_i b \|^3.
\end{align*}
Therefore on $\Hpe(\re^d) \cap \mathpzc{A}_{\p}$,
\begin{align*}
&\left| \tr(\mLp^{-1/2} \ep \mLp^{-1/2}) \right| + \left| \Psi_2( \mLp^{-1/2} \Rp \mLp^{-1/2}) \right| \\
 &\leq \Lambda \sum_{i=1}^n \| \Delta_i b \|^3 +2 \sum_{i=1}^n \| \Delta_i b \|^4 \leq ( \Lambda + 2 \epsi)\sum_{i=1}^n \| \Delta_i b \|^3 \leq C \sum_{i=1}^n \| \Delta_i b \|^3
\end{align*}
which concludes the proof.
\end{proof}

\begin{prop}
\label{prop:onlytrU}
Let $X_{\p} :=  \frac{1}{2} \tr \left( \mLp^{-1/2} \Up \mLp^{-1/2} \right)$. Then for sufficiently small $\Delta$ and $\epsi$, there exists a $C < \infty$ depending only on $d$ and the bound on the curvature of $M$ such that,
\begin{align}
\int_{\Hpe(\re^d)} \left| \rhot_{\p} - e^{X_{\p}} \right| d\muS{\p}  \leq C \Delta^{1/4}.
\end{align}
\end{prop}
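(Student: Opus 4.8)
The plan is to localise on the event $\pzc{A}_{\p}=\{\sum_{i=1}^n\|\Delta_i b\|^4\le\tfrac14\}$ of Lemma~\ref{lem:remsize}, on which $\rhot_{\p}$ admits the exponential form of Eq.~(\ref{eqn:MainExpForm}) with a genuinely small remainder, and to dispose of the complementary event by crude $L^2$ tail bounds. Concretely, I would split
\[
\int_{\Hpe(\re^d)}\bigl|\rhot_{\p}-e^{X_{\p}}\bigr|\,d\muS{\p}
=\int_{\Hpe(\re^d)\cap\pzc{A}_{\p}}\bigl|\rhot_{\p}-e^{X_{\p}}\bigr|\,d\muS{\p}
+\int_{\Hpe(\re^d)\setminus\pzc{A}_{\p}}\bigl|\rhot_{\p}-e^{X_{\p}}\bigr|\,d\muS{\p}
\]
and bound the two integrals separately, always using $\rhot_{\p}\ge 0$ and $e^{X_{\p}}>0$.

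On $\Hpe(\re^d)\cap\pzc{A}_{\p}$ one has $\|\mLp^{-1/2}\Rp\mLp^{-1/2}\|\le\tfrac12$ (established in the course of proving Lemma~\ref{lem:remsize}) and $\Rp=\Up+\ep$, so Eq.~(\ref{eqn:MainExpForm}) gives $\rhot_{\p}=e^{X_{\p}}e^{Y_{\p}}$ with $Y_{\p}:=\tfrac12\bigl(\tr(\mLp^{-1/2}\ep\mLp^{-1/2})+\Psi_2(\mLp^{-1/2}\Rp\mLp^{-1/2})\bigr)$, and Lemma~\ref{lem:remsize} bounds $|Y_{\p}|\le C\sum_{i=1}^n\|\Delta_i b\|^3$ there. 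Hence $|\rhot_{\p}-e^{X_{\p}}|=\rhot_{\p}\,|1-e^{-Y_{\p}}|\le\rhot_{\p}\bigl(e^{|Y_{\p}|}-1\bigr)\le\rhot_{\p}\bigl(e^{C\sum_i\|\Delta_i b\|^3}-1\bigr)$, and Cauchy--Schwarz gives
\[
\int_{\Hpe(\re^d)\cap\pzc{A}_{\p}}\bigl|\rhot_{\p}-e^{X_{\p}}\bigr|\,d\muS{\p}
\le\bigl(\E[\rhot_{\p}^2]\bigr)^{1/2}\Bigl(\int_{\Hpe(\re^d)}\bigl(e^{C\sum_i\|\Delta_i b\|^3}-1\bigr)^2\,d\muS{\p}\Bigr)^{1/2}.
\]
The first factor is bounded for $\Delta$ small by Theorem~\ref{thm:UniformIntegrability}, and the second is $\le(C'n\Delta^{3/2})^{1/2}=(C')^{1/2}\Delta^{1/4}$ by Corollary~\ref{cor:sumbto0} (applied with $\Gamma=\{1,\dots,n\}$, $m=n$, $p=3$, $a=2$) together with $n\Delta=1$. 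So this piece is $O(\Delta^{1/4})$.

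On $\Hpe(\re^d)\setminus\pzc{A}_{\p}$ I would simply use $|\rhot_{\p}-e^{X_{\p}}|\le\rhot_{\p}+e^{X_{\p}}$ and Cauchy--Schwarz,
\[
\int_{\Hpe(\re^d)\setminus\pzc{A}_{\p}}\bigl(\rhot_{\p}+e^{X_{\p}}\bigr)\,d\muS{\p}
\le\biggl(\bigl(\E[\rhot_{\p}^2]\bigr)^{1/2}+\Bigl(\int_{\Hp(\re^d)}e^{2X_{\p}}\,d\muS{\p}\Bigr)^{1/2}\biggr)\,\muS{\p}(\pzc{A}_{\p}^c)^{1/2}.
\]
By Lemma~\ref{lem:trLULsize}, $X_{\p}\le\tfrac C2\sum_{i=1}^n\|\Delta_i b\|^2$ pointwise, so by Theorem~\ref{thm:lawofbp} and independence of the Brownian increments $\int_{\Hp(\re^d)}e^{2X_{\p}}\,d\muS{\p}\le\E\bigl[e^{C\sum_i\|\Delta_i b\|^2}\bigr]=\prod_{i=1}^n(1-2C\Delta)^{-d/2}=(1-2C\Delta)^{-nd/2}$, which stays bounded as $\Delta\to0$ because $n\Delta=1$ (so it tends to $e^{Cd}$); and $\muS{\p}(\pzc{A}_{\p}^c)=\mu\bigl(\sum_i\|\Delta_i b\|^4>\tfrac14\bigr)\le 4\sum_i\E\|\Delta_i b\|^4=O(n\Delta^2)=O(\Delta)$ by Markov's inequality and the Gaussian fourth-moment identity. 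Thus this piece is $O(\Delta^{1/2})$. Adding the two estimates and using $\Delta\le1$ yields the claimed $C\Delta^{1/4}$.

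The main obstacle is precisely the region $\pzc{A}_{\p}^c$: there the operator-norm bound $\|\mLp^{-1/2}\Rp\mLp^{-1/2}\|<1$ can fail, so the exponential representation and the Taylor-type estimates underlying the decomposition $\Rp=\Up+\ep$ (Proposition~\ref{prop:GFptaylor}) are unavailable, and one is forced to control $\rhot_{\p}$ and $e^{X_{\p}}$ there by brute $L^2$ bounds that must then be paired against the small probability of $\pzc{A}_{\p}^c$. That this succeeds rests on the a priori uniform integrability of $\{\rhot_{\p}\}$ (Theorem~\ref{thm:UniformIntegrability}) and on the global deterministic bound $|X_{\p}|\le C\sum_i\|\Delta_i b\|^2$ feeding a Gaussian moment-generating-function computation; everything on $\pzc{A}_{\p}$ itself reduces cleanly to Corollary~\ref{cor:sumbto0}.
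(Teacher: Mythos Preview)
Your proof is correct and follows essentially the same approach as the paper: split on the event $\pzc{A}_{\p}$, use the exponential representation with Lemma~\ref{lem:remsize} and Corollary~\ref{cor:sumbto0} on the good set, and control the bad set via Cauchy--Schwarz paired with Theorem~\ref{thm:UniformIntegrability} and the bound $|X_{\p}|\le C\sum_i\|\Delta_i b\|^2$. The only differences are cosmetic: the paper invokes the packaged Lemma~\ref{lem.EeXminusY} where you unwind Cauchy--Schwarz directly, and the paper bounds $\mu(\pzc{A}_{\p}^c)$ via the Gaussian tail estimate of Lemma~\ref{lem:GaussBound} (giving exponential decay) whereas your Markov/fourth-moment bound yields only $O(\Delta)$ --- but that is already more than enough for the stated $\Delta^{1/4}$ conclusion.
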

\begin{proof} Let $\fancyA_{\p}$ be as in Lemma \ref{lem:remsize}. 
Define the random variable $y_{\p}$ by 
\begin{align*} 
y_{\p} := \frac{1}{2} \left( \tr(\mLp^{-1/2} \ep \mLp^{-1/2}) + \Psi_2(\mLp^{-1/2} \Rp \mLp^{-1/2})\right).
\end{align*} 
By Theorem \ref{thm:UniformIntegrability}, $\E[\rhot_{\p}^2] < \infty$ independent of $n$. By Lemma \ref{lem.EeXminusY} and Eq. (\ref{eqn:MainExpForm}), we only need to show that $\left| \E[ e^{2|y_{\p}|} - 1]\right| \leq C \Delta^{1/2}$, which follows from  Eq. (\ref{eqn:trPsisize}) along with Corollary \ref{cor:sumbto0}. 

On the compliment $F := \Hpe(\re^d) \cap \fancyA_{\p}^c$,
\begin{align*}
 \int_{F} \left| \rhot_{\p} - e^{X_{\p}} \right| d\muS{\p} &\leq   \int_{F}\left( \left| \rhot_{\p}\right| + \left| e^{X_{\p}} \right| \right) d\muS{\p} \\
 &\leq \left( \E \left[ |\rhot_{\p} |^2 \right]^{1/2}  + \E \left[ e^{2X_{\p}} \right]^{1/2} \right)  \E \left[ 1_{\left\{ \sum_{i=1}^n \| \Delta_i b \|^4 > 1/4 \right\}}\right]^{1/2}.
\end{align*}
Another application of Lemma \ref{lem:EDb2} and using Theorem \ref{thm:UniformIntegrability} ensures that $ \E \left[ |\rhot_{\p} |^2 \right]^{1/2}  + \E \left[ e^{2X_{\p}} \right]^{1/2}$ stays bounded for sufficiently small $\Delta$. Noticing that 
\begin{align*}
\left\{ \sum_{i=1}^n \| \Delta_i b \|^4 > \frac{1}{4} \right\} &\subset \bigcup_{i=1}^n  \left\{ \| \Delta_i b \|^4 > \frac{1}{4n} \right\} \\
&\ed \bigcup_{i=1}^n\left\{ \| Z_i \| > \left( \frac{n}{4} \right)^{1/4} \right\},
\end{align*}
where $\{ Z_i \}$ are i.i.d. with $Z_i \ed b_1$.
Lemma \ref{lem:GaussBound} gives,
\begin{align*}
\E \left[ 1_{\left\{ \sum_{i=1}^n \| \Delta_i b \|^4 > 1/4 \right\}}\right] &\leq \sum_{i=1}^n \frac{C}{\sqrt{n}} e^{-\frac{\sqrt{n}}{16}} = \frac{C}{\sqrt{ \Delta}} e^{-\frac{1}{16\sqrt{\Delta}}}.
\end{align*}
Combining these facts implies the claim.
\end{proof}

$\newline$
$\newline$
With Proposition \ref{prop:onlytrU}, we turn our attention to $\tr \left( \mLp^{-1/2} \Up \mLp^{-1/2} \right)$.

\begin{lem} 
\label{lem:LpUptrace}
For $\betap_k, \alpha^m_k$ and $\lambda^{\p}_k$ as in Theorem \ref{thm:Lpthm},
\begin{align}
\tr \left( \mLp^{-1/2} \Up \mLp^{-1/2} \right) &= \sum_{m,k=1}^n \frac{(\betap_k)^2}{\lambda^{\p}_k}\left[ (\alpha^m_k)^2 \tr([\Up]_{m,m}) + 2 \alpha_k^{m} \alpha_k^{m+1} \tr([\Up]_{m,m+1}) \right] \label{eqn:trformula1} \\
&= - \sum_{m,k=1}^n \frac{(\betap_k)^2}{\lambda^{\p}_k} \Delta^3 \langle \Ric_{u(s_{m-1})} \Delta_m b, \Delta_m b \rangle \xi_{k,m} \label{eqn:trformula2}
\end{align}
where for $1 \leq m \leq n$ (with $\alpha^{n+1}_k = \alpha^0_k := 0$),
\begin{align}
\xi_{k,m} = \frac{2}{45}\left[ 4 (\alpha_k^m)^2 -  (\alpha_k^{m-1})^2\right] + \frac{ 1 }{180} \alpha_k^m \left[ 13\alpha_k^{m+1} - 7\alpha_k^{m-1} \right]. \label{eqn.xikm}
\end{align}
\end{lem}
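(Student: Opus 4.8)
The plan is to derive \eqref{eqn:trformula1} as a direct instance of Corollary \ref{cor:trLUL} applied to $U = \Up$, and then to substitute the explicit block structure of $\Up$ from \eqref{eqn:Up} together with the trace identity \eqref{eqn:trAp} to obtain \eqref{eqn:trformula2}. Since $\Up$ is symmetric and tri-diagonal (with $d\times d$ blocks), Corollary \ref{cor:trLUL} applies verbatim and gives \eqref{eqn:trformula1} with no work beyond quoting it; this is the first step.

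For the second step I would compute $\tr([\Up]_{m,m})$ and $\tr([\Up]_{m,m+1})$ from \eqref{eqn:Up}. We have $[\Up]_{m,m} = \frac{\Delta^5}{360}\,16\big(4\Ap{m}(0) - \Ap{m+1}(0)\big)$ and $[\Up]_{m,m+1} = \frac{\Delta^5}{360}\big(13\Ap{m}(0) - 7\Ap{m+1}(0)\big)$, with the convention $\Ap{n+1}\equiv 0$. Taking traces and using \eqref{eqn:trAp}, namely $\tr(\Ap{i}(0)) = -\big\langle \Ric_{\up_{s_{i-1}}} \frac{\Delta_i b}{\Delta_i s}, \frac{\Delta_i b}{\Delta_i s}\big\rangle = -\Delta^{-2}\langle \Ric_{\up_{s_{i-1}}} \Delta_i b, \Delta_i b\rangle$ (here $\Delta_i s = \Delta$), each trace becomes a linear combination of the scalars $\langle \Ric_{\up_{s_{m-1}}} \Delta_m b, \Delta_m b\rangle$ and $\langle \Ric_{\up_{s_{m}}} \Delta_{m+1} b, \Delta_{m+1} b\rangle$, carrying a factor $\Delta^5\cdot\Delta^{-2} = \Delta^3$.

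The third step is bookkeeping: substitute these trace expressions into \eqref{eqn:trformula1}, and reindex the double sum so that everything is collected as a coefficient of the single scalar $\langle \Ric_{u(s_{m-1})} \Delta_m b, \Delta_m b\rangle$ for each $m$. Concretely, the term $\Ap{m}(0)$ appearing inside $[\Up]_{m,m}$, $[\Up]_{m,m+1}$, $[\Up]_{m-1,m-1}$, and $[\Up]_{m-1,m}$ all feed into the coefficient of $\langle \Ric_{u(s_{m-1})} \Delta_m b, \Delta_m b\rangle$; tracking which $\alpha_k$-products multiply each occurrence — $(\alpha_k^m)^2$ from the $(m,m)$ block, $\alpha_k^{m-1}\alpha_k^m$ from the $(m-1,m)$ block (appearing with the factor $2$ from \eqref{eqn:trformula1}), $(\alpha_k^{m-1})^2$ from the $(m-1,m-1)$ block, and $\alpha_k^m\alpha_k^{m+1}$ from the $(m,m)$ off-diagonal pairing — and combining the rational coefficients $\frac{16\cdot 4}{360} = \frac{64}{360} = \frac{8}{45}$, $\frac{-16}{360}\cdot(\text{shift})$, $\frac{13}{360}$, $\frac{-7}{360}$ with the appropriate shifts in $m$, produces exactly $\xi_{k,m}$ as in \eqref{eqn.xikm}. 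I would present this reindexing carefully but briefly, using the conventions $\alpha_k^0 = \alpha_k^{n+1} = 0$ and $\Ap{n+1}\equiv 0$ to make the boundary terms automatic.

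The main obstacle is purely the combinatorial/arithmetic care needed in the reindexing of the third step: one must correctly match the off-diagonal contribution at index $(m-1,m)$ (which involves $\Ap{m-1}(0)$ and $\Ap{m}(0)$) against the diagonal contributions, and verify that the coefficient of $(\alpha_k^{m-1})^2$ is $-\frac{2}{45}$, the coefficient of $(\alpha_k^m)^2$ is $\frac{8}{45}$, and the cross-terms $\alpha_k^m\alpha_k^{m+1}$ and $\alpha_k^{m-1}\alpha_k^m$ carry $\frac{13}{180}$ and $-\frac{7}{180}$ respectively, which is precisely \eqref{eqn.xikm}. No analytic input is needed here beyond \eqref{eqn:trAp} and \eqref{eqn:trformula1}; the content is entirely algebraic. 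I expect the verification to go through cleanly once the index shifts are laid out, with the only real risk being sign errors, which I would double-check against the $M = \re^d$ case (where all $\Ap{i}(0) = 0$ and both sides vanish trivially) and against the eventual constant $\frac{2+\sqrt 3}{20\sqrt 3}$ extracted in the following sections.
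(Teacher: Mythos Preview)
Your proposal is correct and follows essentially the same route as the paper: apply Corollary~\ref{cor:trLUL} to obtain \eqref{eqn:trformula1}, compute $\tr([\Up]_{m,m})$ and $\tr([\Up]_{m,m+1})$ from \eqref{eqn:Up}, then reindex in $m$ to collect the coefficient of $\tr(\Ap{m}(0))$ and invoke \eqref{eqn:trAp}. The paper's proof is slightly more streamlined in that it keeps everything in terms of $\tr(\Ap{m}(0))$ until the very last line before converting to the Ricci inner product, but the arithmetic and the reindexing are identical to what you describe.
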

\begin{proof}
Eq. (\ref{eqn:trformula1}) follows by applying Corollary \ref{cor:trLUL}. By the definition of $\Up$ in Eq \ref{eqn:Up}, 
\begin{align*}
\tr \left( [\Up]_{m,m} \right) &= \frac{2 \Delta^5}{45}   \tr( 4 \Ap{m}(0) - \Ap{m+1}(0) ) \quad \text{ and,} \\
\tr\left( [\Up]_{m,m+1} \right) &= \frac{\Delta^5}{360}   \tr( 13 \Ap{m}(0) - 7 \Ap{m+1}(0) ).
\end{align*}
This implies,
\begin{align*}
(\alpha^m_k)^2 \tr([\Up]_{m,m}) &+ 2 \alpha_k^{m} \alpha_k^{m+1} \tr([\Up]_{m,m+1})\\
&= \Delta^5 \left[ \tr(\Ap{m}(0))\left( \frac{8}{45} (\alpha_k^m)^2 + \frac{13}{180} \alpha_k^m \alpha_k^{m+1} \right) \right. \\
&\qquad \qquad - \left. \tr(\Ap{m+1}(0)) \left( \frac{2}{45} (\alpha_k^m)^2 + \frac{7}{180} \alpha_k^m \alpha_k^{m+1} \right)\right].
\end{align*}
Hence, 
\begin{align*}
\sum_{m=1}^n (\alpha^m_k)^2 \tr([\Up]_{m,m}) + 2 \alpha_k^{m} \alpha_k^{m+1} \tr([\Up]_{m,m+1}) = \sum_{m=1}^n \Delta^5 \tr(\Ap{m}(0)) \xi_{k,m}.
\end{align*}
This leads to Eq. (\ref{eqn:trformula2}) by Eqs. (\ref{eqn:Ai}) and (\ref{eqn:trAp}).
\end{proof}

\begin{cor}
\label{cor.bndrytermsto0}
Using the same notation as Corollary \ref{cor:ksumsize}, there is some $C = C(\text{curvature})$ such that
\begin{align}
\sum_{m \in \W_{\delta}} \frac{(\betap_n)^2}{\lambda_n^{\p}}\Delta^3 \langle \Ric_{u(s_{m-1})} \Delta_m b, \Delta_m b \rangle \xi_{n,m} < C\left(\frac{4}{9}\right)^{\frac{\delta n}{ \pi}} \sum_{m\in\bdryc{\delta}} \| \Delta_m b\|^2 \label{eqn.bdrycn}
\end{align}
and
\begin{align}
\sum_{m \in \bdry{\delta}} \frac{(\betap_n)^2}{\lambda_n^{\p}}\Delta^3 \langle \Ric_{u(s_{m-1})} \Delta_m b, \Delta_m b \rangle \xi_{n,m} \leq C \sum_{m\in\bdry{\delta}} \| \Delta_m b\|^2 \label{eqn.bdryn}
\end{align}
\end{cor}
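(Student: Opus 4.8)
The plan is to bound each summand of the $k=n$ part of the trace formula of Lemma~\ref{lem:LpUptrace}, Eq.~(\ref{eqn:trformula2}), by a geometrically decaying multiple of $\|\Delta_m b\|^2$, the decay rate being controlled by $n-m$. All the ingredients are recorded in Theorem~\ref{thm:Lpthm}: $\alpha_n^m = \gamma_n^m - \gamma_n^{-m}$ with $\gamma_n \in (-2,-3/2)$, $\lambda_n^{\p} = \frac{\Delta^3}{6}(4+\gamma_n+\gamma_n^{-1})$, and $(\betap_n)^2 = O(\gamma_n^{-2n})$. Since $|\gamma_n|>1$ we have $|\alpha_n^m| \leq |\gamma_n|^m + |\gamma_n|^{-m} \leq 2|\gamma_n|^m$ for $m\geq 1$ (and $\alpha_n^0 = 0$), and since $\gamma\mapsto 4+\gamma+\gamma^{-1}$ is bounded below by a positive absolute constant on $(-2,-3/2)$ we get $\Delta^3/\lambda_n^{\p} = 6/(4+\gamma_n+\gamma_n^{-1}) \leq C$. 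Feeding $|\alpha_n^{m'}|\leq 2|\gamma_n|^{m'}$ and $|\gamma_n| < 2$ into the definition of $\xi_{n,m}$ in Eq.~(\ref{eqn.xikm}) --- a fixed linear combination of products $\alpha_n^{m'}\alpha_n^{m''}$ with $m',m'' \in \{m-1,m,m+1\}$ --- gives $|\xi_{n,m}| \leq C|\gamma_n|^{2m}$ with $C$ absolute.

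Combining these with the estimate $|\langle \Ric_{u(s_{m-1})}\Delta_m b,\Delta_m b\rangle| \leq C\|\Delta_m b\|^2$ coming from the standing bound on the curvature, and with $(\betap_n)^2 \leq C|\gamma_n|^{-2n}$, we get for every $1\leq m\leq n$
\begin{align*}
\left| \frac{(\betap_n)^2}{\lambda_n^{\p}}\,\Delta^3\, \langle \Ric_{u(s_{m-1})}\Delta_m b,\Delta_m b\rangle\, \xi_{n,m} \right| \leq C\,|\gamma_n|^{-2n}\,|\gamma_n|^{2m}\,\|\Delta_m b\|^2 = C\,|\gamma_n|^{-2(n-m)}\,\|\Delta_m b\|^2,
\end{align*}
with $C = C(\text{curvature})$. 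Since $|\gamma_n| > 3/2$ and $m \leq n$, we have $|\gamma_n|^{-2(n-m)} \leq (4/9)^{n-m} \leq 1$, so summing this estimate over $m \in \bdry{\delta}$ yields Eq.~(\ref{eqn.bdryn}) directly.

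For Eq.~(\ref{eqn.bdrycn}) it only remains to extract the decaying prefactor. If $m \in \W_\delta$, i.e.\ $\delta < \pi m/(n+1) < \pi - \delta$, then $m < (n+1)(1-\delta/\pi)$ and hence $n - m > (n+1)\delta/\pi - 1 \geq \delta n/\pi - 1$; therefore $(4/9)^{n-m} < (9/4)(4/9)^{\delta n/\pi}$. Summing the per-$m$ estimate over $m \in \W_\delta$ and absorbing the factor $9/4$ into $C$ produces Eq.~(\ref{eqn.bdrycn}).

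This corollary is essentially bookkeeping rather than a genuine obstacle. The whole content is that $(\betap_n)^2$ decays like $|\gamma_n|^{-2n}$ while $\xi_{n,m}$ grows at most like $|\gamma_n|^{2m}$, so their product carries the factor $|\gamma_n|^{-2(n-m)}$, and the strict inequality $|\gamma_n| > 3/2$ from Claim~\ref{claim.lpc2} upgrades this to honest geometric decay $(4/9)^{n-m}$. The two items that do require a moment's care are that the $\gamma_n^{-m}$ tails in $\alpha_n^m$ are harmless because $|\gamma_n|>1$, and that $\Delta^3/\lambda_n^{\p}$ is uniformly bounded in $n$ because $4+\gamma_n+\gamma_n^{-1}$ stays bounded away from $0$ on the interval in which all the $\gamma_n$ lie.
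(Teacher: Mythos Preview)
Your proof is correct and follows essentially the same line as the paper's own argument: both reduce to the per-term estimate $(\betap_n)^2|\xi_{n,m}| = O(|\gamma_n|^{-2(n-m)})$ coming from $(\betap_n)^2 = O(\gamma_n^{-2n})$ and $\xi_{n,m} = O(\gamma_n^{2m})$, combine this with boundedness of $\Ric$ and of $\Delta^3/\lambda_n^{\p}$, and then invoke $|\gamma_n| > 3/2$ to convert the decay to $(4/9)^{n-m}$. Your handling of the exponent offset $n-m > \delta n/\pi - 1$ (absorbing the stray $9/4$ into $C$) is slightly more explicit than the paper's, but the content is identical.
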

\begin{proof}
Since $\Ric$ and $\Delta^3 / \lambda_n^{\p}$ are bounded independent of $n$, this result comes down to understanding the size of $(\betap_n)^2 \xi_{n,m}$. Recall that in Theorem \ref{thm:Lpthm}, for $n \geq 2$, $\gamma_n \in (-2, -3/2)$ and $\alpha_n^m = \gamma_n^m - \gamma_n^{-m}$. From Eq. (\ref{eqn.bn}), $(\betap_n)^2 = O(\gamma_n^{-2n})$, and from Eq. (\ref{eqn.xikm}), $\xi_{n,m} = O( \gamma_n^{2m})$. Therefore $(\betap_n)^2 \xi_{n,m} = O(\gamma_n^{-2(n-m)})$, which is enough to conclude Eq. (\ref{eqn.bdryn}).

For $m \in \W_{\delta}$, $n - m > (\delta/\pi) n$ and thusly, 
\begin{align*}
\sum_{m \in \W_{\delta}} \frac{(\betap_n)^2}{\lambda_n^{\p}}\Delta^3 \langle \Ric_{u(s_{m-1})} \Delta_m b, \Delta_m b \rangle \xi_{n,m} \leq C \left( \gamma_n^{-2} \right)^{\frac{\delta n}{\pi}} \sum_{m \in \W_{\delta}} \| \Delta_m b\|^2 \\
 <  C \left( \frac{4}{9} \right)^{\frac{\delta n}{\pi}} \sum_{m \in \bdryc{\delta}} \| \Delta_m b\|^2
\end{align*}
which is Eq.\,(\ref{eqn.bdrycn}).
\end{proof}

An important fact that will be used is that from the definition of $\alpha_k^m$ in Theorem \ref{thm:Lpthm} for $2 \leq m \leq n-2$ and $1 \leq k \leq n-1$,
\begin{align}
\xi_{k,m} &= \frac{1}{15} + \frac{1}{60}\cos(\tp_{k}) + \rem_{k,m} \label{eqn:xikm}
\end{align}
where the remainder term $\rem_{k,m}$ is given by,
\begin{align}
\rem_{k,m} = \cos(2m\tp_k)\left\{ \frac{1}{45}( \cos(2\tp_k) - 4 ) - \frac{1}{60}\cos(\tp_k) \right\} \notag \\
 + \sin(2m\tp_k)\left\{ \frac{1}{45} \sin(2\tp_k) - \frac{1}{60}\sin(\tp_k) \right\} \label{eqn:remxi}.
\end{align}
It's useful to write $\xi_{k,m}$ in this fashion since what follows shows that as $| \p | \to 0$, the terms involving $\rem_{k,m}$ vanish.

\begin{lem} 
\label{lem:ksumsize}
Let $\delta > 0$. Using the notation from Corollary \ref{cor:ksumsize}, there exists a $C = C(\text{curvature}) < \infty$ such that,
\begin{align}
& \left| \sum_{m = 2}^{n-1} \sum_{k=1}^{n-1} \frac{(\betap_k)^2}{\lambda^{\p}_k} \Delta^3 \langle \Ric_{u(s_{m-1})} \Delta_m b, \Delta_m b \rangle  \rem_{k,m}  \right| \notag \\
& \qquad \leq \frac{C}{n \sin(\delta)} \sum_{m=1}^n \| \Delta_m b \|^2 + C \sum_{m \in \bdry{\delta}} \| \Delta_m b \|^2.
\end{align}
\end{lem}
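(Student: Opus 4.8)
The plan is to interchange the order of summation, factor out the sum over $k$ (which carries all the oscillation in $\tp_k$), and recognize it as exactly the type of exponential sum controlled by Corollary \ref{cor:ksumsize}.

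First I would rewrite the double sum as $\sum_{m=2}^{n-1} \langle \Ric_{u(s_{m-1})}\Delta_m b,\Delta_m b\rangle\, T_m$, where
\[
T_m := \sum_{k=1}^{n-1} \frac{(\betap_k)^2}{\lambda_k^{\p}}\,\Delta^3\, \rem_{k,m}.
\]
Because $M$ has bounded curvature there is a $\Lambda<\infty$ with $\|\Ric_{u(s_{m-1})}\|\leq\Lambda$ for all $m$, so $|\langle\Ric_{u(s_{m-1})}\Delta_m b,\Delta_m b\rangle|\leq\Lambda\|\Delta_m b\|^2$. It therefore suffices to prove $|T_m|\leq C/(n\sin\delta)$ when $m\in\bdryc{\delta}$ and $|T_m|\leq C$ for every $m\in\{2,\dots,n-1\}$; the claimed inequality then follows by splitting the outer sum over $\{2,\dots,n-1\}\cap\bdryc{\delta}$ and $\{2,\dots,n-1\}\cap\bdry{\delta}$, bounding $\sum_{\bdryc{\delta}}\|\Delta_m b\|^2\leq\sum_{m=1}^n\|\Delta_m b\|^2$ and $\sum_{\bdry{\delta}}\|\Delta_m b\|^2$ trivially, and absorbing $\Lambda$ into the constant.

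Next I would use the explicit form of $\rem_{k,m}$ from Eq.\,(\ref{eqn:remxi}). Setting $A(\theta):=\tfrac{1}{45}(\cos 2\theta-4)-\tfrac{1}{60}\cos\theta$ and $B(\theta):=\tfrac{1}{45}\sin 2\theta-\tfrac{1}{60}\sin\theta$, we have $\rem_{k,m}=\mathrm{Re}\big(e^{i2m\tp_k}(A(\tp_k)-iB(\tp_k))\big)$, so that, using $\Delta^3/\lambda_k^{\p}=3/(2+\cos\tp_k)$ from Theorem \ref{thm:Lpthm} (valid for $1\leq k\leq n-1$),
\[
T_m=\mathrm{Re}\!\left[\sum_{k=1}^{n-1}(\betap_k)^2\,F(\tp_k)\,e^{i2m\tp_k}\right],\qquad F(\theta):=\frac{3\big(A(\theta)-iB(\theta)\big)}{2+\cos\theta}.
\]
Since $2+\cos\theta\geq 1$ on $[0,\pi]$ and $A,B$ are trigonometric polynomials, $\mathrm{Re}\,F$ and $\mathrm{Im}\,F$ are smooth on $[0,\pi]$, hence Lipschitz, with sup-norm and Lipschitz constant bounded by an absolute constant. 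Applying Corollary \ref{cor:ksumsize} separately to $f=\mathrm{Re}\,F$ and $f=\mathrm{Im}\,F$ with $j=m$ gives $|T_m|\leq C/(n\sin\delta)$ for $m\in\bdryc{\delta}$ and $|T_m|\leq C$ for every $m\in\{2,\dots,n-1\}$, with $C$ depending only on the curvature bound. Combining this with the first step yields the asserted estimate.

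The only genuinely delicate point is the bookkeeping in the middle step: one must check that the $m$-dependence of $\rem_{k,m}$ enters \emph{only} through the exponential $e^{i2m\tp_k}$, while all the $\tp_k$-dependent coefficients, together with the factor $(\betap_k)^2\Delta^3/\lambda_k^{\p}$, assemble into a single fixed function of $\tp_k$ whose Lipschitz constant and sup-norm are bounded independently of $n$ and $k$ — so that Corollary \ref{cor:ksumsize} applies verbatim. Everything else (interchanging sums, the $\Ric$ bound, the final splitting) is routine.
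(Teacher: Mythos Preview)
Your proposal is correct and follows essentially the same approach as the paper: interchange the order of summation, bound the Ricci inner product by $\Lambda\|\Delta_m b\|^2$, apply Corollary~\ref{cor:ksumsize} to the inner $k$-sum, and then split the outer $m$-sum over $\bdryc{\delta}$ and $\bdry{\delta}$. The paper's proof is simply more terse, invoking Corollary~\ref{cor:ksumsize} and Eq.~(\ref{eqn:remxi}) directly without writing out the packaging of $\rem_{k,m}$ as $\mathrm{Re}\big(e^{i2m\tp_k}F(\tp_k)\big)$; your version makes that bookkeeping explicit, which is fine.
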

\begin{proof}
Using Corollary \ref{cor:ksumsize} and Eq. (\ref{eqn:remxi}),
\begin{align*}
&\left| \sum_{m=2}^{n-1} \sum_{k=1}^{n-1} \frac{(\betap_k)^2}{\lambda^{\p}_k} \Delta^3 \langle \Ric_{u(s_{m-1})} \Delta_m b, \Delta_m b \rangle  \rem_{k,m}  \right| \\
& \qquad \leq \sum_{m =2}^{n-1} \left|  \langle \Ric_{u(s_{m-1})} \Delta_m b, \Delta_m b \rangle  \sum_{k=1}^{n-1}  \frac{(\betap_k)^2}{\lambda^{\p}_k} \Delta^3 \rem_{k,m} \right| \\
&\qquad \leq \frac{C}{n \sin(\delta)} \sum_{m\in \bdryc{\delta}} \| \Delta_m b \|^2  + C \sum_{m \in \bdry{\delta}} \| \Delta_m b \|^2\\
&\qquad \leq \frac{C}{n \sin(\delta)} \sum_{m=1}^n \| \Delta_m b \|^2 + C \sum_{m \in \bdry{\delta}} \| \Delta_m b \|^2.
\end{align*}
\end{proof}

From here we are able to show that what actually contributes in the limit allows us to ignore the $\rem_{k,m}$ term. Moreover, we will see that those boundary cases of $\xi_{k,m}$ for $m = 1, n-1, n$ are also negligible and allow us to further simplify our expression when passing to the limit.

\begin{prop} 
\label{prop:LpUptrSimp2}
For sufficiently small $\epsi$ and $\Delta$,
\begin{align}
\int_{\Hpe(\re^d)} \left| e^{X_{\p}} - e^{Y_{\p}} \right| d\muS{\p} \leq  C\Delta^{1/4}
\end{align} 
where $C = C(d, \text{curvature})<\infty$, 
\begin{align*}
Y_{\p} := - \frac{1}{2} \sum_{m=2}^{n-2}\sum_{k=1}^{n-1} \frac{(\betap_k)^2}{\lambda^{\p}_k} \Delta^3  \left( \frac{1}{15} + \frac{1}{60}\cos(\tp_k) \right) \langle \Ric_{u(s_{m-1})} \Delta_m b, \Delta_m b \rangle
\end{align*}
and as before $X_{\p} := \frac{1}{2} \tr(\mLp^{-1/2} \Up \mLp^{-1/2})$. 
\end{prop}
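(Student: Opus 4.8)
The plan is to set $W_{\p}:=X_{\p}-Y_{\p}$, bound $|W_{\p}|$ pointwise by a small multiple of $\sum_m\|\Delta_m b\|^2$ plus a sum over a short set of ``boundary'' indices, and then finish by Cauchy--Schwarz. Writing $e^{X_{\p}}-e^{Y_{\p}}=e^{X_{\p}}\bigl(1-e^{-W_{\p}}\bigr)$, restricting the domain ($\HpeR\subset\Hp(\re^d)$) and applying Cauchy--Schwarz with respect to $\muS{\p}$ reduces the problem to two things: (a) $\int_{\Hp(\re^d)}e^{2X_{\p}}\,d\muS{\p}$ is bounded uniformly in $\p$, which is already contained in the proof of Proposition \ref{prop:onlytrU} (via Lemma \ref{lem:EDb2} and the bound $|X_{\p}|\le C\sum_m\|\Delta_m b\|^2$ coming from Lemma \ref{lem:trLULsize}), and (b) $\E\bigl[e^{2|W_{\p}|}-1\bigr]\le C\Delta^{1/2}$, since then $\bigl(\E[|1-e^{-W_{\p}}|^2]\bigr)^{1/2}\le\bigl(\E[e^{2|W_{\p}|}-1]\bigr)^{1/2}\le C\Delta^{1/4}$. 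So the whole content is establishing (b).

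First I would decompose $W_{\p}$. By Lemma \ref{lem:LpUptrace} and Eq.\,(\ref{eqn:xikm}), $-2W_{\p}=-2(X_{\p}-Y_{\p})$ is exactly $-2X_{\p}$ with the $\bigl(\tfrac{1}{15}+\tfrac{1}{60}\cos\tp_k\bigr)$-part of the interior block $\{1\le k\le n-1,\ 2\le m\le n-2\}$ (which equals $-2Y_{\p}$) removed, leaving three families of terms: (i) the interior remainder terms $\tfrac{(\betap_k)^2}{\lambda^{\p}_k}\Delta^3\langle\Ric_{u(s_{m-1})}\Delta_m b,\Delta_m b\rangle\,\rem_{k,m}$ over $1\le k\le n-1$, $2\le m\le n-2$; (ii) the single eigen-mode $k=n$, summed over all $m$; (iii) the three boundary indices $m\in\{1,n-1,n\}$, summed over all $k$. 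For the pointwise bound, fix $\delta\in(0,\pi)$ with $n$ large enough that $\{1,n-1,n\}\subset\bdry{\delta}$. Family (iii) is handled crudely using that $\Ric$ is bounded, $|\xi_{k,m}|\le\tfrac13$ for $k\le n-1$ by Eq.\,(\ref{eqn.xikm}), $(\betap_n)^2\xi_{n,m}$ is bounded for $m\in\{1,n-1,n\}$ by the estimates in Theorem \ref{thm:Lpthm}, and $\sum_{k=1}^{n-1}\tfrac{(\betap_k)^2}{\lambda^{\p}_k}\Delta^3=O(1)$; this gives $\le C\sum_{m\in\bdry{\delta}}\|\Delta_m b\|^2$. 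Family (ii) is handled by Corollary \ref{cor.bndrytermsto0}, giving $\le C(4/9)^{\delta n/\pi}\sum_{m=1}^n\|\Delta_m b\|^2+C\sum_{m\in\bdry{\delta}}\|\Delta_m b\|^2$. Family (i) is handled by Lemma \ref{lem:ksumsize} (restricting its $m$-sum from $2\le m\le n-1$ down to $2\le m\le n-2$ only decreases the bound, as its proof bounds a sum of moduli of the individual $m$-terms), giving $\le\tfrac{C}{n\sin\delta}\sum_{m=1}^n\|\Delta_m b\|^2+C\sum_{m\in\bdry{\delta}}\|\Delta_m b\|^2$. Altogether
\begin{align*}
|W_{\p}|\ \le\ C\Bigl(\tfrac{1}{n\sin\delta}+(4/9)^{\delta n/\pi}\Bigr)\sum_{m=1}^n\|\Delta_m b\|^2\ +\ C\sum_{m\in\bdry{\delta}}\|\Delta_m b\|^2
\end{align*}
with $C=C(\ddim,\text{curvature})$, valid for every such $\delta$.

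The key step, and the one I expect to be the main obstacle, is choosing $\delta$ and computing the exponential moment, and this is exactly where the equally-spaced assumption ($n\Delta=1$, Assumption \ref{assumption.Pequal}) is essential. The tension is that $\delta$ must be large enough that $\tfrac{1}{n\sin\delta}$ is small, but small enough that the boundary index set $\bdry{\delta}$ (which has $\#\bdry{\delta}\asymp\delta n$) is short enough that the product $\prod_{m\in\bdry{\delta}}\E[e^{C\|\Delta_m b\|^2}]=(1-C\Delta)^{-d\#\bdry{\delta}/2}$ does not blow up — if $\#\bdry{\delta}$ were of order $n$ this product would converge to a positive constant rather than to $1$. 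The choice that threads the needle is $\delta=\sqrt{\Delta}=n^{-1/2}$: then the coefficient of $\sum_m\|\Delta_m b\|^2$ is $\le C\Delta^{1/2}$ (the $(4/9)^{\sqrt n/\pi}$ term being negligible) and $\#\bdry{\delta}\le C\sqrt n$. Splitting $|W_{\p}|$ into its two pieces above, using $e^{2(a+b)}-1\le\tfrac12(e^{4a}-1)+\tfrac12(e^{4b}-1)$ for $a,b\ge0$, the independence of the increments $\Delta_m b$ under $\mu$ with $\E[e^{t\|\Delta_m b\|^2}]=(1-2t\Delta)^{-d/2}$ for $2t\Delta<1$, the Remark after Theorem \ref{thm:lawofbp}, and crucially $n\Delta=1$, one gets for $\Delta$ small
\begin{align*}
\E\bigl[e^{4|W_{\p}^{(1)}|}-1\bigr]\le(1-C\Delta^{3/2})^{-nd/2}-1=O(\Delta^{1/2}),\qquad
\E\bigl[e^{4|W_{\p}^{(2)}|}-1\bigr]\le(1-C\Delta)^{-d\#\bdry{\delta}/2}-1=O(\Delta^{1/2}),
\end{align*}
hence $\E\bigl[e^{2|W_{\p}|}-1\bigr]\le C\Delta^{1/2}$.

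Finally I would assemble the pieces: using $\HpeR\subset\Hp(\re^d)$, Cauchy--Schwarz with respect to $\muS{\p}$, and $|1-e^{w}|^2\le e^{2|w|}-1$,
\begin{align*}
\int_{\HpeR}\bigl|e^{X_{\p}}-e^{Y_{\p}}\bigr|\,d\muS{\p}
=\int_{\HpeR}e^{X_{\p}}\bigl|1-e^{-W_{\p}}\bigr|\,d\muS{\p}
\le\Bigl(\int_{\Hp(\re^d)}e^{2X_{\p}}\,d\muS{\p}\Bigr)^{1/2}\bigl(\E[e^{2|W_{\p}|}-1]\bigr)^{1/2}\le C\Delta^{1/4},
\end{align*}
where the first factor is uniformly bounded by the argument in the proof of Proposition \ref{prop:onlytrU} and the second is $O(\Delta^{1/4})$ by the previous paragraph. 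This gives the proposition, with $C=C(\ddim,\text{curvature})$, for $\Delta$ (and $\epsi$) sufficiently small.
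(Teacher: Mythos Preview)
Your proposal is correct and follows essentially the same route as the paper: decompose $X_{\p}-Y_{\p}$ into the interior remainder terms, the $k=n$ mode, and the three boundary $m$-indices, control these via Lemma~\ref{lem:ksumsize} and Corollary~\ref{cor.bndrytermsto0} with the choice $\delta=n^{-1/2}$, and finish by Cauchy--Schwarz together with Lemma~\ref{lem:EDb2}. The only cosmetic difference is that the paper factors out $e^{Y_{\p}}$ and further splits off the three boundary terms $\partial X_{\p}$ before applying H\"older, whereas you factor out $e^{X_{\p}}$ and bound all of $W_{\p}$ at once via the convexity inequality $e^{2(a+b)}-1\le\tfrac12(e^{4a}-1)+\tfrac12(e^{4b}-1)$ and a direct computation of the Gaussian moments; this is a mild streamlining but not a different idea.
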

\begin{proof} We first have the estimate,
\begin{align*}
| e^{X_{\p}} - e^{Y_{\p}} | = e^{Y_{\p}} | e^{X_{\p}-Y_{\p}} - 1 | &\leq e^{Y_{\p}}\left( e^{\partial X_{\p}} | e^{Z_{\p}} - 1 | + |e^{\partial X_{\p}}-1| \right) \\
&= e^{y_{\p}} | e^{Z_{\p}} - 1 | + e^{Y_{\p}}|e^{\partial X_{\p}}-1|
\end{align*} 
where,
\begin{align*}
&\partial X_{\p} := - \frac{1}{2} \sum_{m \in \{1,n-1,n\}} \sum_{k=1}^n \frac{(\betap_k)^2}{\lambda^{\p}_k} \Delta^3 \langle \Ric_{u(s_{m-1})} \Delta_m b, \Delta_m b \rangle \xi_{k,m} \\
&y_{\p} := Y_{\p} + \partial X_{\p}, 
\end{align*}
and
\begin{align*}
Z_{\p} &:= (X_{\p} - \partial X_{\p}) - Y_{\p} \\
&= - \frac{1}{2} \sum_{m=2}^{n-2} \bigg( \sum_{k=1}^{n-1} \frac{(\betap_k)^2}{\lambda^{\p}_k} \Delta^3 \langle \Ric_{u(s_{m-1})} \Delta_m b, \Delta_m b \rangle \rem_{k,m} \\ 
& \qquad \qquad \qquad   + \frac{(\betap_n)^2}{\lambda^{\p}_n} \Delta^3 \langle \Ric_{u(s_{m-1})} \Delta_m b, \Delta_m b \rangle \xi_{n,m} \bigg)
\end{align*}
There is some $\Lambda = \Lambda(\text{curvature})$ so that we have the following size estimates,
\begin{align*}
|\partial X_{\p}| &\leq \sum_{k=1}^n \frac{\Lambda}{n} \left( \| \Delta_1 b \|^2 + \| \Delta_{n-1} b \|^2 + \| \Delta_{n} b \|^2 \right)\\
&\leq \Lambda  \left( \| \Delta_1 b \|^2 + \| \Delta_{n-1} b \|^2 + \| \Delta_{n} b \|^2 \right), 
\end{align*}
and from Eq. (\ref{eqn.bk}),
\begin{align*}
|Y_{\p}| \leq \Lambda \sum_{m=2}^{n-2} \sum_{k=1}^{n-1} \frac{1}{n} \| \Delta_m b\|^2 \leq \Lambda \sum_{m=1}^n \| \Delta_m b \|^2,
\end{align*}
and hence,
\begin{align*}
|y_{\p}| \leq \Lambda \sum_{m=1}^n \| \Delta_m b\|^2.
\end{align*}
From Lemma \ref{lem:ksumsize} and Corollary \ref{cor.bndrytermsto0}, with $\delta = 1/\sqrt{n}$,
\begin{align*}
|Z_{\p}| &\leq \Lambda \bigg(\frac{1}{\sqrt{n}} + \left( \frac{4}{9}\right)^{\frac{\sqrt{n}}{\pi}} \bigg) \sum_{m=1}^n \| \Delta_m b\|^2 + \Lambda \sum_{m \in \bdry{\delta}} \| \Delta_m b \|^2 \\
&\leq \frac{\Lambda}{\sqrt{n}}\sum_{m=1}^n \| \Delta_m b\|^2 + \Lambda \sum_{m \in \bdry{\delta}} \| \Delta_m b \|^2
\end{align*}
where we allowed $\Lambda$ to grow in the second inequality to absorb the exponentially decaying term. Using H\"{o}lder's inequality,
\begin{align*}
\int_{\Hpe(\re^d)} | e^{X} - e^{Y} | d\muS{\p} \leq \left[ \int_{\Hpe(\re^d)} e^{2 y} d\muS{\p} \right]^{1/2} \left[ \int_{\Hpe(\re^d)} (e^{Z}- 1)^2 d\muS{\p} \right]^{1/2} \\ 
+ \left[ \int_{\Hpe(\re^d)} e^{2Y}  d\muS{\p} \right]^{1/2}  \left[ \int_{\Hpe(\re^d)}(e^{|\partial X|} - 1)^2 d\muS{\p} \right]^{1/2}
\end{align*}
where we have simplified notation by dropping the subscript $\p$ from  $y, \tilde{y}, X, Y$, and $Z$.
For sufficiently small $\Delta$ Lemma \ref{lem:EDb2} ensures that, 
\begin{align*}
\int_{\Hpe(\re^d)} e^{2 Y} d\muS{\p} \leq C ~~ \text{ and } ~~\int_{\Hpe(\re^d)} e^{2 y} d\muS{\p} \leq C.
\end{align*}
From Corollary \ref{cor:sumbto0},
\begin{align*}
\int_{\Hpe(\re^d)}(e^{|\partial X|} - 1)^2 d\muS{\p} \leq C \Delta.
\end{align*}
For the term involving $Z$, 
\begin{align*}
& \int_{\Hpe(\re^d)} (e^Z - 1)^2 d\muS{\p} \leq 2\E\left[ |Z| e^{2|Z|} \right] \\
 &\qquad \leq  \E \left[ \left( \frac{\Lambda}{\sqrt{n}} \sum_{m=1}^n \| \Delta_m b \|^2 +\Lambda \sum_{m \in \bdry{\delta}} \| \Delta_m b \|^2 \right) \ex\left\{ \Lambda \sum_{m=1}^n \| \Delta_m b \|^2 \right\} \right]^{1/2} 
 \end{align*}
Using Lemma \ref{lem:sumbpto0}, 
\begin{align*}
\E \left[ \frac{\Lambda}{\sqrt{n}} \sum_{m=1}^n \| \Delta_m b \|^2  e^{ 2 \Lambda \sum_{m=1}^n \| \Delta_m b \|^2} \right] \leq \frac{C}{\sqrt{n}} =C \Delta^{1/2}, 
\end{align*}
and 
\begin{align*}
\E \left[ \Lambda \sum_{m \in \bdry{\delta}} \| \Delta_m b \|^2  e^{ 2 \Lambda \sum_{m=1}^n \| \Delta_m b \|^2} \right] \leq C\delta = \frac{C}{\sqrt{n}} = C \Delta^{1/2}\\
\end{align*}
This is enough to conclude the proof.
\end{proof}

Defining $Y_{\p}$ as in  Proposition \ref{prop:LpUptrSimp2}, we rearrange the expression using the definition of $\lambda_k^{\p}$ from Theorem \ref{thm:Lpthm},
\begin{align}
Y_{\p} &= - \frac{1}{40} \left\{ \sum_{k=1}^{n-1} (\betap_k)^2\frac{4 + \cos(\tp_k)}{2 + \cos(\tp_k)} \right\} \left\{ \sum_{m=2}^{n-2} \langle \Ric_{u(s_{m-1})} \Delta_m b, \Delta_m b \rangle\right\}. \label{eqn:yp}\\
&=: - \tau_{\p} \sum_{m=2}^{n-2} \langle \Ric_{u(s_{m-1})} \Delta_m b, \Delta_m b \rangle \label{eqn:taup}.
\end{align}
Corollary \ref{cor:BetaInt} above shows that 
\begin{align}
\tau_{\p} \to \frac{1}{20} \int_0^1 \frac{4 + \cos(\pi x)}{2+\cos(\pi x)} dx = \frac{1}{20}\left( 1 + \frac{2}{\pi} \int_0^{\pi} \frac{dx}{2 + \cos(x)}\right). \label{eqn.tauplimit}
\end{align}
To evaluate the integral $\int_0^{\pi} (2 + \cos(x))^{-1} dx$, we use the residue theorem from complex variables with the substitution $z = e^{i x}$,
\begin{align*}
\int_0^{\pi} \frac{dx}{2+\cos( x)} dx = \frac{1}{2} \int_0^{2 \pi} \frac{dx}{2 + \cos(x)}  = \frac{1}{2} \oint_{ | z | = 1} \frac{- i z^{-1} dz}{2 + \frac{1}{2}(z + z^{-1})} \\ 
= - i \oint_{|z|=1} \frac{dz}{z^2 + 4z + 1} =  - i \oint_{|z|=1} \frac{dz}{(z + 2 - \sqrt{3})(z+2 + \sqrt{3})} \\ 
= 2 \pi i \times \left.\left( \frac{-i}{z + 2 + \sqrt{3}} \right)\right|_{z = - 2 + \sqrt{3}} = \frac{\pi}{\sqrt{3}}.
\end{align*}
From here we define 
\begin{align}
\tau_G := \frac{1}{20} \int_0^1 \frac{4 + \cos(\pi x)}{2+\cos(\pi x)} dx = \frac{2 + \sqrt{3}}{20\sqrt{3}}. \label{eqn:tauG}
\end{align}
By Eq. (\ref{eqn.tauplimit}), $\tau_{\p} \to \tau_G$ as $n \to \infty$.
\begin{prop}
\label{prop:LpUptrSimp3}
Let $Y_{\p}$ be as in Eq. (\ref{eqn:yp}), $\tau_{\p}$ be as in Eq. (\ref{eqn:taup}), and $\tau_{G}$ be as in Eq. (\ref{eqn:tauG}). There is a constant $C = C(d, \text{curvature}) < \infty$ such that,
\begin{align}
\int_{\Hpe(\re^d)} | e^{Y_{\p}(\w)} - e^{-\tau_{G}  \int_0^1 \Scal(\phi(\w)(s))} | d\muS{\p}(\w) \leq C ( \sqrt{ | \tau_{\p} - \tau_{G} |} + \Delta^{1/4}).
\end{align}
\end{prop}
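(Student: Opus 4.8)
The plan is to interpolate between $e^{Y_{\p}}$ and $e^{-\tau_G\int_0^1\Scal(\phi(\w)(s))\,ds}$ through four intermediate exponents and estimate each link by H\"older's inequality against a bound already in hand. Recall from Eqs.\,(\ref{eqn:yp})--(\ref{eqn:taup}) that $Y_{\p}=-\tau_{\p}\sum_{m=2}^{n-2}\langle\Ric_{u(s_{m-1})}\Delta_m b,\Delta_m b\rangle$ with $u(s)=\up_s$, and recall $\fancyR_{\p},\fancyS_{\p}$ from Eqs.\,(\ref{eqn:fancyRp})--(\ref{eqn:fancySp}). Set
\[
W_1=-\tau_G\sum_{m=2}^{n-2}\langle\Ric_{u(s_{m-1})}\Delta_m b,\Delta_m b\rangle,\qquad W_2=-\tau_G\fancyR_{\p},
\]
\[
W_3=-\tau_G\fancyS_{\p},\qquad W_4=-\tau_G\int_0^1\Scal(\phi(\w)(s))\,ds,
\]
and write $D_a=Y_{\p}-W_1=(\tau_G-\tau_{\p})\sum_{m=2}^{n-2}\langle\Ric_{u(s_{m-1})}\Delta_m b,\Delta_m b\rangle$, $D_b=W_1-W_2=\tau_G\sum_{m\in\{1,n-1,n\}}\langle\Ric_{u(s_{m-1})}\Delta_m b,\Delta_m b\rangle$, $D_c=W_2-W_3=\tau_G(\fancyS_{\p}-\fancyR_{\p})$, and $D_d=W_3-W_4=\tau_G\bigl(\int_0^1\Scal(\phi(\w)(s))\,ds-\fancyS_{\p}\bigr)$, so the four links telescope to $Y_{\p}-W_4$. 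Using $|e^{a}-e^{b}|=e^{b}|e^{a-b}-1|$ on each link and then H\"older's inequality,
\[
\int_{\Hpe(\re^d)}\bigl|e^{Y_{\p}}-e^{W_4}\bigr|\,d\muS{\p}\ \le\ \sum_{j}\Bigl(\int e^{2W_j}\,d\muS{\p}\Bigr)^{1/2}\Bigl(\int\bigl(e^{D_j}-1\bigr)^{2}\,d\muS{\p}\Bigr)^{1/2},
\]
the sum running over the four links with $W_j$ the later endpoint of that link.

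The leading factors are uniformly bounded: the curvature bound gives $|\langle\Ric_u v,v\rangle|\le\Lambda\|v\|^{2}$ and $|\Scal|\le\Lambda$, hence $|W_1|,|W_2|\le\tau_G\Lambda\sum_{m=1}^{n}\|\Delta_m b\|^{2}$ and $|W_3|,|W_4|\le\tau_G\Lambda$, so for $\Delta$ small each $\int_{\Hpe(\re^d)}e^{2W_j}\,d\muS{\p}\le C$ by the Gaussian exponential‑moment estimates already established (Lemma \ref{lem:EDb2}). For the error factors, $D_c$ and $D_d$ are exactly what Lemmas \ref{lem:fancyRS1} and \ref{lem:fancyS} control: using $e^{p|x|}\le e^{px}+e^{-px}$, Lemma \ref{lem:fancyRS1} with $p=\pm2\tau_G$ gives $\int\bigl(e^{2|D_c|}-1\bigr)\,d\muS{\p}\le C\Delta$, and Lemma \ref{lem:fancyS} with $p=2\tau_G$ gives $\int\bigl(e^{2|D_d|}-1\bigr)\,d\muS{\p}\le C\Delta^{1/2}$; since $(e^{x}-1)^{2}\le e^{2|x|}-1$ and the integrands are nonnegative (so restricting from $\Hp(\re^d)$ to $\Hpe(\re^d)$ only helps), these links contribute $O(\Delta^{1/2})$ and $O(\Delta^{1/4})$ respectively. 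The link $D_b$ involves only the three indices $m\in\{1,n-1,n\}$ with $|D_b|\le\tau_G\Lambda\sum_{m\in\{1,n-1,n\}}\|\Delta_m b\|^{2}$, so Corollary \ref{cor:sumbto0} (with $p=2$, $a=2$, $\#\Gamma=3$) gives $\int_{\Hpe(\re^d)}(e^{D_b}-1)^{2}\,d\muS{\p}\le C\Delta$, contributing $O(\Delta^{1/2})$. Finally $|D_a|\le|\tau_G-\tau_{\p}|\,\Lambda\sum_{m=1}^{n}\|\Delta_m b\|^{2}$; once $\Delta$ is small enough that $|\tau_G-\tau_{\p}|$ lies below the admissible threshold (legitimate since Corollary \ref{cor:BetaInt} gives $\tau_{\p}\to\tau_G$), the bound $(e^{D_a}-1)^{2}\le2|D_a|e^{2|D_a|}$ together with $n\Delta=1$ and the moment estimate of Lemma \ref{lem:sumbpto0} yields $\int(e^{D_a}-1)^{2}\,d\muS{\p}\le C|\tau_G-\tau_{\p}|$, contributing $O(\sqrt{|\tau_G-\tau_{\p}|})$. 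Summing the four contributions gives $C\bigl(\sqrt{|\tau_{\p}-\tau_G|}+\Delta^{1/4}\bigr)$.

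The only substantive ingredient is the control of $D_c$, i.e.\ trading the Riemann‑sum curvature functional $\fancyR_{\p}$ for $\fancyS_{\p}$ in an exponential‑moment sense; this rests on the identity $\tr(\Ric_{\up_s})=\Scal(\phi(\bp)|_s)$ and the It\^o/martingale cancellation behind Proposition \ref{prop.ItoTraceOnlyImportant}, but it has already been packaged as Lemma \ref{lem:fancyRS1}, so here it is a citation. Everything else is bookkeeping: telescoping the four links, checking that the exponential prefactors are uniformly integrable, and observing that passing from $\Hpe(\re^d)$ back to $\Hp(\re^d)$ in the error factors only enlarges integrals of nonnegative functions. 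I expect the exponent $\tfrac12$ on $|\tau_{\p}-\tau_G|$ is not optimal --- a linear bound follows from the same computation with $(e^{x}-1)^{2}\le x^{2}e^{2|x|}$ --- but the stated form is all that is needed and dovetails with the $\Delta^{1/4}$ coming from Lemma \ref{lem:fancyS}.
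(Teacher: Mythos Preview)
Your approach is essentially the paper's: the same telescoping through $\fancyR_{\p}$ and $\fancyS_{\p}$, the same lemmas (\ref{lem:fancyRS1}, \ref{lem:fancyS}, \ref{lem:sumbpto0}, Corollary \ref{cor:sumbto0}) doing the same work. The only organizational difference is that you split the paper's single link $e^{Y_{\p}}\to e^{-\tau_G\fancyR_{\p}}$ into two ($D_a$ and $D_b$), which is harmless.

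There is, however, one genuine slip in your treatment of $D_c$. You claim that $e^{p|x|}\le e^{px}+e^{-px}$ together with Lemma \ref{lem:fancyRS1} yields $\int(e^{2|D_c|}-1)\,d\muS{\p}\le C\Delta$. It does not: integrating $e^{2|D_c|}-1\le e^{2D_c}+e^{-2D_c}-1$ and applying Lemma \ref{lem:fancyRS1} to each exponential gives only $\int(e^{2|D_c|}-1)\le 2e^{C\Delta}-1$, which is of order $1$, not $\Delta$. The bound you actually need, $\int(e^{D_c}-1)^2\le C\Delta$, is nonetheless true and follows immediately from the identity
\[
(e^{D_c}-1)^2=(e^{2D_c}-1)-2(e^{D_c}-1),
\]
after which Lemma \ref{lem:fancyRS1} (applied with $p=-\tau_G$ and $p=-2\tau_G$) bounds each integral on the right by $e^{C\Delta}-1\le C\Delta$. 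This is precisely how the paper handles the corresponding term (packaged there as Lemma \ref{lem.EeXminusY}, condition (\ref{eqn.EeXminusY3})). With this correction your argument goes through; the links $D_a$, $D_b$, $D_d$ are handled correctly as written.
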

\begin{proof}
Breaking the integrand into pieces we consider,
\begin{align*}
 \underbrace{(e^{Y_{\p}(\w)} - e^{-\tau_{G} \fancyR_{\p}})}_{I} + \underbrace{(e^{-\tau_{G} \fancyR_{\p}}- e^{-\tau_{G} \fancyS_{\p}})}_{II} + \underbrace{(e^{-\tau_{G} \fancyS_{\p}} - e^{-\tau_{G}  \int_0^1 \Scal(\phi(\w)(s))})}_{III}.
\end{align*}
Let $\Lambda  = \Lambda(\text{curvature}) < \infty$ be given such that $|\Scal| \leq \Lambda$. Then,
\begin{align*}
|e^{-\tau_{G} \fancyR_{\p}}- e^{-\tau_{G} \fancyS_{\p}}| \leq e^{\tau_G \Lambda} | e^{-\tau_{G} (\fancyR_{\p}- \fancyS_{\p}) } - 1 |
\end{align*}
Now applying Lemma \ref{lem.EeXminusY} and Lemma \ref{lem:fancyRS1}, 
 \[ \int_{\Hpe(\bbR^d)}  \big| II \big| \leq e^{\tau_G \Lambda} \int_{\Hp(\bbR^d)}  \big| e^{-\tau_{G} (\fancyR_{\p}- \fancyS_{\p}) } - 1 \big| \leq C(e^{C \Delta} - 1)^{1/2}.\]  
 Similarly, with
\begin{align*}
&\left| e^{-\tau_{G} \fancyS_{\p}} - e^{-\tau_{G}  \int_0^1 \Scal(\phi(\w)(s))}\right| \\
&\qquad \leq e^{\tau_G \Lambda} \left(\ex\left\{ \tau_G \left|\fancyS_{\p} - \int_0^1 \Scal(\phi(\w)(s))ds \right|\right\} - 1 \right),
\end{align*}
another application of Lemma \ref{lem.EeXminusY} to the right hand side along with Lemma \ref{lem:fancyS} gives $\int \big|III\big| \leq C \Delta^{1/4}$.

What remains then is to bound $\int \big| I \big|$. To start, we will assume that $\Lambda$ is also a bound on $\Ric$ so that  $| \langle \Ric_{u(s_{i-1})} \Delta_i b, \Delta_i b \rangle | \leq \Lambda \| \Delta_i b \|^2$ for each $i = 1, 2, ..., n$.
% Another application of Lemma \ref{lem.EeXminusY} ensures that we need only show the appropriate bound for
%\begin{align*}
%\E \left[ |\tau_{G} \fancyR_{\p} + Y_{\p} | e^{|\tau_{G} \fancyR_{\p} + Y_{\p} |} \right].
%\end{align*}
From here,
\begin{align*}
\tau_{G} \fancyR_{\p} + Y_{\p} = (\tau_{G} - \tau_{\p}) \fancyR_{\p} + \tau_{\p} \partial \fancyR_{\p}
\end{align*}
where 
\begin{align*}
\partial \fancyR_{\p} := \langle \Ric_{u(s_{0})} \Delta_1 b, \Delta_1 b \rangle + \langle \Ric_{u(s_{n-2})} \Delta_{n-1} b, \Delta_{n-1} b \rangle \\ + \langle \Ric_{u(s_{n-1})} \Delta_n b, \Delta_n b \rangle.
\end{align*}
Using the bounds
\begin{align*}
|\fancyR_{\p}| \leq \Lambda \sum_{i=1}^n \|\Delta_i b\|^2 ~~ \text{ and }~~|\partial \fancyR_{\p}| \leq \Lambda ( \| \Delta_1 b \|^2 + \| \Delta_{n-1} b \|^2 + \| \Delta_n b \|^2 )
\end{align*}
along with Eq. (\ref{eqn.eaminus1}), Eq. (\ref{eqn.deltabineq3}) in Lemma \ref{lem:sumbpto0}, and Theorem \ref{thm:lawofbp} we have,
\begin{align}
\int_{\Hpe(M)} e^{2 \tau_G |\fancyR_{\p}| } \, d\muS{\p} \leq  2 \tau_G \Lambda \sum_{i=1}^n\E \left[\|\Delta_i b \|^2 e^{2 \tau_G \Lambda \sum_{j=1}^n\|\Delta_j b \|^2 } \right] \leq C. \label{eqn.intI1}
\end{align}
Along these same lines, from Eq. (\ref{eqn.deltabineq3}),
\begin{align*}
&\int_{\Hpe(M)} \big| (\tau_{G} - \tau_{\p}) \fancyR_{\p}  \big| e^{| (\tau_{G} - \tau_{\p}) \fancyR_{\p} + \tau_{\p} \partial \fancyR_{\p}|} \, d\muS{\p} \\
&\qquad \leq  \E \left[ \big| (\tau_{G} - \tau_{\p}) \fancyR_{\p}  \big| e^{| (\tau_{G} - \tau_{\p}) \fancyR_{\p} + \tau_{\p} \partial \fancyR_{\p}|}\right] \\
&\qquad \leq |\tau_{G} - \tau_{\p}|  \Lambda \sum_{i=1}^n\E \left[   \|\Delta_i b\|^2 e^{ 2\Lambda \sum_{j=1}^n\|\Delta_j b\|^2}\right] \leq C( |\tau_{G} - \tau_{\p}|),
\end{align*}
and arguing similarly using Eq. (\ref{eqn.deltabineq2}),
\begin{align*}
\int_{\Hpe(M)} \big| \tau_{\p} \partial \fancyR_{\p} \big| e^{| (\tau_{G} - \tau_{\p}) \fancyR_{\p} + \tau_{\p} \partial \fancyR_{\p}|} \, d\muS{\p} \leq  \E \left[ \big| \tau_{\p} \partial \fancyR_{\p} \big| e^{| (\tau_{G} - \tau_{\p}) \fancyR_{\p} + \tau_{\p} \partial \fancyR_{\p}|}\right] \\
\leq C\Delta.
\end{align*}
In particular,
\begin{align}
 \E \left[ \big| (\tau_{G} - \tau_{\p}) \fancyR_{\p} + \tau_{\p} \partial \fancyR_{\p} \big| e^{| (\tau_{G} - \tau_{\p}) \fancyR_{\p} + \tau_{\p} \partial \fancyR_{\p}|}\right] \leq C( |\tau_{G} - \tau_{\p}| + \Delta). \label{eqn.intI2}
\end{align}
With Eqs.\,(\ref{eqn.intI1}) and (\ref{eqn.intI2}), Lemma \ref{lem.EeXminusY} implies $\int \big| I \big| \leq C (|\tau_G - \tau_{\p}| + \Delta)^{1/2}$.
Combining the bounds on $\int \big| I \big|$, $\int \big| II \big|$, and $\int \big| III \big|$ concludes the proof.
\end{proof}

\begin{prop}
\label{prop.limit2}
Under the assumptions of Theorem \ref{thm.mainthm}, the limit in Eq. (\ref{eqn.limit3}) is zero.
\end{prop}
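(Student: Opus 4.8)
The plan is to reduce the limit in Eq.\,(\ref{eqn.limit3}) to the Andersson--Driver convergence theorem (Theorem \ref{thm.AndDrive}) applied to one bounded continuous functional on $W(M)$. Write $\tau_G := (2+\sqrt{3})/(20\sqrt{3})$ and put $g(\sigma) := f(\sigma)\,\ex\{-\tau_G \int_0^1 \Scal(\sigma(s))\,ds\}$ for $\sigma \in W(M)$, so the two terms in Eq.\,(\ref{eqn.limit3}) are $\int_{\Hpe(\re^d)} g(\phi(\w))\,d\muS{\p}(\w)$ and $\int_{W(M)} g\,d\nu$. The first thing to record is that $g$ is bounded and continuous on $W(M)$: boundedness of the curvature gives $|\Scal|\le\Lambda$ for some $\Lambda<\infty$, so $\|g\|_\infty \le \|f\|_\infty\,\ex\{\tau_G\Lambda\}$; and if $\sigma_n\to\sigma$ uniformly on $[0,1]$ then $\Scal(\sigma_n(s))\to\Scal(\sigma(s))$ for each $s$ with $|\Scal(\sigma_n(s))|\le\Lambda$, so bounded convergence yields $\int_0^1\Scal(\sigma_n(s))\,ds\to\int_0^1\Scal(\sigma(s))\,ds$, whence $g(\sigma_n)\to g(\sigma)$ by continuity of $f$ and of $x\mapsto\ex\{-\tau_G x\}$.

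Next I would remove the truncation, replacing $\Hpe(\re^d)$ by $\Hp(\re^d)$. Cartan development restricts to a bijection $\Hp(\re^d)\to\Hp(M)$ carrying $\Hpe(\re^d)$ onto $\Hpe(M)$, and $\muS{\p}=\phi^{*}\nuS{\p}$ by Theorem \ref{thm:lawofbp}; hence $\muS{\p}(\Hp(\re^d)\setminus\Hpe(\re^d)) = \nuS{\p}(\Hp(M)\setminus\Hpe(M))$, which Lemma \ref{lem:MeasHpeSp} bounds by $(C/\epsi^2)\ex\{-\epsi^2/4\Delta\}$. Consequently, for fixed $\epsi$,
\[
\Bigl|\, \int_{\Hpe(\re^d)} g(\phi)\,d\muS{\p} \;-\; \int_{\Hp(\re^d)} g(\phi)\,d\muS{\p} \,\Bigr| \;\le\; \|g\|_\infty\,\frac{C}{\epsi^2}\,\ex\Bigl\{-\frac{\epsi^2}{4\Delta}\Bigr\},
\]
which tends to $0$ as $|\p|=\Delta\to0$ (this is exactly the reduction used in the proof of Proposition \ref{prop.limit13}), so it suffices to prove the assertion with $\Hpe(\re^d)$ replaced by $\Hp(\re^d)$.

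Finally, reading $\muS{\p}=\phi^{*}\nuS{\p}$ from Theorem \ref{thm:lawofbp} as a change of variables gives $\int_{\Hp(\re^d)} g(\phi(\w))\,d\muS{\p}(\w) = \int_{\Hp(M)} g(\sigma)\,d\nuS{\p}(\sigma)$, so Eq.\,(\ref{eqn.limit3}) reduces to $\lim_{|\p|\to0}\bigl|\,\int_{\Hp(M)} g\,d\nuS{\p} - \int_{W(M)} g\,d\nu\,\bigr| = 0$. Since $g$ is bounded and continuous on $W(M)$, this is precisely Theorem \ref{thm.AndDrive}, and the proof is complete.

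I do not expect any real obstacle here: the argument is a continuity/boundedness check for $g$, the Gaussian tail estimate of Lemma \ref{lem:MeasHpeSp} to drop the $\Hpe$-truncation, and a single application of the Andersson--Driver theorem. The substantive difficulty of the paper --- showing that the relevant Radon--Nikodym density on $\Hpe(\re^d)$ converges in $L^1(\muS{\p})$ to $\ex\{-\tau_G\int_0^1\Scal\}$ --- was already dispatched in Propositions \ref{prop:onlytrU}, \ref{prop:LpUptrSimp2}, and \ref{prop:LpUptrSimp3}, together with the limit $\tau_{\p}\to\tau_G$ of Eq.\,(\ref{eqn.tauplimit}).
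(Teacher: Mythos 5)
Your argument is sound for the claim as literally printed, but the statement of Proposition \ref{prop.limit2} appears to contain a misprint: the reference should be to Eq.\,(\ref{eqn.limit2}), not Eq.\,(\ref{eqn.limit3}). Two pieces of internal evidence make this clear. The proof of Theorem \ref{thm.mainthm} assigns the roles explicitly (``Proposition \ref{prop.limit13} below shows Eqs.\,(\ref{eqn.limit1}) and (\ref{eqn.limit3}) are both zero. Proposition \ref{prop.limit2} below shows Eq.\,(\ref{eqn.limit2}) is zero.''), and the paper's own proof of Proposition \ref{prop.limit2} concludes by showing ``that the limit in Eq.\,(\ref{eqn.limit2}) vanishes when $\Delta\to 0$.'' The vanishing of Eq.\,(\ref{eqn.limit3}) is already dispatched in Proposition \ref{prop.limit13}, and your write-up essentially reproduces that half of its argument: pull back via $\phi^{*}\nuS{\p}=\muS{\p}$, drop the $\Hpe$ truncation by a Gaussian tail bound, and invoke Theorem \ref{thm.AndDrive} for the bounded continuous functional $g = f\cdot\ex\{-\tau_G\int_0^1\Scal\}$. (Your appeal to Lemma \ref{lem:MeasHpeSp} for the truncation step is actually the more natural choice here, since the integrand in Eq.\,(\ref{eqn.limit3}) carries no density $\rho_{\p}$; Corollary \ref{cor.HpOutHpe}, which Prop.\,\ref{prop.limit13} cites, is an estimate phrased for $\nuG{\p}$.)

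What Proposition \ref{prop.limit2} is actually meant to establish is the hard part of the theorem: that $\rhot_{\p}$ converges in $L^1(\muS{\p})$ on $\Hpe(\re^d)$ to $\ex\{-\tau_G\int_0^1\Scal(\phi(\w)(s))\,ds\}$, i.e.\ that Eq.\,(\ref{eqn.limit2}) vanishes. The paper's proof of that is a three-stage $L^1$ comparison: Proposition \ref{prop:onlytrU} replaces $\rhot_{\p}$ by $e^{X_{\p}}$ up to $O(\Delta^{1/4})$, Proposition \ref{prop:LpUptrSimp2} replaces $e^{X_{\p}}$ by $e^{Y_{\p}}$ (discarding the $\rem_{k,m}$ and boundary contributions), and Proposition \ref{prop:LpUptrSimp3} replaces $e^{Y_{\p}}$ by the target exponential, with error controlled by $|\tau_{\p}-\tau_G|\to 0$ from Eq.\,(\ref{eqn.tauplimit}) and the residue computation giving Eq.\,(\ref{eqn:tauG}). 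You correctly identified this chain at the end of your note; that chain \emph{is} the proof of the proposition, and the Andersson--Driver reduction you wrote out is the content of the other proposition, not this one.
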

\begin{proof} Combining Propositions \ref{prop:onlytrU}, \ref{prop:LpUptrSimp2},  \ref{prop:LpUptrSimp3}, and Eqs.\,(\ref{eqn.tauplimit}) and (\ref{eqn:tauG})  shows that the limit in Eq.\,(\ref{eqn.limit2}) vanishes when $\Delta \to 0$.
\end{proof}

\section{Appendix}
\subsect{Frequently Referenced Inequalities}
Here we collect several inequalities which are straight forward to show, but the frequency of use warrants their mention. For any $a \in \bbR$ and $p\in \nats$,
\begin{align}
\left| e^a - 1 \right|^p \leq e^{p|a|} - 1 \leq p|a|e^{p|a|}. \label{eqn.eaminus1}
\end{align}
If $a, b > 0$ and $\alpha \geq 1$,
\begin{align}
&\frac{\sinh(a)}{a} \leq \cosh(a), \label{eqn.sinhleqcosh}\\ 
&\cosh(a)\cosh(b) \leq \cosh(a+b),\label{eqn.coshacoshb} \\
&\cosh(a)(\cosh(b) - 1) \leq \cosh(a)\cosh(b) - 1 \label{eqn.coshabminus1} \\
&\alpha(\cosh(a) \cosh(b) - 1) \leq \cosh(\alpha a) \cosh( \alpha b) - 1. \label{eqn.alphacoshabminus1}
\end{align}

%------------------------------------------------------
\subsect{ODE Estimates}
\begin{prop}
\label{prop:ODE}
Let $s > 0$ and $J$ be an interval of $\re$ containing $[0,s]$.  Suppose that $z: J \to \Hom(\re^N \to \re^N)$ satisfies $z''(r) = A(r)z(r)$ where $A \in C^1(J \to \Hom(\re^N\to \re^N))$. We also suppose that there exist $K_0, K_1 > 0$ such that $\sup_{r \in [0,s]} \| A(r) \| \leq K_0$ and $\sup_{r \in [0,s]} \| A'(r) \| \leq K_1$. Then,
\begin{align}
\| z(s) - z(0) - s z'(0) \| \leq \| z(0) \| (\cosh(\sqrt{K_0} s) - 1) + \| z'(0) \| s \left( \frac{\sinh(\sqrt{K_0} s)}{\sqrt{K_0} s} - 1 \right). \label{eqn:ODE1}
\end{align}
If we assume that $z(0)=0$ and $z'(0)=I$, then
\begin{align}
\| z(s) - s I - \frac{s^3}{6}A(0) \| \leq \frac{s^4}{12} K_1 + \frac{s}{6}\left( \frac{\sinh(\sqrt{K_0}s)}{\sqrt{K_0}s} - 1 - \frac{1}{6}K_0 s^2 \right).
\end{align}
If instead we assume that $z(0)=I$ and $z'(0)=0$, then
\begin{align}
\| z(s) - I - \frac{s^2}{2}A(0) \| \leq \frac{s^3}{6} K_1 + \frac{1}{2} \left( \cosh(\sqrt{K_0} s)-1-\frac{1}{2} K_0 s^2 \right). 
\end{align}
\end{prop}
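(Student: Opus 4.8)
The plan is to recast the linear system $z'' = Az$ as the equivalent Volterra integral equation
\begin{align*}
z(s) = z(0) + s\,z'(0) + \int_0^s (s-r)\,A(r)z(r)\,dr,
\end{align*}
valid by integrating $z'' = Az$ twice and applying Fubini. From here all three estimates come from inserting an \emph{a priori} size bound for $\|z(r)\|$ into the integral remainder and evaluating a few elementary integrals, chiefly
\begin{align*}
\int_0^s (s-r)\cosh(\sqrt{K_0}\,r)\,dr &= \frac{\cosh(\sqrt{K_0}\,s)-1}{K_0}, \\
\int_0^s (s-r)\,\frac{\sinh(\sqrt{K_0}\,r)}{\sqrt{K_0}}\,dr &= \frac{1}{K_0}\left(\frac{\sinh(\sqrt{K_0}\,s)}{\sqrt{K_0}}-s\right),
\end{align*}
together with $\int_0^s (s-r)\,dr = s^2/2$, $\int_0^s (s-r)r\,dr = s^3/6$, and $\int_0^s (s-r)r^2\,dr = s^4/12$.

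\textbf{A priori bound and the first estimate.} Taking norms in the Volterra identity and using $\|A(r)\|\le K_0$ gives $\|z(r)\| \le \|z(0)\| + r\|z'(0)\| + K_0\int_0^r (r-t)\|z(t)\|\,dt$. The integral operator $g\mapsto K_0\int_0^{\cdot}(\cdot-t)g(t)\,dt$ is positivity preserving, so iterating it (equivalently, a Gronwall argument for second-order integral inequalities) dominates $\|z(r)\|$ by the solution $\phi$ of the corresponding scalar fixed-point equation, i.e.\ $\phi''=K_0\phi$ with $\phi(0)=\|z(0)\|$, $\phi'(0)=\|z'(0)\|$, which is
\begin{align*}
\|z(r)\| \le \|z(0)\|\cosh(\sqrt{K_0}\,r) + \|z'(0)\|\,\frac{\sinh(\sqrt{K_0}\,r)}{\sqrt{K_0}}, \qquad r\in[0,s].
\end{align*}
Feeding this into $\|z(s)-z(0)-sz'(0)\| \le K_0\int_0^s (s-r)\|z(r)\|\,dr$ and using the first two displayed integrals gives Eq.~(\ref{eqn:ODE1}) directly.

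\textbf{The remaining two estimates.} For $z(0)=0$, $z'(0)=I$, realize the leading term as an integral, $\tfrac{s^3}{6}A(0) = \int_0^s (s-r)\,r\,A(0)\,dr$, so that
\begin{align*}
z(s) - sI - \tfrac{s^3}{6}A(0) = \int_0^s (s-r)\Big[ A(r)\big(z(r)-rI\big) + r\big(A(r)-A(0)\big)\Big]\,dr.
\end{align*}
Bound the first bracket by $K_0\,\|z(r)-rI\|$, with $\|z(r)-rI\|$ controlled by Eq.~(\ref{eqn:ODE1}) specialized to $z(0)=0$, $z'(0)=I$, and the second bracket by $K_1 r^2$ via $\|A(r)-A(0)\| \le \int_0^r\|A'(t)\|\,dt \le K_1 r$; then integrate ($\int_0^s(s-r)r^2\,dr=s^4/12$ for the $K_1$ term, the $\sinh$ integral above for the rest). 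The case $z(0)=I$, $z'(0)=0$ is identical in structure: write $\tfrac{s^2}{2}A(0) = \int_0^s (s-r)A(0)\,dr$, split $A(r)z(r)-A(0) = A(r)(z(r)-I) + (A(r)-A(0))$, use $\|z(r)-I\| \le \cosh(\sqrt{K_0}\,r)-1$ from Eq.~(\ref{eqn:ODE1}) together with $\|A(r)-A(0)\|\le K_1 r$, and evaluate $\int_0^s(s-r)(\cosh(\sqrt{K_0}\,r)-1)\,dr$ and $\int_0^s(s-r)r\,dr$.

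The step I expect to be the real obstacle is the \emph{a priori} bound: making rigorous the majorant inequality $\|z(r)\|\le\phi(r)$ (cleanest by iterating the positivity-preserving Volterra operator and passing to the limit, or via a second-order Gronwall lemma), and then carrying the constants carefully through the elementary integrations so that everything collapses to precisely the closed forms in $\cosh$ and $\sinh/(\sqrt{K_0}\,s)$ asserted in the statement.
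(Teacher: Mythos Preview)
Your proposal is correct and follows essentially the same route as the paper. Both start from the Volterra identity $z(s)=z(0)+sz'(0)+\int_0^s(s-r)A(r)z(r)\,dr$; the paper obtains the a priori bound by iterating this identity $m$ times and summing the resulting series (which \emph{is} your ``iterate the positivity-preserving operator'' argument written out explicitly), so your worry about making the majorant inequality rigorous is already resolved by that direct computation. For the two refined estimates the paper substitutes the Volterra expression for $z(r)$ once more and bounds $\|z(t)\|$ inside the resulting double integral, whereas you split $A(r)z(r)-A(0)\cdot(\text{linear part})=A(r)(z(r)-\text{linear})+(A(r)-A(0))(\text{linear})$ and invoke the first estimate; the two decompositions produce identical bounds after the elementary integrations you list.
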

\begin{proof}
By Taylor's theorem with integral remainder,
\begin{align}
z(s) &= z(0) + s z'(0) + \int_0^{s}(s-r)z''(r)dr \notag \\
&= z(0) + s z'(0) + \int_0^s(s-r)A(r)z(r) dr. \notag
\end{align}
From here, iterating Talyor's theorem, 
\begin{align*}
&z(s) -z(0) - sz'(0)  \\
&=  \int_0^{s}(s-r)A(r)z(r)dr  \\
&= \int_0^s(s-r)A(r)\left\{ z(0) + r z'(0) + \int_0^{r}(r-t)A(t)z(t)dt \right\} dr  \\
& \cdots  \\
&= \left(\overbrace{\sum_{j=1}^m \int_{0< s_1 < \cdots < s_j < s} (s-s_j) \cdots (s_2-s_1) A(s_j) \cdots A(s_1) ds_1 \cdots ds_j}^{I} \right) z(0)  \\
&~ + \left( \overbrace{\sum_{j=1}^m \int_{0< s_1 < \cdots < s_j < s} (s-s_j) \cdots (s_2-s_1) s_1 A(s_j) \cdots A(s_1) ds_1 \cdots ds_j}^{II} \right) z'(0) \\
&~ + \overbrace{ \int_{0< s_1 < \cdots < s_{m+1} < s} (s-s_{m+1}) \cdots (s_2-s_1) A(s_{m+1}) \cdots A(s_1) z(s_1) ds_1 \cdots ds_{m+1}}^{III}.
\end{align*}
Using the bound on $A$, 
\begin{align}
\| I \| &\leq \sum_{j=1}^m \frac{s^{2j} K_0^{j}}{(2j)!} \leq \cosh(\sqrt{K_0} s)-1\\
\| II \| &\leq \sum_{j=1}^m \frac{s^{2j+1} K_0^j}{(2j+1)!} \leq \frac{\sinh(\sqrt{K_0} s)}{\sqrt{K_0}} - s
\end{align}
and
\begin{align}
\| III \| &\leq \sup_{r \in [0,s]} \| z(r) \| \frac{s^{2(m+1)}}{[2(m+1)]!}.
\end{align}
Taking $m \to \infty$ completes the proof of Eq. (\ref{eqn:ODE1}).

If $z(0)=0$ and $z'(0) = I$, then 
\begin{align*}
\| z(r) \| &\leq \frac{\sinh(\sqrt{K_0} r)}{\sqrt{K_0}}.
\end{align*}
Again iterating Taylor's theorem,
\begin{align*}
z(s) &= sI + \int_0^s(s-r)A(r)\left\{ rI + \int_0^r (r-t)A(t) z(t) dt \right\} dr \\
&= sI + \int_0^s (s-r)r A(0) dr + \int_0^s \int_0^r (s-r)r A'(t) dt dr \\
&~~ + \int_0^s \int_0^r (s-r)(r-t) A(r) A(t) z(t) dt dr \\
&= sI + \frac{s^3}{6} A(0) + \int_0^s \int_0^r (s-r)r A'(t) dt dr \\
&~~ + \int_0^s \int_0^r (s-r)(r-t) A(r) A(t) z(t) dt dr \\
\end{align*}
where the second equality came from $A(r) = A(0) + \int_0^r A'(t)dt$. Hence
\begin{align*}
\| z(s) - sI - \frac{s^3}{6}A(0) \| &\leq \frac{s^4}{12} K_1 + K_0^2 \int_0^s \int_0^r (s-r)(r-t)\frac{\sinh(\sqrt{K_0}t)}{\sqrt{K_0}} dt dr\\
&= \frac{s^4}{12} K_1 + \frac{s}{6}\left( \frac{\sinh(\sqrt{K_0}s)}{\sqrt{K_0}s} - 1 - \frac{1}{6}K_0 s^2 \right).
\end{align*}
If $z(0)=I$ and $z'(0)=0$, then $\| z(r) \| \leq \cosh(\sqrt{K_0} r)$, a similar expansion as above shows
\begin{align*}
z(s) &= I + \int_0^s(s-r)A(0)dr + \int_0^s \int_0^r (s-r)A'(t)dtdr \\
&~~ + \int_0^s \int_0^r (s-r)(r-t)A(r)A(t)z(t)dt dr\\
&= I + \frac{s^2}{2} A(0) + \int_0^s \int_0^r (s-r)A'(t)dtdr \\
&~~ + \int_0^s \int_0^r (s-r)(r-t)A(r)A(t)z(t)dt dr. 
\end{align*}
Therefore,
\begin{align*}
\| z(s) - I - \frac{s^2}{2} A(0) \| &\leq \frac{s^3}{6}K_1  + K_0^2 \int_0^s \int_0^r (s-r)(r-t)\cosh(\sqrt{K_0} t)dt dr\\
&= \frac{s^3}{6} K_1 + \frac{1}{2} \left( \cosh(\sqrt{K_0} s)-1-\frac{1}{2} K_0 s^2 \right). 
\end{align*}
\end{proof}

\begin{prop}
\label{prop:GREENest}
Let $z(s)$ be as in Proposition \ref{prop:ODE} and $f(s) := \frac{z(\Delta) - z(0)}{\Delta}s+z(0)$. Then,
\begin{align*}
&||z(s) - f(s)|| \\
&\qquad \leq s \left(1- \frac{s}{\Delta} \right) \left\{ ||z(0)|| K \Delta \cosh(\sqrt{K} \Delta)  +||z'(0)||\left( \cosh(\sqrt{K} \Delta) - 1 \right) \right\}
\end{align*}
\end{prop}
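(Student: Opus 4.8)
The plan is to recognize that $w(s) := z(s) - f(s)$ solves a two-point boundary value problem on $[0,\Delta]$. Since $f$ is affine in $s$ we have $f''\equiv 0$, so $w''(s) = z''(s) = A(s)z(s)$, while $w(0) = w(\Delta) = 0$ by construction (because $f$ interpolates $z$ at the two endpoints). I would then represent $w$ through the Dirichlet Green's function of $d^2/ds^2$ on $[0,\Delta]$; concretely, starting from the integral-remainder Taylor identity $z(s) - z(0) - sz'(0) = \int_0^s (s-r)A(r)z(r)\,dr$ already used in the proof of Proposition \ref{prop:ODE}, and the same identity at $s = \Delta$, one subtracts $\frac{s}{\Delta}$ times the latter from the former and splits $\int_0^\Delta = \int_0^s + \int_s^\Delta$ to get
\[
z(s) - f(s) = \int_0^s \Big[(s-r) - \tfrac{s}{\Delta}(\Delta - r)\Big] A(r)z(r)\,dr \; - \; \frac{s}{\Delta}\int_s^\Delta (\Delta - r)A(r)z(r)\,dr .
\]
Using the algebraic identity $(s-r) - \tfrac{s}{\Delta}(\Delta-r) = -\,r\,\tfrac{\Delta - s}{\Delta}$ one checks that both kernels are bounded in norm by $s\bigl(1 - \tfrac{s}{\Delta}\bigr)$ on the respective ranges of $r$, whence $\|z(s) - f(s)\| \le s\bigl(1 - \tfrac{s}{\Delta}\bigr)\,K\int_0^\Delta \|z(r)\|\,dr$, where $K = \sup_{[0,s]}\|A\|$.

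Next I would bound $\|z(r)\|$ pointwise on $[0,\Delta]$ using Proposition \ref{prop:ODE}: inequality (\ref{eqn:ODE1}) together with the triangle inequality rearranges to $\|z(r)\| \le \|z(0)\|\cosh(\sqrt{K}\,r) + \|z'(0)\|\,\tfrac{\sinh(\sqrt{K}\,r)}{\sqrt{K}}$. Integrating this over $[0,\Delta]$ and using (\ref{eqn.sinhleqcosh}) gives $\int_0^\Delta \cosh(\sqrt{K}\,r)\,dr = \tfrac{\sinh(\sqrt{K}\Delta)}{\sqrt{K}} \le \Delta\cosh(\sqrt{K}\Delta)$ and $\int_0^\Delta \tfrac{\sinh(\sqrt{K}\,r)}{\sqrt{K}}\,dr = \tfrac{\cosh(\sqrt{K}\Delta) - 1}{K}$, hence
\[
K\int_0^\Delta \|z(r)\|\,dr \;\le\; \|z(0)\|\,K\Delta\cosh(\sqrt{K}\Delta) + \|z'(0)\|\bigl(\cosh(\sqrt{K}\Delta) - 1\bigr).
\]
Substituting this into the displayed bound for $\|z(s) - f(s)\|$ produces exactly the asserted inequality.

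I do not expect a serious obstacle here; the only delicate point is keeping the two Taylor remainders \emph{under a single integral} so that the algebraic cancellation yields the factor $s(1 - s/\Delta)$ that vanishes at both endpoints — a naive triangle-inequality split of $z(s) - f(s)$ into $z(s) - z(0) - sz'(0)$ and $\tfrac{s}{\Delta}\bigl(z(\Delta) - z(0) - \Delta z'(0)\bigr)$ and separate application of Proposition \ref{prop:ODE} to each would lose the vanishing at $s = \Delta$. The matrix-valued setting causes no difficulty because the Green kernel is scalar and passes through operator norms.
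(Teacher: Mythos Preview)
Your proposal is correct and essentially identical to the paper's proof: the paper writes $g(s)=z(s)-f(s)$, observes $g''=A z$ with $g(0)=g(\Delta)=0$, represents $g$ via the Dirichlet Green's function $G(s,t)=t(1-s/\Delta)1_{[0,s)}(t)+s(1-t/\Delta)1_{[s,\Delta]}(t)$, bounds $|G(s,t)|\le s(1-s/\Delta)$, and then integrates the same pointwise bound $\|z(r)\|\le\|z(0)\|\cosh(\sqrt{K}r)+\|z'(0)\|\sinh(\sqrt{K}r)/\sqrt{K}$. Your Taylor-remainder subtraction is just an explicit derivation of this Green's function (your two kernels $-r(\Delta-s)/\Delta$ and $-s(\Delta-r)/\Delta$ are exactly $-G(s,r)$), so the two arguments coincide line for line.
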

\begin{proof}
Let $g(s) := z(s) - f(s)$. Then $g''(s) = A(s)z(s)$ and satisfies the zero Dirichlet boundary condition $g(0) = g(\Delta) = 0$. For $s, t \in [0,1]$, let $G(s,t)$ be defined by
\[ G(s,t) := t \left( 1-\frac{s}{\Delta} \right) 1_{[0,s)}(t)  + s \left( 1-\frac{t}{\Delta} \right) 1_{[s,\Delta]}(t). \]
Then,
\begin{align*}
g(s) & =  - \int_0^{\Delta} G(s,t)g''(t)dt = - \int_0^{\Delta} G(s,t) A(t) z(t) dt.
\end{align*}
Therefore, using the fact that $0 \leq G(s,t) \leq s \left( 1-\frac{s}{\Delta} \right)$ and the estimate from proposition \ref{prop:ODE} , we have 
\begin{align*}
||g(s)|| & \leq s \left( 1-\frac{s}{\Delta} \right) K \int_0^{\Delta} \left(||z(0)|| \cosh(\sqrt{K} t) + ||z'(0)||\frac{\sinh(\sqrt{K} t)}{\sqrt{K}} \right) dt \\
& =  s \left( 1-\frac{s}{\Delta} \right)K\left(||z(0)||\frac{\sinh(\sqrt{K} \Delta)}{\sqrt{K}} + ||z'(0)|| \frac{\cosh(\sqrt{K} \Delta) -1}{K} \right)\\
&\leq s \left(1- \frac{s}{\Delta} \right) \left\{ ||z(0)|| K \Delta \cosh(\sqrt{K} \Delta) + ||z'(0)||\left( \cosh(\sqrt{K} \Delta) - 1 \right) \right\}.
\end{align*}
\end{proof}

\begin{rmk*}
Let $V$ be a finite-dimensional vector space and $\mc{C}_{0}$ be the collection of those functions in $C^2([0,\Delta] \to V)$  satisfying the zero Dirichlet boundary condition. The function $G(s,t)$ as defined in the proof of Proposition \ref{prop:GREENest} is the Green's function of the operator $- d^2/dt^2$ acting on $\mc{C}_0$. Indeed, for any $k \in \mc{C}_0$, $k(s) = - \int_0^{\Delta} G(s,t) k''(t) dt$.
\end{rmk*}

\begin{prop}
\label{prop:PosSemiDefBound}
Suppose that $t \mapsto A(t)$ is a smooth map where $A(t)$ is an $N \times N$ positive semi-definite matrix.  Let $t \mapsto \xi(t) \in \re^N$ for $t \geq 0$ be a smooth map with $\ddot{\xi}(t) = A(t) \xi(t), \xi(0) = 0,$ and $\dot{\xi}(0)\neq 0$. Then, 
\begin{align}
\| \xi(t) \| \geq t \|\dot{\xi}(0) \|.
\end{align}
\end{prop}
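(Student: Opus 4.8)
The plan is to prove that $g(t):=\|\xi(t)\|$ is a convex function of $t$ on $[0,\infty)$ with $g(0)=0$, and then to invoke the elementary fact that for such a function the difference quotient $g(t)/t$ is non-decreasing and hence bounded below by its limit $g'(0^+)=\|\dot\xi(0)\|$.

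First I would rule out that $\xi$ vanishes for $t>0$. Put $f(t):=\|\xi(t)\|^2=\langle\xi(t),\xi(t)\rangle$; differentiating and using $\ddot\xi=A\xi$ gives
\[
f''(t)=2\langle A(t)\xi(t),\xi(t)\rangle+2\|\dot\xi(t)\|^2\ge 0,
\]
since $A(t)$ is positive semi-definite. Thus $f$ is convex on $[0,\infty)$; being non-negative with $f(0)=0$, if $f(t_0)=0$ for some $t_0>0$ then $f\equiv 0$ on $[0,t_0]$, forcing $\dot\xi(0)=0$, contrary to hypothesis. Hence $\xi(t)\neq 0$ for all $t>0$, so $g(t)=\|\xi(t)\|$ is smooth on $(0,\infty)$ and continuous on $[0,\infty)$.

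Next I would compute $g''$ on $(0,\infty)$. A direct differentiation gives, for $t>0$,
\[
g''(t)=\frac{\langle A(t)\xi(t),\xi(t)\rangle\,\|\xi(t)\|^2+\big(\|\dot\xi(t)\|^2\|\xi(t)\|^2-\langle\dot\xi(t),\xi(t)\rangle^2\big)}{\|\xi(t)\|^3}.
\]
The first term in the numerator is $\ge 0$ because $A(t)$ is positive semi-definite, and the second is $\ge 0$ by the Cauchy--Schwarz inequality; hence $g''\ge 0$ on $(0,\infty)$, and by continuity $g$ is convex on $[0,\infty)$.

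Finally, since $g$ is convex with $g(0)=0$, for $0<s<t$ convexity applied to the representation $s=\tfrac{s}{t}\,t+(1-\tfrac{s}{t})\cdot 0$ yields $g(s)\le \tfrac{s}{t}g(t)$, i.e.\ $g(s)/s\le g(t)/t$; thus $t\mapsto g(t)/t$ is non-decreasing on $(0,\infty)$. From $\xi(t)=t\,\dot\xi(0)+O(t^2)$ we get $g(t)/t\to\|\dot\xi(0)\|$ as $t\to 0^+$, so $g(t)/t\ge\|\dot\xi(0)\|$ for all $t>0$, which is the asserted inequality $\|\xi(t)\|\ge t\|\dot\xi(0)\|$ (the case $t=0$ being trivial). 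The only point requiring care is the non-vanishing of $\xi$ on $(0,\infty)$ — everything else is routine differentiation together with the chord characterization of convexity — and this is precisely what the convexity of $f$ supplies.
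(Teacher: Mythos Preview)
Your proof is correct and shares the paper's central idea: compute that $g''\ge 0$ on $(0,\infty)$ via positive semi-definiteness of $A(t)$ and Cauchy--Schwarz, then conclude from convexity. The one noteworthy difference is how you secure non-vanishing of $\xi$ on $(0,\infty)$: the paper runs a bootstrap (work on a maximal interval where $x(t)>0$, derive $x(t)\ge t\,\dot x(0)$ there, and argue the interval must be all of $(0,\infty)$), whereas you first establish non-vanishing independently via the convexity of $f=\|\xi\|^2$. Your route is a bit cleaner and sidesteps any appearance of circularity, at the mild cost of carrying out two convexity computations instead of one; either way the substance is the same.
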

\begin{proof}
If $x(t):= \| \xi(t) \|$, then we want to show $x(t) \geq t \dot{x}(0)$. It suffices to show that $\ddot{x}(t) \geq 0$ for all $t$, since then $x(t) = \int_0^t \dot{x}(s)ds \geq t \dot{x}(0)$. By assumption $\dot{x}(0) > 0$ and since $x(0)=0$, there exists some $\epsi > 0$ with $ x(t) > 0$ for all $t \in (0,\epsi)$. For $t \in (0,\epsi)$, we calculate, 
\begin{align*}
\ddot{x}(t) &= \frac{\langle \ddot{\xi}(t), \xi(t) \rangle + \| \dot{\xi}(t)\|^2}{\| \xi(t) \|} - \frac{ \langle \xi(t), \dot{\xi}(t) \rangle^2}{ \| \xi(t) \|^3 } \\
&= \frac{ \langle A(t) \xi(t), \xi(t) \rangle}{\| \xi(t) \|} + \frac{\| \dot{\xi}(t) \|^2 \| \xi(t) \|^2 - \langle \xi(t), \dot{\xi}(t) \rangle^2}{\| \xi(t) \|^3 }  \geq 0
\end{align*}
where the last inequality comes from the positivity of $A(t)$ and the Cauchy-Schwarz inequality. Now, suppose we set $\tau = \sup\{ \epsi > 0 : x(t) > 0 \text{ for all } t \in (0,\epsi)\}$. Then a continuity argument reveals that if $\tau < \infty$, $x(\tau) \geq \tau \dot{x}(0) > 0$, but by the definition of $\tau$, $x(\tau)=0$, a contradiction. Therefore $\tau = \infty$ and the above argument shows that $\ddot{x}(t)\geq 0$ for all $t \in (0,\infty)$.
\end{proof}

%------------------------------------------------------$$$$$$$
\subsect{Probabilistic Inequalities}
For the following, $\p=\{ 0=s_0 < s_1 < s_2 < \cdots < s_n=1 \}$ is a partition of $[0,1]$ and $\{b_s : s \in [0,1]\}$ is a standard $\bbR^d$-valued brownian motion. The symbol $\ed$ means equal in distribution, and $N^d(0,1)$ is a $d$-dimensional standard normal random variable. 

\begin{lem}
\label{lem:EDb2}
Given any $C \geq 0$ and $p \in [1, \infty)$, if $p C \Delta_i s < 1$ for each $i$, then
\begin{align}
\E \left[ e^{\frac{p}{2} C \sum_{i=1}^n \| \Delta_i b \|^2 } \right] &= \prod_{i=1}^n (1 - p C \Delta_i s)^{-d/2} \\
& \to e^{d p C / 2} \text{ as } | \p | \to 0.
\end{align}
\end{lem}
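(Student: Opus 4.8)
The plan is to exploit the independence and Gaussianity of the Brownian increments. First I would recall that under $\mu$ the process $(b_s)_{s\in[0,1]}$ is a standard $\re^d$-valued Brownian motion, so the increments $\{\Delta_i b\}_{i=1}^n$ over the disjoint intervals determined by $\p$ are independent, with $\Delta_i b \ed \sqrt{\Delta_i s}\,N^d(0,1)$; equivalently, each coordinate $(\Delta_i b)^a$, $1\le a\le d$, is an independent $N(0,\Delta_i s)$ random variable. Consequently $\sum_{i=1}^n \|\Delta_i b\|^2 = \sum_{i=1}^n\sum_{a=1}^d \big((\Delta_i b)^a\big)^2$ is a sum of $nd$ independent terms, and the left-hand side factors accordingly.

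The second step is the elementary one-dimensional computation: for $X\sim N(0,\sigma^2)$ and real $t$ with $2t\sigma^2<1$, completing the square in the Gaussian integral gives $\E[e^{tX^2}] = (1-2t\sigma^2)^{-1/2}$. Applying this with $t=\tfrac{p}{2}C$ and $\sigma^2=\Delta_i s$ — here the hypothesis $pC\Delta_i s<1$ is precisely the convergence condition $2t\sigma^2<1$ — and multiplying the $nd$ independent factors, I obtain
\[
\E\left[e^{\frac{p}{2}C\sum_{i=1}^n\|\Delta_i b\|^2}\right] = \prod_{i=1}^n\prod_{a=1}^d (1-pC\Delta_i s)^{-1/2} = \prod_{i=1}^n (1-pC\Delta_i s)^{-d/2},
\]
which is the asserted identity.

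For the limit as $|\p|\to 0$, I would pass to logarithms, writing $\log\prod_{i=1}^n(1-pC\Delta_i s)^{-d/2} = -\tfrac{d}{2}\sum_{i=1}^n\log(1-pC\Delta_i s)$. Using $\log(1-x) = -x + O(x^2)$, valid uniformly for $x = pC\Delta_i s \in [0,pC|\p|]$ once $|\p|$ is small, the sum equals $pC\sum_{i=1}^n\Delta_i s + O\big(|\p|\sum_{i=1}^n\Delta_i s\big)$, and since $\sum_{i=1}^n\Delta_i s = 1$ this is $pC + O(|\p|)$. Hence the logarithm converges to $\tfrac{d}{2}pC$ and the product to $e^{dpC/2}$.

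There is essentially no obstacle here; the only point requiring care is to invoke the hypothesis $pC\Delta_i s<1$ both to guarantee convergence of each Gaussian integral and to keep $1-pC\Delta_i s$ bounded away from $0$ uniformly for small $|\p|$, which legitimizes the Taylor expansion of the logarithm. Everything else is a routine consequence of independence together with the scalar moment-generating-function formula.
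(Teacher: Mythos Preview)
Your proof is correct and follows essentially the same approach as the paper: the paper's proof simply notes that $\E[e^{pC\Delta_i s\|Z\|^2/2}]=(1-pC\Delta_i s)^{-d/2}$ for $Z\ed N^d(0,1)$ and then invokes independent increments and Brownian scaling, while you supply the coordinate-wise details and the logarithmic Taylor expansion for the limit that the paper leaves as ``elementary.''
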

\begin{proof} If $Z \ed N^d(0,1)$, the $\E[ \ex\{ pC \Delta_i s \|Z\|^2 / 2 \}] = (1-pC \Delta_i s)^{-d/2}$. Therefore the above equalities are elementary using independent increments and scaling of Brownian motion.
\end{proof}

\begin{lem} 
\label{lem:sumbpto0}
Take $c > 0$ and $p \in \nats$ with $p \geq 2$. For sufficiently small $| \p |$, there exists a $C = C(\ddim, c, p) < \infty$ such that,
\begin{align}
\E\left[ \| \Delta_i b \|^p \ex \left\{ c \sum_{j=1}^n \| \Delta_j b \|^2 \right\} \right] \leq C \Delta_i s |\p|^{\frac{p-2}{2}} \leq C | \p |^{\frac{p}{2}}. \label{eqn.deltabineq1}
\end{align} 
In particular, if $\Gamma \subset \{ 1, ..., n \}$ with $\#(\Gamma)=m$,
\begin{align}
\sum_{i\in \Gamma} \E \left[ \| \Delta_i b \|^p \ex \left\{ c \sum_{j \in \Gamma} \| \Delta_j b \|^2 \right\} \right] \leq C \left( \sum_{i\in \Gamma} \Delta_i s \right) |\p|^{\frac{p-2}{2}} \leq C m |\p|^{\frac{p}{2}}, \label{eqn.deltabineq2}
\end{align}
and,
\begin{align}
\sum_{i = 1}^n \E \left[ \| \Delta_i b \|^p \ex \left\{ c \sum_{j=1}^n \| \Delta_j b \|^2 \right\} \right] \leq C  |\p|^{\frac{p-2}{2}}. \label{eqn.deltabineq3}
\end{align}
\end{lem}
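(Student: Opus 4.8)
The plan is to use the independence of the Brownian increments $\Delta_j b$ under $\mu$ together with the explicit moment generating function of a $\chi^2$ random variable, in the same spirit as Lemma \ref{lem:EDb2}.

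First I would write $\Delta_j b \ed \sqrt{\Delta_j s}\,Z_j$ with $\{Z_j\}_{j=1}^n$ i.i.d.\ $N^d(0,1)$, so that $\|\Delta_j b\|^2 \ed \Delta_j s\,\|Z_j\|^2$ and $\|Z_j\|^2$ is $\chi^2$ with $\ddim$ degrees of freedom. Since $\|\Delta_i b\|^p e^{c\|\Delta_i b\|^2}$ depends only on $\Delta_i b$ while each $e^{c\|\Delta_j b\|^2}$ with $j\neq i$ depends only on $\Delta_j b$, independence factors the expectation in \eqref{eqn.deltabineq1} as
\begin{align*}
\E\left[\|\Delta_i b\|^p \ex\left\{c\sum_{j=1}^n\|\Delta_j b\|^2\right\}\right] = \E\left[\|\Delta_i b\|^p e^{c\|\Delta_i b\|^2}\right]\prod_{j\neq i}\E\left[e^{c\|\Delta_j b\|^2}\right].
\end{align*}
For $|\p|$ small enough that $2c|\p|<1$ we have $\E[e^{c\|\Delta_j b\|^2}] = (1-2c\Delta_j s)^{-\ddim/2}$, and the bound $-\log(1-x)\le x/(1-x)$ applied termwise together with $\sum_j\Delta_j s=1$ shows that $\prod_{j=1}^n(1-2c\Delta_j s)^{-\ddim/2}$ stays below a constant $C_1=C_1(\ddim,c)$ uniformly in $\p$; in particular $\prod_{j\neq i}\E[e^{c\|\Delta_j b\|^2}]\le C_1$. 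For the remaining factor, scaling gives $\E[\|\Delta_i b\|^p e^{c\|\Delta_i b\|^2}] = (\Delta_i s)^{p/2}\,\E[\|Z\|^p e^{c\Delta_i s\|Z\|^2}]$, and once $c|\p|<1/4$ this is at most $C_2(\ddim,p)(\Delta_i s)^{p/2}$ with $C_2 := \E[\|Z\|^p e^{\|Z\|^2/4}]<\infty$. Combining the two estimates and using $p\ge 2$ (so that the exponent $(p-2)/2$ is nonnegative) and $\Delta_i s\le|\p|$ gives the left side of \eqref{eqn.deltabineq1} bounded by $C_1C_2(\Delta_i s)^{p/2} = C\,\Delta_i s\,(\Delta_i s)^{(p-2)/2} \le C\,\Delta_i s\,|\p|^{(p-2)/2}\le C|\p|^{p/2}$.

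For \eqref{eqn.deltabineq2} I would run the same factorization with the inner sum restricted to $j\in\Gamma$; since $\prod_{j\in\Gamma\setminus\{i\}}\E[e^{c\|\Delta_j b\|^2}]\le C_1$, each term is again at most $C(\Delta_i s)^{p/2}$, and summing over $i\in\Gamma$ yields $\sum_{i\in\Gamma}(\Delta_i s)^{p/2}\le|\p|^{(p-2)/2}\sum_{i\in\Gamma}\Delta_i s\le m|\p|^{p/2}$. Taking $\Gamma=\{1,\dots,n\}$ and using $\sum_{i=1}^n\Delta_i s=1$ gives \eqref{eqn.deltabineq3}. I do not expect a genuine obstacle here; the only point requiring a little care is the uniform-in-$\p$ boundedness of $\prod_{j=1}^n(1-2c\Delta_j s)^{-\ddim/2}$, which is already the substance of Lemma \ref{lem:EDb2}.
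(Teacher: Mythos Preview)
Your proposal is correct and follows essentially the same approach as the paper: factor the expectation via independence of the increments, control $\prod_{j\neq i}\E[e^{c\|\Delta_j b\|^2}]$ uniformly in $\p$ by the content of Lemma \ref{lem:EDb2}, and handle the remaining factor by Brownian scaling $\|\Delta_i b\|^p e^{c\|\Delta_i b\|^2}\ed(\Delta_i s)^{p/2}\|b_1\|^p e^{c\Delta_i s\|b_1\|^2}$. The only cosmetic difference is that you bound the product via $-\log(1-x)\le x/(1-x)$ explicitly, whereas the paper appeals to the $\limsup$ statement in Lemma \ref{lem:EDb2}; both yield the same uniform constant.
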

\begin{proof}
Notice first,
\begin{align*}
&\E \left[ \| \Delta_i b \|^p \ex \left\{ c \sum_{i=1}^n \| \Delta_i b \|^2 \right\} \right]\\
&\qquad =   \E \left[ \| \Delta_i b \|^p \ex\left\{ c \| \Delta_i b \|^2 \right\} \right] \E \left[ \ex\left\{ c \sum_{j\neq i} \|\Delta_j b\|^2\right\} \right].
\end{align*}
By Lemma \ref{lem:EDb2}, $\limsup_{| \p | \to 0} \E \left[ \ex\left\{ c \sum_{j\neq i} \|\Delta_j b\|^2\right\} \right] = e^{\ddim c }$ and thus
\begin{align*}
&  \E \left[ \| \Delta_i b \|^p \ex\left\{ c \| \Delta_i b \|^2 \right\} \right] \E \left[ \ex\left\{ c  \sum_{j\neq i} \|\Delta_j b\|^2\right\} \right] \\
&\qquad \leq 2  e^{\ddim c} \E\left[ \| \Delta_i b \|^p \ex\left\{ c \| \Delta_i b \|^2 \right\} \right] \\
&\qquad = 2 c e^{\ddim c} (\Delta_i s)^{p/2} \E\left[ \| b_1 \|^p \ex\left\{ c \Delta_i s \| b_1 \|^2 \right\} \right] \\
&\qquad \leq 2 c \Delta_i s e^{\ddim c} |\p|^{\frac{p-2}{2}} \E\left[ \| b_1 \|^p \ex\left\{ c |\p| \| b_1 \|^2 \right\} \right] \\
&\qquad \leq C \Delta_i s |\p|^{\frac{p-2}{2}}
\end{align*}
where $C$ is as desired. 

Using Eq.  (\ref{eqn.deltabineq1}), 
\begin{align*}
\sum_{i\in \Gamma} \E \left[ \| \Delta_i b \|^p \ex \left\{ c \sum_{j \in \Gamma} \| \Delta_j b \|^2 \right\} \right]   \leq \sum_{i\in \Gamma} \E \left[ \| \Delta_i b \|^p \ex \left\{ c \sum_{j=1}^n \| \Delta_j b \|^2 \right\} \right] \\
\leq C \left( \sum_{i\in \Gamma} \Delta_i s \right) |\p|^{\frac{p-2}{2}} 
\end{align*} 
If $\Gamma = \{ 1, 2, ..., n\}$, then $\sum_{i \in \Gamma} \Delta_i s = 1$ from which Eq. (\ref{eqn.deltabineq3}) follows. Otherwise, $\sum_{i \in \Gamma} \Delta_is \leq m|\p|$, from which the Eq.  (\ref{eqn.deltabineq2}) follows.
\end{proof}

The proof of Eq. (\ref{eqn:Zga}) below in Lemma \ref{lem:GaussBound} can be found in \cite[Lemma 8.6]{AndDrive:1999}, but a full proof is included here, since we need more for our purposes.
\begin{lem}
\label{lem:GaussBound}
Let $Z \ed N^{d}(0,1)$, $k \geq 0$, and $a > 0$. Then there exists a $C<\infty$ depending only on $k$ and $d$ such that 
\begin{align}
\E[ e^{k \| Z \|} : \| Z \| \geq a] \leq \frac{C}{a^2} e^{-\frac{1}{4}a^2}. \label{eqn:Zga}
\end{align}
If we further restrict $k < 1/2$, then we can take $C$ such that 
\begin{align}
\E[ e^{k \| Z \|^2} : \| Z \| \geq a] \leq \frac{C}{a^2} e^{-\frac{1-2k}{4}a^2}. \label{eqn:Z2ga}
\end{align}
\end{lem}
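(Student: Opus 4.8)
The plan is to pass to polar coordinates and reduce both inequalities to one–dimensional tail integrals against the chi density, then to control the polynomial factor $r^{d-1}$ by absorbing it into a fraction of the Gaussian weight rather than trying to extract the full exponential decay at once.

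First I would record the radial representation: there is a constant $c_d<\infty$ depending only on $d$ such that $\E[g(\|Z\|)] = c_d\int_0^\infty g(r)\,r^{d-1}e^{-r^2/2}\,dr$ for every nonnegative measurable $g$; in particular $\E[e^{k\|Z\|}]<\infty$ for all $k\ge 0$, and $\E[e^{k\|Z\|^2}] = (1-2k)^{-d/2}<\infty$ when $k<1/2$ (cf. Lemma \ref{lem:EDb2}). Hence $\E[e^{k\|Z\|}:\|Z\|\ge a] = c_d\int_a^\infty e^{kr}r^{d-1}e^{-r^2/2}\,dr$ and, for $k<1/2$, $\E[e^{k\|Z\|^2}:\|Z\|\ge a] = c_d\int_a^\infty r^{d-1}e^{-(\frac12-k)r^2}\,dr$.

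Next, for Eq.\,(\ref{eqn:Zga}) I would fix a threshold $a_0=a_0(k,d)$ so large that $e^{kr}r^{d-1}\le e^{r^2/8}$ for all $r\ge a_0$, which is possible since $r^2/8$ eventually dominates $kr+(d-1)\log r$. For $a\ge a_0$ this gives $\int_a^\infty e^{kr}r^{d-1}e^{-r^2/2}\,dr\le\int_a^\infty e^{-3r^2/8}\,dr\le\tfrac{4}{3a}e^{-3a^2/8}$, using the elementary bound $\int_a^\infty e^{-cr^2}\,dr\le\tfrac{1}{2ca}e^{-ca^2}$; since $\sup_{a>0}\tfrac{4a}{3}e^{-a^2/8}<\infty$, this is at most $\tfrac{C}{a^2}e^{-a^2/4}$. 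For $0<a\le a_0$ I would simply bound the left side by $\E[e^{k\|Z\|}]$ and note that $a\mapsto a^{-2}e^{-a^2/4}$ is decreasing, so $\tfrac{C}{a^2}e^{-a^2/4}\ge\tfrac{C}{a_0^2}e^{-a_0^2/4}$, whence enlarging $C$ covers that range. Eq.\,(\ref{eqn:Z2ga}) is entirely parallel with $\beta:=\tfrac12-k>0$: choose $a_1=a_1(k,d)$ with $r^{d-1}\le e^{\beta r^2/4}$ for $r\ge a_1$, so that for $a\ge a_1$ one has $\int_a^\infty r^{d-1}e^{-\beta r^2}\,dr\le\int_a^\infty e^{-3\beta r^2/4}\,dr\le\tfrac{2}{3\beta a}e^{-3\beta a^2/4}\le\tfrac{C}{a^2}e^{-\beta a^2/2}$ because $\sup_{a>0}a\,e^{-\beta a^2/4}<\infty$; the range $0<a\le a_1$ is absorbed into $C$ as before, using $\E[e^{k\|Z\|^2}]=(1-2k)^{-d/2}$ and the monotonicity of $a\mapsto a^{-2}e^{-\beta a^2/2}$.

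The main point to be careful about — and the reason a direct completion of the square is not enough — is the polynomial factor $r^{d-1}$: one cannot pull the full weight $e^{-a^2/4}$ (resp.\ $e^{-\beta a^2/2}$) out of the tail integral, since the remaining $\int_a^\infty r^{d-1}e^{-r^2/4}\,dr$ grows like $a^{d-2}e^{-a^2/4}$ and overshoots the claimed $a^{-2}$ prefactor as soon as $d\ge 4$. Peeling off only a fraction of the Gaussian ($e^{-r^2/8}$, resp.\ $e^{-\beta r^2/4}$) leaves enough surplus decay both to kill $r^{d-1}$ and to produce the $a^{-2}$ factor uniformly in $d$, which is exactly the extra content of the lemma beyond \cite[Lemma 8.6]{AndDrive:1999}.
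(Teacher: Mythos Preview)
Your proof is correct and follows essentially the same idea as the paper: pass to polar coordinates, absorb the factor $r^{d-1}e^{kr}$ (resp.\ $r^{d-1}$) into a fraction $e^{r^2/8}$ (resp.\ $e^{\beta r^2/4}$) of the Gaussian weight, then use the standard tail bound $\int_a^\infty e^{-cr^2}\,dr\le \tfrac{1}{2ca}e^{-ca^2}$ and trade the leftover exponential for the extra power of $a^{-1}$. The only cosmetic difference is that the paper avoids your threshold split into $a\le a_0$ and $a\ge a_0$ by observing that $\Lambda:=\sup_{r\ge 0} r^{d-1}e^{kr}e^{-r^2/8}<\infty$ (and similarly for the second inequality), which gives the bound uniformly for all $a>0$ in one stroke.
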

\begin{proof}
We can find a $\Lambda = \Lambda(k,\ddim) < \infty$ such that $r^{\ddim -1} e^{kr} e^{-r^2/2} \leq \Lambda e^{-3r^2/8}$ and $r^{d-1} e^{-(1-2k)r^2/2} \leq \Lambda e^{-3(1-2k)r^2/8}$. Now, let $\w_{d-1}$ be the volume of the unit sphere in $\re^d$, and for $b > 0$ we have,
\begin{align*}
\Lambda \w_{d-1}(2 \pi)^{d/2} \int_a^{\infty} e^{-\frac{3b}{8} r^2} dr &\leq \Lambda \w_{d-1}(2 \pi)^{d/2} \int_a^{\infty} \frac{r}{a} e^{-\frac{3b}{8} r^2} dr \\
&= \frac{8 \Lambda \w_{d-1}(2\pi)^{d/2}}{6 b a}e^{-\frac{3b}{8}a^2} \\
&\leq \frac{C}{a^2} e^{-\frac{b}{4}a^2}.
\end{align*}
Realizing that 
\begin{align*}
\E[ e^{k \| Z \|} : \| Z \| \geq a] &= \w_{d-1}(2 \pi)^{d/2} \int_a^{\infty} r^{d-1} e^{kr}e^{-\frac{1}{2} r^2} dr \\
&\leq \Lambda \w_{d-1}(2 \pi)^{d/2} \int_a^{\infty} e^{-\frac{3}{8} r^2} dr
\end{align*}
and
\begin{align*}
\E[ e^{k \| Z \|^2} : \| Z \| \geq a] &= \w_{d-1}(2 \pi)^{d/2} \int_a^{\infty} r^{d-1} e^{-\frac{1-2k}{2} r^2} dr \\
&\leq \Lambda \w_{d-1}(2 \pi)^{d/2} \int_a^{\infty} e^{-\frac{3(1-2k)}{8} r^2} dr
\end{align*}
implies the result.
\end{proof}

\begin{lem}
\label{lem.EeXminusY}
Let $X$ and $Y$ be random variables on the probability space $\W$. Suppose that  for some  $p \geq 0$, there is a constant $M(p)$ such that $\E[e^{2pY}] \leq M(p)$, and some $R(p) < \infty$ such that any of the following hold,
\begin{align}
&\E \left[ 2 | X - Y| e^{2 p |X-Y|} \right] \leq R(p), \label{eqn.EeXminusY1}\\
&\E \left[ e^{2 p|X-Y|} - 1 \right] \leq R(p). \label{eqn.EeXminusY2} \\
&\left| \E \left[ e^{p(X-Y)} - 1 \right] \right| \vee \left| \E \left[ e^{2p(X-Y)} - 1 \right] \right|  \leq R(p).  \label{eqn.EeXminusY3}
\end{align}
Then, given any measurable subset $A \subset \W$, 
\begin{align}
\E \left[ |e^{pX} - e^{pY}| : A \right] \leq \sqrt{3M(p)R(p)}.  \label{eqn.EeXminusY4}
\end{align}
\end{lem}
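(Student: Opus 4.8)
The plan is to estimate $|e^{pX}-e^{pY}|$ pointwise in terms of $|X-Y|$ and $e^{pY}$, and then apply the Cauchy--Schwarz inequality together with the hypotheses. First I would observe the elementary pointwise bound
\begin{align*}
|e^{pX} - e^{pY}| = e^{pY}\,\bigl| e^{p(X-Y)} - 1 \bigr| \leq e^{pY}\bigl( e^{p|X-Y|} - 1 \bigr),
\end{align*}
so that on any measurable $A \subset \W$,
\begin{align*}
\E\bigl[ |e^{pX} - e^{pY}| : A \bigr] \leq \E\bigl[ e^{pY}( e^{p|X-Y|} - 1 ) \bigr] \leq \E\bigl[ e^{2pY} \bigr]^{1/2}\,\E\bigl[ ( e^{p|X-Y|} - 1 )^2 \bigr]^{1/2},
\end{align*}
using Cauchy--Schwarz in the last step. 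Since $\E[e^{2pY}] \leq M(p)$ by hypothesis, it remains only to show that the second factor is controlled by $3R(p)$ under each of the three alternative hypotheses, which would give the bound $\sqrt{3M(p)R(p)}$ as claimed.

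Next I would dispatch the factor $\E[(e^{p|X-Y|}-1)^2]$. By Eq.~(\ref{eqn.eaminus1}) with $a = p|X-Y|$ we have $(e^{p|X-Y|}-1)^2 \leq e^{2p|X-Y|}-1 \leq 2p|X-Y|e^{2p|X-Y|}$; this already yields $\E[(e^{p|X-Y|}-1)^2] \leq R(p)$ under either (\ref{eqn.EeXminusY1}) or (\ref{eqn.EeXminusY2}), with room to spare (the constant $3$ is generous). For hypothesis (\ref{eqn.EeXminusY3}), which only controls the one-sided exponential moments $\E[e^{p(X-Y)}-1]$ and $\E[e^{2p(X-Y)}-1]$, I would write $(e^{p(X-Y)}-1)^2 = e^{2p(X-Y)} - 2e^{p(X-Y)} + 1 = (e^{2p(X-Y)}-1) - 2(e^{p(X-Y)}-1)$, whence
\begin{align*}
\E\bigl[ (e^{p(X-Y)}-1)^2 \bigr] = \E\bigl[ e^{2p(X-Y)}-1 \bigr] - 2\,\E\bigl[ e^{p(X-Y)}-1 \bigr] \leq R(p) + 2R(p) = 3R(p),
\end{align*}
and since $(e^{p|X-Y|}-1)^2 = (e^{p(X-Y)}-1)^2$ when $X \geq Y$ and equals $(1 - e^{p(X-Y)})^2 = (e^{-p(X-Y)}-1)^2 e^{2p(X-Y)}$ otherwise, one has to be slightly careful; the cleanest route is to note $(e^{p|X-Y|}-1)^2 \leq (e^{p(X-Y)}-1)^2 + (e^{-p(X-Y)}-1)^2$ only when both signs are available, so instead I would argue directly that $\E[|e^{pX}-e^{pY}|:A] \leq \E[e^{pY}|e^{p(X-Y)}-1|]$ and apply Cauchy--Schwarz to the pair $e^{pY}$ and $|e^{p(X-Y)}-1|$ to land on $\E[(e^{p(X-Y)}-1)^2]^{1/2} \leq \sqrt{3R(p)}$.

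The main obstacle is bookkeeping the case (\ref{eqn.EeXminusY3}): unlike (\ref{eqn.EeXminusY1}) and (\ref{eqn.EeXminusY2}), it does not bound an absolute-value moment of $X-Y$, so one cannot simply pass to $|X-Y|$; one must exploit the algebraic identity $(e^t-1)^2 = (e^{2t}-1) - 2(e^t-1)$ to reassemble the square from the two given one-sided moments, and then avoid re-introducing $|X-Y|$ by keeping $X-Y$ (not its absolute value) inside the Cauchy--Schwarz step. Once that identity is in place the argument is immediate. I would organize the write-up as: (i) the pointwise inequality; (ii) Cauchy--Schwarz; (iii) three short sub-cases for the $\E[(e^{p(X-Y)}-1)^2]$ factor, citing Eq.~(\ref{eqn.eaminus1}) for the first two and the algebraic identity for the third.
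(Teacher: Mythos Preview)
Your proposal is correct and essentially the same as the paper's proof: both factor $|e^{pX}-e^{pY}| = e^{pY}\,|e^{p(X-Y)}-1|$, apply Cauchy--Schwarz, and then use the identity $(e^{t}-1)^2 = (e^{2t}-1) - 2(e^{t}-1)$ to bound $\E[(e^{p(X-Y)}-1)^2]\le 3R(p)$. The only organizational difference is that the paper first observes the chain of implications (\ref{eqn.EeXminusY1}) $\Rightarrow$ (\ref{eqn.EeXminusY2}) $\Rightarrow$ (\ref{eqn.EeXminusY3}) and then handles only the weakest case (\ref{eqn.EeXminusY3}), whereas you treat the three hypotheses separately before arriving at the same signed-exponent argument for the third.
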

\begin{proof} 
The fact that  Eq. (\ref{eqn.EeXminusY1}) implies Eq. (\ref{eqn.EeXminusY2}) comes from Eq. (\ref{eqn.eaminus1}), and that Eq.\,(\ref{eqn.EeXminusY2}) implies Eq.\,(\ref{eqn.EeXminusY3}) is clear. So we will assume Eq. (\ref{eqn.EeXminusY3}), and without losing generality also  assume that  $A = \W$.  To start, 
\[ (e^{p(X-Y)} - 1)^2 = (e^{2p(X-Y)}-1) - 2(e^{p(X-Y)}-1).\]
Therefore, using Holder's inequality and Eq. (\ref{eqn.eaminus1}),
\begin{align*}
&\E[|e^{pX} - e^{pY}|]\\
 &\leq \left( \E[e^{2pY}] \right)^{1/2} \left( \E[(e^{p(X-Y)} - 1)^2] \right)^{1/2}\\
 & \leq \sqrt{M(p)} \cdot  \left( \E \big[(e^{2p(X-Y)}-1) - 2(e^{p(X-Y)}-1)\big] \right)^{1/2} \\
 &\leq \sqrt{M(p)} \sqrt{3 R(p)}.
\end{align*}
which concludes the proof.
%At the heart of this is the equality 
%\begin{align*}
%(e^{X} - e^{Y})^2 = e^{2Y} \left[(e^{2(X-Y)} - 1) - 2(e^{X-Y} - 1)\right].
%\end{align*}
%Applying Jensen's inequality yields 
%\begin{align*}
%| \E (e^{X} - e^Y) | \leq \left[ \E (e^X - e^Y)^2 \right]^{1/2}.
%\end{align*} 
%Combining this with the above equality and performing the obvious bounds yields the result.
\end{proof}

 The following result is proved in \cite[Proposition 8.8]{AndDrive:1999}.
\begin{prop}
\label{prop.ItoTraceOnlyImportant}
Let $\{ R_i \}_{i=1}^n$ be a collection of random $\ddim \times \ddim$ symmetric matrices such that $R_i$ is $\sigma( b_r : 0 \leq r \leq s_{i-1} )$-measurable. Suppose there is some $K < \infty$ with $\|R_i \|\leq K$ for all $i$. Then, given any $p \in \re$,
\begin{align}
1 \leq \E\left[ \ex\{ p \sum_{i=1}^n \left( \langle R_i \Delta_i b, \Delta_i b \rangle - \tr(R_i)\Delta_i s \right) \}\right] \leq e^{2 \ddim p^2 K^2 | \p |}.
\end{align}
\end{prop}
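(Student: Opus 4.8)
The plan is to exploit the martingale-type structure of the sum: each increment $\Delta_i b = b_{s_i} - b_{s_{i-1}}$ is independent of $\mathcal{F}_{s_{i-1}} := \sigma(b_r : 0 \le r \le s_{i-1})$, while by hypothesis $R_i$ is $\mathcal{F}_{s_{i-1}}$-measurable. Writing $M_k := \ex\{ p \sum_{i=1}^k ( \langle R_i \Delta_i b, \Delta_i b \rangle - \tr(R_i)\Delta_i s)\}$, so that $M_0 = 1$ and $M_n$ is the integrand of interest, I would condition successively from the top:
\[
\E[M_k \mid \mathcal{F}_{s_{k-1}}] = M_{k-1}\, g_k(R_k), \qquad g_k(R) := \E\bigl[ \ex\{ p ( \langle R Z, Z \rangle - \tr(R)\Delta_k s)\}\bigr],
\]
where $Z \sim N(0, \Delta_k s \cdot I_d)$ and in $g_k$ the matrix $R$ is frozen. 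The entire problem then reduces to the deterministic two-sided estimate $1 \le g_k(R) \le e^{2 d p^2 K^2 (\Delta_k s)^2}$ for every symmetric $R$ with $\|R\| \le K$.

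To get this estimate, diagonalize $R = O^{\tr}\operatorname{diag}(\mu_1,\dots,\mu_d)O$ with $|\mu_j| \le K$; since the law of $Z$ is rotation invariant, $OZ \ed Z$, so $g_k(R) = \prod_{j=1}^d \E\bigl[\ex\{ p\mu_j(W_j^2 - \Delta_k s)\}\bigr]$ with $W_j \sim N(0,\Delta_k s)$ independent. Each factor is the familiar Gaussian moment generating function: with $t := p\mu_j \Delta_k s$,
\[
\E\bigl[\ex\{ p\mu_j(W_j^2 - \Delta_k s)\}\bigr] = e^{-t}(1-2t)^{-1/2} =: h(t),
\]
valid for $t < \tfrac12$. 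Then $\log h(t) = -t - \tfrac12\log(1-2t)$ has derivative $\tfrac{2t}{1-2t}$, which vanishes at $t=0$, and $\log h$ is convex; hence $\log h \ge 0$, i.e. $h \ge 1$. Restricting to $|t|\le \tfrac14$ (so $1-2t \ge \tfrac12$) one has $(\log h)'(t) \le 4t$ for $t \ge 0$ and $|(\log h)'(t)| \le 2|t|$ for $t \le 0$, whence $0 \le \log h(t) \le 2t^2$. Since $|t| \le |p|K\Delta_k s$, this yields $1 \le h(t) \le e^{2p^2K^2(\Delta_k s)^2}$, and multiplying the $d$ factors gives the stated bound on $g_k(R)$. (Convergence of all the Gaussian integrals is automatic once $2|p|K|\p| \le \tfrac12$, which also places $|t|\le\tfrac14$; this is harmless since the application has $|\p| \to 0$.)

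Finally I would assemble by downward induction: from $\E[M_k] = \E[M_{k-1} g_k(R_k)]$, nonnegativity of $M_{k-1}$, and the pointwise bounds $1 \le g_k(R_k) \le e^{2dp^2K^2(\Delta_k s)^2}$, one gets $\E[M_{k-1}] \le \E[M_k] \le e^{2dp^2K^2(\Delta_k s)^2}\E[M_{k-1}]$; iterating from $\E[M_0] = 1$ and using $\sum_{i=1}^n(\Delta_i s)^2 \le |\p|\sum_{i=1}^n \Delta_i s = |\p|$ gives $1 \le \E[M_n] \le e^{2dp^2K^2|\p|}$. The only genuine content is the scalar analysis of $h(t)$; the rest is conditioning bookkeeping, and the single point requiring care is the finiteness of the Gaussian integrals, handled by the mild restriction on $|\p|$ just noted.
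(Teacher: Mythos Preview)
Your proof is correct; the paper does not prove this result itself but defers to \cite[Proposition 8.8]{AndDrive:1999}, and your argument --- successive conditioning on $\mathcal{F}_{s_{k-1}}$, diagonalization reducing each factor to the scalar Gaussian moment generating function $h(t)=e^{-t}(1-2t)^{-1/2}$, and the convexity estimate $0\le\log h(t)\le 2t^2$ --- is the standard route and almost certainly what appears in that reference. The integrability caveat $|p|K|\p|<\tfrac12$ you flag is genuine and implicit in the statement, harmless since the proposition is only applied as $|\p|\to 0$.
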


%----------------------------$$$$$$$$$$$$------$$$$$$$$$$$$
\subsect{A Lemma in Stochastic Integration}
\begin{lem}
\label{lem:limsupGeneral}
Let $p \in \nats$ and $\alpha, \beta, \gamma \in \re$ with $\alpha < \frac{1}{4p}, 0 \leq\beta,$ and $-1 \leq \gamma$. Set $\mathbf{B}_n := (\Delta_1 b, \cdots, \Delta_n b)$ where $\{b_s : s \in [0,1]\}$ is an $\re^{\ddim}$-valued Brownian motion. Then 
\begin{align}
\limsup_{n \to \infty} \E \left[ \left(1 + \frac{\beta}{n}\left( e^{\alpha \| \mathbf{B}_n \|^2}  + \gamma \right) \right)^{np} \right] < \infty. \label{eqn:limsupGeneral}
\end{align}
\end{lem}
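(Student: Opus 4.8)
The plan is to reduce everything to the explicit Laplace transform of $\|\mathbf{B}_n\|^2$ together with a binomial expansion, and the first — and essentially only — point to get right is that one must \emph{not} use the elementary bound $(1+x)^{np}\le e^{npx}$. That estimate is fatally lossy here: it would reduce the claim to the finiteness of $\E[\exp(p\beta\,e^{\alpha\|\mathbf{B}_n\|^2})]$, which is $+\infty$ for \emph{every} $n$, since the law of $\|\mathbf{B}_n\|^2$ has tails only of order $e^{-nw/2}$ while $\exp(p\beta\,e^{\alpha w})$ grows doubly exponentially in $w$. Keeping the $np$-th power in polynomial form is exactly what rescues the argument, and this is where the hypothesis $p\in\nats$ is used.

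First I would record the relevant moment generating function. The increments $\Delta_i b$ are independent with $\Delta_i b\ed n^{-1/2}N^d(0,1)$, so $\|\mathbf{B}_n\|^2=\sum_{i=1}^n\|\Delta_i b\|^2$ and, by Lemma~\ref{lem:EDb2},
\begin{align*}
\E\!\left[e^{\lambda\|\mathbf{B}_n\|^2}\right] = \prod_{i=1}^n\Big(1-\tfrac{2\lambda}{n}\Big)^{-d/2} = \Big(1-\tfrac{2\lambda}{n}\Big)^{-nd/2}\qquad\text{whenever }\tfrac{2\lambda}{n}<1 .
\end{align*}
Next, since $np\in\nats$, I expand by the binomial theorem and take expectations term by term. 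Using $\beta\ge 0$ together with the pointwise bound $(e^{\alpha w}+\gamma)^k\le (e^{\alpha w}+|\gamma|)^k$ (valid for all real $w$ and $k\in\nats$, since $|e^{\alpha w}+\gamma|\le e^{\alpha w}+|\gamma|$) and one further binomial expansion, this gives
\begin{align*}
\E\!\left[\Big(1+\tfrac{\beta}{n}\big(e^{\alpha\|\mathbf{B}_n\|^2}+\gamma\big)\Big)^{np}\right] \le \sum_{k=0}^{np}\binom{np}{k}\frac{\beta^k}{n^k}\sum_{j=0}^{k}\binom{k}{j}|\gamma|^{k-j}\,\E\!\left[e^{j\alpha\|\mathbf{B}_n\|^2}\right].
\end{align*}

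The key estimate is then $\E[e^{j\alpha\|\mathbf{B}_n\|^2}]\le A^{\,j}$ for all $0\le j\le np$, with $A:=\max\{1,\,e^{d\alpha/(1-2p\alpha)}\}<\infty$ \emph{independent of $n$}. Indeed, if $\alpha\le 0$ the left side is at most $1$; if $\alpha>0$, the hypothesis $\alpha<\tfrac1{4p}$ forces $\tfrac{2j\alpha}{n}\le 2p\alpha<\tfrac12$, so the transform is finite, and from $-\log(1-t)\le\tfrac{t}{1-t}$,
\begin{align*}
\Big(1-\tfrac{2j\alpha}{n}\Big)^{-nd/2}\le\exp\!\Big(\tfrac{nd}{2}\cdot\tfrac{2j\alpha/n}{1-2j\alpha/n}\Big)\le\exp\!\Big(\tfrac{d\alpha}{1-2p\alpha}\,j\Big)=A^{\,j},
\end{align*}
the factors of $n$ cancelling. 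Feeding this in, together with $\binom{np}{k}\le (np)^k/k!$, collapses the inner sum to $(|\gamma|+A)^k$ and hence bounds the whole expression by $\sum_{k=0}^{np}\big(p\beta(|\gamma|+A)\big)^k/k!\le e^{\,p\beta(|\gamma|+A)}$, which is free of $n$; this yields \eqref{eqn:limsupGeneral}. (The base $1+\tfrac{\beta}{n}(e^{\alpha\|\mathbf{B}_n\|^2}+\gamma)$ is nonnegative once $n>\beta$, since $e^{\alpha\|\mathbf{B}_n\|^2}+\gamma>-1$; that is all one needs for a $\limsup$ statement.)

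I do not expect any serious obstacle beyond the one flagged in the first paragraph: once the binomial route is adopted, the only quantitative inputs are the closed-form $\chi^2$-type transform (already available as Lemma~\ref{lem:EDb2}) and the bookkeeping observation that $\alpha<\tfrac1{4p}$ keeps the largest relevant exponent $\tfrac{2(np)\alpha}{n}$ strictly below $\tfrac12$ with an $n$-independent margin, so that $\E[e^{j\alpha\|\mathbf{B}_n\|^2}]$ grows at most geometrically in $j$ uniformly in $n$; no large-deviation estimates or truncations of $\|\mathbf{B}_n\|$ are required.
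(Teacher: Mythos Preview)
Your argument is correct, and it is genuinely different from --- and considerably more elementary than --- the paper's own proof. The paper proceeds via stochastic calculus: it introduces the process $Q_t^n=\sum_i\|b_{t\wedge s_i}-b_{t\wedge s_{i-1}}\|^2$, applies It\^o's formula to $f(Q_t^n)$ with $f(x)=(1+\tfrac{\beta}{n}(e^{\alpha x}+\gamma))^{np}$, and after H\"older's inequality and a somewhat delicate computation (including a binomial expansion and Stirling's formula at the very end) obtains an integral inequality to which Gronwall applies, yielding $\E[f(Q_t^n)]\le e^{\beta p(1+\gamma)+\tilde C t}$ uniformly in $n$. Your route bypasses all of this: you expand the $np$-th power directly, reduce to the explicit Laplace transform of $\|\mathbf{B}_n\|^2$ (already available as Lemma~\ref{lem:EDb2}), and observe that the crucial condition $\alpha<\tfrac1{4p}$ is exactly what makes $\E[e^{j\alpha\|\mathbf{B}_n\|^2}]\le A^j$ hold for $0\le j\le np$ with $A$ independent of $n$. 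The paper's approach has the virtue of being more robust --- it would adapt to settings where the underlying increments are not exactly Gaussian but merely semimartingale-valued --- while yours exploits the exact $\chi^2$-structure to give a two-line proof with an explicit bound $e^{p\beta(|\gamma|+A)}$. Your opening remark that the naive bound $(1+x)^{np}\le e^{npx}$ is fatally lossy here is well taken and worth keeping.
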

\begin{proof}
We define the deterministic functions $g(x) := 1 + \frac{\beta}{n}\left( e^{\alpha x}  + \gamma \right)$ and $f(x) := g(x)^{np} = (1 + \frac{\beta}{n}\left( e^{\alpha x}  + \gamma \right))^{np}$. We also define the stochastic process $Q_t^n := \sum_{i=1}^n \| b_{t \wedge s_i} - b_{t \wedge s_{i-1}} \|^2$. With this notation, Eq. (\ref{eqn:limsupGeneral}) becomes
\begin{align}
\limsup_{n\to \infty} \E[ f(Q_1^n) ] < \infty.
\end{align}
We now use It\^{o}'s Lemma to get an estimate on $\E[ f(Q_t^n) ]$. To start, $d Q^n_s = 2 ( b_s - b_{\underline{s}} ) db_s + \ddim ds$ and $d[Q^n]_s = 4 \| b_s - b_{\underline{s}} \|^2 ds$ where $\underline{s} = s_{i-1}$ whenever $s \in (s_{i-1}, s_i]$. Therefore,
\begin{align*}
\E[f(Q_t^n)- f(Q_0^n)] &= \E\left[ \int_0^t f'(Q_s^n) dQ^n_s + \frac{1}{2} \int_0^t f''(Q_s^n) d[Q^n]_s \right] \\
&= \E \left[ \ddim \int_0^t f'(Q_s^n) ds + 2 \int_0^t f''(Q_s^n)\|b_s - b_{\underline{s}}\|^2 ds\right] \\
&= \ddim \int_0^t \E[ f'(Q_s^n) ] ds + 2 \int_0^t \E[f''(Q_s^n)\|b_s - b_{\underline{s}}\|^2 ] ds
\end{align*}
where in the second equality we dropped the martingale term. Calculating the derivatives of $f$,
\begin{align*}
& f'(x) = \beta \alpha p e^{\alpha x} g(x)^{np-1} \\
& f''(x) = \beta^2 \alpha^2 p ( p- \frac{1}{n}) e^{2 \alpha x} g(x)^{np-2} + \beta \alpha^2 p e^{\alpha x} g(x)^{np-1}.
\end{align*}
By our choices for $\beta$ and $\gamma$, $g(x) \geq 1$ for $x \geq 0$. This implies that there exists a constant $C < \infty$ independent of $n$ such that
\begin{align}
\E[f(Q_t^n)- f(Q_0^n)] &\leq C \int_0^t \E[ e^{2 \alpha Q_s^n} (1 + \|b_s - b_{\underline{s}}\|^2 ) g(Q_s^n)^{np-1}] ds.
\label{eqn:limsupGeneralIneq1}
\end{align}
From here we want to show that there exists another constant $\tilde{C} < \infty$ independent of $n$ such that,
\begin{align*}
\E[f(Q_t^n)- f(Q_0^n)] &\leq \tilde{C} \int_0^t \E[ g(Q_s^n)^{np}] ds \\
&= \tilde{C} \int_0^t \E[ f(Q_s^n)] ds.
\end{align*}
Before we do this, let us first understand why this will be enough to finish the proof. If such a $\tilde{C}$ exists, then we will have
\begin{align*}
\E[f(Q_t^n)] &\leq \E[f(Q_0^n)] + \tilde{C} \int_0^t \E[ f(Q_s^n)] ds \\
&= \left( 1 + \frac{\beta}{n}(1 + \gamma)\right)^{np} + \tilde{C} \int_0^t \E[ f(Q_s^n)] ds \\
&\leq e^{\beta p (1 + \gamma)} + \tilde{C} \int_0^t \E[ f(Q_s^n)] ds.
\end{align*}
Applying Gronwall's inequality to the function $t \mapsto \E[f(Q_t^n)]$, 
\begin{align*}
\E[f(Q_t^n)] &\leq e^{\beta p (1 + \gamma) + \tilde{C}t},
\end{align*}
noting the the right hand side is independent of $n$. In particular, for any $t \in [0,1]$,
\begin{align}
\limsup_{n \to \infty} \E[f(Q_t^n)] &\leq e^{\beta p (1 + \gamma) + \tilde{C}t} < \infty,
\end{align}
which concludes the proof as soon as the existence of $\tilde{C}$ is established.

From Eq. (\ref{eqn:limsupGeneralIneq1}), to prove the existence of $\tilde{C}$ it will suffice to show that there exists a constant $\Lambda$ independent of $n$ such that $\E[ e^{2 \alpha Q_s^n} (1 + \|b_s - b_{\underline{s}}\|^2 ) g(Q_s^n)^{np-1}] \leq \Lambda \E[g(Q_s^n)^{np}]$. Using Holder's inequality,
\begin{align*}
&\E[ e^{2 \alpha Q_s^n} (1 + \|b_s - b_{\underline{s}}\|^2 ) g(Q_s^n)^{np-1}] \\
\qquad &\leq \E[ e^{2 \alpha np Q_s^n} (1 + \|b_s - b_{\underline{s}}\|^{2} )^{np}]^{\frac{1}{np}} \E[ g(Q_s^n)^{np}]^{\frac{np-1}{np}} \\
\qquad &\leq \E[ e^{2 \alpha np Q_s^n} (1 + \|b_s - b_{\underline{s}}\|^{2} )^{np}]^{\frac{1}{np}} \E[ g(Q_s^n)^{np}],
\end{align*}
where we once again used that $g \geq 1$ for the second inequality. Therefore it is sufficient to find such a $\Lambda$ with $\E[ e^{2 \alpha np Q_s^n} (1 + \|b_s - b_{\underline{s}}\|^{2} )^{np}]^{\frac{1}{np}} \leq \Lambda$. If $\{ Z_i \}_{i=1}^{\infty}$ are i.i.d. $N^{\ddim}(0,1)$ random variables and $s \in (s_{j-1}, s_j]$, then 
\begin{align*}
Q_s^n &= \sum_{i=1}^{j-1} \| \Delta_i b \|^2 +  \| b_s - b_{\underline{s}} \|^2 \\
&\ed \sum_{i=1}^{j-1} \frac{1}{n} \| Z_i \|^2 +  (s- \underline{s}) \|Z_j \|^2
\end{align*} 
and
\begin{align*}
1 + \| b_s - b_{\underline{s}} \|^2 \ed 1 +(s- \underline{s})  \| Z_j \|^2.
\end{align*}
Therefore,
\begin{align*}
&\E[ e^{2 \alpha np Q_s^n} (1 + \|b_s - b_{\underline{s}}\|^{2} )^{np}]^{\frac{1}{np}} \\
&\qquad \leq \E[ \left( \prod_{i=1}^{j-1} e^{2 \alpha p \|Z_i\|^2} \right) e^{2 \alpha p \|Z_j\|^2} (1 + (s- \underline{s})\| Z_j \|^{2} )^{np}]^{\frac{1}{np}} \\
&\qquad =  \left( \prod_{i=1}^{j-1}\E[ e^{2 \alpha p \|Z_i\|^2}] \right)^{\frac{1}{np}} \E[e^{2 \alpha p \|Z_j\|^2} (1 + (s- \underline{s}) \| Z_j \|^{2} )^{np}]^{\frac{1}{np}} \\
&\qquad = \E[ e^{2 \alpha p \|Z_1\|^2}]^{\frac{j-1}{np}} \E[e^{2 \alpha p \|Z_j\|^2} (1 + (s- \underline{s}) \| Z_j \|^{2} )^{np}]^{\frac{1}{np}} \\
&\qquad \leq  \E[ e^{2 \alpha p \|Z_1\|^2}]^{\frac{1}{p}} \E[e^{2 \alpha p \|Z_j\|^2} (1 + \frac{1}{n} \| Z_j \|^{2} )^{np}]^{\frac{1}{np}}.
\end{align*}
With $\alpha < \frac{1}{4p}$, $\E[e^{2 \alpha p \|Z_1\|^2}]^{\frac{1}{p}} = (1-4 \alpha p)^{- \frac{1}{p}}$. For the second term, find some $\delta$ with $\alpha < \delta < \frac{1}{4p}$ and set $m = \inf\{ l \in \nats : l \geq \frac{\delta}{\delta-\alpha} \}$, the ceiling of $\frac{\delta}{\delta-\alpha}$. Again using Holder's inequality,
\begin{align*}
\E[e^{2 \alpha p \|Z_j\|^2} (1 + \frac{1}{n}\| Z_j \|^{2} )^{np}] &\leq \E[e^{2 \delta p \|Z_j\|^2}]^{\frac{\alpha}{\delta}} \E[(1+ \frac{1}{n}\| Z_j \|^{2})^{np \frac{\delta}{\delta-\alpha}} ]^{\frac{\delta - \alpha}{\delta}} \\
&\leq (1 - 4 \delta p)^{-\frac{\alpha}{\delta}} \E[(1+ \frac{1}{n}\| Z_j \|^{2})^{npm} ]^{\frac{\delta - \alpha}{\delta}}
\end{align*}
Using the binomial formula,
\begin{align*}
\E[(1+ \frac{1}{n}\| Z_j \|^{2})^{npm} ] &= \sum_{k=0}^{npm} \chs{npm}{k} \left(\frac{1}{n}\right)^k \E[ \| Z_j \|^{2k}] \\
&=  \sum_{k=0}^{npm} \chs{npm}{k}  \left(\frac{1}{n}\right)^k \frac{(2k)!}{2^k k!} \\
&\leq  \sum_{k=0}^{npm} \chs{npm}{k}  \left(\frac{1}{n}\right)^k \frac{1}{2^k}\frac{e}{\sqrt{\pi}} \left( \frac{4}{e} \right)^k k^k \\
&\leq \sum_{k=0}^{npm} \chs{npm}{k} \frac{e}{\sqrt{\pi}} \left( \frac{2pm}{e} \right)^k \\
&= \frac{e}{\sqrt{\pi}} \left(1 +  \frac{2pm}{e} \right)^{npm}.
\end{align*}
Here the third line comes from Stirling's approximation. Putting these pieces together,
\begin{align*}
&\E[ e^{2 \alpha np Q_s^n} (1 + \|b_s - b_{\underline{s}}\|^{2} )^{np}]^{\frac{1}{np}} \\
&\leq \E[ e^{2 \alpha p \|Z_1\|^2}]^{\frac{1}{p}} \E[e^{2 \alpha p \|Z_j\|^2} (1 + \frac{1}{n} \| Z_j \|^{2} )^{np}]^{\frac{1}{np}}\\
&\leq (1-4 \alpha p)^{- \frac{1}{p}} \left[ (1 - 4 \delta p)^{-\frac{\alpha}{\delta}}  \left( \frac{e}{\sqrt{\pi}} \left(1 +  \frac{2pm}{e} \right)^{npm} \right)^{\frac{\delta-\alpha}{\delta}} \right]^{\frac{1}{np}} \\
&= (1-4 \alpha p)^{- \frac{1}{p}} \left[ (1 - 4 \delta p)^{-\frac{\alpha}{\delta}} \left( \frac{e}{\sqrt{\pi}}\right)^{\frac{\delta-\alpha}{\delta}}  \right]^{\frac{1}{np}} \left(1 +  \frac{2pm}{e} \right)^{m\frac{\delta-\alpha}{\delta}} \\
&\leq (1-4 \alpha p)^{- \frac{1}{p}} (1 - 4 \delta p)^{-\frac{\alpha}{\delta}} \left( \frac{e}{\sqrt{\pi}}\right)^{\frac{\delta-\alpha}{\delta}} \left(1 +  \frac{2pm}{e} \right)^{m\frac{\delta-\alpha}{\delta}}\\
&=: \Lambda < \infty
\end{align*}
where as desired $\Lambda$ is independent of $n$.
\end{proof}

%---------------------------------------------------$$$$$$$$$$$

\section*{Acknowledgements}
The author would like to acknowledge Dr.\,Bruce K. Driver for direction and countless hours of conversation and assistance in completing this paper. The author is also grateful to Dr.\,Maria Gordina for her helpful remarks, and to the anonymous referee whose careful reading and useful commentary improved the quality of this paper.

\bibliographystyle{plain}	% (uses file "plain.bst")
\bibliography{refs}
\end{document}